\documentclass[reqno,11pt]{article}
%\sloppy
\usepackage{a4wide}
\usepackage{color,eucal,enumerate,mathrsfs,amsthm}
\usepackage[normalem]{ulem}
\usepackage{amsmath,amssymb,amsthm,epsfig,bbm}
\numberwithin{equation}{section}

\usepackage{amsfonts}
\usepackage{dsfont}
\usepackage{mathtools}
\usepackage{leftidx}
\usepackage{graphicx}
\usepackage{esint}
\usepackage{authblk}

%\usepackage{draftwatermark}
%\SetWatermarkScale{7}

\usepackage[pdfborder={0 0 0}]{hyperref}  

\usepackage[latin1]{inputenc}
\usepackage[english]{babel}

\frenchspacing

\newtheorem{Definition}{Definition}[section]
\newtheorem{Proposition}[Definition]{Proposition}
\newtheorem{Lemma}[Definition]{Lemma}
\newtheorem{Theorem}[Definition]{Theorem}
\newtheorem{Corollary}[Definition]{Corollary}
\theoremstyle{definition}

\newtheorem{Remark}[Definition]{Remark}
\newtheorem{Setting}[Definition]{Setting}

\allowdisplaybreaks

%%%%%%%%%%%%%%%%%%%%%%%%%%%%%%%%%%%%%%%%%%%%%%%%%%%%%%%%%%%%%%%%%%%%%%%%%%%%%
%%                                                                         %%
%%     SIMBOLI GENERALI (che di solito uso per tutti i lavori)             %%
%%                                                                         %%
%%%%%%%%%%%%%%%%%%%%%%%%%%%%%%%%%%%%%%%%%%%%%%%%%%%%%%%%%%%%%%%%%%%%%%%%%%%%%
%%
%
%%     SIMBOLI MATEMATICI BLACKBOARD (INSIEMI) R -> \R
%%

\renewcommand{\H}{\mathbb{H}}

\newcommand{\N}{\mathbb{N}}

\newcommand{\R}{\mathbb{R}}

%%
%%
%%
%%     SIMBOLI MATEMATICI BOLD (raddoppiano l'iniziale)  a -> \aa,  B -> \bB,
%%     \gamma -> \ggamma
%%     quando sono apici o pedici si aggiunge una s
%%     a -> \saa, \beta -> \sbbeta
%% 
%% 

%\renewcommand{\gg}{{\mbox{\boldmath$g$}}}

\newcommand{\mm}{{\mbox{\boldmath$m$}}}

%%
%%

%\newcommand{\see}{{\mbox{\scriptsize\boldmath$e$}}}

%%
%%
%%

%%
%%
%%

%\newcommand{\sfF}{{\mbox{\scriptsize\boldmath$F$}}}

%%
%%
%%
%%
%%                          GRECI
%%

\newcommand{\ggamma}{{\mbox{\boldmath$\gamma$}}}

\newcommand{\ppi}{{\mbox{\boldmath$\pi$}}}

\newcommand{\sggamma}{{\mbox{\scriptsize\boldmath$\gamma$}}}

%%
%%
%%
%% SANS SERIF NORMAL
%%
%%

\newcommand{\sfd}{{\sf d}}

\newcommand{\sfh}{{\sf h}}

%%
%%       
%%
%%                                OPERATORI e SIMBOLI PARTICOLARI
%%
%%
%%
                          % Identita
\newcommand{\Kliminf}{K\kern-3pt-\kern-2pt\mathop{\rm lim\,inf}\limits}  % Kuratowski liminf di insiemi
\newcommand{\supp}{\mathop{\rm supp}\nolimits}   % supporto 
%\newcommand{\span}{\mathop{\rm span}\nolimits}   % span
   % argmin
\newcommand{\Lip}{\mathop{\rm Lip}\nolimits}          %Lipschitz constant
%%
%%       DERIVAZIONE
%%
\renewcommand{\d}{{\mathrm d}}
\newcommand{\dt}{{\d t}}
\newcommand{\ddt}{{\frac \d\dt}}

\newcommand{\restr}[1]{\lower3pt\hbox{$|_{#1}$}} %restrizione funzione

\newcommand{\la}{\left<}                  % brackets
\newcommand{\ra}{\right>}
\newcommand{\eps}{\varepsilon}  
\newcommand{\nchi}{{\raise.3ex\hbox{$\chi$}}}
\newcommand{\weakto}{\rightharpoonup}
%%
%%
%%              COMMENTI ESTERNI AL TESTO
%%
%%
%%                        sul margine:        \comments{....}
%%                        frecca di richiamo: \attention
%%                        frecca di richiamo: \attention
%%                        frecca di richiamo: \attention
%%

%%
\setlength{\marginparwidth}{3cm}

\newcommand{\limi}{\varliminf}
\newcommand{\lims}{\varlimsup}

% \newcommand{\QED}{\rule{2mm}{2mm}}
% %%
% %%                      per concludere un remark
% %% 
\newcommand{\fr}{\penalty-20\null\hfill$\blacksquare$}                      %quadratino nero alla fine del remark, se non vi piace, la cosa migliore e' `svuotare' la macro, cosi' non bisogna intervenire sul testo

% \newcommand{\QEO}{\vbox{\offinterlineskip\hrule
%     \hbox{\vrule\phantom{o}\vrule}\hrule}}
% %%
%%%

%%
%%
%%
%%
%%
%% TRASPORTO OTTIMO
%%
%%
%%
%%

                   % plans ottimali
                   % plans ammissibili ottimali
                   % displacement plans ottimali
\newcommand{\prob}[1]{\mathscr P(#1)}                   % misure di probabilita
                   % misure di probabilita
\newcommand{\probt}[1]{\mathscr P_2(#1)}                   % misure di probabilita con momento secondo finito
\newcommand{\e}{{\rm{e}}}                           % mappa di valutazione, bisogna mettere `a mano' il tempo t
                      % spazio delle geodetiche

\renewcommand{\mm}{\mathfrak m}                                %misura di riferimento

                                %misura di riferimento

 % gradiente ottenuto per rilassamento 
 % gradient ottenuto integrando lungo "quasi tutte" le geodetiche

\renewenvironment{proof}{\removelastskip\par\medskip   % inizio e fine dimostrazione
\noindent{\em proof} \rm}{\penalty-20\null\hfill$\square$\par\medbreak}

\newcommand{\bd}{{\mathbf\Delta}}

\newcommand{\testi}[1]{{\rm Test}^{\infty}(#1)}
\newcommand{\testip}[1]{{\rm Test}^{\infty}_+(#1)}
\newcommand{\testipp}[1]{{\rm Test}^{\infty}_{>0}(#1)}

\newcommand{\X}{{\rm X}}

\newcommand{\h}{{\sfh}}

\renewcommand{\ae}{{\textrm{\rm{-a.e.}}}}
\newcommand{\aeon}{{\textrm{\rm{-a.e. on }}}}

\newcommand{\CD}{{\sf CD}}
\newcommand{\RCD}{{\sf RCD}}
\newcommand{\Ggamma}{{\mathbf\Gamma}}

\newcommand{\lip}{{\rm lip}}

\newcommand{\HS}{{\lower.3ex\hbox{\scriptsize{\sf HS}}}}
\renewcommand{\H}[1]{{\rm Hess}(#1)}
\renewcommand{\div}{{\rm div}}

\newcommand{\Y}{{\rm Y}}
\newcommand{\hr}{{\sf r}}
\newcommand{\hR}{{\sf R}}

\setcounter{tocdepth}{3}

\title{Second order differentiation formula on compact  $\RCD^*(K,N)$ spaces}
\author{Nicola Gigli \thanks{SISSA, Trieste. email: ngigli@sissa.it} \quad Luca Tamanini \thanks{SISSA, Trieste \& Universit\'e Paris Ouest, Nanterre. email: ltamanini@sissa.it, luca.tamanini@u-paris10.fr}}

\begin{document}

\maketitle

\begin{abstract}
Aim of this paper is to prove the second order differentiation formula along geodesics in compact $\RCD^*(K,N)$ spaces with $N < \infty$. This formula is new even in the context of Alexandrov spaces.

We establish this result by showing that $W_2$-geodesics can be approximated up to second order, in a sense which we shall make precise, by entropic interpolation. In turn this is achieved by proving new,  even in the smooth setting, estimates concerning entropic interpolations which we believe are interesting on their own. In particular we obtain:
\begin{itemize}
\item[-]  equiboundedness of the densities along the entropic interpolations,
\item[-] equi-Lipschitz continuity of the Schr\"odinger potentials,
\item[-] a uniform weighted $L^2$ control of the Hessian of such potentials.
\end{itemize}
Finally, the techniques used in this paper can be used to show that  the viscous solution of the Hamilton-Jacobi equation can be obtained  via a vanishing viscosity method, in accordance with the smooth case.
\end{abstract}

\tableofcontents

\section{Introduction}
In the last ten years there has been a great interest in the study of metric measure spaces with Ricci curvature bounded from below, see for instance \cite{Lott-Villani09},  \cite{Sturm06I}, \cite{Sturm06II}, \cite{Gigli-Kuwada-Ohta10}, \cite{AmbrosioGigliSavare11}, \cite{AmbrosioGigliSavare11-2},  \cite{Gigli12}, \cite{AmbrosioGigliSavare12}, \cite{Rajala12}, \cite{RajalaSturm12}, \cite{Gigli-Mosconi12},  \cite{Gigli13}, \cite{Gigli13over}, \cite{Ketterer13}, \cite{AmbrosioMondinoSavare13}, \cite{Mondino-Naber14}, \cite{CavMon15}, \cite{CavMil16}. The starting points of this research line have been the seminal papers \cite{Lott-Villani09} and \cite{Sturm06I}, \cite{Sturm06II} which linked lower Ricci bounds on metric measure spaces to properties of entropy-like functionals in connection with $W_2$-geometry.  Later (\cite{AmbrosioGigliSavare11}) it emerged  that also Sobolev calculus is linked to $W_2$-geometry and building on top of this the original definition of $\CD$ spaces by Lott-Sturm-Villani has evolved into that of $\RCD$ spaces (\cite{AmbrosioGigliSavare11-2}, \cite{Gigli12}).

An example of link between Sobolev calculus and $W_2$-geometry is the following statement, proved in \cite{Gigli13}:
\begin{Theorem}[First order differentiation formula]\label{thm:1}
Let $(\X,\sfd,\mm)$ be a $\RCD(K,\infty)$ space, $(\mu_t)$ a $W_2$-geodesic made of measures with bounded support and such that $\mu_t\leq C\mm$ for every $t\in[0,1]$ and some $C>0$. Then for every $f\in W^{1,2}(\X)$ the map 
\[
[0,1]\ni t\quad\mapsto\quad\int f\,\d\mu_t
\]
is $C^1$ and we have
\[
\frac\d{\d t}\int f\,\d\mu_t\restr{t=0}=-\int \d f(\nabla\varphi)\,\d\mu_0,
\]
where  $\varphi$ is any locally Lipschitz Kantorovich potential from $\mu_0$ to $\mu_1$.
\end{Theorem}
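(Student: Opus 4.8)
My plan is to lift the $W_2$-geodesic to a test plan, to identify its velocity field with the gradient of the Hopf--Lax evolution of $-\varphi$, and then to integrate in time; the bound $\mu_t\le C\mm$ enters throughout to keep the relevant quantities square integrable and to reduce to Lipschitz $f$. First, by the superposition principle $(\mu_t)$ is represented by some $\ppi\in\prob{\geo(\X)}$ with $(\e_s)_\#\ppi=\mu_s$ for every $s$ and $\int\big(\int_0^1|\dot\gamma_s|^2\,\d s\big)\d\ppi(\gamma)=W_2^2(\mu_0,\mu_1)$; since $\mu_s\le C\mm$ and all the supports lie in a fixed bounded set $\Omega$ (on which $\mm$ is finite), $\ppi$ is a test plan. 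Fix a locally Lipschitz Kantorovich potential $\varphi$ for the cost $\tfrac12\sfd^2$, which may be taken Lipschitz on $\Omega$. Both the map $t\mapsto\int f\,\d\mu_t$ and the quantities on the right-hand side depend continuously on $f\in W^{1,2}(\X)$, uniformly in $t$ — because $\|g\|_{L^1(\mu)}\le(C\mm(\Omega))^{1/2}\|g\|_{L^2(\mm)}$ and $\big|\int\d g(h)\,\d\mu\big|\le\|\weakgrad h\|_{L^\infty(\Omega)}(C\mm(\Omega))^{1/2}\|\weakgrad g\|_{L^2(\mm)}$ whenever $\supp\mu\subseteq\Omega$ and $\mu\le C\mm$ — and a uniform limit of $C^1$ functions with uniformly converging derivatives is $C^1$, so it suffices to prove the statement for $f$ Lipschitz.

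The heart of the argument is the velocity identification. Let $(\varphi_s)_{s\ge0}$ be the Hopf--Lax evolution of $-\varphi$, so that $\varphi_0=-\varphi$ and $\varphi_1=\varphi^c$. Fix a $\ppi$-a.e.\ (constant-speed) geodesic $\gamma$; since $(\e_s)_\#\ppi\le C\mm$, the subsolution property of Hopf--Lax, together with the $\mm$-a.e.\ coincidence of slope and minimal weak upper gradient for Lipschitz functions, gives for a.e.\ $s$
\[
\frac{\d}{\d s}\big(\varphi_s(\gamma_s)\big)\ \le\ -\tfrac12\weakgrad{\varphi_s}^2(\gamma_s)+\weakgrad{\varphi_s}(\gamma_s)\,|\dot\gamma_s|\ \le\ \tfrac12|\dot\gamma_s|^2 .
\]
Integrating over $[0,1]$ and over $\ppi$ yields $\int\big(\varphi_1(\gamma_1)-\varphi_0(\gamma_0)\big)\d\ppi\le\tfrac12\int\sfd^2(\gamma_0,\gamma_1)\,\d\ppi=\tfrac12 W_2^2(\mu_0,\mu_1)$, whereas, since $\varphi_1=\varphi^c$, Kantorovich duality gives $\int\big(\varphi_1(\gamma_1)-\varphi_0(\gamma_0)\big)\d\ppi=\int\varphi^c\,\d\mu_1+\int\varphi\,\d\mu_0=\tfrac12 W_2^2(\mu_0,\mu_1)$. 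Hence equality holds, and all the intermediate inequalities become equalities for a.e.\ $s$ and $\ppi$-a.e.\ $\gamma$: equality in Young forces $\weakgrad{\varphi_s}(\gamma_s)=|\dot\gamma_s|$, and the first inequality being saturated forces both the Hamilton--Jacobi identity $\partial_s\varphi_s(\gamma_s)=-\tfrac12\weakgrad{\varphi_s}^2(\gamma_s)$ and $\frac{\d}{\d r}\varphi_s(\gamma_r)\restr{r=s}=\weakgrad{\varphi_s}(\gamma_s)\,|\dot\gamma_s|=\weakgrad{\varphi_s}^2(\gamma_s)$, i.e.\ the derivative of $\varphi_s$ along $\gamma$ is as large as the norm constraint permits. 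By the Cauchy--Schwarz equality case in the Hilbert module $L^2(T\X)$ this means the velocity $\gamma_s'$ of the curve equals $\nabla\varphi_s(\gamma_s)$, whence for every $f\in W^{1,2}(\X)$
\[
\frac{\d}{\d s}\big(f(\gamma_s)\big)=\d f(\nabla\varphi_s)(\gamma_s)\qquad\text{for a.e.\ }s,\ \ppi\text{-a.e.\ }\gamma .
\]

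It remains to integrate. For $\ppi$-a.e.\ $\gamma$ the map $s\mapsto f(\gamma_s)$ is absolutely continuous by the test plan property, so
\[
\int f\,\d\mu_t-\int f\,\d\mu_0=\int\Big(\int_0^t\frac{\d}{\d s}\big(f(\gamma_s)\big)\,\d s\Big)\d\ppi(\gamma)=\int_0^t\Big(\int\d f(\nabla\varphi_s)\,\d\mu_s\Big)\d s ,
\]
the exchange of integrals being legitimate since $\int\big(\int_0^1\weakgrad f(\gamma_s)\,\weakgrad{\varphi_s}(\gamma_s)\,\d s\big)\d\ppi\le\big(\int_0^1\!\!\int\weakgrad f^2\,\d\mu_s\,\d s\big)^{1/2}W_2(\mu_0,\mu_1)<\infty$ by the density bound. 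Thus $t\mapsto\int f\,\d\mu_t$ is absolutely continuous with a.e.\ derivative $g(t):=\int\d f(\nabla\varphi_t)\,\d\mu_t$; a continuity argument — based on the stability of $t\mapsto\varphi_t$ in $W^{1,2}_{\rm loc}$, the weak-$*$ compactness of $\{\d\mu_t/\d\mm\}$ in $L^\infty$, and, near $t=0$, on $\varphi_t\to-\varphi$ — shows that $g$ has a continuous representative, so the map is $C^1$. Finally $\frac{\d}{\d t}\int f\,\d\mu_t\restr{t=0}=g(0)=\int\d f(\nabla\varphi_0)\,\d\mu_0=-\int\d f(\nabla\varphi)\,\d\mu_0$, which is the asserted value; it is automatically independent of the chosen locally Lipschitz Kantorovich potential because the left-hand side is.

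I expect the main obstacle to be the rigidity in the second step. Turning the two inequalities into equalities is immediate once the integral identity is in place, but converting the resulting maximal alignment of the curve's velocity with $\nabla\varphi_s$ into the genuine identity $\gamma_s'=\nabla\varphi_s(\gamma_s)$ in $L^2(T\X)$ is where the $\RCD(K,\infty)$ hypothesis is essential — it is used through the Hilbert-module structure of $L^2(T\X)$ (which makes both the scalar product $\d f(\nabla\varphi_s)$ and the Cauchy--Schwarz equality case available), the sharp Hamilton--Jacobi behaviour of the Hopf--Lax semigroup, and the $\mm$-a.e.\ coincidence of the slope of a Lipschitz function with its minimal weak upper gradient. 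The bound $\mu_t\le C\mm$ is then invoked repeatedly: to transfer $\mm$-a.e.\ statements to $\mu_s$-a.e.\ ones, to secure the integrability needed for Fubini, and to run both the reduction to Lipschitz $f$ and the final continuity argument.
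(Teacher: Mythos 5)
Your route is the one followed in \cite{Gigli13} (which the paper cites for this theorem without reproving it): lift the geodesic to a test plan, control $\frac{\d}{\d s}\varphi_s(\gamma_s)$ via Hopf--Lax and the weak upper gradient along the plan, integrate in $s$ and in $\ppi$, and use Kantorovich duality to force equality throughout. The skeleton is correct, but two steps as written would not withstand scrutiny.

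The first is the passage from the saturated inequalities to the derivative formula. You phrase it as ``by the Cauchy--Schwarz equality case the velocity $\gamma_s'$ equals $\nabla\varphi_s(\gamma_s)$'', but in this generality there is no object $\gamma_s'\in L^2(T\X)$ to which Cauchy--Schwarz can be applied: the only data you actually extract are $|D\varphi_s|(\gamma_s)=|\dot\gamma_s|$ and $\frac{\d}{\d r}\varphi_s(\gamma_r)\restr{r=s}=|D\varphi_s|^2(\gamma_s)$ for a.e.\ $(s,\gamma)$. To turn this into $\frac{\d}{\d s}f(\gamma_s)=\d f(\nabla\varphi_s)(\gamma_s)$ one must run the horizontal/vertical linearization: apply the upper-gradient bound along $\ppi$ to $\varphi_s\pm\eps f$, use $|D(\varphi_s+\eps f)|^2=|D\varphi_s|^2+2\eps\la\nabla\varphi_s,\nabla f\ra+\eps^2|Df|^2$ (this is where infinitesimal Hilbertianity enters), expand and let $\eps\to0^\pm$. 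Relatedly, your pointwise split $\frac{\d}{\d s}\varphi_s(\gamma_s)=\partial_s\varphi_s(\gamma_s)+\frac{\d}{\d r}\varphi_s(\gamma_r)\restr{r=s}$ hides a Fubini problem, since the a.e.\ sets for the Hopf--Lax derivative in $s$ and for the curve derivative in $r$ a priori depend on the other variable; the rigorous version integrates the difference quotient over $[s-\eps,s]\times C([0,1],\X)$ before taking limits rather than arguing curve by curve.

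The second is the continuity of $g(t)=\int\d f(\nabla\varphi_t)\,\d\mu_t$ near $t=0$, which you only gesture at. Weak-$*$ convergence of $\rho_t$ plus weak $W^{1,2}_{\rm loc}$ convergence of $\varphi_t$ do not let you pass to the limit in the product. What makes the argument work is that the $L^2(T\X)$-norm of $\sqrt{\rho_t}\,\nabla\varphi_t$ is \emph{constant} and equal to $W_2(\mu_0,\mu_1)$ (by the rigidity $|D\varphi_t|(\gamma_t)=|\dot\gamma_t|$ and the constancy of geodesic speed), so weak convergence plus norm convergence upgrades to strong $L^2(T\X)$-convergence; one also needs $\rho_t\to\rho_0$ strongly in $L^p$, $p<\infty$, which in $\RCD(K,\infty)$ comes from the continuity of the entropy along the geodesic (cf.\ Lemma \ref{lem:8} in the present paper). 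With these two insertions your proof is complete and coincides with that of \cite{Gigli13}.
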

Recall that on $\RCD(K,\infty)$ spaces every $W_2$-geodesic $(\mu_t)$ between measures with bounded density and support is such that $\mu_t\leq C\mm$ for every $t\in[0,1]$ and some $C>0$ (\cite{RajalaSturm12}), so that the theorem also says that we can find `many' $C^1$ functions on $\RCD$ spaces. We remark that such   $C^1$ regularity - which was crucial in \cite{Gigli13} - is non-trivial even if the function $f$ is assumed to be Lipschitz and that  statements about $C^1$ smoothness are quite rare in metric geometry.

One might think at Theorem \ref{thm:1} as an `integrated' version of the basic formula
\[
\frac{\d}{\d t}f(\gamma_t)\restr{t=0}=\d f(\gamma'_0)
\]
valid in the smooth framework; at the technical level the proof of the claim has to do with the fact that the geodesic $(\mu_t)$ solves the continuity equation
\begin{equation}
\label{eq:cint}
\frac\d{\d t}\mu_t+\div(\nabla(-\varphi_t)\mu_t)=0,
\end{equation}
where the $\varphi_t$'s are appropriate choices of Kantorovich potentials (see also \cite{GigliHan13} in this direction).

\bigskip

In \cite{Gigli14}, the first author developed a second-order calculus on $\RCD$ spaces, in particular defining the space $H^{2,2}(\X)$ and for $f\in H^{2,2}(\X)$ the Hessian $\H f$, see \cite{Gigli14} and the preliminary section. It is then natural to ask whether an `integrated' version of the second order differentiation formula
\[
\frac{\d^2}{\d t^2}f(\gamma_t)\restr{t=0}=\H f(\gamma'_0,\gamma'_0)\qquad\text{for $\gamma$ geodesic}
\]
holds in this framework. In this paper we provide affirmative answer to this question, our main result being:
\begin{Theorem}[Second order differentiation formula]\label{thm:2}
Let $(\X,\sfd,\mm)$ be a compact $\RCD^*(K,N)$ space, $N<\infty$,  $(\mu_t)$ a $W_2$-geodesic such that $\mu_t\leq C\mm$ for every $t\in[0,1]$ and some $C>0$ and $f\in H^{2,2}(\X)$.

Then the function
\[
[0,1]\ni t\quad\mapsto\quad\int f\,\d\mu_t
\]
is $C^2$ and we have
\begin{equation}
\label{eq:sd}
\frac{\d^2}{\d t^2}\int f\,\d\mu_t\restr{t=0}=\int\H f(\nabla\varphi,\nabla\varphi)\,\d\mu_0,
\end{equation}
where  $\varphi$ is any Kantorovich potential from $\mu_0$ to $\mu_1$.
\end{Theorem}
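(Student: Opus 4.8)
The plan is to approximate the $W_2$-geodesic $(\mu_t)$ by entropic interpolations $(\mu_t^\eps)$ associated to the Schr\"odinger problem with parameter $\eps\downarrow 0$, and to pass to the limit in a second-order differentiation formula that holds along each entropic interpolation. The point of working with entropic interpolations rather than directly with $(\mu_t)$ is regularity: whereas the velocity potentials $\varphi_t$ of the $W_2$-geodesic are only Lipschitz and satisfy the Hamilton--Jacobi equation only in a weak/viscosity sense, the entropic interpolation is driven by the pair of Schr\"odinger potentials $(\varphi_t^\eps,\psi_t^\eps)$ which solve the (backward/forward) Hamilton--Jacobi--Bellman equations $\partial_t\varphi_t^\eps=\tfrac12|\nabla\varphi_t^\eps|^2+\tfrac\eps2\Delta\varphi_t^\eps$ (and the mirror equation for $\psi_t^\eps$), with $\mu_t^\eps=e^{(\varphi_t^\eps+\psi_t^\eps)/\eps}\mm$. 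Because of the presence of the viscous term, these functions are genuinely $W^{2,2}$ in space and smooth in time, so one can legitimately differentiate twice under the integral sign. Concretely, writing $\vartheta_t^\eps:=\tfrac12(\psi_t^\eps-\varphi_t^\eps)$ (the natural "effective potential", so that $\tfrac{\d}{\d t}\int f\,\d\mu_t^\eps=\int\d f(\nabla\vartheta_t^\eps)\,\d\mu_t^\eps$), a direct computation using the continuity equation for $(\mu_t^\eps)$ and the HJB equations gives
\[
\frac{\d^2}{\d t^2}\int f\,\d\mu_t^\eps=\int\H f(\nabla\vartheta_t^\eps,\nabla\vartheta_t^\eps)\,\d\mu_t^\eps+\frac{\eps^2}{4}\int\big(\text{correction terms involving }\Delta,\ \H f,\ \nabla\log\rho_t^\eps\big)\,\d\mu_t^\eps,
\]
where $\rho_t^\eps$ is the density of $\mu_t^\eps$; the first term is the expected one and the second is an $O(\eps^2)$-type remainder that we must show vanishes in the limit.

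The key inputs — and these are exactly the estimates announced in the abstract — are: (i) equiboundedness $\mu_t^\eps\le C'\mm$ uniformly in $t$ and $\eps$, which controls the densities and in particular keeps the correction terms integrable; (ii) equi-Lipschitz bounds on $\varphi_t^\eps,\psi_t^\eps$, hence on $\vartheta_t^\eps$, uniformly in $t,\eps$, which via Arzel\`a--Ascoli lets us extract limits $\vartheta_t^\eps\to\vartheta_t$ with $\nabla\vartheta_t^\eps\to\nabla\vartheta_t$ in a suitable sense and identifies $(\mu_t^\eps)\to(\mu_t)$ with $\vartheta_t$ a Kantorovich potential (so $\nabla\vartheta_0=\nabla\varphi$ up to sign conventions); and (iii) a uniform weighted $L^2$ bound $\int_0^1\!\!\int|\H{\vartheta_t^\eps}|^2\,\d\mu_t^\eps\,\d t\le C''$ together with analogous control of $\int|\nabla\log\rho_t^\eps|^2\,\d\mu_t^\eps$, obtained from a Bochner-type / entropy-dissipation argument using the $\RCD^*(K,N)$ condition (this is where $N<\infty$ and compactness enter, via the dimensional Bochner inequality and heat-kernel bounds). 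Granting these, one first shows that $t\mapsto\int f\,\d\mu_t^\eps$ is $C^2$ with uniformly-in-$\eps$ bounded second derivative, then passes to the limit: the left-hand side converges (as a sequence of equibounded $C^2$ functions converging pointwise, hence in $C^1$, with second derivatives converging in the sense of distributions on $[0,1]$), the main term $\int\H f(\nabla\vartheta_t^\eps,\nabla\vartheta_t^\eps)\,\d\mu_t^\eps\to\int\H f(\nabla\vartheta_t,\nabla\vartheta_t)\,\d\mu_t$ by the convergences in (ii) and the fixed $L^2(\mm)$-regularity of $\H f$ (using $f\in H^{2,2}$), and the $\eps^2$-remainder $\to0$ because its integrand is controlled by $\eps^2(1+|\H{\vartheta_t^\eps}|^2+|\nabla\log\rho_t^\eps|^2)$ times bounded factors, whose time-space integral is $O(\eps^2)$ by (i) and (iii). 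Evaluating at $t=0$ and noting that the limit function, being $C^2$ with the asserted second derivative a.e., is genuinely $C^2$, yields \eqref{eq:sd}. A standard approximation argument (first prove it for $f\in\test{\X}$ or with extra regularity, then extend to general $f\in H^{2,2}(\X)$ by density, using that both sides are continuous in $f$ with respect to the $H^{2,2}$-norm given the uniform bound $\mu_t\le C\mm$) completes the proof.

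The main obstacle is establishing the a priori estimates (i)–(iii) uniformly in $\eps$ — in particular the equi-Lipschitz control of the Schr\"odinger potentials and the weighted $L^2$-bound on their Hessians. These are not perturbative consequences of the smooth theory: they require genuinely new arguments adapted to the $\RCD^*(K,N)$ setting, combining the Hopf--Cole transformation (which turns the HJB system into the heat equation run forwards and backwards, $h_t^\eps:=e^{\varphi_t^\eps/\eps}$ solving $\eps\partial_t h_t^\eps=\tfrac{\eps^2}{2}\Delta h_t^\eps$), sharp heat-kernel and gradient estimates available under $\RCD^*(K,N)$ with $N<\infty$ on a compact space, and a careful Bochner/$\Gamma_2$ computation to extract the Hessian bound from the dissipation of the entropy $\int\rho_t^\eps\log\rho_t^\eps\,\d\mm$ along the interpolation. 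A secondary technical point is the rigorous justification of the second-order computation along $(\mu_t^\eps)$ — i.e. that $\varphi_t^\eps,\psi_t^\eps$ are regular enough (in $H^{2,2}$ in space, $C^1$ or better in $t$) to differentiate twice and to apply the calculus rules of \cite{Gigli14} — and the careful tracking of which terms are truly $O(\eps^2)$ versus merely bounded; but once the uniform estimates are in hand, the limiting procedure is comparatively routine.
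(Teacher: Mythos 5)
Your proposal matches the paper's strategy almost exactly: approximate the $W_2$-geodesic by entropic interpolations, establish the uniform zeroth/first/second-order estimates on densities and Schr\"odinger potentials, differentiate twice along $(\mu_t^\eps)$ (which the paper also reduces to test functions $h\in\testi\X$ before extending to $H^{2,2}$ by density and \eqref{eq:heslap}), and pass to the limit using those estimates to kill the acceleration term. One small technical inaccuracy: the available bound is $\eps^2\iint|\Delta\log\rho_t^\eps|^2\rho_t^\eps\le C$ (and similarly for $|\nabla\log\rho_t^\eps|^2$), so the acceleration integrals vanish at rate $O(\eps)$ via Cauchy--Schwarz (Lemma \ref{le:vanish}) rather than being $O(\eps^2)$ outright, and the correction involves $\Delta h$ and $\nabla h$ rather than $\H f$ — but these are slips of bookkeeping, not of approach.
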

See also Theorem \ref{thm:main} for an alternative, but equivalent, formulation of the result. We wish to stress that based on the kind of arguments used in our proof, we do not believe the compactness assumption to be crucial (but being our proof based on global analysis, to remove it is not a trivial task, especially in the case $K<0$), while on the other hand the finite dimensionality plays a key role (e.g.\ because we use the Li-Yau inequality).

Having at disposal such second order differentiation formula - perhaps without the restriction of working in compact spaces - is interesting not only at the theoretical level, but also for applications to the study of the geometry of $\RCD$ spaces. For instance, the proofs of both the splitting theorem and of the `volume cone implies metric cone' in this setting can be greatly simplified by using such formula. Also, one aspect of the theory of $\RCD$ spaces which is not yet clear is whether they have constant dimension: for Ricci-limit spaces this is known to be true by a result of Colding-Naber \cite{ColdingNaber12} which uses second order derivatives along geodesics in a crucial way. Thus our result is necessary to replicate Colding-Naber argument in the non-smooth setting (but not sufficient: they also use a calculus with Jacobi fields which as of today does not have  a non-smooth counterpart).

\bigskip

Let us discuss the strategy of the proof. Our starting point is a related second order differentiation formula obtained in \cite{Gigli14}, available under proper regularity assumptions:
\begin{Theorem}\label{thm:1i}
Let $(\mu_t)$ be a $W_2$-absolutely continuous curve solving the continuity equation
\[
\frac\d{\d t}\mu_t+\div(X_t\mu_t)=0,
\] 
for some vector fields $(X_t)\subset L^2(T\X)$ in the following sense: for every $f\in W^{1,2}(\X)$ the map $t\mapsto\int f\,\d\mu_t$  is absolutely continuous and it holds
\[
\frac\d{\d t}\int f\,\d\mu_t=\int\la\nabla f,X_t\ra\,\d\mu_t.
\]
Assume that 
\begin{itemize}
\item[(i)] $t \mapsto X_t \in L^2(T{\X})$ is absolutely continuous,
\item[(ii)] $\sup_t\{ \|X_t\|_{L^2} + \|X_t\|_{L^{\infty}}+ \|\nabla X_t\|_{L^2} \} < +\infty$.
\end{itemize} 
Then for $f\in H^{2,2}(\X)$ the map $t\mapsto\int f\,\d\mu_t$ is $C^{1,1}$ and the formula
\begin{equation}
\label{eq:secondsmooth}
\frac{\d^2}{\d t^2}\int f\d\mu_t = \int \H{f}(X_t,X_t) + \la\nabla f,\tfrac\d{\d t} X_t  + \nabla_{X_t} X_t\ra  \d\mu_t
\end{equation}
holds for a.e.\ $t \in [0,1]$.
\end{Theorem}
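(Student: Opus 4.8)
The plan is to make rigorous, first for $f$ in the class $\test{\X}$ of test functions and then for a general $f\in H^{2,2}(\X)$ by approximation, the formal computation obtained by differentiating twice under the integral sign and using the continuity equation at each step. Set $\psi_t:=\la\nabla f,X_t\ra$: when $f\in\test{\X}$ one has $\nabla f\in L^\infty(T\X)$, while $X_t\in L^\infty(T\X)\cap W^{1,2}_C(T\X)$ by (ii) and $\H f\in L^2$ since $f\in H^{2,2}(\X)$, so $\psi_t\in W^{1,2}(\X)$. The hypothesis already gives $\frac\d{\d t}\int f\,\d\mu_t=\int\psi_t\,\d\mu_t$; using the Leibniz rule
\[
\la\nabla\psi_t,X_t\ra=\la\nabla_{X_t}\nabla f,X_t\ra+\la\nabla f,\nabla_{X_t}X_t\ra=\H f(X_t,X_t)+\la\nabla f,\nabla_{X_t}X_t\ra
\]
(metric compatibility of the Levi-Civita connection together with the definition of $\H f$ as the covariant derivative of $\nabla f$; see the calculus developed in \cite{Gigli14}, legitimate here as $\nabla f,X_t\in W^{1,2}_C(T\X)\cap L^\infty$), formula \eqref{eq:secondsmooth} reduces to the identity
\[
\frac\d{\d t}\int\psi_t\,\d\mu_t=\int\la\nabla f,\tfrac\d{\d t}X_t\ra\,\d\mu_t+\int\la\nabla\psi_t,X_t\ra\,\d\mu_t .
\]

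To prove this at a.e.\ $t$ I split the difference quotient as
\begin{align*}
\frac1h\Big(\int\psi_{t+h}\,\d\mu_{t+h}-\int\psi_t\,\d\mu_t\Big)
&=\int\la\nabla f,\tfrac{X_{t+h}-X_t}{h}\ra\,\d\mu_{t+h}\\
&\quad+\frac1h\Big(\int\psi_t\,\d\mu_{t+h}-\int\psi_t\,\d\mu_t\Big).
\end{align*}
For the second summand, apply the hypothesis to the \emph{fixed} function $\psi_t\in W^{1,2}(\X)$: the map $r\mapsto\int\psi_t\,\d\mu_r$ is absolutely continuous with a.e.-derivative $\int\la\nabla\psi_t,X_r\ra\,\d\mu_r$, so the summand equals $\tfrac1h\int_t^{t+h}\int\la\nabla\psi_t,X_r\ra\,\d\mu_r\,\d r$, which converges to $\int\la\nabla\psi_t,X_t\ra\,\d\mu_t$ provided $r\mapsto\int\la\nabla\psi_t,X_r\ra\,\d\mu_r$ is continuous at $r=t$. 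For the first summand, (i) gives $\tfrac{X_{t+h}-X_t}{h}\to\tfrac\d{\d t}X_t$ in $L^2(T\X)$ at a.e.\ $t$, hence $\la\nabla f,\tfrac{X_{t+h}-X_t}{h}\ra\to\la\nabla f,\tfrac\d{\d t}X_t\ra$ in $L^1(\mm)$, and one wants the integrals against $\mu_{t+h}$ to converge to $\int\la\nabla f,\tfrac\d{\d t}X_t\ra\,\d\mu_t$.

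Both of these are instances of a single elementary fact, which I regard as the technical heart of the argument: if $v_n\to v$ in $L^1(\mm)$ and $\mu_n\rightharpoonup\mu$ weakly with densities bounded in $L^2(\mm)$ uniformly in $n$, then $\int v_n\,\d\mu_n\to\int v\,\d\mu$ (approximate $v$ in $L^1(\mm)$ by a bounded Lipschitz function, turn the $L^1$-error into a uniformly small error for the $\mu_n$-integrals via the density bound, then use weak convergence on the approximant). In the situations where Theorem~\ref{thm:1i} is applied one has $\mu_t\le C\mm$ for all $t$, which supplies this uniform bound on the densities and is also what makes the various pairings above well posed; the continuity at $r=t$ needed above follows by the same fact applied with $v_r=\la\nabla\psi_t,X_r\ra\to\la\nabla\psi_t,X_t\ra$ in $L^1(\mm)$ (again because $X_r\to X_t$ in $L^2$). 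Granting this, the two limits add up to show that $t\mapsto\int\psi_t\,\d\mu_t$ is differentiable at a.e.\ $t$ with derivative $\int\H f(X_t,X_t)+\la\nabla f,\tfrac\d{\d t}X_t+\nabla_{X_t}X_t\ra\,\d\mu_t$; since $t\mapsto\int\psi_t\,\d\mu_t$ is also continuous (the same fact, without the factor $1/h$) and this derivative is bounded in $t$ in terms of the quantities in (i)--(ii) and $\|f\|_{H^{2,2}}$, the map $t\mapsto\int f\,\d\mu_t$ is $C^{1,1}$ and \eqref{eq:secondsmooth} holds --- so far for $f\in\test{\X}$.

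Finally, to remove the assumption $f\in\test{\X}$, pick $f_n\in\test{\X}$ with $f_n\to f$ in $H^{2,2}(\X)$ (possible since $\test{\X}$ is dense in $H^{2,2}(\X)$): then $\nabla f_n\to\nabla f$ in $L^2(T\X)$ and $\H{f_n}\to\H f$ in $L^2$, and together with $X_t,\,\tfrac\d{\d t}X_t,\,\nabla_{X_t}X_t\in L^2(T\X)$, $|X_t|^2\in L^\infty(\mm)$ and the uniform density bound, the quantities $\int f_n\,\d\mu_t$, $\int\la\nabla f_n,X_t\ra\,\d\mu_t$ and the right-hand side of \eqref{eq:secondsmooth} for $f_n$ all converge (uniformly in $t$ for the first two) to the corresponding expressions for $f$; passing to the limit in the $C^{1,1}$ statement and in \eqref{eq:secondsmooth} established for each $f_n$ concludes the proof. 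The main obstacle is exactly the limit passage $\int v_n\,\d\mu_n\to\int v\,\d\mu$ with $v_n$ converging only in $L^1(\mm)$ against weakly converging measures; everything else is a careful but routine use of the second-order calculus of \cite{Gigli14}.
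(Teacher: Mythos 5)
Before anything else: the paper does \emph{not} prove Theorem~\ref{thm:1i}; it is quoted from~\cite{Gigli14} as the ``starting point'' of the present work, so there is no in-paper argument to compare yours against. What follows is therefore an assessment of your proof on its own merits.

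Your architecture --- reduce to $f\in\testi\X$, rewrite the target identity via the Leibniz rule for the covariant derivative so that what has to be shown becomes
\[
\frac{\d}{\d t}\int\psi_t\,\d\mu_t=\int\la\nabla f,\tfrac{\d}{\d t}X_t\ra\,\d\mu_t+\int\la\nabla\psi_t,X_t\ra\,\d\mu_t,
\]
split the difference quotient into a time-variation-of-$X$ term plus a time-variation-of-$\mu$ term, and close both limits with one elementary lemma --- is sound and, modulo the points below, does give the statement. You are also right that the Leibniz rule is legitimate here: $\nabla f\in W^{1,2}_C(T\X)\cap L^\infty$ because $f\in\testi\X$, $X_t\in W^{1,2}_C(T\X)\cap L^\infty$ by (ii), and $|\nabla_{X_t}X_t|\le|\nabla X_t|_\HS\,|X_t|\in L^2$, which is what makes all the pairings in the last approximation step converge.

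The genuine issue is the one you partially flag: the statement, as reproduced here, is missing the hypothesis $\mu_t\leq C\mm$, and without it not even the right-hand side of~\eqref{eq:secondsmooth} makes sense ($\H f(X_t,X_t)$ is only $L^2(\mm)$, so its $\mu_t$-integral is ill-defined for a general $W_2$-AC curve). You should not treat this as ``something that happens to hold where the theorem is applied'' but add it to the hypotheses; it is what the original formulation in~\cite{Gigli14} has. Relatedly, your convergence lemma must be stated with \emph{$L^\infty(\mm)$-bounded} densities, not $L^2(\mm)$-bounded ones: the step $\bigl|\int(v_n-w)\,\d\mu_n\bigr|\lesssim\|v_n-w\|_{L^1(\mm)}$ uses $\|\rho_n\|_{L^\infty}\le C$ (an $L^2$ bound would require $\|v_n-w\|_{L^2}$, which you do not have --- in the second summand $\langle\nabla\psi_t,X_r\rangle\to\langle\nabla\psi_t,X_t\rangle$ only in $L^1(\mm)$). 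With these fixes the lemma, and hence the whole passage to the limit, is correct.

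A smaller caveat: as stated, (i) only puts $t\mapsto\|\tfrac{\d}{\d t}X_t\|_{L^2}$ in $L^1(0,1)$, which delivers $\tfrac{\d}{\d t}\int\psi_t\,\d\mu_t\in L^1(0,1)$, hence $\int f\,\d\mu_t\in W^{2,1}(0,1)$, not $C^{1,1}$. For the $C^{1,1}$ conclusion one needs an $L^\infty$-in-time bound on $\tfrac{\d}{\d t}X_t$ (equivalently (i) read as Lipschitz-in-$L^2$). This is likely an imprecision in how the paper paraphrases~\cite{Gigli14} rather than an error in your argument, but you should either strengthen (i) or weaken the regularity conclusion so that your own proof is internally consistent.
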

If the vector fields $X_t$ are of gradient type, so that $X_t=\nabla\phi_t$ for every $t$ and the `acceleration' $a_t$ is defined as
\[
\frac\d{\d t}\phi_t+\frac{|\nabla\phi_t|^2}2=:a_t
\]
then \eqref{eq:secondsmooth} reads as
\begin{equation}
\label{eq:secondsmooth2}
\frac{\d^2}{\d t^2}\int f\d\mu_t = \int \H{f}(\nabla\phi_t,\nabla\phi_t)\,\d\mu_t+\int  \la\nabla f,\nabla a_t\ra  \d\mu_t.
\end{equation}
In the case of geodesics, the functions  $\varphi_t$ appearing in \eqref{eq:cint} solve (in a sense which we will not make precise here) the Hamilton-Jacobi equation
\begin{equation}
\label{eq:hji}
\frac\d{\d t}\varphi_t=\frac{|\nabla\varphi_t|^2}{2},
\end{equation}
thus in this case the acceleration $a_t$ is identically 0 (notice the minus sign in \eqref{eq:cint}). Hence if the vector fields $(\nabla\varphi_t)$ satisfy the regularity requirements $(i),(ii)$ in the last theorem we would easily be able to establish Theorem \ref{thm:2}. However in general this is not the case; informally speaking this has to do with the fact that for solutions of the Hamilton-Jacobi equations we do not have sufficiently strong second order estimates.

In order to establish Theorem \ref{thm:2} it is therefore natural to look for suitable `smooth' approximation of geodesics for which we can apply Theorem \ref{thm:1i} above and then pass to the limit in formula \eqref{eq:secondsmooth}. Given that the lack of smoothness of $W_2$-geodesic is related to the lack of smoothness of solutions of \eqref{eq:hji}, also in line with the classical theory of viscous approximation for the Hamilton-Jacobi equation there is a quite natural thing to try:  solve, for $\eps>0$, the equation
\[
\frac\d{\d t}\varphi^\eps_t=\frac{|\nabla\varphi^\eps_t|^2}{2}+\frac\eps2\Delta\varphi^\eps_t,\qquad\qquad\varphi^\eps_0:=\varphi,
\]
where $\varphi$ is a given, fixed, Kantorovich potential for the geodesic $(\mu_t)$, and then solve
\[
\frac\d{\d t}\mu^\eps_t-\div(\nabla\varphi^\eps_t\mu^\eps_t)=0,\qquad\qquad\mu^\eps_0:=\mu_0.
\]
This plan can actually be pursued and following the ideas in this paper one can show   that if the space $(\X,\sfd,\mm)$ is compact and $\RCD^*(K,N)$ and the geodesic $(\mu_t)$ is made of measures with equibounded densities, then as $\eps\downarrow0$:
\begin{itemize}
\item[i)] the curves $(\mu_t^\eps)$ $W_2$-uniformly converge to the geodesic $(\mu_t)$ and the measures $\mu^\eps_t$ have equibounded densities.
\item[ii)] the functions $\varphi^\eps_t$ are equi-Lipschitz and converge both uniformly and in the $W^{1,2}$-topology to the only viscous solution $(\varphi_t)$ of \eqref{eq:hji} with $\varphi$ as initial datum; in particular the continuity equation \eqref{eq:cint} for the limit curve holds.
\end{itemize}
These convergence results are based on Hamilton's gradient estimates and the Li-Yau inequality and are sufficient to pass to the limit in the term with the Hessian in \eqref{eq:secondsmooth2}. For these curves the acceleration is given by $a_t^\eps=-\frac\eps2\Delta\varphi^\eps_t$ and thus we are left to prove  that the   quantity
\[
\eps\int\la\nabla f,\nabla\Delta\varphi^\eps_t \ra\,\d\mu^\eps_t
\]
goes to 0 in some sense. However, there appears to be \emph{no hope of obtaining this by PDE estimates}. The problem is that this kind of viscous approximation can produce in the limit a curve which is not a geodesic if $\varphi$ is not $c$-concave: shortly said, this happens as  soon as a shock appears in Hamilton-Jacobi. Since there is no hope for formula \eqref{eq:sd} to be true for non-geodesics, we see that there is little chance of obtaining it via such viscous approximation.

\bigskip

We therefore use another way of approximating geodesics:  the slowing down of entropic interpolations.  Let us briefly describe what this is in the familiar Euclidean setting.

Fix two probability measures $\mu_0=\rho_0\mathcal L^d$, $\mu_1=\rho_1\mathcal L^d$ on $\R^d$. The Schr\"odinger functional equations are
\begin{equation}
\label{eq:sch10}
\rho_0=f\,\h_1g\qquad\qquad\qquad\qquad\rho_1=g\,\h_1f,
\end{equation}
the unknown being the Borel functions $f,g:\R^d\to[0,\infty)$, where $\h_tf$ is the heat flow starting at $f$ evaluated at time $t$. It turns out that in great generality these equations admit a solution which is unique up to the trivial transformation $(f,g)\mapsto (cf,g/c)$ for some constant $c>0$. Such solution can be found in the following way: let $\hR$ be the measure on $(\R^d)^2$ whose density w.r.t.\ $\mathcal L^{2d}$ is given by the heat kernel $\hr_t(x,y)$ at time $t=1$ and minimize the Boltzmann-Shannon entropy $H(\ggamma\,|\,\hR)$ among all transport plans $\ggamma$ from $\mu_0$ to $\mu_1$. The Euler equation for the minimizer forces it to be of the form  $f\otimes g\,\hR$ for some Borel functions $f,g:\R^d\to[0,\infty)$, where $f\otimes g(x,y):=f(x)g(y)$ (we shall reprove this known result in Proposition \ref{pro:2}). Then the fact that  $f\otimes g\,\hR$ is a transport plan from $\mu_0$ to $\mu_1$ is equivalent to $(f,g)$ solving \eqref{eq:sch10}. 

Once we have found the solution of \eqref{eq:sch10} we can use it in conjunction with the heat flow to interpolate from $\rho_0$ to $\rho_1$ by defining
\[
\rho_t:=\h_tf\,\h_{1-t}g.
\]
This is called entropic interpolation. Now we slow down the heat flow: fix $\eps>0$ and by mimicking the above find $f^\eps,g^\eps$ such that
\begin{equation}
\label{eq:sch1}
\rho_0=f^\eps\,\h_{\eps/2}g^\eps\qquad\qquad\rho_1=g^\eps\,\h_{\eps/2}f^\eps,
\end{equation}
(the factor $1/2$ plays no special role, but is convenient in computations).
Then define 
\[
\rho^\eps_t:=\h_{t\eps/2}f^\eps\,\h_{(1-t)\eps/2}g^\eps.
\]

The  remarkable and non-trivial fact here is that as $\eps\downarrow0$ the curves of measures $(\rho^\eps_t\mathcal L^d)$ converge to the $W_2$-geodesic from $\mu_0$ to $\mu_1$. 

The first connections between Schr\"odinger equations and optimal transport have been obtained by Mikami in \cite{Mikami04} for the quadratic cost on $\R^d$; later Mikami-Thieullen \cite{MikamiThieullen08} showed that a link persists even for more general cost functions. The statement we have just made about convergence of entropic interpolations to displacement ones has been proved by L\'eonard in \cite{Leonard12}. Actually, L\'eonard worked in much higher generality: as it is perhaps clear from the presentation, the construction of entropic interpolation can be done in great generality, as only a  heat kernel is needed. He also provided a basic intuition about why such convergence is in place: the basic idea is that if the heat kernel admits the asymptotic expansion $\eps\log \hr_\eps(x,y)\sim -\frac{\sfd^2(x,y)}{2}$ (in the sense of Large Deviations), then the rescaled entropy functionals $\eps H(\cdot\,|\, \hR_\eps)$ converge to $\frac12\int \sfd^2(x,y)\,\d \cdot $ (in the sense of $\Gamma$-convergence). We refer to \cite{Leonard14} for a deeper discussion of this topic, historical remarks and much more.

\bigskip

Starting from these intuitions and results,  working in the setting of compact $\RCD^*(K,N)$ spaces we gain new information about the convergence of entropic interpolations to displacement ones. In order to state our results, it is convenient to introduce the Schr\"odinger potentials $\varphi^\eps_t,\psi^\eps_t$ as
\[
\varphi^\eps_t:=\eps\log \h_{t\eps/2}f^\eps\qquad\qquad\qquad\qquad\psi^\eps_t:=\eps\log \h_{(1-t)\eps/2}g^\eps.
\]
In the limit $\eps\downarrow0$ these will converge to forward and backward Kantorovich potentials along the limit geodesic $(\mu_t)$ (see below). In this direction, it is worth to notice that while for $\eps>0$ there is a tight link between potentials and densities, as we trivially have
\[
\varphi^\eps_t+\psi^\eps_t=\eps\log\rho^\eps_t,
\]
in the limit this becomes the well known (weaker) relation that is in place between  forward/backward Kantorovich potentials and measures $(\mu_t)$:
\[
\begin{split}
\varphi_t+\psi_t&=0\qquad\text{on }\supp(\mu_t),\\
\varphi_t+\psi_t&\leq 0\qquad\text{on }\X,
\end{split}
\]
see e.g.\ Remark 7.37 in \cite{Villani09} (paying attention to the different sign convention). By direct computation one can verify that $(\varphi^\eps_t),(\psi^\eps_t)$ solve the Hamilton-Jacobi-Bellman equations
\begin{equation}
\label{eq:hj2}
\frac{\d}{\d t}\varphi^{\varepsilon}_t  =  \frac{1}{2}|\nabla\varphi^{\varepsilon}_t|^2 + \frac{\varepsilon}{2}\Delta\varphi^{\varepsilon}_t\qquad\qquad\qquad\qquad-\frac{\d}{\d t}\psi^{\varepsilon}_t  =  \frac{1}{2}|\nabla\psi^{\varepsilon}_t|^2 + \frac{\varepsilon}{2}\Delta\psi^{\varepsilon}_t,
\end{equation}
thus introducing the functions 
\[
\vartheta^\eps_t:=\frac{\psi^\eps_t-\varphi^\eps_t}2
\]
it is not hard to check that it holds
\begin{equation}
\label{eq:cei}
\frac\d{\d t}\rho^\eps_t+\rm{div}(\nabla\vartheta^\eps_t\,\rho^\eps_t)=0
\end{equation}
and
\[
\frac\d{\d t}\vartheta^\eps_t+\frac{|\nabla\vartheta^\eps_t|^2}2=a^\eps_t,\qquad\qquad\text{where}\qquad a^\eps_t:= -\frac{\varepsilon^2}{8}\Big(2\Delta\log\rho^{\varepsilon}_t + |\nabla\log\rho^{\varepsilon}_t|^2\Big).
\]
With this said, our main results about entropic interpolations can be summarized  as follows. Under the assumptions that the metric measure space is compact and $\RCD^*(K,N)$, $N<\infty$, and that  $\rho_0,\rho_1$ belong to $L^\infty(\X)$ we have:
\begin{itemize}
\item[-]\noindent\underline{Zeroth order} 
\begin{itemize}
\item[--]\emph{bound} For some $C>0$ we have $\rho^\eps_t\leq C\mm$ for every $\eps\in(0,1)$ and $t\in[0,1]$ (Proposition \ref{pro:6}).

\item[--]\emph{convergence} The curves  $(\rho^\eps_t\mm)$ $W_2$-uniformly converge to the unique $W_2$-geodesic $(\mu_t)$ from $\mu_0$ to $\mu_1$ (Propositions \ref{lem:6} and \ref{pro:9}).
\end{itemize}
\item[-]\noindent\underline{First order}
\begin{itemize}
\item[--]\emph{bound} For any $t\in(0,1]$ the functions $\{\varphi^\eps_t\}_{\eps\in(0,1)}$ are equi-Lipschitz (Proposition \ref{pro:1}). Similarly for the $\psi$'s. 
\item[--]
\emph{convergence}   For every sequence $\eps_n\downarrow0$ there is a subsequence - not relabeled - such that for any $t\in(0,1]$ the functions $\varphi^\eps_t$ converge both uniformly and in $W^{1,2}(\X)$ to a function $\varphi_t$ such that  $-t\varphi_t$ is a Kantorovich potential from $\mu_t$ to $\mu_0$ (see Propositions \ref{lem:6}, \ref{pro:9} and \ref{thm:10} for the precise formulation of the results). Similarly for the $\psi$'s. 
\end{itemize}
\item[-]\noindent\underline{Second order}  For every $\delta\in(0,1/2)$ we have
\begin{itemize}
\item[--]\emph{bound}
\begin{equation}
\label{eq:2i}
\begin{split}
&\sup_{\eps\in(0,1)}\iint_\delta^{1-\delta} \big(|\H{\vartheta^\eps_t}|_\HS^2+\eps^2|\H{\log\rho^\eps_t}|_\HS^2\big)\rho^\eps_t\,\d t\,\d\mm<\infty,\\
&\sup_{\eps\in(0,1)}\iint_\delta^{1-\delta} \big(|\Delta{\vartheta^\eps_t}|^2+\eps^2|\Delta{\log\rho^\eps_t}|^2\big)\rho^\eps_t\,\d t\,\d\mm<\infty,
\end{split}
\end{equation}
(Lemma \ref{lem:7}).  Notice that since in general the Laplacian is not the trace of the Hessian, there is no direct link between these two bounds.
\item[--]\emph{convergence} For every function $h\in W^{1,2}(\X)$ with $\Delta h\in L^\infty(\X)$ it holds
\begin{equation}
\label{eq:2ii}
\lim_{\eps\downarrow0}\iint_\delta^{1-\delta}\la\nabla h,\nabla a^\eps_t\ra \rho^\eps_t\,\d t\,\d\mm=0,
\end{equation}
(Theorem \ref{thm:main}).
\end{itemize}
\end{itemize}
\bigskip

With the exception of the convergence $\rho^\eps_t\mm\to \mu_t$, all these results are new even on compact smooth manifolds (in fact, even in the flat torus). The zeroth and first order bounds are both consequences of the Hamilton-Jacobi-Bellman equations \eqref{eq:hj2} satisfied by the $\varphi$'s and $\psi$'s and can be obtained from Hamilton's gradient estimate and the Li-Yau inequality. The facts that the limit curve is the $W_2$-geodesic and that the limit potentials are Kantorovich potentials are consequence of the fact that we can pass to the limit in the continuity equation \eqref{eq:cei} and that the limit potentials satisfy the Hamilton-Jacobi equation. In this regard it is key that we approximate  at the same time both the `forward' potentials $\psi$ and the `backward' one $\varphi$: see the proof of Proposition \ref{pro:9} and recall that the simple viscous approximation may converge to curves which are not $W_2$-geodesics.

 These zeroth and first order convergences are sufficient to pass to the limit in the term with the Hessian in \eqref{eq:secondsmooth2}.

As said, also the viscous approximation could produce the same kind of convergence. The crucial advantage of dealing with entropic interpolations is thus in the second order convergence result \eqref{eq:2ii} which shows that the term with the acceleration in \eqref{eq:secondsmooth2} vanishes in the limit and thus eventually allows us to prove our main result Theorem \ref{thm:2}. In this direction, we informally point out  that being the geodesic equation a second order one, in searching for an approximation procedure it is natural to look for one producing some sort of second order convergence.

The limiting property \eqref{eq:2ii} is mostly a consequence - although perhaps non-trivial - of the bound \eqref{eq:2i} (see in particular Lemma \ref{le:vanish} and the proof of Theorem \ref{thm:main}), thus let us focus on how to get \eqref{eq:2i}. The starting point here is a formula due to L\'eonard \cite{Leonard13}, who realized that there is a connection between entropic interpolation and lower Ricci bounds: he computed the  second order derivative of the entropy along entropic interpolations obtaining 
\begin{equation}
\label{eq:leo}
\frac{\d^2}{\d t^2}H(\rho^\eps_t\mm\,|\,\mm)=\int \big(\Gamma_2(\vartheta^\eps_t)+\tfrac{\eps^2}4\Gamma_2(\log\rho^\eps_t)\big)\rho^\eps_t\,\d\mm,
\end{equation}
where $\Gamma_2$ is the `iterated carr\'e du champ' operator defined as
\[
\Gamma_2(f):=\Delta\frac{|\nabla f|^2}2-\la\nabla f,\nabla\Delta f\ra
\]
(in the setting of $\RCD$ spaces some care is needed when handling this object, because $\Gamma_2(f)$ is in general only a measure, but let us neglect this issue here).

Thus if, say, we are on a manifold with non-negative Ricci curvature, then the Bochner inequality 
\begin{equation}
\label{eq:bint}
\Gamma_2(f)\geq |\H f|^2_\HS
\end{equation}
grants that the entropy is convex along entropic interpolations. 

Now notice that if $f:[0,1]\to\R^+$ is convex, then for $t\in(0,1)$ the quantity $|f'(t)|$ can be bounded in terms of $f(0),f(1)$ and $t$ only. Thus since the value of $H(\rho^\eps_t\mm\,|\,\mm)$ at $t=0,1$ is independent on $\eps>0$, we have the uniform bound
\[
\sup_{\eps>0}\int_\delta^{1-\delta}\frac{\d^2}{\d t^2}H(\mu^\eps_t\,|\,\mm)\,\d t=\sup_{\eps>0}\bigg(\frac{\d}{\d t}H(\mu^\eps_t\,|\,\mm)\restr{t=1-\delta}-\frac{\d}{\d t}H(\mu^\eps_t\,|\,\mm)\restr{t=\delta}\bigg)<\infty
\]
which by \eqref{eq:bint} and \eqref{eq:leo} grants the first in \eqref{eq:2i}. The second is obtained in a similar way using the Bochner inequality in the form
\[
\Gamma_2(f)\geq \frac{(\Delta f)^2}N
\]
in place of \eqref{eq:bint}.

\bigskip

\noindent{\bf Acknowledgements} 

The authors wish to thank C. L\'eonard for the numerous inspiring conversations about entropic interpolation.

 This research has been supported by the MIUR SIR-grant `Nonsmooth Differential Geometry' (RBSI147UG4). The second author is also grateful to the UFI/UIF for the financial support of the Vinci Programme.

\section{Preliminaries}\label{sec:2}
\subsection{Sobolev calculus on $\RCD$ spaces}
We shall assume the reader to be familiar with the language of optimal transport, metric measure geometry, the notion of $\RCD$ spaces and the differential calculus on them. Here we shall only recall those facts that we shall use in the sequel, mostly to fix the notation and provide bibliographical references.

\bigskip

By $C([0,1],(\X,\sfd))$, or simply $C([0,1],\X)$, we denote the space of continuous curves with values on the metric space $(\X,\sfd)$ and for $t\in[0,1]$ the {\bf evaluation map} $\e_t:C([0,1],(\X,\sfd))\to \X$ is defined as $\e_t(\gamma):=\gamma_t$.  For the notion of {\bf absolutely continuous curve} in a metric space and of {\bf metric speed} see for instance Section 1.1 in \cite{AmbrosioGigliSavare08}.  The collection of absolutely continuous curves on $[0,1]$ is denoted $AC([0,1],(\X,\sfd))$, or simply by $AC([0,1],\X)$.

By $\prob \X$ we denote the space of Borel probability measures on $(\X,\sfd)$ and by $\probt \X\subset \prob \X$ the subclass of those with finite second moment.

\medskip

Let $(\X,\sfd,\mm)$ be a complete and separable metric measure space endowed with a Borel non-negative measure which is finite on bounded sets.

For the definition of {\bf test plans}, of the {\bf Sobolev class} $S^2(\X)$ and of {\bf minimal weak upper gradient} $|D f|$ see \cite{AmbrosioGigliSavare11} (and the previous works \cite{Cheeger00}, \cite{Shanmugalingam00} for alternative - but equivalent - definitions of Sobolev functions).

The Banach space $W^{1,2}(\X)$ is defined as $L^2(\X)\cap S^2(\X)$ and endowed with the norm $\|f\|_{W^{1,2}}^2:=\|f\|_{L^2}^2+\||Df|\|_{L^2}^2$ and the  {\bf Cheeger energy} is the convex and lower-semicontinuous functional $E:L^2(\X)\to[0,\infty]$ given by
\[
E(f):=\left\{\begin{array}{ll}
\displaystyle{\frac12\int|D f|^2\,\d\mm}&\qquad \text{for }f\in W^{1,2}(\X)\\
+\infty&\qquad\text{otherwise}
\end{array}\right.
\]
$(\X,\sfd,\mm)$ is {\bf infinitesimally Hilbertian} (see \cite{Gigli12}) if $W^{1,2}(\X)$ is Hilbert. In this case $E$ is a Dirichlet form and its infinitesimal generator $\Delta$, which is a closed self-adjoint operator on $L^2(\X)$, is called {\bf Laplacian} on $(\X,\sfd,\mm)$ and its domain denoted by $D(\Delta)\subset W^{1,2}(\X)$. The flow $(\h_t)$ associated to $E$ is called {\bf heat flow} (see \cite{AmbrosioGigliSavare11}), and  for any $f\in L^2(\X)$ the curve $t\mapsto\h_tf\in L^2(\X)$ is continuous an $[0,\infty)$, locally absolutely continuous on $(0,\infty)$ and the only solution of
\[
\ddt\h_tf=\Delta\h_tf\qquad\h_tf\to f\text{ as }t\downarrow0.
\]
If moreover $(\X,\sfd,\mm)$ is an $\RCD(K,\infty)$ space (see \cite{AmbrosioGigliSavare11-2}) there exists the {\bf heat kernel}, namely a function 
\begin{equation}
\label{eq:hk}
(0,\infty)\times \X^2\ni (t,x,y)\quad\mapsto\quad \hr_t[x](y)=\hr_t[y](x)\in (0,\infty)
\end{equation}
such that
\begin{equation}
\label{eq:rapprform}
\h_tf(x)=\int f(y)\hr_t[x](y)\,\d\mm(y)\qquad\forall t>0
\end{equation}
for every $f\in L^2(\X)$. For every $x\in \X$ and $t>0$, $r_t[x]$ is a probability density and thus \eqref{eq:rapprform} can be used to extend the heat flow to $L^1(\X)$ and shows that the  flow is {\bf mass preserving} and satisfies the {\bf maximum principle}, i.e.
\begin{equation}
\label{eq:maxprinc}
f\leq c\quad\mm-a.e.\qquad\qquad\Rightarrow \qquad\qquad\h_tf\leq c\quad\mm\ae,\ \forall t>0.
\end{equation}
For compact and finite-dimensional $\RCD^*(K,N)$ spaces (\cite{Gigli12}, \cite{Erbar-Kuwada-Sturm13}, \cite{AmbrosioMondinoSavare13}), the fact that the measure is doubling and the space supports a weak 1-2 Poincar\'e inequality (\cite{Sturm06II}, \cite{Rajala12-2}) grants via the results in \cite{Sturm96III}, \cite{AmbrosioGigliSavare11-2}  that the heat kernel is continuous and satisfies {\bf Gaussian estimates}, i.e.\ there is $C_1=C_1(K,N,{\rm Diam}(\X))$ and for every $\delta>0$ another constant $C_2=C_2(K,N,{\rm Diam}(\X),\delta)$ such that for every $x,y\in \X$ and $t>0$ it holds
\begin{equation}
\label{eq:gaussest}
\frac{1}{C_1\mm(B_{\sqrt t}(y))}\exp\Big(-C_1\frac{\sfd^2(x,y)}{t}\Big)\leq \hr_t[x](y)\leq \frac{C_2}{\mm(B_{\sqrt t}(y))}\exp\Big(-\frac{\sfd^2(x,y)}{(4+\delta)t}\Big).
\end{equation}

\bigskip

For general metric measure spaces, the {\bf differential} is a well defined linear map $\d$ from $S^2(\X)$ with values in the {\bf cotangent module} $L^2(T^*\X)$ (see \cite{Gigli14}) which is a closed operator when seen as unbounded operator on $L^2(\X)$. It satisfies the following calculus rules which we shall use extensively without further notice:
\begin{align*}
|\d f|&=|D f|\quad\mm\ae&&\forall f\in S^2(\X)\\
\d f&=\d g\qquad\mm\ae\ \text{\rm on}\ \{f=g\},&&\forall f,g\in S^2(\X)\\
\d(\varphi\circ f)&=\varphi'\circ f\,\d f&&\forall f\in S^2(\X),\ \varphi:\R\to \R\ \text{Lipschitz}\\
\d(fg)&=g\,\d f+f\,\d g&&\forall f,g\in L^\infty\cap S^2(\X).
\end{align*}
where it is part of the properties the fact that $\varphi\circ f,fg\in S^2(\X)$ for $\varphi,f,g$ as above.

If $(\X,\sfd,\mm)$ is infinitesimally Hilbertian, which from now on we shall always assume, the cotangent module is canonically isomorphic to  its dual, the {\bf tangent module} $L^2(T\X)$, and the isomorphism sends the differential $\d f$ to the gradient $\nabla f$. Elements of $L^2(T\X)$ are called vector fields. The {\bf divergence} of a vector field is defined as (minus) the adjoint of the differential, i.e.\ we say that  $v$ has a divergence, and write $v\in D({\rm div})$, provided there is a function $g\in L^2(\X)$ such that
\[
\int fg\,\d\mm=-\int \d f(v)\,\d\mm\qquad\forall f\in W^{1,2}(\X).
\]
In this case $g$ is unique and is denoted ${\rm div}(v)$. The formula
\[
{\rm div}(fv)=\d f(v)+f{\rm div}(v)\qquad\forall f\in W^{1,2}(\X),\ v\in D({\rm div}),\ \text{such that}\ |f|,|v|\in L^\infty(\X)
\]
holds, where it is intended in particular that $fv\in D({\rm div})$ for $f,v$ as above. It can also be verified that
\[
f\in D(\Delta)\ \text{if and only if}\ \nabla f\in D({\rm div})\text{ and in this case }\Delta f={\rm div}(\nabla f),
\]
in accordance with the smooth case. It is now not hard to see that the formulas
\[
\begin{split}
\Delta(\varphi\circ f)&=\varphi''\circ f|\d f|^2+\varphi'\circ f\Delta f\\
\Delta(fg)&=g\Delta f+f\Delta g+2\la\nabla f,\nabla g\ra
\end{split}
\]
hold, where in the first equality we assume that $f\in D(\Delta),\varphi\in C^2(\R)$ are such that $f,|\d f|\in L^\infty(\X)$ and $\varphi',\varphi''\in L^\infty(\R)$ and in the second that $f,g\in D(\Delta)\cap L^\infty(\X)$ and $|\d f|,|\d g|\in L^\infty(\X)$ and it is part of the claims that $\varphi\circ f,fg$ are in $D(\Delta)$.

Beside this notion of $L^2$-valued Laplacian, we shall also need that of measure-valued Laplacian (\cite{Gigli12}). A function $f\in W^{1,2}(\X)$ is said to have measure-valued Laplacian, and in this case we write $f\in D(\bd)$, provided there exists a Borel (signed) measure $\mu$ whose total variation is finite on bounded sets and such that
\[
\int g\,\d\mu=-\int\la\nabla g,\nabla f\ra\,\d\mm,\qquad\forall g\text{ Lipschitz with bounded support}.
\]
In this case $\mu$ is unique and denoted $\bd f$. This notion is compatible with the previous one in the sense that
\[
f\in D(\bd),\ \bd f\ll\mm\text{ and }\frac{\d\bd f}{\d\mm}\in L^2(\mm)\qquad\Leftrightarrow\qquad f\in D(\Delta)\text{ and in this case }\Delta f=\frac{\d\bd f}{\d\mm}.
\]

\bigskip

On $\RCD(K,\infty)$ spaces, the vector space of `test functions'  (see \cite{Savare13}) is defined as
\[
\begin{split}
%\test{\X} &:= \Big\{ f \in D(\Delta) \cap L^{\infty}({\X}) \,:\, |\nabla f| \in L^{\infty}(\X),\, \Delta f \in W^{1,2}(\X) \Big\},\\
\testi{\X} &:= \Big\{ f \in D(\Delta) \cap L^{\infty}({\X}) \ :\ |\nabla f| \in L^{\infty}(\X),\ \Delta f \in L^{\infty}\cap W^{1,2}(\X) \Big\}.
\end{split}
\]
This is an algebra dense in $W^{1,2}(\X)$ and such that
\begin{equation}
\label{eq:compos}
\varphi\circ f\in\testi \X\quad\forall f\in\testi \X,\ \varphi:\R\to\R \text{ which is $C^\infty$ on the image of $f$}
\end{equation}
(see \cite{Savare13}).   We shall also make use of the set
\[
\begin{split}
\testipp \X&:= \Big\{ f \in \testi \X\ :\  f\geq c\ \mm\ae \text{ for some }c>0 \Big\}.
\end{split}
\]
Combining  the Gaussian estimates  on compact $\RCD^*(K,N)$ spaces, $N<\infty$, with the results in \cite{Savare13} we see that
\begin{equation}
\label{eq:regflow}
\begin{split}
f\in L^1(\X),\  t>0\qquad&\Rightarrow\qquad \h_t(f)\in\testi \X,\\
f\in L^1(\X),\ f\geq 0,\ \int f\,\d\mm>0,\ t>0\qquad&\Rightarrow\qquad \h_t(f)\in\testipp \X.
\end{split}
\end{equation}
The fact that $\testi\X$ is an algebra is based on the property
\begin{equation}
\label{eq:reggrad}
\begin{split}
f\in\testi \X\qquad\Rightarrow\qquad &|\d f|^2\in W^{1,2}(\X)\quad\text{ with }\\
&\int|\d(|\d f|^2)|^2\,\d\mm\leq \||\d f|\|^2_{L^\infty}\Big(\||\d f|\|_{L^2}\||\d \Delta f|\|_{L^2}+|K|\||\d f|\|_{L^2}^2\Big)
\end{split}
\end{equation}
and actually  a further regularity property of test functions is that
\begin{equation}
\label{eq:lapmistest}
f\in\testi \X\qquad\Rightarrow\qquad |\d f|^2\in D(\bd),
\end{equation}
so that it is possible to introduce the {\bf measure-valued $\Gamma_2$ operator} (\cite{Savare13}) as
\[
\Ggamma_2(f):=\bd\frac{|\d f|^2}{2}-\la\nabla f,\nabla\Delta f\ra\mm\qquad\forall f\in\testi \X.
\]
By construction, the assignment $f\mapsto \Ggamma_2(f)$ is a quadratic form.

An important property of the heat flow on $\RCD(K,\infty)$ spaces is the {\bf Bakry-\'Emery contraction estimate} (see \cite{AmbrosioGigliSavare11-2}):
\begin{equation}
\label{eq:be}
|\d\h_tf|^2\leq e^{-2Kt}\h_t(|\d f|^2)\qquad\forall f\in W^{1,2}(\X),\ t\geq 0.
\end{equation}

We also recall that $\RCD(K,\infty)$ spaces have the {\bf Sobolev-to-Lipschitz} property (\cite{AmbrosioGigliSavare11-2}, \cite{Gigli13}), i.e.
\begin{equation}
\label{eq:sobtolip}
f\in W^{1,2}(\X),\ |\d f|\in L^\infty(\X)\qquad\Rightarrow\qquad \exists \tilde f=f\ \mm-a.e.\ \text{ with }\Lip(\tilde f)\leq\||\d f|\|_{L^\infty},
\end{equation}
and thus we shall typically identify Sobolev functions with bounded differentials with their Lipschitz representative; in particular this will be the case for functions in $\testi \X$.

\bigskip

The existence of the space of test functions and the language of $L^2$-normed $L^\infty$-modules allow to introduce the space $W^{2,2}(\X)$ as follows (see \cite{Gigli14}). We first consider the  tensor product $L^2((T^*)^{\otimes 2}\X)$ of $L^2(T^*\X)$ with itself. The pointwise norm on such module is denoted $|\cdot|_\HS$ to remind that in the smooth case it coincides with the Hilbert-Schmidt one. Then we say that a function $f\in W^{1,2}(\X) $ belongs to $W^{2,2}(\X)$ provided there exists $A\in L^2((T^*)^{\otimes 2}\X)$ symmetric, i.e.\ such that $A(v_1,v_2)=A(v_2,v_1)$ $\mm$-a.e.\ for every $v_1,v_2\in L^2(T\X)$, for which it holds
\[
\int h A(\nabla g,\nabla g)\,\d\mm=\int-\la\nabla f,\nabla g\ra{\rm div}(h\nabla g)-h\langle\nabla f,\nabla\frac{|\nabla g|^2}2\rangle\,\d\mm \qquad\forall g,h\in\testi \X.
\]
In this case $A$ is unique, called {\bf Hessian} of $f$ and denoted by $\H f$. The space $W^{2,2}(\X)$ endowed with the norm
\[
\|f\|^2_{W^{2,2}(\X)}:=\|f\|^2_{L^2(\X)}+\|\d f\|^2_{L^2(T^*\X)}+\|\H f\|^2_{L^2((T^*)^{\otimes 2}\X)}
\]
is a complete separable Hilbert space which contains $\testi \X$ and in particular is dense in $W^{1,2}(\X)$. It is proved in \cite{Gigli14} that $D(\Delta)\subset W^{2,2}(\X)$ with
\begin{equation}
\label{eq:heslap}
\int|\H f|_{\HS}^2\,\d\mm\leq \int (\Delta f)^2-K|\nabla f|^2\,\d\mm\qquad\forall f\in D(\Delta).
\end{equation}
The space $H^{2,2}(\X)$ is defined as the closure of $D(\Delta)$ in $W^{2,2}(\X)$; it is unknown whether it coincides with $W^{2,2}(\X)$ or not. 

We shall need the following form of Leibniz rule (\cite{Gigli14}):
\begin{equation}
\label{eq:leibh}
\d\la\nabla f,\nabla g\ra=\H f(\nabla g,\cdot)+\H g(\nabla f,\cdot)\qquad\forall f,g\in\testi \X.
\end{equation}

The {\bf Bochner inequality} on $\RCD(K,\infty)$ spaces takes the form of an inequality between measures (\cite{Gigli14} - see also the previous contributions \cite{Savare13}, \cite{Sturm14}):
\begin{equation}
\label{eq:bochhess}
\Ggamma_2(f)\geq \big(|\H f|^2_\HS+K|\d f|^2\big)\mm\qquad\forall f\in\testi \X,
\end{equation}
and if the space is $\RCD^*(K,N)$ for some finite $N$ it also holds (\cite{Erbar-Kuwada-Sturm13}, \cite{AmbrosioMondinoSavare13}):
\begin{equation}
\label{eq:bochlap}
\Ggamma_2(f)\geq\Big( \frac{(\Delta f)^2}N+K|\d f|^2\Big)\mm\qquad\forall f\in\testi \X.
\end{equation}
Notice that since the Laplacian is in general not the trace of the Hessian, the former does not trivially imply the latter (in connection to this, see \cite{Han14}).

\bigskip

We conclude the section recalling the notion of Regular Lagrangian Flow, introduced by Ambrosio-Trevisan in \cite{Ambrosio-Trevisan14} as the generalization to $\RCD$ spaces of the analogous concept existing on $\R^d$ as proposed by Ambrosio in \cite{Ambrosio04}:

\begin{Definition}[Regular Lagrangian Flow]
Given $(v_t)\in L^1([0,1],L^2(T\X))$, the function $F:[0,1]\times \X\to \X$ is  a Regular Lagrangian Flow for $(v_t)$ provided:
\begin{itemize}
\item[i)] $[0,1]\ni t \mapsto F_t(x)$ is continuous for every $x\in \X$
\item[ii)] for every $f\in\testi \X$ and $\mm$-a.e.\ $x$ the map $t\mapsto f(F_t(x))$ belongs to $W^{1,1}([0,1])$ and
\[
\ddt  f(F_t(x))=\d f(v_t)(F_t(x))\qquad {\rm a.e.}\ t\in[0,1].
\]
\item[iii)] it holds
\[
(F_t)_*\mm\leq C\mm\qquad\forall t\in[0,1]
\]
for some constant $C>0$.
\end{itemize}
\end{Definition}
In \cite{Ambrosio-Trevisan14} the authors prove that under suitable assumptions on the $v_t$'s, Regular Lagrangian Flows exist and are unique. We shall use the following formulation of their result (weaker than the one provided in \cite{Ambrosio-Trevisan14}):
\begin{Theorem}\label{thm:RLF}
Let $(\X,\sfd,\mm)$ be a $\RCD(K,\infty)$ space and $(\varphi_t)\in L^1([0,1],W^{1,2}(\X))$ be such that $\varphi_t\in D(\Delta)$ for a.e.\ $t$ and
\[
\Delta\varphi_t\in L^1([0,1],L^2(\X))\qquad (\Delta\varphi_t)^-\in L^1([0,1],L^\infty(\X)).
\]
Then there exists a unique, up to $\mm$-a.e.\ equality, Regular Lagrangian Flow $F$ for $(\nabla\varphi_t)$.

For such flow, the quantitative bound
\begin{equation}
\label{eq:quantm}
(F_t)_*\mm\leq \exp\Big(\int_0^1\|(\Delta\varphi_t)^-\|_{L^\infty(\X)}\,\d t\Big)\mm
\end{equation}
holds for every $t\in[0,1]$ and for $\mm$-a.e.\ $x$ the curve $t\mapsto F_t(x)$ is absolutely continuous and its metric speed ${\rm ms}_t({F_{\cdot}}(x))$ at time $t$ satisfies
\begin{equation}
\label{eq:quants}
{\rm ms}_t({F_{\cdot}}(x))=|\nabla\varphi_t|(F_t(x))\qquad {\rm a.e.}\ t\in[0,1].
\end{equation}
\end{Theorem}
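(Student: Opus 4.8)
The plan is to quote the existence and uniqueness of the Regular Lagrangian Flow from \cite{Ambrosio-Trevisan14} and then to derive the two quantitative statements \eqref{eq:quantm} and \eqref{eq:quants} directly. The field $v_t:=\nabla\varphi_t$ satisfies the hypotheses of the main theorem of \cite{Ambrosio-Trevisan14}: $v_t\in L^1([0,1],L^2(T\X))$, its symmetric derivative is controlled through the Bochner inequality, $\div v_t=\Delta\varphi_t\in L^1([0,1],L^2(\X))$ and $(\div v_t)^-\in L^1([0,1],L^\infty(\X))$; hence a Regular Lagrangian Flow $F$ for $(\nabla\varphi_t)$ exists and is unique up to $\mm$-a.e.\ equality. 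I would recall, without reproving it, that the heart of \cite{Ambrosio-Trevisan14} is an Eulerian well-posedness theorem for the continuity equation $\ddt\mu_t+\div(\nabla\varphi_t\,\mu_t)=0$ in the class of curves of measures with uniformly bounded density with respect to $\mm$, whose uniqueness part is a renormalization/commutator estimate obtained by mollifying $\mu_t$ and $\nabla\varphi_t$ with the heat semigroup $(\h_\delta)$ and controlling the commutator through the Bakry--\'Emery contraction \eqref{eq:be}; the superposition principle then builds the Lagrangian flow out of the Eulerian solution and transfers uniqueness.

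To obtain \eqref{eq:quantm} I would use a renormalized energy estimate. Fix $\rho_0\in L^1\cap L^\infty(\X)$, $\rho_0\geq0$, and set $\mu_t:=(F_t)_*(\rho_0\mm)=:\rho_t\mm$ (legitimate, and $\rho_t$ a priori uniformly bounded, by property iii)). Since $(\mu_t)$ solves the continuity equation, the renormalization property from \cite{Ambrosio-Trevisan14} gives for $p>1$ that $t\mapsto\int\rho_t^p\,\d\mm$ is absolutely continuous and, using the identity $\nabla(\beta'(\rho))\cdot\rho\nabla\varphi=\nabla(\rho\beta'(\rho)-\beta(\rho))\cdot\nabla\varphi$ with $\beta(r)=r^p$ together with an integration by parts,
\[
\ddt\int\rho_t^p\,\d\mm=-(p-1)\int\rho_t^p\,\Delta\varphi_t\,\d\mm\leq(p-1)\,\|(\Delta\varphi_t)^-\|_{L^\infty}\int\rho_t^p\,\d\mm.
\]
Gr\"onwall's inequality gives $\|\rho_t\|_{L^p}\leq\exp\big(\tfrac{p-1}{p}\int_0^t\|(\Delta\varphi_s)^-\|_{L^\infty}\,\d s\big)\|\rho_0\|_{L^p}$, and letting $p\to\infty$ yields $\|\rho_t\|_{L^\infty}\leq\exp\big(\int_0^t\|(\Delta\varphi_s)^-\|_{L^\infty}\,\d s\big)\|\rho_0\|_{L^\infty}$. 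Taking $\rho_0=\chi_{B_R(\bar x)}$ and letting $R\to\infty$ by monotone convergence produces \eqref{eq:quantm}.

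For \eqref{eq:quants} I would prove the two inequalities separately. A density argument extends property ii) to all Lipschitz $f$ with $|\nabla f|\in L^\infty(\X)$, so for every $1$-Lipschitz such $f$ and $\mm$-a.e.\ $x$, $|f(F_t(x))-f(F_s(x))|\leq\int_s^t|\nabla\varphi_r|(F_r(x))\,\d r$ for $s\leq t$; taking the supremum over such $f$ through $\sfd(y,z)=\sup\{f(y)-f(z):\Lip(f)\leq1\}$ gives $\sfd(F_t(x),F_s(x))\leq\int_s^t|\nabla\varphi_r|(F_r(x))\,\d r$. Since by \eqref{eq:quantm} the map $r\mapsto|\nabla\varphi_r|(F_r(x))$ lies in $L^1(0,1)$ for $\mm$-a.e.\ $x$, this shows $t\mapsto F_t(x)$ is absolutely continuous with ${\rm ms}_t(F_\cdot(x))\leq|\nabla\varphi_t|(F_t(x))$ for a.e.\ $t$. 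For the reverse inequality let $\ppi:=(F_\cdot)_*\mm$, so $\mu_t:=(\e_t)_*\ppi=(F_t)_*\mm$; the curve $(\mu_t)$ is $W_2$-absolutely continuous (by property ii), since $\int_0^1\||\nabla\varphi_t|\|_{L^2(\mu_t)}\,\d t<\infty$ thanks to \eqref{eq:quantm} and $\varphi_t\in L^1([0,1],W^{1,2}(\X))$), and since $\nabla\varphi_t$ lies in the tangent module $\Tmu{\mu_t}$ — being the $L^2(\mu_t)$-limit of $\nabla\psi_n$ for $\psi_n\in\testi\X$ with $\psi_n\to\varphi_t$ in $W^{1,2}(\X)$ — and solves the continuity equation, it is the minimal velocity of $(\mu_t)$, whence $|\dot\mu_t|^2=\int|\nabla\varphi_t|^2\,\d\mu_t$ for a.e.\ $t$. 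Combining with the superposition inequality $|\dot\mu_t|^2\leq\int{\rm ms}_t(F_\cdot(x))^2\,\d\mm(x)$ and the pointwise bound already proved,
\[
\int|\nabla\varphi_t|^2\,\d\mu_t=|\dot\mu_t|^2\leq\int{\rm ms}_t(F_\cdot(x))^2\,\d\mm(x)\leq\int|\nabla\varphi_t|^2(F_t(x))\,\d\mm(x)=\int|\nabla\varphi_t|^2\,\d\mu_t
\]
for a.e.\ $t$, which forces ${\rm ms}_t(F_\cdot(x))=|\nabla\varphi_t|(F_t(x))$ for a.e.\ $t$ and $\mm$-a.e.\ $x$, i.e.\ \eqref{eq:quants}.

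The main obstacle is the black-boxed input rather than the estimates above: the Eulerian well-posedness of the continuity equation — especially its uniqueness, i.e.\ the non-smooth DiPerna--Lions--Ambrosio commutator estimate built on heat-semigroup regularization and \eqref{eq:be} — which is the substantial achievement of \cite{Ambrosio-Trevisan14} and which I would invoke rather than redo. Among the derivations I carry out myself, the delicate point is the lower bound in \eqref{eq:quants}: it relies on recognizing $\nabla\varphi_t$ as the \emph{minimal} velocity of $(\mu_t)$ (density of test functions plus uniqueness of the velocity field in $\Tmu{\mu_t}$) together with the superposition principle.
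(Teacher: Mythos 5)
The paper gives no proof of Theorem \ref{thm:RLF}: it attributes existence, uniqueness and \eqref{eq:quantm} to \cite{Ambrosio-Trevisan14} and remarks that \eqref{eq:quants} ``can be obtained, for instance, following the arguments in \cite{Gigli14}.'' Your proposal supplies exactly the derivations the paper leaves to the reader, and your route is the intended one. Checking the hypotheses of \cite{Ambrosio-Trevisan14} (the Hessian bound for $\nabla\varphi_t$ coming from \eqref{eq:heslap} on an $\RCD(K,\infty)$ space, plus the divergence controls) is correct; the renormalized $L^p$/Gr\"onwall estimate for \eqref{eq:quantm} and the limit $p\to\infty$ are sound, modulo the standard localization to $\rho_0\in L^1\cap L^\infty$ when $\mm$ is infinite, which you do perform for that part; and the two-sided argument for \eqref{eq:quants} --- upper bound via a countable family of $1$-Lipschitz test functions together with $\sfd(y,z)=\sup\{f(y)-f(z):\Lip(f)\leq1\}$, lower bound via the minimal-velocity identity of Theorem \ref{thm:GH} combined with the superposition inequality --- is the standard argument, essentially the one carried out in \cite{Gigli14}.

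One technical point deserves more care than you give it. To apply Theorem \ref{thm:GH} you need $\int_0^1\int|\nabla\varphi_t|^2\,\d\mu_t\,\d t<\infty$. Using \eqref{eq:quantm} and the boundedness of the density this amounts to $\int_0^1\|\nabla\varphi_t\|^2_{L^2(\mm)}\,\d t<\infty$, i.e.\ $(\varphi_t)\in L^2([0,1],W^{1,2}(\X))$, whereas the theorem's hypothesis grants only $(\varphi_t)\in L^1([0,1],W^{1,2}(\X))$. The $L^1$-in-time control is sufficient for your absolute-continuity claim and the upper bound in \eqref{eq:quants} (there you only need $\int_0^1\int|\nabla\varphi_t|\,\d\mu_t\,\d t<\infty$, which follows by Cauchy--Schwarz), but it does not by itself justify the invocation of Theorem \ref{thm:GH} for the lower bound. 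In the paper's actual application (Proposition \ref{pro:9}) the potentials are uniformly Lipschitz in $x$ and $t$, so $L^2$-in-time integrability holds trivially and there is no issue; but if you want the statement at the stated level of generality, you must either strengthen the integrability assumption, or supply an additional time-localization argument before invoking Theorem \ref{thm:GH}. Also, the measure $\ppi:=(F_\cdot)_*\mm$ you introduce for the superposition step need not be a probability measure when $\mm(\X)=\infty$; the fix is again to work with $(F_\cdot)_*(\rho_0\mm)$ for $\rho_0\in L^1\cap L^\infty$ probability densities and then exhaust $\X$, exactly as you already do for \eqref{eq:quantm}.
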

To be precise,  \eqref{eq:quants} is not explicitly stated in \cite{Ambrosio-Trevisan14}; its proof is anyway not hard and can be obtained, for instance, following the arguments in \cite{Gigli14}.

\subsection{Optimal transport on $\RCD$ spaces}
It is well known that on $\R^d$, curves of measures which are $W_2$-absolutely continuous are in correspondence with appropriate solutions of the {\bf continuity equation} (\cite{AmbrosioGigliSavare08}). It has been proved in \cite{GigliHan13} that the same connection holds on arbitrary metric measure spaces $(\X,\sfd,\mm)$, provided the measures are controlled by $C\mm$ for some $C>0$, the formulation of such result which we shall need is:
\begin{Theorem}[Continuity equation and $W_2$-AC\ curves]\label{thm:GH} Let $(\X,\sfd,\mm)$ be infinitesimally Hilbertian,   $(\mu_t)\subset \prob \X$ be weakly continuous and $t\mapsto\phi_t\in W^{1,2}(\X)$ be Borel, possibly defined only for a.e.\ $t\in[0,1]$. Assume that:
\begin{subequations}
\begin{align}
\label{eq:bih}
\mu_t&\leq C\mm\qquad\text{ $\forall t\in[0,1]$ for some $C>0$}\\
\label{eq:bkh}
\int_0^1\int |\nabla\phi_t|^2\,\d\mu_t\,\d t&<\infty
\end{align}
\end{subequations}
and that  the continuity equation
\[
\ddt\mu_t+{\rm div}(\nabla\phi_t\mu_t)=0,
\]
is satisfied in the following sense: for any $f\in W^{1,2}(\X)$ the map $[0,1]\ni t\mapsto\int f\,\d\mu_t$ is absolutely continuous and it holds
\[
\ddt\int f\,\d\mu_t=\int \d f(\nabla\phi_t)\,\d\mu_t\qquad {\rm a.e.}\ t.
\]
Then $(\mu_t)\in AC([0,1],(\prob \X,W_2))$ and
\[
|\dot\mu_t|^2=\int|\nabla\phi_t|^2\,\d\mu_t\qquad{\rm a.e.}\ t\in[0,1].
\]
\end{Theorem}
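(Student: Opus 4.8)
The plan is to show that the candidate curve satisfies the metric derivative bound by testing against a rich enough family of functions and using a duality characterization of $W_2$. First I would reduce the problem to a pointwise-in-time estimate: it suffices to prove that for a.e.\ $s<t$ one has $W_2(\mu_s,\mu_t)\le\int_s^t\big(\int|\nabla\phi_r|^2\,\d\mu_r\big)^{1/2}\,\d r$, since then $(\mu_t)$ is absolutely continuous with $|\dot\mu_t|^2\le\int|\nabla\phi_t|^2\,\d\mu_t$ a.e., and the reverse inequality $|\dot\mu_t|^2\ge\int|\nabla\phi_t|^2\,\d\mu_t$ follows from the general lower bound for solutions of the continuity equation (one tests the continuity equation against Kantorovich potentials for nearby times and uses the metric-speed characterization, exactly as in the Euclidean Benamou--Brenier theory; see \cite{AmbrosioGigliSavare08}).

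For the upper bound I would use the Kantorovich--Rubinstein-type duality for $W_2$ in the form: $\tfrac12 W_2^2(\mu_s,\mu_t)=\sup_{\varphi}\big(\int Q_1\varphi\,\d\mu_t-\int\varphi\,\d\mu_s\big)$, where $Q_\tau$ is the Hopf--Lax semigroup and $\varphi$ ranges over bounded Lipschitz functions. Fix such a $\varphi$ and set $\varphi_r:=Q_{(r-s)/(t-s)}\varphi$ for $r\in[s,t]$, so that $r\mapsto\varphi_r$ solves the Hamilton--Jacobi equation $\partial_r\varphi_r+\tfrac{1}{2(t-s)}|\nabla\varphi_r|^2\le 0$ in the a.e.\ sense guaranteed by the Hopf--Lax theory on metric measure spaces (here the hypothesis $\mu_r\le C\mm$ is what lets us integrate inequalities that hold only $\mm$-a.e.\ against $\mu_r$). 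Then I would differentiate $r\mapsto\int\varphi_r\,\d\mu_r$: using the continuity equation (with $f$ replaced by the $\varphi_r$'s after a standard regularization/approximation argument, again exploiting $\mu_r\le C\mm$ and \eqref{eq:bkh}) together with the subsolution property of $\varphi_r$, one gets
\[
\frac{\d}{\d r}\int\varphi_r\,\d\mu_r\;\le\;\int\Big(\d\varphi_r(\nabla\phi_r)-\tfrac{1}{2(t-s)}|\nabla\varphi_r|^2\Big)\,\d\mu_r\;\le\;\frac{t-s}{2}\int|\nabla\phi_r|^2\,\d\mu_r,
\]
by Young's inequality $\d\varphi_r(\nabla\phi_r)\le\tfrac{1}{2(t-s)}|\nabla\varphi_r|^2+\tfrac{t-s}{2}|\nabla\phi_r|^2$. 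Integrating over $[s,t]$ and taking the supremum over $\varphi$ yields $\tfrac12 W_2^2(\mu_s,\mu_t)\le\tfrac{t-s}{2}\int_s^t\int|\nabla\phi_r|^2\,\d\mu_r\,\d r$, and a standard argument (replacing $[s,t]$ by a partition and optimizing, or directly via Hölder) upgrades this to the sharp bound $W_2(\mu_s,\mu_t)\le\int_s^t\big(\int|\nabla\phi_r|^2\,\d\mu_r\big)^{1/2}\,\d r$.

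The main obstacle I anticipate is the differentiation step: justifying that $r\mapsto\int\varphi_r\,\d\mu_r$ is absolutely continuous and computing its derivative when $\varphi_r$ is only Lipschitz (not $W^{1,2}$ a priori, and time-dependent), so that the hypothesis ``$\int f\,\d\mu_t$ absolutely continuous for $f\in W^{1,2}(\X)$'' does not apply verbatim. This is handled by: (a) truncating/regularizing so that each $\varphi_r$ lies in $W^{1,2}(\X)$ — on a space where $\mu_r\le C\mm$ a bounded Lipschitz function is in $L^2(\mu_r)$ and the Hopf--Lax semigroup preserves the relevant Sobolev bounds — and (b) a careful product-rule argument combining the a.e.-in-$r$ continuity-equation identity with the a.e.-in-$(r,x)$ Hamilton--Jacobi inequality, controlling the cross terms by \eqref{eq:bkh} and the equi-Lipschitz bound on the $\varphi_r$'s; this is precisely the point where the full strength of \cite{GigliHan13} is used, and I would cite that paper for the technical core rather than redo it.
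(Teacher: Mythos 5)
The paper does not prove Theorem~\ref{thm:GH}: it is stated as a black box imported from \cite{GigliHan13}, and your proposal also ends by deferring to that paper for the technical core, so there is no internal proof to compare against.

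On the merits of the sketch: the upper-bound half (Kantorovich duality via the Hopf--Lax semigroup, the $\mm$-a.e.\ Hamilton--Jacobi subsolution inequality made usable against $\mu_r$ through the bound $\mu_r\le C\mm$, Young's inequality) is the standard and correct route; note that the extra ``upgrade'' step is unnecessary, since the bound $W_2^2(\mu_s,\mu_t)\le(t-s)\int_s^t\|\nabla\phi_r\|^2_{L^2(\mu_r)}\,\d r$ already yields absolute continuity and $|\dot\mu_t|^2\le\int|\nabla\phi_t|^2\,\d\mu_t$ a.e.\ by Lebesgue differentiation. The gap is in the reverse inequality. There is no ``general lower bound for solutions of the continuity equation'': for \emph{any} admissible velocity $v_t$ the general inequality runs the other way, $|\dot\mu_t|\le\|v_t\|_{L^2(\mu_t)}$. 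Equality for a gradient velocity is exactly the nontrivial Benamou--Brenier content of \cite{GigliHan13}, and it rests on two specific facts which your sketch does not surface: (a) a $W_2$-AC curve with $\mu_t\le C\mm$ admits a gradient velocity $\nabla\psi_t$ realizing $\|\nabla\psi_t\|_{L^2(\mu_t)}=|\dot\mu_t|$, and (b) on an infinitesimally Hilbertian space the gradient velocity is unique, because $\int\la\nabla f,\nabla(\phi_t-\psi_t)\ra\,\d\mu_t=0$ for all $f\in W^{1,2}(\X)$ yields, with $f=\phi_t-\psi_t$ and using $\mu_t\ll\mm$, that $\nabla\phi_t=\nabla\psi_t$ $\mu_t$-a.e. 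The ``testing against Kantorovich potentials'' you gesture at is how (a) is obtained, but presenting this as a generic continuity-equation fact misrepresents where the difficulty lies; since you are in any case citing \cite{GigliHan13} for precisely this step, the overall strategy is fine, but the description of the lower bound should be corrected.
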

Recall that given $f:\X\to\R$ the  upper and lower slopes $|D^+f|,|D^-f|:\X\to[0,\infty]$ are defined as 0 on isolated points and otherwise
\[
|D^+f|(x):=\lims_{y\to x}\frac{(f(y)-f(x))^+}{\sfd(x,y)}\qquad\qquad|D^-f|(x):=\lims_{y\to x}\frac{(f(y)-f(x))^-}{\sfd(x,y)}.
\]
Similarly, the {\bf local Lipschitz constant} $\lip(f):\X\to[0,\infty]$ is defined as 0 on isolated points and otherwise as
\[
\lip f(x) :=\max\{|D^+f|(x),|D^-f|(x)\}= \limsup_{y \to x}\frac{|f(x) - f(y)|}{d(x,y)}.
\]
We also recall that the $c$-transform $\varphi^c:X\to\R\cup\{-\infty\}$ of  a function $\varphi:\X\to\R\cup\{-\infty\}$ is defined as
\[
\varphi^c(x):=\inf_{y\in X}\frac{\sfd^2(x,y)}{2}-\varphi(y)
\]
and that $\varphi$ is said to be {\bf $c$-concave} provided $\varphi=\psi^c$ for some $\psi$. Also, given $\mu_0,\mu_1\in\probt X$, a function $\varphi:\X\to\R\cup\{-\infty\}$ is called {\bf Kantorovich potential} from $\mu_0$ to $\mu_1$ provided it is $c$-concave and
\[
\int\varphi\,\d\mu_0+\int\varphi^c\,\d\mu_1=\frac12W_2^2(\mu_0,\mu_1).
\]
It is worth recalling that on general complete and separable metric spaces $(X,\sfd)$ we have that for $\mu_0,\mu_1\in\prob \X$ with bounded support there exists a Kantorovich potential from $\mu_0$ to $\mu_1$ which is Lipschitz and bounded.

This can be obtained starting from an arbitrary Kantorovich potential $\psi$ and then defining
\[
\varphi(x):=\min\Big\{C,\inf_{y\in X}\frac{\sfd^2(x,y)}2-\psi^c(y)\Big\}
\]
for $C$ sufficiently big.

With this said, we recall the following version of   Brenier-McCann theorem on $\RCD$ spaces ($(i)$ comes from \cite{Gigli12a} and \cite{RajalaSturm12}, $(ii)$ from \cite{AmbrosioGigliSavare11-2} and \cite{Gigli12}, $(iii)$ from \cite{AmbrosioGigliSavare11} and  $(iv)$ from \cite{GigliRajalaSturm13}).
\begin{Theorem}\label{thm:bm} Let $(\X,\sfd,\mm)$ be a $\RCD(K,\infty)$ space and $\mu_0,\mu_1\in\probt \X$ with bounded support and such that $\mu_0,\mu_1\leq C\mm$ for some $C>0$.  Also, let $\varphi$ be a  Kantorovich potential for the couple $(\mu_0,\mu_1)$ which is locally Lipschitz on a neighbourhood of $\supp(\mu_0)$. Then:
\begin{itemize}
\item[i)] There exists a unique geodesic $(\mu_t)$ from $\mu_0$ to $\mu_1$, it satifies
\begin{equation}
\label{eq:linftyrcd}
\mu_t\leq C'\mm\qquad\forall t\in[0,1]\text{ for some }C'>0
\end{equation}
and there is a unique \emph{lifting} $\ppi$ of it, i.e.\ a unique measure $\ppi\in\prob{C([0,1],X)}$ such that $(\e_t)_*\ppi=\mu_t$ for every $t\in[0,1]$ and $\iint_0^1|\dot\gamma_t|^2\,\d t\,\d\ppi(\gamma)=W_2^2(\mu_0,\mu_1)$.

\item[ii)] For every $f\in W^{1,2}(\X)$ the map $t\mapsto \int f\,\d\mu_t$ is differentiable at $t=0$ and
\[
\ddt\int f\,\d\mu_t\restr{t=0}=-\int \d f(\nabla\varphi)\,\d\mu_0.
\]
\item[iii)] The identity 
\[
|\d\varphi|(\gamma_0)=|D^+\varphi|(\gamma_0)=\sfd(\gamma_0,\gamma_1)
\]
holds for $\ppi$-a.e.\ $\gamma$.
\item[iv)] If the space is $\RCD^*(K,N)$ for some $N<\infty$, then $(i),(ii),(iii)$ holds with $\mu_1$ only assumed to be with bounded support, with the caveat that \eqref{eq:linftyrcd} holds in the form: for every $\delta\in(0,1/2)$ there is $C_\delta>0$ so that  $\mu_t\leq C'_\delta\mm$ for every $t\in[0,1-\delta]$.
\end{itemize}
\end{Theorem}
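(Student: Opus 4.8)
The plan is to obtain the four assertions by assembling the results quoted in the statement, the genuinely substantial part being $(iii)$ — the metric Brenier theorem — from which $(ii)$ will follow via the first-order calculus. Throughout I would fix the bounded Lipschitz Kantorovich potential $\varphi$ (available since $\mu_0$ has bounded support, as recalled just above the statement) and work with an optimal plan between $\mu_0$ and $\mu_1$, which is then concentrated on the $c$-superdifferential of $\varphi$, i.e.\ on $\{(x,y):\varphi(x)+\varphi^c(y)=\tfrac12\sfd^2(x,y)\}$.

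For $(i)$ I would first invoke the structural theory: an $\RCD(K,\infty)$ space is essentially non-branching (\cite{RajalaSturm12}), and on such a space the $\CD(K,\infty)$ condition forces — by the arguments of \cite{Gigli12a} and \cite{RajalaSturm12} — that there is a unique optimal plan, that it is induced by a map, and that the displacement interpolant satisfies $\mu_t\le C'\mm$ for every $t$. Uniqueness of the geodesic $(\mu_t)$ and of its lift $\ppi$ then follows from the fact that an essentially non-branching space cannot carry two distinct optimal geodesic plans with the same marginals once one of them is induced by a map (the usual restriction/gluing argument). Having the bound $\mu_t\le C'\mm$ in hand, and recalling $\iint_0^1|\dot\gamma_t|^2\,\d t\,\d\ppi=W_2^2(\mu_0,\mu_1)<\infty$, one sees that $\ppi$ is a test plan, which is the point that unlocks the rest.

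The heart is $(iii)$. On one side, fixing $\gamma\in\supp\ppi$ one has $(\gamma_0,\gamma_1)$ in the $c$-superdifferential of $\varphi$, so $\varphi(y)-\varphi(\gamma_0)\le\tfrac12\big(\sfd^2(y,\gamma_1)-\sfd^2(\gamma_0,\gamma_1)\big)$ for every $y$, and the elementary inequality $\sfd^2(y,\gamma_1)-\sfd^2(\gamma_0,\gamma_1)\le\sfd(y,\gamma_0)\big(\sfd(y,\gamma_1)+\sfd(\gamma_0,\gamma_1)\big)$ yields $|D^+\varphi|(\gamma_0)\le\sfd(\gamma_0,\gamma_1)$, hence $\int|D^+\varphi|^2\,\d\mu_0\le\int\sfd^2(\gamma_0,\gamma_1)\,\d\ppi=W_2^2(\mu_0,\mu_1)$. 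On the other side, since $\ppi$ is a test plan the minimal weak upper gradient $|\d\varphi|$ obeys the upper-gradient inequality along $\ppi$; feeding in the lower bound $\varphi(\gamma_0)-\varphi(\gamma_s)\ge\tfrac12\big(\sfd^2(\gamma_0,\gamma_1)-\sfd^2(\gamma_s,\gamma_1)\big)$ coming again from $c$-concavity (note $\sfd(\gamma_s,\gamma_1)=(1-s)\sfd(\gamma_0,\gamma_1)$), dividing by $s$, integrating against $\ppi$, letting $s\downarrow0$ and applying Cauchy–Schwarz gives $W_2^2(\mu_0,\mu_1)\le\int|\d\varphi|^2\,\d\mu_0$. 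Sandwiching $|\d\varphi|$ between $|D^+\varphi|$ — using that $\varphi$ is monotone along $\ppi$-a.e.\ geodesic, so that the one-sided slope governs the weak gradient in the relevant direction — and $\sfd(\gamma_0,\gamma_1)$, and then invoking the locality of minimal weak upper gradients together with a Lebesgue-point argument in $s$, one upgrades these integral inequalities to the pointwise identities $|\d\varphi|(\gamma_0)=|D^+\varphi|(\gamma_0)=\sfd(\gamma_0,\gamma_1)$ for $\ppi$-a.e.\ $\gamma$, as in \cite{AmbrosioGigliSavare11}.

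Statement $(ii)$ then follows by the first-order differentiation formula (see \cite{AmbrosioGigliSavare11-2}, \cite{Gigli12}, and Theorem \ref{thm:1}): for Lipschitz $f$ one writes $\tfrac1t\big(\int f\,\d\mu_t-\int f\,\d\mu_0\big)=\tfrac1t\int\big(f(\gamma_t)-f(\gamma_0)\big)\,\d\ppi$, bounds the integrand by $\tfrac1t\int_0^t\lip f(\gamma_s)\,|\dot\gamma_s|\,\d s$ with $|\dot\gamma_s|=\sfd(\gamma_0,\gamma_1)=|\d\varphi|(\gamma_0)$ from $(iii)$, uses the uniform bound $\mu_s\le C'\mm$ to pass to the limit, and identifies the pointwise limit as $-\d f(\nabla\varphi)(\gamma_0)$ through the chain rule along geodesics and the identification of the initial velocity of $\gamma$ with $-\nabla\varphi(\gamma_0)$; a density argument extends the identity to all $f\in W^{1,2}(\X)$. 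For $(iv)$, the only statement that degenerates when $\mu_1$ has unbounded density is the interpolation bound: replacing the $\CD(K,\infty)$ input by the finite-dimensional ($\CD(K,N)$, measure-contraction) Jacobian estimates, whose constant blows up only as $t\to1$, one still gets $\mu_t\le C'_\delta\mm$ on $[0,1-\delta]$ for every $\delta\in(0,1/2)$ (\cite{GigliRajalaSturm13}), and with this localized bound the proofs of $(i),(ii),(iii)$ go through on $[0,1-\delta]$ — which suffices since all conclusions concern behaviour at $t=0$. I expect the metric Brenier step $(iii)$ to be the main obstacle: matching the one-sided slope with the weak differential along the optimal geodesic plan forces the combined use of the $L^\infty$ bound (to make $\ppi$ a test plan), the Sobolev-to-Lipschitz property, and the locality/Lebesgue-point arguments, whereas everything else is either soft (compactness, Kantorovich duality) or a black-box appeal to the structural results of Rajala–Sturm and the first-order calculus of Ambrosio–Gigli–Savaré.
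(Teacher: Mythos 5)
The paper does not prove Theorem~\ref{thm:bm}: it is recalled as a known result, with each item attributed to the literature (``$(i)$ comes from \cite{Gigli12a} and \cite{RajalaSturm12}, $(ii)$ from \cite{AmbrosioGigliSavare11-2} and \cite{Gigli12}, $(iii)$ from \cite{AmbrosioGigliSavare11} and $(iv)$ from \cite{GigliRajalaSturm13}''). Your proposal is therefore a reconstruction of those external proofs rather than a competitor to an argument in this paper, and for $(i)$, $(ii)$ and $(iv)$ the route you describe is the standard one and matches the cited sources.

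The genuine gap is in your treatment of $(iii)$. After correctly deriving the two integral inequalities
\[
\int|D^+\varphi|^2\,\d\mu_0\;\le\;W_2^2(\mu_0,\mu_1)\;\le\;\int|\d\varphi|^2\,\d\mu_0,
\]
you close the sandwich by appealing to ``$\varphi$ is monotone along $\ppi$-a.e.\ geodesic, so that the one-sided slope governs the weak gradient in the relevant direction,'' plus locality and a ``Lebesgue-point argument in $s$.'' As written this does not constitute a proof. The first inequality is pointwise ($|D^+\varphi|(\gamma_0)\le\sfd(\gamma_0,\gamma_1)$ $\ppi$-a.e.), but the second is only integral, and in general $|\d\varphi|$ is not dominated by $|D^+\varphi|$ --- one only has $|\d\varphi|\le\lip\varphi=\max\{|D^+\varphi|,|D^-\varphi|\}$, and your $c$-superdifferential inequality controls only the ascending part. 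Nothing in your sketch rules out a positive-$\mu_0$-measure set on which $|\d\varphi|(\gamma_0)>\sfd(\gamma_0,\gamma_1)$. Likewise the Lebesgue-point device does not immediately apply at $s=0$: the averages $\tfrac1s\int_0^s|\d\varphi|(\gamma_r)\,\d r$ converge to $|\d\varphi|(\gamma_0)$ only for Lebesgue points in the interior of $[0,1]$, not at the endpoint. What is actually needed is a separate substantive input, namely the pointwise bound $|\d\varphi|(\gamma_0)\le\sfd(\gamma_0,\gamma_1)$ (equivalently $\lip\varphi(\gamma_0)\le\sfd(\gamma_0,\gamma_1)$) for $\ppi$-a.e.\ $\gamma$. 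In the finite-dimensional setting this follows from a Lusin-continuity argument for the optimal map combined with the doubling property of $\mm$ (cf.\ the proof of the paper's Lemma~\ref{lem:12} and Proposition~2.7 of \cite{AmbrosioGigliSavare11}); note, however, that Theorem~\ref{thm:bm} is stated for $\RCD(K,\infty)$ where $\mm$ need not be doubling, so the proof in \cite{AmbrosioGigliSavare11} goes through the Hopf--Lax semigroup and Kantorovich duality instead. Either way, this step is where the theorem lives, and your proposal should identify and carry out that argument explicitly rather than invoke it by name.
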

A property related to the above is the fact that although the Kantorovich potentials are not uniquely determined by the initial and final measures, their gradients are. This is expressed by the following result, which also says that if we sit in the intermediate point of a geodesic and move to one extreme or the other, then the two corresponding velocities are one the opposite of the other (see Lemma 5.8 and Lemma 5.9 in \cite{Gigli13} for the proof):
\begin{Lemma}\label{le:gradpot}
Let $(\X,\sfd,\mathfrak{m})$ be a  $\RCD(K,\infty)$ space with $K \in \mathbb{R}$ and  $(\mu_t)\subset \probt \X$ a $W_2$-geodesic such that $\mu_t\leq C\mm$ for every $t\in[0,1]$ for some $C>0$. For $t\in[0,1]$ let $\phi_t,\phi_t':\X\to\R$ be locally Lipschitz functions such that for some $s,s'\neq t$ the functions $-(s-t)\phi_t$ and $-(s'-t)\phi_t'$ are Kantorovich potentials from $\mu_t$ to $\mu_s$ and from $\mu_t$ to $\mu_{s'}$ respectively.

Then
\[
\nabla\phi_t=\nabla\phi_{t'}\qquad\mu_t\ae.
\]
\end{Lemma}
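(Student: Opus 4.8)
The plan is to reduce the statement to the first order differentiation formula and to the uniqueness of $W_2$-geodesics: the point is that the derivative at $r=t$ of the scalar function $r\mapsto\int f\,\d\mu_r$ is a single, intrinsically defined quantity which can be computed using \emph{any} admissible normalised potential at time $t$, and this forces $\nabla\phi_t$ and $\nabla\phi_t'$ to coincide $\mu_t$-a.e.

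First I would fix $t\in[0,1]$ and $s\neq t$ as in the statement (necessarily $s\in[0,1]$). The curve $\tau\mapsto\nu_\tau:=\mu_{t+\tau(s-t)}$, $\tau\in[0,1]$, is a $W_2$-geodesic from $\mu_t$ to $\mu_s$, hence \emph{the} geodesic joining them by the uniqueness in Theorem \ref{thm:bm}(i), and by hypothesis $-(s-t)\phi_t$ is a locally Lipschitz Kantorovich potential from $\nu_0=\mu_t$ to $\nu_1=\mu_s$. Theorem \ref{thm:bm}(ii) then gives, for every $f\in W^{1,2}(\X)$,
\[
\frac{\d}{\d\tau}\int f\,\d\nu_\tau\Big|_{\tau=0}=-\int\d f\big(\nabla(-(s-t)\phi_t)\big)\,\d\mu_t=(s-t)\int\d f(\nabla\phi_t)\,\d\mu_t .
\]
Since $\nu_\tau=\mu_{t+\tau(s-t)}$ and, by Theorem \ref{thm:1}, $G\colon r\mapsto\int f\,\d\mu_r$ is $C^1$, the left-hand side equals $(s-t)\,G'(t)$; dividing by $s-t\neq0$ yields $G'(t)=\int\d f(\nabla\phi_t)\,\d\mu_t$. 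Running the same argument with $s'$ and $\phi_t'$ gives $G'(t)=\int\d f(\nabla\phi_t')\,\d\mu_t$, whence
\[
\int\d f(\nabla\phi_t)\,\d\mu_t=\int\d f(\nabla\phi_t')\,\d\mu_t\qquad\text{for every }f\in W^{1,2}(\X).
\]

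Next I would pass from this integral identity to the pointwise one. Set $\psi:=\phi_t-\phi_t'$, which is locally Lipschitz, and write $\mu_t=\rho\,\mm$ with $0\le\rho\in L^\infty(\X)$; the previous display reads $\int\la\nabla f,\nabla\psi\ra\,\d\mu_t=0$ for all $f\in W^{1,2}(\X)$. As $\supp(\mu_t)$ is bounded, choose a Lipschitz cut-off $\chi$ with bounded support and $\chi\equiv1$ on a neighbourhood of $\supp(\mu_t)$; then $f:=\chi\psi\in W^{1,2}(\X)$ — its differential is bounded and supported in the bounded set $\supp(\chi)$, on which $\mm$ is finite and $\psi$ is bounded — and $\nabla f=\nabla\psi$ on $\{\chi=1\}\supseteq\supp(\mu_t)$. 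Plugging this $f$ in gives $0=\int\la\nabla f,\nabla\psi\ra\,\d\mu_t=\int|\nabla\psi|^2\,\d\mu_t$, so $\nabla\psi=0$ $\mu_t$-a.e., that is $\nabla\phi_t=\nabla\phi_t'$ $\mu_t$-a.e.

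I expect the substantial step to be the first one: everything rests on the fact that $G'(t)$ is well defined and is simultaneously represented by every normalised potential at time $t$, which is exactly where the first order differentiation formula (needed for differentiability of $G$ at interior times, i.e.\ when the prescribed $s,s'$ lie on opposite sides of $t$) and the uniqueness of optimal geodesics (to identify the affinely reparametrised sub-curve with the geodesic between its endpoints) come in. The second step is routine; its only subtlety is that $\psi$ is a priori merely locally Lipschitz, so one localises rather than testing directly with $\psi$, and it works because $\nabla\phi_t-\nabla\phi_t'$ is itself a gradient — this is what turns the vanishing of a bilinear pairing into the vanishing of the $L^2(\mu_t)$-seminorm of $\nabla\psi$. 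Note that the weaker relation $\mathrm{div}\big(\rho\,(\nabla\phi_t-\nabla\phi_t')\big)=0$, which is all one gets without exploiting the gradient structure, would not suffice.
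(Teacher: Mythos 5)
Your proof is correct and is essentially the standard route also followed in the cited source (\cite{Gigli13}, Lemmas 5.8--5.9): use the first order differentiation formula together with the fact that the reparametrised sub-curve is the unique geodesic to show that $G'(t)$ is represented by both $\nabla\phi_t$ and $\nabla\phi_t'$, then exploit the gradient structure by testing with a localisation of $\psi:=\phi_t-\phi_t'$ to turn the resulting integral identity into $\int|\nabla\psi|^2\,\d\mu_t=0$. The only point worth making explicit is that the test-function step uses that $\supp(\mu_t)$ is bounded (so a cut-off $\chi\equiv 1$ near it with bounded support exists and $\chi\psi\in W^{1,2}(\X)$), which is not stated in the lemma but is implicit from the local Lipschitz hypothesis on the potentials and is automatic in the paper's actual setting of compact spaces.
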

On $\RCD$ spaces, $W_2$-geodesics  made of measures with bounded density also have the weak continuity property of the densities expressed by the following lemma. The proof  follows by a simple argument involving Young's measures and the continuity of the entropy along a geodesic (see Corollary 5.7 in \cite{Gigli13}):
\begin{Lemma}\label{lem:8}
Let $(\X,\sfd,\mathfrak{m})$ be a  $\RCD(K,\infty)$ space with $K \in \mathbb{R}$ and   $(\mu_t)\subset \probt X$ a $W_2$-geodesic such that $\mu_t\leq C\mm$ for every $t\in[0,1]$ for some $C>0$. Let $\rho_t$ be the density of $\mu_t$.

Then for any $t \in [0,1]$ and any sequence $(t_n)_{n \in \N} \subset [0,1]$ converging to $t$ there exists a subsequence $(t_{n_k})_{k \in \N}$ such that
\[
\rho_{t_{n_k}} \to \rho_t, \quad \mm\ae
\]
as $k \to \infty$.
\end{Lemma}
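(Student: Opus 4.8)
The plan is to combine a Young-measure compactness argument with the convexity and the weak lower semicontinuity of the relative entropy along the geodesic. We first reduce to the case $\mm(\X)<\infty$: the curve $(\mu_s)_{s\in[0,1]}$ is concentrated on a common bounded set whenever $\mu_0,\mu_1$ have bounded support (which is the situation in all the applications), and replacing $\mm$ by its restriction to a large ball changes neither the statement nor the geodesic. Fix $t\in[0,1]$ and $(t_n)\subset[0,1]$ with $t_n\to t$; since a $W_2$-geodesic is $W_2$-continuous we have $\mu_{t_n}\to\mu_t$ weakly. The densities obey $0\le\rho_{t_n}\le C$ $\mm$-a.e., so by the fundamental theorem on Young measures there are a subsequence (not relabeled) and a Borel family $(\nu_x)_{x\in\X}$ of probability measures on $[0,C]$ such that $g\circ\rho_{t_n}\weakto\overline g$ weakly-$*$ in $L^\infty(\mm)$ for every $g\in C([0,C])$, where $\overline g(x):=\int g\,\d\nu_x$.

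Choosing $g=\mathrm{id}$ and testing against continuous bounded functions, the weak convergence $\mu_{t_n}\to\mu_t$ forces the barycenter to be $\int r\,\d\nu_x(r)=\rho_t(x)$ for $\mm$-a.e.\ $x$. Next I claim $H(\mu_{t_n}\,|\,\mm)\to H(\mu_t\,|\,\mm)$. On one hand $s\mapsto H(\mu_s\,|\,\mm)$ is $K$-convex on $[0,1]$ (being the entropy along a geodesic of an $\RCD(K,\infty)$ space), hence continuous on $(0,1)$ and, by the very $K$-convexity bound $H(\mu_s)\le(1-s)H(\mu_0)+sH(\mu_1)-\tfrac K2 s(1-s)W_2^2(\mu_0,\mu_1)$, upper semicontinuous at $s=0,1$; therefore $\limsup_n H(\mu_{t_n}\,|\,\mm)\le H(\mu_t\,|\,\mm)$. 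On the other hand the relative entropy is lower semicontinuous for weak convergence, so $\liminf_n H(\mu_{t_n}\,|\,\mm)\ge H(\mu_t\,|\,\mm)$, proving the claim. Since $u(r):=r\log r$ is bounded and continuous on $[0,C]$, the Young-measure convergence yields $H(\mu_{t_n}\,|\,\mm)=\int u(\rho_{t_n})\,\d\mm\to\int_\X\int_{[0,C]}u(r)\,\d\nu_x(r)\,\d\mm(x)$.

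Comparing the two computations of the limit gives $\int_\X\big(\int u\,\d\nu_x\big)\,\d\mm=H(\mu_t\,|\,\mm)=\int_\X u\big(\int r\,\d\nu_x\big)\,\d\mm$, so by Jensen's inequality the $\mm$-a.e.\ nonnegative function $x\mapsto\int u\,\d\nu_x-u\big(\int r\,\d\nu_x\big)$ has vanishing integral, hence vanishes $\mm$-a.e.; strict convexity of $u$ then forces $\nu_x=\delta_{\rho_t(x)}$ for $\mm$-a.e.\ $x$. A Young measure that is a.e.\ a Dirac mass is equivalent to convergence in measure, so $\rho_{t_n}\to\rho_t$ in $\mm$-measure, and a final extraction produces $\rho_{t_{n_k}}\to\rho_t$ $\mm$-a.e., as desired. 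The crux is the identity $\lim_n H(\mu_{t_n}\,|\,\mm)=H(\mu_t\,|\,\mm)$ together with the use of \emph{strict} convexity of $u$ to upgrade weak to a.e.\ convergence; its two ingredients — the $K$-convexity of the entropy, which is the defining feature of $\CD/\RCD$ spaces, and the weak lower semicontinuity of the entropy — are standard, while the remaining bookkeeping (measurability of $x\mapsto\nu_x$, the reduction to finite $\mm$, and the passage from an a.e.-Dirac Young measure to an a.e.-convergent subsequence) is routine.
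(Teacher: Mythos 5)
Your proposal is correct and follows exactly the strategy the paper indicates for this lemma (Young measures together with continuity of the entropy along the geodesic, as in Corollary 5.7 of the cited reference), with strict convexity of $r\mapsto r\log r$ turning equality in Jensen's inequality into $\nu_x=\delta_{\rho_t(x)}$. The only loose point is the opening reduction to $\mm(\X)<\infty$: the lemma does not assume bounded supports, and one should not literally replace $\mm$ by $\mm\restr{B}$ (this would destroy the $\RCD$ condition used for $K$-convexity); instead one keeps $\mm$, notes that the Jensen defect $\int u\,\d\nu_x-u(\rho_t(x))$ is pointwise nonnegative, and tests against indicators of an exhaustion by balls so that the identification $\int_\X\int u\,\d\nu_x\,\d\mm=\lim_n H(\mu_{t_n}\,|\,\mm)=H(\mu_t\,|\,\mm)$ still forces the defect to vanish a.e.\ — a technical repair, not a change of method.
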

We conclude recalling some properties of the {\bf Hopf-Lax semigroup} in metric spaces, also in connection with optimal transport. For $f:X\to\R$ and $t>0$ the function $Q_tf:X\to\R$ is defined as
\begin{equation}
\label{eq:hli}
Q_tf(x):=\inf_{y\in \X}\frac{\sfd^2(x,y)}{2t}+f(y).
\end{equation}
Then we have the following result (\cite{AmbrosioGigliSavare11} - see also \cite{AmbrosioGigliSavare-compact}):
\begin{Proposition}\label{pro:11}
Let $(\X,\sfd)$ be a compact geodesic metric  space and $f:\X\to\R$ Lipschitz. Then the map  
$[0,\infty)\ni t\mapsto Q_tf\in C(\X)$ is Lipschitz w.r.t.\ the $\sup$ norm and for every $x\in \X$  we have
\begin{equation}\label{eq:55}
\ddt Q_tf(x) + \frac{1}{2}\Big(\lip Q_tf(x)\Big)^2 = 0\qquad {\rm a.e.}\ t>0.
\end{equation}
\end{Proposition}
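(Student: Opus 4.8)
The plan is to work directly with the minimizers of the functional $y\mapsto\tfrac{\sfd^2(x,y)}{2t}+f(y)$ defining $Q_tf(x)$: these exist for every $x\in\X$, $t>0$ by compactness of $\X$ and continuity of $f$, and we set
\[
D^+(x,t):=\max\big\{\sfd(x,y)\ :\ y\ \text{realizes the infimum in}\ Q_tf(x)\big\},
\]
with $D^-(x,t)$ the corresponding minimum. Two preliminary remarks. First, testing the functional at $y=x$ shows that any minimizer $y$ satisfies $\tfrac{\sfd^2(x,y)}{2t}\le f(x)-f(y)\le\Lip(f)\,\sfd(x,y)$, whence the a priori bound $D^+(x,t)\le 2t\,\Lip(f)$. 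Second, $Q_tf$ is an infimum of the functions $x\mapsto\tfrac{\sfd^2(x,y)}{2t}+f(y)$, which for fixed $t$ are equi-Lipschitz in $x$ on the compact space $\X$; hence $Q_tf$ is Lipschitz on $\X$, in particular continuous.

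Next I would analyse $t\mapsto Q_tf(x)$. It is nonincreasing, and by inserting a minimizer for one parameter into the functional at the other one obtains, for $0<s<t$,
\[
D^+(x,s)^2\Big(\tfrac1{2s}-\tfrac1{2t}\Big)\ \le\ Q_sf(x)-Q_tf(x)\ \le\ D^-(x,t)^2\Big(\tfrac1{2s}-\tfrac1{2t}\Big);
\]
adding the two minimality inequalities for minimizers at times $s<t$ also gives $D^+(x,s)\le D^-(x,t)$. Together with the a priori bound, the displayed estimate shows that $t\mapsto Q_tf(x)$ is locally Lipschitz on $(0,\infty)$, uniformly in $x$, hence differentiable for a.e.\ $t$. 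Dividing the estimate by $t-s$ and letting $s\uparrow t$ gives $-\ddt Q_tf(x)\le\tfrac{D^-(x,t)^2}{2t^2}$; the analogous inequality with $s\downarrow t$ gives $-\ddt Q_tf(x)\ge\tfrac{D^+(x,t)^2}{2t^2}$; since trivially $D^-(x,t)\le D^+(x,t)$, at every differentiability point
\[
-\ddt Q_tf(x)=\frac{D^+(x,t)^2}{2t^2}=\frac{D^-(x,t)^2}{2t^2}.
\]
Combined with $D^+(x,t)\le 2t\,\Lip(f)$ this bounds $|\ddt Q_tf(x)|$ by $2\Lip(f)^2$ uniformly in $x,t$, which together with the elementary estimate $0\le f(x)-Q_tf(x)\le 2t\,\Lip(f)^2$ near $t=0$ yields that $[0,\infty)\ni t\mapsto Q_tf\in C(\X)$ is Lipschitz for the sup norm.

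It remains to identify $\lip Q_tf(x)=\tfrac{D^+(x,t)}{t}$, and here the geodesic assumption enters. For the lower bound: take a minimizer $y$ with $\sfd(x,y)=D^+(x,t)$, run a geodesic from $x$ to $y$, let $x_h$ be the point on it at distance $h$ from $x$ (so $\sfd(x_h,y)=\sfd(x,y)-h$), and test $Q_tf(x_h)$ against the competitor $y$: this gives $Q_tf(x)-Q_tf(x_h)\ge\tfrac{h\,(2\sfd(x,y)-h)}{2t}$, hence $|D^-Q_tf|(x)\ge\tfrac{\sfd(x,y)}{t}$ and a fortiori $\lip Q_tf(x)\ge\tfrac{D^+(x,t)}{t}$. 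For the reverse inequality: for $z\neq x$ compare $Q_tf(x)$ and $Q_tf(z)$ by inserting a \emph{closest} minimizer of one point into the functional at the other, and use the triangle inequality to get
\[
\frac{|Q_tf(x)-Q_tf(z)|}{\sfd(x,z)}\le\frac{\sfd(x,z)+2\max\{D^-(x,t),D^-(z,t)\}}{2t};
\]
a stability argument — minimizers relative to $z_n\to x$ subconverge by compactness to a minimizer relative to $x$, using the continuity of $Q_tf$ — gives $\limsup_{z\to x}D^-(z,t)\le D^+(x,t)$, whence $\lip Q_tf(x)\le\tfrac{D^+(x,t)}{t}$. Plugging this together with the $t$-derivative formula into the left side of the claimed identity gives, for a.e.\ $t$,
\[
\ddt Q_tf(x)+\tfrac12\big(\lip Q_tf(x)\big)^2=-\frac{D^+(x,t)^2}{2t^2}+\frac12\,\frac{D^+(x,t)^2}{t^2}=0 .
\]

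The main obstacle is precisely the sharp identification $\lip Q_tf(x)=\tfrac{D^+(x,t)}{t}$: the lower bound genuinely requires a geodesic joining $x$ to a possibly distant minimizer (it would fail on a disconnected or merely length space), while the matching upper bound rests on the upper semicontinuity $\limsup_{z\to x}D^-(z,t)\le D^+(x,t)$, which uses compactness of $\X$ and the stability of minimizers of the Hopf--Lax functional. Everything else — monotonicity in $t$, the one-sided difference quotient estimates, and the a.e.\ differentiation identity — is elementary once the minimizers are under control.
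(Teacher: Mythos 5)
Your proof is correct and is essentially the standard argument from Ambrosio--Gigli--Savar\'e (\cite{AmbrosioGigliSavare11}, \cite{AmbrosioGigliSavare-compact}), which is exactly what the paper cites for this proposition without reproving it: one introduces the extremal distances $D^\pm(x,t)$ to the minimizers of the Hopf--Lax functional, derives the one-sided difference-quotient estimates in $t$ that squeeze $-\partial_t Q_tf(x)$ between $\tfrac{D^+(x,t)^2}{2t^2}$ and $\tfrac{D^-(x,t)^2}{2t^2}$, and identifies $\lip Q_tf(x)=D^+(x,t)/t$ using the geodesic hypothesis (for the lower bound along a geodesic to a farthest minimizer) and compactness-based stability of minimizers (for the upper bound via $\limsup_{z\to x}D^-(z,t)\le D^+(x,t)$). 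The argument is sound, including the a priori bound $D^+(x,t)\le 2t\,\Lip(f)$ that yields the uniform-in-$x$ Lipschitz continuity of $t\mapsto Q_tf$ in sup norm.
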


\section{The Schr\"odinger problem}\label{sec:3}

Let $(\X,\tau)$ be a Polish space, $\mu_0,\mu_1\in\prob \X$ and $\hR\in\prob{\X^2}$ be given measures. Recall that $\ggamma\in\prob{\X^2}$ is called  transport plan  for $\mu_0,\mu_1$ provided $\pi^0_*\ggamma=\mu_0$ and $\pi^1_*\ggamma=\mu_1$, where $\pi^0,\pi^1:\X^2\to \X$ are the canonical projections. We are interested in finding  a transport plan of the form
\[
\ggamma=f\otimes g\,\hR
\]
for certain Borel functions $f,g:\X\to[0,\infty)$, where $f\otimes g(x,y):=f(x)g(y)$. As we shall see in this short section, in great generality this problem can be solved in a unique way and the plan $\ggamma$ can be found as the minimum of
\[
\ggamma'\quad\mapsto\quad H(\ggamma'\,|\, \hR)
\]
among all transport plans from $\mu_0$ to $\mu_1$, where $H(\cdot\,|\,\cdot)$ is the Boltzmann-Shannon entropy defined as
\[
H(\sigma\,|\,\nu):=\left\{\begin{array}{ll}
\displaystyle{\int\rho\log(\rho)\,\d\nu}&\qquad\text{ if }\sigma=\rho\nu,\\
+\infty&\qquad\text{ if }\sigma\not\ll\nu.
\end{array}\right.
\]
For appropriate choice of the reference measure $\hR$ (which will also be our choice in the following), this minimization problem is called Schr\"odinger problem, we refer to \cite{Leonard14} for a survey on the topic.

The following proposition collects the basic properties of the minimizer of the Schr\"odinger problem; points $(i)$ and $(ii)$ of the statement are already  known in the literature on the subject (see in particular \cite{Leonard01}, \cite{BLN94} and \cite{RuscThom93}), but for completeness we give the full proofs.

\begin{Proposition}\label{pro:2}
Let $(\X,\tau,\mm)$ be a Polish space equipped with a probability measure and $\hR \in \mathscr{P}(\X^2)$ be such that 
\[
\mm\otimes\mm\ll \hR\ll\mm\otimes\mm\qquad\text{ and }\qquad H(\mm \otimes \mm \,|\, \hR)<\infty. 
\]
Let $\mu_0=\rho_0\mm$ and $\mu_1=\rho_1\mm$ be Borel probability measures with bounded densities.

Then:
\begin{itemize}
\item[i)] There exists a unique minimizer $\ggamma$ of $H(\cdot\,|\,\hR)$ among all transport plans from $\mu_0$ to $\mu_1$.
\item[ii)] $\ggamma=f\otimes g\hR$ for appropriate Borel functions $f,g:\X\to[0,\infty)$ which are  unique up to the trivial transformation $(f,g)\to (cf,g/c)$ for some $c>0$.
\item[iii)] Assume in addition that
\begin{equation}
\label{eq:k8}
c\mm\otimes\mm\leq \hR\leq C\mm\otimes\mm
\end{equation}
for suitable $c,C>0$. Then $f,g\in L^\infty(\X,\mm)$ and $\ggamma$ is the only transport plan which can be written as $f'\otimes g' \hR$ for Borel $f',g':\X\to[0,\infty)$.
\end{itemize}
\end{Proposition}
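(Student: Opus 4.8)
The plan is to obtain the minimizer by the direct method, then derive its product structure from the Euler--Lagrange conditions, and finally use the two-sided bound on $\hR$ to upgrade integrability and uniqueness of the representation. For existence and uniqueness in $(i)$: the set of transport plans from $\mu_0$ to $\mu_1$ is convex, and since $\X$ is Polish and $\mu_0,\mu_1$ are fixed, it is tight, hence weakly compact by Prokhorov. The functional $\ggamma'\mapsto H(\ggamma'\,|\,\hR)$ is convex and weakly lower semicontinuous; it is not identically $+\infty$ on this set because $H(\mu_0\otimes\mu_1\,|\,\hR)<\infty$, which itself follows from $H(\mm\otimes\mm\,|\,\hR)<\infty$ together with the boundedness of $\rho_0,\rho_1$ (write the density of $\mu_0\otimes\mu_1$ w.r.t.\ $\hR$ as $(\rho_0\otimes\rho_1)\frac{\d\mm\otimes\mm}{\d\hR}$ and estimate $x\log x$). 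Hence a minimizer $\ggamma$ exists, and strict convexity of $x\mapsto x\log x$ gives uniqueness.

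For $(ii)$, the point is that the Euler--Lagrange equation for this constrained minimization forces $\log\frac{\d\ggamma}{\d\hR}$ to split as a sum of a function of $x$ and a function of $y$. I would make this rigorous by a perturbation argument: given bounded Borel $u,v$ with $\int u\,\d\mu_0=\int v\,\d\mu_1=0$, the competitor $\ggamma_s$ with density $\frac{\d\ggamma}{\d\hR}(1+s(u\oplus v))$ (for $|s|$ small) is again an admissible plan, and differentiating $H(\ggamma_s\,|\,\hR)$ at $s=0$ yields $\int(u(x)+v(y))\log\frac{\d\ggamma}{\d\hR}\,\d\hR=0$ for all such $u,v$. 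Standard duality (the annihilator of the mean-zero functions) then gives $\log\frac{\d\ggamma}{\d\hR}(x,y)=\varphi(x)+\psi(y)$ $\hR$-a.e., so $\ggamma=f\otimes g\,\hR$ with $f=e^\varphi$, $g=e^\psi$; the condition that this be a plan from $\mu_0$ to $\mu_1$ is exactly the Schr\"odinger system. Uniqueness up to $(f,g)\mapsto(cf,g/c)$: if $f\otimes g\,\hR=f'\otimes g'\,\hR$ as measures with full marginals, then $\frac{f(x)}{f'(x)}=\frac{g'(y)}{g(y)}$ on a set of full $\hR$-measure, hence (using $\mm\otimes\mm\ll\hR$ and Fubini) constant $\mm$-a.e. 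Some care is needed because a priori $\varphi,\psi$ need not be integrable or bounded; one works with truncations and uses that $\frac{\d\ggamma}{\d\hR}>0$ $\hR$-a.e.\ (else one could strictly lower the entropy).

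For $(iii)$, assume $c\,\mm\otimes\mm\le\hR\le C\,\mm\otimes\mm$. From $\rho_0=f\cdot\heatl$-type relation $\rho_0(x)=f(x)\int g(y)\,\d\hR_x(y)$ (the disintegration of $\hR$), the two-sided bound gives $c\int g\,\d\mm\le \rho_0(x)/f(x)\le C\int g\,\d\mm$ wherever $f(x)>0$; since $\rho_0\in L^\infty$ and $\int g\,\d\mm>0$ (because $\ggamma$ has mass one), this forces $f\in L^\infty(\mm)$, and symmetrically $g\in L^\infty(\mm)$. For the final uniqueness statement, suppose $f'\otimes g'\,\hR$ is \emph{any} transport plan from $\mu_0$ to $\mu_1$ of product form. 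One shows it has finite entropy (using $f',g'$ finite a.e.\ and the bounds on $\hR$, $\rho_0$, $\rho_1$) and then that any product-form plan is automatically a critical point of $H(\cdot\,|\,\hR)$; by convexity the critical point is the unique minimizer, so $f'\otimes g'\,\hR=\ggamma$. Alternatively, and more directly, given two product-form plans one compares them via the same ratio argument as in $(ii)$, now legitimate because all quantities are in $L^\infty$.

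The main obstacle I anticipate is the rigorous justification of the Euler--Lagrange step in $(ii)$ under the weak hypotheses: one must produce enough admissible variations $u\oplus v$ inside the class of transport plans while controlling the sign and integrability of $\log\frac{\d\ggamma}{\d\hR}$ near $0$ and $\infty$, and then pass from ``$\int(u\oplus v)\log\frac{\d\ggamma}{\d\hR}\,\d\hR=0$ for bounded mean-zero $u,v$'' to the pointwise additive decomposition. This is where the finiteness assumption $H(\mm\otimes\mm\,|\,\hR)<\infty$ and the absolute-continuity sandwiching $\mm\otimes\mm\ll\hR\ll\mm\otimes\mm$ do the real work, and it is the part where I would be most careful about measurability and truncation arguments; everything else is convexity, compactness, and bookkeeping.
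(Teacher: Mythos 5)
Your overall architecture (direct method for $(i)$, Euler--Lagrange and product decomposition for $(ii)$, disintegration plus the two-sided bound on $\hR$ for $(iii)$) is the same as the paper's, and $(i)$ and the $L^\infty$ part of $(iii)$ are fine. However, the key step in $(ii)$ contains a genuine gap: the perturbation you propose, $\ggamma_s$ with density $p\,(1+s(u\oplus v))$ where $u,v$ are merely mean-zero against $\mu_0,\mu_1$, does \emph{not} preserve the marginal constraints. Computing the first marginal gives $\pi^0_*\ggamma_s=\mu_0+s\big(u(x)+\int v\,\d\ggamma_x\big)\mu_0(\d x)$, where $\ggamma_x$ is the disintegration of $\ggamma$ over the first coordinate; this equals $\mu_0$ only if $u(x)=-\int v\,\d\ggamma_x$ for $\mu_0$-a.e.\ $x$, which is far stronger than $\int u\,\d\mu_0=0$. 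So $\ggamma_s$ is not an admissible competitor, and the Euler relation you extract from it does not follow from minimality. The correct variation class, used in the paper, is the set of bounded $h$ on $\X^2$ with $\pi^0_*(h\ggamma)=\pi^1_*(h\ggamma)=0$; with these, $(1+\varepsilon h)\ggamma$ remains a transport plan and one gets $\int h\,p\log p\,\d\hR=0$.

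Even once the correct Euler condition is in hand, the passage to the pointwise splitting $\log p=\varphi\oplus\psi$ is not ``standard duality'' but the hard core of the proof: one must show that the pre-annihilator (in $L^1(\ggamma)$) of the class $W=\{h\in L^\infty(\ggamma):\pi^0_*(h\ggamma)=\pi^1_*(h\ggamma)=0\}$ is contained in the space $V$ of functions of the form $\varphi\oplus\psi$. The paper does this by proving that $V$ is closed in $L^1(\ggamma)$ (via the functional equation $f(x,y)+f(x',y')=f(x,y')+f(x',y)$, which crucially uses that $p>0$ on $P_0\times P_1$, established first), that $V^\perp\subset W$, and then applying Hahn--Banach. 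Your sketch acknowledges uncertainty here but does not supply the argument; without it, the decomposition is unproved. A minor further point: you claim $p>0$ $\hR$-a.e., but in general $p$ vanishes outside $\supp\mu_0\times\supp\mu_1$; the correct statement (and what the paper proves) is $p>0$ $\mm\otimes\mm$-a.e.\ on $P_0\times P_1$ with $P_i=\{\rho_i>0\}$.
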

\begin{proof}\\
\noindent{\bf{(i)}} Existence follows by the direct method of calculus of variations: the class of transport plans is  not empty and narrowly compact and $H(\,\cdot\,|\,\hR)$ is narrowly lower semicontinuous (see e.g.\ \cite{AmbrosioGigliSavare08}). Since $H(\,\cdot\,|\,\hR)$ is strictly convex, uniqueness will follow if we show that there is a transport plan with finite entropy. We consider  $\mu_0\otimes\mu_1$ and notice that  by direct computation we have
\[
\begin{split}
H(\mu_0\otimes\mu_1 \,|\, \hR) & =  H(\mu_0 \,|\,\mm) + H(\mu_1 \,|\, \mm) + \int \log\bigg(\frac{\d(\mm \otimes \mm)}{\d \hR}\bigg)\rho_0\otimes \rho_1\d (\mm \otimes \mm)  \\ 
& \leq H(\mu_0 \,|\, \mm) + H(\mu_1 \,|\, \mm) + \|\rho_0\|_{L^\infty}\|\rho_1\|_{L^\infty}\int \frac{\d(\mm \otimes \mm)}{\d \hR}\Big|\log\bigg(\frac{\d(\mm \otimes \mm)}{\d \hR}\bigg)\Big|\d \hR
\end{split}
\]
and using the trivial fact that $ z|\log(z)|=z\log(z)+2 (z\log(z))^-\leq z\log(z)+2e^{-1}$ valid for any $z\geq 0$ we conclude that
\[
H(\mu_0\otimes\mu_1 \,|\, \hR)\leq H(\mu_0 \,|\, \mm) + H(\mu_1 \,|\, \mm) +\|\rho_0\|_{L^\infty}\|\rho_1\|_{L^\infty}\big(H(\mm\otimes\mm\,|\, \hR)+2e^{-1} \big)
\] 
and our assumptions grant that the right hand side is finite. 

\medskip

\noindent{\bf{(ii)}} The uniqueness part of the claim is trivial, so we concentrate on existence. Finiteness of the entropy in particular grants that $\ggamma\ll \hR$. Put $p:=\frac{\d\sggamma}{\d \hR}$ and let $P_0:=\{\rho_0>0\}$, $P_1:=\{\rho_1>0\}$. We start claiming that 
\begin{equation}
\label{eq:denspos}
p>0\quad \mm\otimes\mm\aeon P_0\times P_1.
\end{equation}
Notice that since $\mm\otimes\mm$ and $\hR$ are mutually absolutely continuous, the claim makes sense and arguing by contradiction we shall assume that $\hR(Z)>0$, where  $Z:=(P_0\times P_1)\cap \{p=0\}$.

Let $s:=\frac{\d(\mu_0\otimes\mu_1)}{\d \hR }$ and  for $\lambda\in(0,1)$ let us define $\Phi(\lambda):\X^2\to\R$ by
\[
\Phi(\lambda) := \frac{u(p + \lambda(s-p)) - u(p)}{\lambda},\qquad\text{where}\quad u(z):=z\log(z).
\]
The convexity of $u$ grants that $\Phi(\lambda)\leq u(s) - u(p)\in L^1(\X^2,\hR)$ (recall that we proved that $H(\mu_0\otimes\mu_1|\hR)<\infty$) and   that  $\Phi(\lambda)$ is monotone decreasing as $\lambda \downarrow 0$. Moreover, on  $Z$ we have $\Phi(\lambda) \downarrow -\infty$ $\hR$-a.e.\ as $\lambda \downarrow 0$, thus  the monotone convergence theorem ensures that
\[
\lim_{\lambda \downarrow 0} \frac{H(\ggamma + \lambda(\mu_0\otimes\mu_1 - \ggamma) \,|\, \hR) - H(\ggamma \,|\, \hR)}{\lambda} = -\infty.
\]
Since $\ggamma + \lambda(\mu_0\otimes\mu_1 - \ggamma)$ is a transport plan from $\mu_0$ to $\mu_1$ for $\lambda\in(0,1)$, this is in contradiction with the minimality of $\ggamma$ which grants that the left hand side is non-negative, hence $Z$ is $\hR$-negligible, as desired.

Let us now pick $h\in L^\infty(\X^2,\ggamma)$  such that $\pi^0_*( h\ggamma)=\pi^1_*(h\ggamma)=0$ and  $\eps\in(0, \|h\|^{-1}_{L^\infty(\sggamma)})$. Then $(1+\eps h)\ggamma$ is a transport plan from $\mu_0$ to $\mu_1$ and noticing that $hp$ is well defined $\hR$-a.e.\  we have
\[
\begin{split}
\|u((1+\eps h)p)\|_{L^1(\hR)}& =  \int| (1 + \varepsilon h)p\log((1 + \varepsilon h)p)|\,\d \hR  \\ 
& \leq  \int (1 + \varepsilon h)p\,|\log p|\,\d \hR + \int (1 + \varepsilon h)\,|\log(1 + \varepsilon h)|\,\d \ggamma \\ 
& \leq  \|1 + \varepsilon h\|_{L^{\infty}(\sggamma)} \|p\log p\|_{L^1(\hR)} + \|(1 + \varepsilon h)\log(1 + \varepsilon h)\|_{L^{\infty}(\sggamma)} ,
\end{split}
\]
so that $u((1+\eps h)p)\in L^1(\hR)$. Then again by the monotone convergence theorem we get
\[
\lim_{\eps\downarrow 0}\frac{H((1 + \varepsilon h)\ggamma \,|\, \hR)-H(\ggamma \,|\, \hR)}{\eps}=\int \lim_{{\eps\downarrow 0}}\frac{u((1+\eps h)p)-u(p)}\eps\,\d \hR=\int hp(\log p+1)\,\d \hR.
\] 
By the minimality of $\ggamma$ we know that the left hand side in this last identity is non-negative, thus after running the same computation with $-h$ in place of $h$ and noticing that the choice of $h$ grants that $\int hp\,\d \hR=\int h\,\d\ggamma=0$ we obtain
\begin{equation}
\label{eq:kyoto1}
\int hp\log(p)\,\d \hR=0\qquad\forall h\in L^\infty(\ggamma)\ \text{such that}\ \pi^0_*( h\ggamma)=\pi^1_*(h\ggamma)=0.
\end{equation}
The rest of the argument is better understood by introducing the spaces $V,{}^\perp W\subset L^1(\ggamma)$ and $V^\perp,W\subset L^\infty(\ggamma)$ as follows
\[
\begin{split}
V&:=\big\{f\in L^1(\ggamma)\ :\ f=\varphi\oplus \psi\text{ for some }\varphi\in L^0(\mm\restr{P_0}),\ \psi\in L^0(\mm\restr{P_1})\big\},\\
W&:=\big\{h\in L^\infty(\ggamma)\ :\ \pi^0_*( h\ggamma)=\pi^1_*(h\ggamma)=0\big\},\\
V^\perp&:=\big\{h\in L^\infty(\ggamma)\ :\ \int fh\,\d\ggamma=0\ \forall f\in V\big\},\\
{}^\perp W&:=\big\{f\in L^1(\ggamma):\int fh\,\d\ggamma=0\ \forall h\in W\big\},
\end{split}
\]
where here and in the following the function $\varphi\oplus\psi$ is defined as $\varphi\oplus\psi(x,y):=\varphi(x)+\psi(y)$. Notice that the Euler equation \eqref{eq:kyoto1} reads as $\log(p)\in {}^\perp W$ and our thesis as $\log(p)\in V$; hence to conclude it is sufficient to show that ${}^\perp W\subset V$.

\noindent{\underline{Claim 1}: $V$ is a closed subspace of $L^1(\ggamma)$}.

We start claiming that  $f\in V$ if and only if $f\in L^1(\ggamma)$ and
\begin{equation}
\label{eq:kyoto2}
f(x,y)+f(x',y')=f(x,y')+f(x',y)\quad \mm\otimes\mm\otimes\mm\otimes\mm\ae\ (x,x',y,y')\in P_0^2\times P_1^2.
\end{equation}
Indeed the `only if' follows trivially from $\ggamma\ll\mm\otimes\mm$ and the definition of $V$. For the `if' we apply Fubini's theorem to get the existence of $x'\in P_0$ and $y'\in P_1$ such that
\[
f(x,y)+f(x',y')=f(x,y')+f(x',y)\quad \mm\otimes\mm\ae\ x,y\in P_0\times P_1.
\]
Thus  $f=f(\cdot,y')\oplus(f(x',\cdot)-f(x',y'))$, as desired. 

Now notice that since that \eqref{eq:denspos} grants that   $(\mm\times\mm)\restr{P_0\times P_1}\ll\ggamma$, we see that the condition \eqref{eq:kyoto2} is closed w.r.t.\ $L^1(\ggamma)$-convergence.

\noindent{\underline{Claim 2}: $V^\perp\subset W$}.

Let $h\in L^\infty(\ggamma)\setminus W$, so that either the first or second marginal of $h\ggamma$ is non-zero. Say the first. Thus since $\pi^0_*\ggamma=\mu_0$ we have $\pi^0_*(h\ggamma)=f_0\mu_0$ for some $f_0\in L^\infty(\mu_0)\setminus\{0\}$. Then the function $f:=f_0\oplus 0=f_0\circ\pi^0$ belongs to $V$ and we have
\[
\int hf\,\d\ggamma=\int f_0\circ\pi^0\,\d(h\ggamma)=\int f_0\,\d\pi^0_*(h\ggamma)=\int f_0^2\,\d\mu_0>0,
\]
so that $h\notin V^\perp$.

\noindent{\underline{Claim 3}: ${}^\perp W\subset V$}.

Let $f\in L^1(\ggamma)\setminus V$, use the fact that  $V$ is closed and  the Hahn-Banach theorem to find $h\in L^\infty(\ggamma)\sim L^1(\ggamma)^*$ such that $\int fh\,\d\ggamma\neq 0$ and $\int \tilde fh\,\d\ggamma=0$ for every $\tilde f\in V$. Thus $h\in V^\perp$ and hence by the previous step $h\in W$. The fact that $\int fh\,\d\ggamma\neq 0$ shows that $f\notin {}^\perp W$, as desired.

\medskip

\noindent{\bf (iii)} Let $\sigma$ be a transport plan from $\mu_0$ to $\mu_1$ such that  $\sigma = f' \otimes g' \hR$ for suitable non-negative Borel functions $f',g'$. We claim that in this case it holds $f',g'\in L^\infty(\mm)$, leading in particular to the claim in the statement about $\ggamma$.

By disintegrating $\hR$ w.r.t.\ $\pi^0$, from $\pi^0_*(f'\otimes g'\hR)=\rho_0\mm$ we get that
\begin{equation}\label{eq:44}
f'(x)\int g'(y)\,\d \hR_x(y) = \rho_0(x) < +\infty, \quad \textrm{for }\mm\ae\ x 
\end{equation}
whence $g' \in L^1(\hR_x)$ for $\mm$-a.e.\ $x \in P_0$. Since from \eqref{eq:k8} we have that $\hR_x\geq c\mm$ for $\mm$-a.e.\ $x$, we see that $g'\in L^1(\mm)$ with 
\[
c \|g'\|_{L^1(\mm)}\leq \int g'(y)\,\d \hR_x( y)\quad \textrm{for }\mm\ae\ x 
\]
and thus  \eqref{eq:44} yields
\[
f' \leq \frac{\|\rho_0\|_{L^{\infty}(\mm)}}{c\|g'\|_{L^1(\mm)}}, \quad \mm\ae,
\]
which is the desired $L^\infty$ bound on  $f'$. By interchanging the roles of $f'$ and $g'$, the same conclusion follows for $g'$.

For the uniqueness of $\ggamma$, put $\varphi:=\log f'$, $\psi:=\log g'$ and notice that, by what we have just proved, they are bounded from above. Therefore from
\[
0\leq H(\sigma\,|\,\hR)=\int \varphi\oplus\psi \,\d\sigma
\]
we infer that
\begin{equation}
\label{eq:kyoto3}
\varphi\circ\pi^0,\psi\circ\pi^1\in L^1(\sigma).
\end{equation}
Putting for brevity $p':=f'\otimes g'$ and  arguing as before to justify the passage to the limit inside the integral we  get
\[
\begin{split}
\frac{\d}{\d\lambda} H\big((1-\lambda)\sigma+\lambda\ggamma\,|\,\hR\big)\restr{\lambda=0^+}&=\int(p-p')\log(p')\,\d \hR \\
&=\int\varphi\oplus\psi\,\d(\ggamma-\sigma)\\
(\text{by }\eqref{eq:kyoto3})\qquad&=\int\varphi\,\d\pi^0_*(\ggamma-\sigma)+\int\psi\,\d\pi^1_*(\ggamma-\sigma)\\
(\text{because }\sigma\text{ and }\ggamma\text{ have the same marginals})\qquad&=0.
\end{split}
\]
This equality and the convexity of $H(\cdot\,|\,\hR)$ yield $H(\sigma\,|\,\hR)\leq H(\ggamma\,|\,\hR)$ and being $\ggamma$  the unique minimum of $H(\cdot\,|\,\hR)$  among transport plans from $\mu_0$ to $\mu_1$, we conclude that  $\sigma=\ggamma$.
\end{proof}

The above result is valid in the very general framework of Polish spaces. We shall now restate it in the form we shall need in the context  of $\RCD$ spaces and show that  additional regularity assumptions on $\rho_0,\rho_1$ reflect into the regularity of $f,g$. 

Recall that on $\RCD$ spaces there is a well defined heat kernel $\hr_\eps[x](y)$ (see \eqref{eq:hk} and \eqref{eq:rapprform}). The choice of working with $\hr_{\eps/2}$ is convenient for the computations we will do later on.
\begin{Theorem}\label{thm:5}
Let $({\X},\sfd,\mm)$ be a compact $\RCD^*(K,N)$ space with $K \in \mathbb{R}$, $N \in [1,\infty)$ and $\mm\in\prob \X$. For $\eps>0$ define $\hR^{\eps/2}\in\prob{\X^2}$ as
\[
\d \hR^{\eps/2}(x,y):= r_{\varepsilon/2}[x](y)\,\d\mm(x)\,\d\mm(y).
\]
Also, let  $\mu_0,\mu_1\in\prob \X$ be Borel probability measures with bounded densities.

Then there exist and are uniquely $\mm$-a.e.\ determined (up to multiplicative constants) two Borel non-negative functions $f^{\varepsilon}, g^{\varepsilon} : \X \to [0,\infty)$ such that
$ f^{\varepsilon}\otimes g^{\varepsilon}\hR^{\eps/2}$ is a transport plan from $\mu_0$ to $\mu_1$.  In addition, $f^{\varepsilon},g^{\varepsilon}$ belong to $L^{\infty}(\mm)$.

Moreover, if the densities of $\mu_0,\mu_1$ belong to $ \testipp{\X}$, then $f^\eps,g^\eps\in \testipp \X$ as well.
\end{Theorem}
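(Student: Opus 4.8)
My plan is to derive the first two assertions directly from Proposition \ref{pro:2} applied with $\hR:=\hR^{\eps/2}$, so the only thing to check is that this reference measure satisfies the hypotheses there. On a compact $\RCD^*(K,N)$ space with $N<\infty$ the heat kernel $(x,y)\mapsto\hr_{\eps/2}[x](y)$ is continuous (by the regularity results recalled around \eqref{eq:gaussest}) and strictly positive (see \eqref{eq:hk}); since $\X\times\X$ is compact it therefore attains a finite maximum and a strictly positive minimum, so there are constants $0<c_\eps\le C_\eps<\infty$ with
\[
c_\eps\,\mm\otimes\mm\le\hR^{\eps/2}\le C_\eps\,\mm\otimes\mm .
\]
This gives at once $\mm\otimes\mm\ll\hR^{\eps/2}\ll\mm\otimes\mm$, the bound \eqref{eq:k8}, and $H(\mm\otimes\mm\,|\,\hR^{\eps/2})=\int\log(1/\hr_{\eps/2})\,\d(\mm\otimes\mm)\le\log(1/c_\eps)<\infty$. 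Since $\mu_0,\mu_1$ have bounded densities and $\mm$ is a probability measure, Proposition \ref{pro:2}(i)--(iii) then provides non-negative Borel $f^\eps,g^\eps\in L^\infty(\mm)$, unique $\mm$-a.e.\ up to $(f^\eps,g^\eps)\mapsto(c f^\eps,g^\eps/c)$, such that $f^\eps\otimes g^\eps\hR^{\eps/2}$ is the only product-form transport plan from $\mu_0$ to $\mu_1$.

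For the regularity statement, assume $\rho_0,\rho_1\in\testipp\X$. First I would translate the marginal constraints into the Schr\"odinger system. Disintegrating $\hR^{\eps/2}$ along the first projection produces the measures $(\hR^{\eps/2})_x=\hr_{\eps/2}[x]\,\mm$, so by \eqref{eq:rapprform} the condition $\pi^0_*(f^\eps\otimes g^\eps\hR^{\eps/2})=\rho_0\mm$ becomes $f^\eps\,\h_{\eps/2}g^\eps=\rho_0$ $\mm$-a.e., and symmetrically (using $\hr_{\eps/2}[x](y)=\hr_{\eps/2}[y](x)$) $g^\eps\,\h_{\eps/2}f^\eps=\rho_1$ $\mm$-a.e. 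Since $f^\eps\otimes g^\eps\hR^{\eps/2}$ is a probability measure and the heat kernel is strictly positive, $f^\eps,g^\eps\ge0$ have strictly positive integral, and as $\mm$ is finite they also belong to $L^1(\X)$.

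Then I would bootstrap using the smoothing property \eqref{eq:regflow}: from $g^\eps\in L^1(\X)$, $g^\eps\ge0$, $\int g^\eps\,\d\mm>0$ we get $\h_{\eps/2}g^\eps\in\testipp\X$, in particular $\mm$-a.e.\ bounded below by a positive constant and, by the maximum principle \eqref{eq:maxprinc}, above by $\|g^\eps\|_{L^\infty}$. Hence $z\mapsto1/z$ is $C^\infty$ on the range of $\h_{\eps/2}g^\eps$, so \eqref{eq:compos} gives $1/\h_{\eps/2}g^\eps\in\testi\X$, and being bounded below by a positive constant it lies in $\testipp\X$. As $\testi\X$ is an algebra and $\rho_0\in\testipp\X$, the identity $f^\eps=\rho_0\cdot(1/\h_{\eps/2}g^\eps)$ shows $f^\eps\in\testi\X$, and since $f^\eps$ is bounded below by a positive constant (as $\rho_0$ is and $\h_{\eps/2}g^\eps\le\|g^\eps\|_{L^\infty}$) we conclude $f^\eps\in\testipp\X$. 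Running the symmetric argument with $f^\eps$ and $\rho_1$ in place of $g^\eps$ and $\rho_0$ gives $g^\eps\in\testipp\X$, finishing the proof.

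I do not expect a deep obstacle: the statement is essentially a bookkeeping consequence of Proposition \ref{pro:2}, of the heat-kernel bounds \eqref{eq:hk} and \eqref{eq:gaussest}, of the regularization \eqref{eq:regflow}, and of the algebra/composition structure of $\testi\X$. The two places that genuinely need care are (a) the two-sided comparison $c_\eps\,\mm\otimes\mm\le\hR^{\eps/2}\le C_\eps\,\mm\otimes\mm$, which is where compactness of $\X$ is really used, and (b) the legitimacy of inverting $\h_{\eps/2}g^\eps$, i.e.\ that $z\mapsto1/z$ is composed with a function staying away from $0$ --- guaranteed precisely by strict positivity of the heat kernel together with $\int g^\eps\,\d\mm>0$.
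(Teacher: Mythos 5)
Your proof is correct and follows essentially the same route as the paper's: reduce the first two assertions to Proposition~\ref{pro:2} by establishing a two-sided comparison $c_\eps\,\mm\otimes\mm\le\hR^{\eps/2}\le C_\eps\,\mm\otimes\mm$, then translate the marginal constraints into the system $f^\eps\,\h_{\eps/2}g^\eps=\rho_0$, $g^\eps\,\h_{\eps/2}f^\eps=\rho_1$ and bootstrap regularity via \eqref{eq:regflow}, \eqref{eq:compos}, and the algebra structure of $\testi\X$. The only cosmetic difference is that you derive the two-sided bound on $\hR^{\eps/2}$ from continuity and strict positivity of the heat kernel on the compact $\X\times\X$, whereas the paper invokes the Gaussian estimates \eqref{eq:gaussest}; both are valid and both use compactness in the same essential way.
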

\begin{proof} The first part of the statement follows directly from  Proposition \ref{pro:2}  and the fact that the Gaussian estimates \eqref{eq:gaussest} on the heat kernel grant that there are  constants $0 < c_{\varepsilon} \leq C_{\varepsilon} < +\infty$ such that
\[
c_{\varepsilon} \mm \otimes \mm \leq \hR^{\varepsilon} \leq C_{\varepsilon} \mm \otimes \mm.
\]
For the second part, notice that thanks to the representation formula \eqref{eq:rapprform}, the fact that $\pi^0_*(f^\eps\otimes g^\eps \hR^{\eps/2})=\rho_0\mm$ reads as
\[
f^\eps \h_{\eps/2}(g^\eps)=\rho_0.
\]
Now notice that by \eqref{eq:regflow} we have  $\h_{\eps/2}(g^\eps)\in \testipp \X$, and thus from \eqref{eq:compos} applied with $\varphi(z):=z^{-1}$ we deduce that  $\frac1{\h_{\eps/2}(g^\eps)}\in\testipp \X$. Since $\testi \X$ is an algebra we conclude that $f^\eps=\frac{\rho_0}{\h_\eps(g^\eps)}\in\testipp \X$. The same applies to $g^\eps$.
\end{proof}

\section{Old estimates in a new setting}\label{sec:4}

Aim of this part is to adapt two results already known in the Riemannian framework to the context of (compact) $\RCD$ spaces:  Hamilton's gradient estimate and Li-Yau inequality. As we learnt while already working on this manuscript, the former has already been proved on proper  $\RCD$ spaces by Jiang-Zhang in \cite{JiangZhang16}; since we have the additional compactness assumption, the proof simplifies a bit and for completeness we present it. In this direction, we also prove a bound which seems new in the non-smooth context, namely a uniform bound on $|\nabla\log\h_tu|$ in the special case  $|\nabla\log u|\in L^\infty$, see Proposition \ref{pro:kyoto}. On the other hand, to the best of our knowledge the Li-Yau inequality is only known on $\RCD^*(0,N)$ spaces from \cite{GarofaloMondino14} and \cite{Jiang15}. Here we generalize such result to negative Ricci bounds in the case of compact spaces: the bound that we obtain is quite rough, but sufficient for our purposes.

\subsection{Comparison principles}

The proofs of Hamilton's gradient estimate and of the Li-Yau inequality are based on the following two comparison principles, valid in general infinitesimally Hilbertian spaces $(\Y,\sfd_\Y,\mm_\Y)$.

 To formulate the result we need to introduce the dual of $W^{1,2}(\Y)$, which we shall denote $W^{-1,2}(\Y)$. As usual, the fact that $W^{1,2}(\Y)$ embeds in $L^2(\Y)$ with dense image allows to see $L^2(\Y)$ as a dense subset of $W^{-1,2}(\Y)$, where $f\in L^2(\Y)$ is identified with the mapping $W^{1,2}(\Y)\ni g\mapsto \int fg\,\d\mm_\Y$.
 
Notice also that even in this generality, a regularization via the heat flow shows that $D(\Delta)$ is dense in $W^{1,2}(\Y)$ and, with the use of the maximum principle \eqref{eq:maxprinc}, that non-negative functions in $D(\Delta)$ are $W^{1,2}$-dense in the space of non-negative functions in $W^{1,2}$.

\begin{Proposition}\label{pro:3}
Let $(\Y,\sfd_\Y,\mm_\Y)$ be an infinitesimally Hilbertian space. Then the following two comparison principles hold:
\begin{itemize}
\item[(i)]  let $(F_t) , (G_t)  \in   AC_{loc}([0,\infty),L^2(\Y))$ be respectively a weak super- and weak sub- solution of the heat equation, i.e.\ such that for all $h \in D(\Delta)$ non-negative and a.e.\ $t > 0$ it holds
\[
\frac{\d}{\d t}\int hF_t\d\mm_\Y \geq \int \Delta h F_t\d\mm_\Y, \qquad \qquad \frac{\d}{\d t}\int hG_t\d\mm_\Y \leq \int \Delta h G_t\d\mm_\Y.
\]
Assume that $F_0 \geq G_0$ $\mm$-a.e. Then $F_t \geq G_t$ $\mm$-a.e.\ for every $t > 0$.
\item[(ii)] Let  $a_0,a_1,a_2 \in \R$ and  $(v_t)\in  L^1_{loc}([0,\infty),W^{1,2}(\Y))$ with $v_t\in D(\Delta)$ for a.e.\ $t$ and $\|\Delta v_t\|_{L^\infty}\in L^1_{loc}([0,\infty))$ and let $(F_t),(G_t)\in L^\infty_{loc}([0,\infty),L^{\infty}(\Y))\cap L^\infty_{loc}([0,\infty),W^{1,2}(\Y))\cap AC_{loc}([0,\infty),W^{-1,2}(\Y))$ be respectively a weak super- and weak sub- solution of 
\begin{equation}\label{eq:16}
\frac{\d}{\d t} u_t = \Delta u_t + a_0u_t^2 + a_1u_t + \langle \nabla u_t,\nabla v_t \rangle + a_2
\end{equation}
 in the following sense: for all $h \in D(\Delta) $ non-negative  and a.e.\ $t > 0$ it holds
\begin{eqnarray}
&& \frac{\d}{\d t}\int h F_t\d\mm_\Y \geq \int  \Delta h F_t\d\mm_\Y + \int  h\Big(a_0F_t^2 + a_1F_t + \langle \nabla F_t,\nabla v_t \rangle + a_2\Big)\d\mm_\Y, \nonumber \\
&& \frac{\d}{\d t}\int  h G_t\d\mm_\Y \leq \int  \Delta h G_t\d\mm_\Y + \int  h\Big(a_0G_t^2 + a_1G_t + \langle \nabla G_t,\nabla v_t \rangle + a_2\Big)\d\mm_\Y. \nonumber
\end{eqnarray}
Assume that $F_0 \geq G_0$ $\mm_\Y$-a.e.. Then $F_t \geq G_t$ $\mm_\Y$-a.e.\ for every $t > 0$.
\end{itemize}
\end{Proposition}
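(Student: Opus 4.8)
The plan is to prove both comparison principles by studying the difference $w_t := G_t - F_t$ and showing that $w_0 \le 0$ forces $w_t \le 0$ for all $t > 0$. The two parts call for slightly different arguments: in (i) the curves $F,G$ are only assumed to be in $AC_{loc}([0,\infty),L^2(\Y))$, with no $W^{1,2}(\Y)$-regularity, which rules out a direct energy estimate and makes a duality argument against the backward heat flow the natural route; in (ii) the available $W^{1,2}(\Y)$-regularity allows a classical Gr\"onwall estimate for $\|w_t^+\|_{L^2}^2$.

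For part (i), subtracting the two defining inequalities shows that $w\in AC_{loc}([0,\infty),L^2(\Y))$ is a weak subsolution of the heat equation, i.e.\ $\langle\dot w_t,h\rangle_{L^2}\le\int\Delta h\,w_t\,\d\mm_\Y$ for every non-negative $h\in D(\Delta)$ and a.e.\ $t$, with $w_0\le 0$. Fix $T>0$ and a non-negative $h_0\in L^2(\Y)$; since the Cheeger energy is a Dirichlet form, $\h_s h_0$ lies in $D(\Delta)$ for $s>0$, is non-negative, and depends smoothly on $s$ in $L^2(\Y)$. I would then consider $\psi(t):=\langle \h_{T-t} h_0,w_t\rangle_{L^2}$ on $[0,T]$: differentiating by the product rule for $L^2(\Y)$-valued curves produces a term $-\langle\Delta \h_{T-t} h_0,w_t\rangle_{L^2}$ which cancels against the bound $\langle\dot w_t,\h_{T-t} h_0\rangle_{L^2}\le\int\Delta \h_{T-t} h_0\,w_t\,\d\mm_\Y$ coming from the subsolution property tested against the legitimate function $\h_{T-t} h_0$, so that $\psi'\le 0$ a.e.\ on $(0,T)$. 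As $\psi$ is continuous on $[0,T]$ it is non-increasing, whence
\[
\langle h_0,w_T\rangle_{L^2}=\psi(T)\le\psi(0)=\langle \h_T h_0,w_0\rangle_{L^2}\le 0
\]
because $\h_T h_0\ge 0$ and $w_0\le 0$; since $h_0\ge 0$ is arbitrary, $w_T\le 0$, i.e.\ $F_T\ge G_T$ (equivalently, $w_T\le \h_T w_0$).

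For part (ii), $w$ now lies in $L^\infty_{loc}([0,\infty),L^\infty(\Y))\cap L^\infty_{loc}([0,\infty),W^{1,2}(\Y))\cap AC_{loc}([0,\infty),W^{-1,2}(\Y))$. Writing $a_0(G_t^2-F_t^2)=b_t\,w_t$ with $b_t:=a_0(F_t+G_t)+a_1\in L^\infty_{loc}([0,\infty),L^\infty(\Y))$, subtraction of the two inequalities (the constants $a_2$ cancel) shows that $w$ is a weak subsolution of the linear equation $\partial_t u=\Delta u+b_t u+\langle\nabla u,\nabla v_t\rangle$, with $w_0\le 0$. I would first enlarge the class of test functions from non-negative $h\in D(\Delta)$ to non-negative $h\in W^{1,2}(\Y)\cap L^\infty(\Y)$ — rewriting $\int\Delta h\,w_t\,\d\mm_\Y=-\int\langle\nabla h,\nabla w_t\rangle\,\d\mm_\Y$, integrating in time, and approximating $h$ by $h_n:=\h_{1/n} h\in D(\Delta)$, which are non-negative, satisfy $\|h_n\|_{L^\infty}\le\|h\|_{L^\infty}$, and converge to $h$ in $W^{1,2}(\Y)$, so that dominated convergence applies. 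Testing then with $h=w_t^+$ and invoking the chain rule for the $W^{-1,2}(\Y)$-valued curve $w$, namely that $t\mapsto\|w_t^+\|_{L^2}^2$ is absolutely continuous with $\tfrac12\tfrac{\d}{\d t}\|w_t^+\|_{L^2}^2=\langle\dot w_t,w_t^+\rangle$ (a standard fact for Hilbert triples), together with $\nabla w_t^+=\mathbf{1}_{\{w_t>0\}}\nabla w_t$, $w_t^+\nabla w_t=\nabla\tfrac{(w_t^+)^2}{2}$ and the integration by parts $\int\langle\nabla\tfrac{(w_t^+)^2}{2},\nabla v_t\rangle\,\d\mm_\Y=-\int\tfrac{(w_t^+)^2}{2}\Delta v_t\,\d\mm_\Y$, one reaches
\[
\frac12\frac{\d}{\d t}\|w_t^+\|_{L^2}^2\le-\int|\nabla w_t^+|^2\,\d\mm_\Y+\Big(\|b_t\|_{L^\infty}+\tfrac12\|\Delta v_t\|_{L^\infty}\Big)\|w_t^+\|_{L^2}^2.
\]
Dropping the non-positive gradient term gives $\tfrac{\d}{\d t}\|w_t^+\|_{L^2}^2\le C_t\|w_t^+\|_{L^2}^2$ with $C_t:=2\|b_t\|_{L^\infty}+\|\Delta v_t\|_{L^\infty}\in L^1_{loc}([0,\infty))$; since $w_0\le 0$ yields $\|w_0^+\|_{L^2}=0$, Gr\"onwall's lemma forces $\|w_t^+\|_{L^2}\equiv 0$, i.e.\ $G_t\le F_t$.

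The conceptual content of both arguments is elementary; the work lies in the rigorous manipulation of \emph{time-dependent} test functions. I expect the main technical point to be, in (ii), the chain rule $\tfrac{\d}{\d t}\|w_t^+\|_{L^2}^2=2\langle\dot w_t,w_t^+\rangle$ along a curve that is only absolutely continuous into $W^{-1,2}(\Y)$ while merely bounded into $W^{1,2}(\Y)$ (together with the analogous, easier, product-rule differentiation of $\psi$ in (i) and the passage $t\uparrow T$), and in both cases the measure-theoretic reduction needed to guarantee that the tested differential inequality holds on a single full-measure set of times while the test function varies with $t$ — handled by separability of the relevant spaces and, in (i), by approximating $t\mapsto\h_{T-t} h_0$ by step functions on $[0,T-\eta]$ before sending $\eta\downarrow 0$.
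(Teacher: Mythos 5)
Your proof of (ii) is essentially the one in the paper: subtract to get $w_t:=G_t-F_t$, enlarge the test class from $D(\Delta)$ to nonnegative $W^{1,2}$ functions, test against $w_t^+$, use $\langle\nabla w^+,\nabla w\rangle=|\nabla w^+|^2$ and $w^+\nabla w=\tfrac12\nabla(w^+)^2$, and run Gronwall. The only place where you are a little cavalier is calling $\tfrac12\tfrac{\d}{\d t}\|w_t^+\|_{L^2}^2=\langle\dot w_t,w_t^+\rangle$ ``a standard fact for Hilbert triples'': the standard Lions--Magenes identity is for $\|w_t\|_{L^2}^2$, and the variant with the nonlinear $w\mapsto w^+$ does need an extra argument (the paper devotes Lemma~\ref{le:kyoto} to it, proving it by smoothing $w$ with $\h_\varepsilon$ and passing to the limit), but you correctly flag this as the main technical point and it is indeed true under your hypotheses, so this is not a gap. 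For (i), however, your route is genuinely different and, in my view, slightly cleaner. The paper regularizes the \emph{solution}: it replaces $F_t$ by $\h_\varepsilon F_t\in D(\Delta)$, shows that $\frac{\d}{\d t}\h_\varepsilon F_t\ge\Delta\h_\varepsilon F_t$ pointwise, deduces that $\tfrac12\int|(\h_\varepsilon F_t)^-|^2\,\d\mm_\Y$ is nonincreasing, and then sends $\varepsilon\downarrow0$. You instead regularize the \emph{test function}: you pair the untouched $w_t$ against the backward flow $\h_{T-t}h_0$ of an arbitrary nonnegative $h_0\in L^2$, use the product rule to produce a $\Delta$-term which the subsolution inequality exactly cancels, and conclude by continuity as $t\uparrow T$. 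This duality argument avoids the nonlinear energy functional entirely and in fact yields the stronger pointwise comparison $w_T\le\h_Tw_0$, which the paper's proof does not state. The price is that you must justify differentiating with a time-dependent test function, i.e.\ that the ``a.e.\ $t$'' null set in the subsolution inequality can be taken independent of $h$; this is routine (pass to the time-integrated inequality, use that $t\mapsto\int\Delta h\,w_t$ is continuous and that $w$ is $L^2$-differentiable a.e., together with separability of $D(\Delta)$) and you indicate the right fix with the step-function approximation, so I am satisfied the argument closes.
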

\begin{proof}

\noindent{\bf(i)} By linearity it is not restrictive to assume $G_t \equiv 0$ for all $t \geq 0$. Fix $\varepsilon > 0$, notice that $t \mapsto \h_{\varepsilon}F_t$ belongs to $  AC_{loc}([0,\infty),L^2(\Y))$ with values in $D(\Delta)$. Then pick $h\in D(\Delta)$ non-negative, notice that $\h_\eps h$ is non-negative as well to get
\[
\int h\frac{\d}{\d t}\h_{\varepsilon} F_t\d\mm_\Y= \frac{\d}{\d t}\int  h\h_{\varepsilon}F_t\d\mm_Y = \frac{\d}{\d t}\int  (\h_{\varepsilon} h) F_t\d\mm_\Y \geq \int  \Delta\h_{\varepsilon} h \,F_t\d\mm_\Y = \int h\Delta\h_{\varepsilon} F_t\d\mm_\Y.
\]
Since this is true for all $h\in D(\Delta)$ non-negative and, by what we said before, this class of functions is $L^2$-dense in the set of non-negative $L^2$-functions, we deduce that  for a.e.\ $t > 0$ it holds
\begin{equation}
\label{eq:pmax}
\frac{\d}{\d t}\h_{\varepsilon}F_t \geq \Delta\h_{\varepsilon}F_t, \quad \mm_Y\ae.
\end{equation}
Now notice that being $F_0 \geq 0$, by the maximum principle \eqref{eq:maxprinc} we see that $\h_{\varepsilon}F_0 \geq 0$ too and we claim that from this fact and \eqref{eq:pmax} it follows that  $\h_\eps(F_t)\geq 0$ for every $t\geq 0$. Thus  let us consider
\[
\Phi(t) := \frac12\int {|\phi(\h_{\varepsilon}F_t)|^2}\d\mm_\Y,
\]
where $\phi(z):=z^-=\max\{0,-z\}$.  Observe that $\Phi\in  AC_{loc}([0,\infty))$, that $\Phi(0) = 0$ and compute
\begin{equation}
\label{eq:step1}
\begin{split}
\Phi'(t)&=\int\phi(\h_\eps F_t)\frac\d{\d t}\phi(\h_\eps F_t)\,\d\mm_\Y=\int\phi'(\h_\eps F_t)\,\phi(\h_\eps F_t)\frac{\d}{\d t}\h_\eps F_t\,\d\mm_\Y=-\int\phi(\h_\eps F_t)\frac{\d}{\d t}\h_\eps F_t \,\d\mm_\Y
\end{split}
\end{equation}
and therefore taking \eqref{eq:pmax} into account we see that
\[
\begin{split}
\Phi'(t)&\leq-\int\,\phi(\h_\eps F_t)\Delta\h_\eps F_t\,\d\mm_\Y=\int \la\nabla\phi(\h_\eps F_t),\nabla \h_\eps F_t \ra\,\d\mm_\Y=-\int |\nabla \phi(\h_\eps F_t)|^2\,\d\mm_\Y\leq 0.
\end{split}
\]
Thus $\Phi(t)=0$ for every $t\geq 0$, i.e.\  $\h_{\varepsilon}F_t \geq 0$ for all $t \geq 0$. Letting $\varepsilon \downarrow 0$ we conclude.

\noindent{\bf(ii)} 
Since $(F_t)\in  L^\infty_{loc}([0,\infty),W^{1,2}(\Y))$, the fact that it is a supersolution of \eqref{eq:16} can be written as
\begin{equation}
\label{eq:16f}
\frac{\d}{\d t}\int h F_t\d\mm_\Y \geq -\int \la\nabla h,\nabla F_t\ra \d\mm_\Y + \int h\Big(a_0F_t^2 + a_1F_t + \langle \nabla F_t,\nabla v_t \rangle + a_2\Big)\d\mm_\Y
\end{equation}
for every $h\in D(\Delta)$ non-negative. Recalling that the class of such functions is $W^{1,2}$-dense in the one  of non-negative $W^{1,2}$ functions, passing through the integral formulation - in time - of \eqref{eq:16f} it is immediate to see that \eqref{eq:16f} also holds for any $h\in W^{1,2}(\Y)$ non-negative. Using the fact that $W^{-1,2}(\Y)$ has the Radon-Nikodym property (because it is Hilbert) we see that $(F_t)$ seen as curve with values in $W^{-1,2}(\Y)$ must be differentiable at a.e.\ $t$ and it is then clear that for any point of differentiability $t$, the inequality \eqref{eq:16f} holds for any $h\in W^{1,2}(\Y)$ non-negative, i.e.\ that the set of $t$'s for which \eqref{eq:16f} holds is independent on $h$. The analogous property holds for $(G_t)$.

Now we apply Lemma \ref{le:kyoto} below to $h_t:=G_t-F_t$ to get that $\Phi(t):=\frac12\int|(G_t-F_t)^+|^2\,\d\mm_Y$ is absolutely continuous and 
\[
\Phi'(t)= \int (G_t-F_t)^+\frac{\d}{\d t}(G_t-F_t)\,\d\mm_Y,
\]
where the right hand side is intended as the coupling of $\frac{\d}{\d t}(G_t-F_t)\in W^{-1,2}(\Y)$ and the function $ (G_t-F_t)^+\in W^{1,2}(\Y)$. Fix $t$ which is a differentiability point of both $(F_t)$ and $(G_t)$,  pick $h:=(G_t-F_t)^+$ in \eqref{eq:16f} and in the analogous inequality for $(G_t)$ to obtain
\[
\begin{split}
\Phi'(t)&\leq \int-\la\nabla((G_t-F_t)^+),\nabla(G_t-F_t)\ra\\
&\qquad\qquad+ (G_t-F_t)^+\Big(a_0(G_t^2-F_t^2)+a_1(G_t-F_t)+\la\nabla(G_t-F_t),\nabla v_t\ra\Big)\,\d\mm_\Y
\end{split}
\]
and since $\la\nabla h^+,\nabla h\ra=|\nabla h^+|^2$ and $h^+\nabla h=\frac12\nabla(h^+)^2 $ for any $h\in W^{1,2}$, we have
\[
\begin{split}
\Phi'(t)&\leq\int -|\nabla((G_t-F_t)^+)|^2+|(G_t-F_t)^+|^2\big(a_0(G_t+F_t)+a_1-\tfrac12\Delta v_t\big)\,\d\mm_\Y\\
&\leq 2\Phi(t)\big(|a_0|\|G_t+F_t\|_{L^\infty}+|a_1|+\tfrac12\|\Delta v_t\|_{L^\infty}\big).
\end{split}
\]
Since the assumption $F_0\geq G_0$ gives $\Phi(0)=0$, by Gronwall's lemma we conclude that $\Phi(t)=0$ for any $t\geq 0$, which is the thesis.
\end{proof}

\begin{Lemma}\label{le:kyoto}
Let $(h_t)\in L^\infty_{loc}([0,\infty),W^{1,2}(\Y))\cap AC_{loc}([0,\infty),W^{-1,2}(\Y))$. 

Then $t\mapsto\frac12\int|(h_t)^+|^2\,\d\mm_\Y$ is locally absolutely continuous on $[0,\infty)$ and it holds
\begin{equation}
\label{eq:tesilemma}
\frac\d{\d t}\frac12\int|(h_t)^+|^2\,\d\mm_\Y=\int (h_t)^+\frac{\d}{\d t}h_t\,\d\mm_\Y,\qquad {\rm a.e.}\ t,
\end{equation}
where the integral in the right hand side is intended as the coupling of $(h_t)^+\in W^{1,2}(\Y)$ with $\frac\d{\d t}h_t\in W^{-1,2}(\Y)$.
\end{Lemma}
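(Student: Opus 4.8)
The plan is to set up the Hilbert triple $V:=W^{1,2}(\Y)\hookrightarrow H:=L^2(\Y)\hookrightarrow V^*:=W^{-1,2}(\Y)$, to prove the identity first for curves that are $C^1$ in time by an elementary chain rule, and then to recover the general case by a time--mollification and limiting argument. The elementary ingredients are the following: the function $\phi(z):=\tfrac12(z^+)^2$ is convex and of class $C^1$, with $\phi'(z)=z^+$ which is $1$-Lipschitz and satisfies $\phi(0)=\phi'(0)=0$; consequently $u\mapsto u^+$ is $1$-Lipschitz from $H$ to $H$ and maps $V$ into $V$ with $\|u^+\|_V\le\|u\|_V$ (because $\nabla u^+=\mathbf 1_{\{u>0\}}\nabla u$); and the functional $u\mapsto\int_\Y\phi(u)\,\d\mm_\Y$ is Fr\'echet differentiable on $H$ with differential $v\mapsto\langle\phi'(u),v\rangle_H$, since the $1$-Lipschitz bound on $\phi'$ gives $\big|\int_\Y\phi(u+v)-\phi(u)-\phi'(u)v\,\d\mm_\Y\big|\le\tfrac12\|v\|_H^2$.

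Fix $[a,b]\subset(0,\infty)$, let $\rho_\delta$ be a standard mollifier on $\R$, and for small $\delta>0$ set $h^\delta_t:=\int_\R\rho_\delta(t-s)h_s\,\d s$, a Bochner integral in $V$ which is legitimate because $h\in L^\infty((a-\delta,b+\delta),V)$. Then $h^\delta\in C^1([a,b],V)\subset C^1([a,b],H)$, and differentiating under the integral and integrating by parts in time (using $h\in AC_{loc}([0,\infty),V^*)$) yields $\tfrac{\d}{\d t}h^\delta_t=\int_\R\rho_\delta'(t-s)h_s\,\d s=\int_\R\rho_\delta(t-s)h'_s\,\d s$, which therefore belongs to $V\subset H$. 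Applying the chain rule for the differentiable functional above along the $C^1$ curve $h^\delta$ and integrating over $[s,t]\subset[a,b]$ gives
\[
\tfrac12\int_\Y|(h^\delta_t)^+|^2\,\d\mm_\Y-\tfrac12\int_\Y|(h^\delta_s)^+|^2\,\d\mm_\Y=\int_s^t{}_{V^*}\big\langle\tfrac{\d}{\d r}h^\delta_r,(h^\delta_r)^+\big\rangle_V\,\d r,
\]
where on the right we used that $\tfrac{\d}{\d r}h^\delta_r\in H$ and $(h^\delta_r)^+\in V$ to read $\langle\phi'(h^\delta_r),\tfrac{\d}{\d r}h^\delta_r\rangle_H$ as a duality pairing between $V^*$ and $V$.

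Now let $\delta\downarrow0$. By standard properties of mollification, $h^\delta\to h$ in $L^2([a,b],H)$ and $\tfrac{\d}{\d\cdot}h^\delta=\rho_\delta*h'\to h'$ in $L^1([a,b],V^*)$; hence along a subsequence $\delta_n\downarrow0$ one has $h^{\delta_n}_r\to h_r$ in $H$ and $\tfrac{\d}{\d r}h^{\delta_n}_r\to h'_r$ in $V^*$ for a.e.\ $r\in[a,b]$, while $\|(h^{\delta_n}_r)^+\|_V\le\|h^{\delta_n}_r\|_V$ is bounded uniformly in $n$ and $r\in[a,b]$, and $\|\tfrac{\d}{\d r}h^{\delta_n}_r\|_{V^*}\le(\rho_{\delta_n}*\|h'_\cdot\|_{V^*})(r)$ defines an equi-integrable family on $[a,b]$. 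On the left-hand side, $h^{\delta_n}_t\to h_t$ in $H$ and the continuity of $u\mapsto u^+$ on $H$ give $\int_\Y|(h^{\delta_n}_t)^+|^2\,\d\mm_\Y\to\int_\Y|(h_t)^+|^2\,\d\mm_\Y$ for a.e.\ $t$. On the right-hand side, for a.e.\ $r$ the uniform $V$-bound together with $(h^{\delta_n}_r)^+\to(h_r)^+$ in $H$ forces $(h^{\delta_n}_r)^+\rightharpoonup(h_r)^+$ weakly in $V$; coupled with the strong convergence of $\tfrac{\d}{\d r}h^{\delta_n}_r$ in $V^*$ this yields the convergence of the duality pairings for a.e.\ $r$, and the equi-integrability lets one pass to the limit in the time integral. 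Therefore, for a.e.\ $s,t\in[a,b]$,
\[
\tfrac12\int_\Y|(h_t)^+|^2\,\d\mm_\Y-\tfrac12\int_\Y|(h_s)^+|^2\,\d\mm_\Y=\int_s^t{}_{V^*}\big\langle h'_r,(h_r)^+\big\rangle_V\,\d r.
\]

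Since $r\mapsto{}_{V^*}\langle h'_r,(h_r)^+\rangle_V$ is the product of $\|h'_\cdot\|_{V^*}\in L^1_{loc}([0,\infty))$ with a factor bounded by $\|(h_\cdot)^+\|_V\in L^\infty_{loc}([0,\infty))$, the right-hand side above is a locally absolutely continuous function of $t$; letting $a\downarrow0$ and $b\uparrow\infty$ we conclude that $t\mapsto\tfrac12\int_\Y|(h_t)^+|^2\,\d\mm_\Y$ coincides almost everywhere with a locally absolutely continuous function on $[0,\infty)$ whose derivative is given by \eqref{eq:tesilemma}. I expect the only genuinely delicate point to be the passage to the limit in the duality pairing on the right: it hinges on pairing the \emph{strong} convergence in $V^*$ of $\tfrac{\d}{\d r}h^{\delta_n}_r$ against the merely \emph{weak} convergence in $V$ of $(h^{\delta_n}_r)^+$ (the latter obtained from the uniform $V$-bound plus $H$-convergence), and on the $L^1$-contractivity of mollification, which supplies the equi-integrability needed to carry the limit through the time integral.
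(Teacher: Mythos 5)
Your argument is correct in its essentials and takes a genuinely different route from the paper's. Where the paper regularizes \emph{spatially}—extending the heat semigroup $\h_\eps$ to a bounded map $W^{-1,2}(\Y)\to L^2(\Y)$ so that $(\h_\eps h_t)$ becomes an $L^2$-absolutely continuous curve, establishing the integral identity for it, then sending $\eps\downarrow0$—you regularize \emph{in time} by mollification, obtain a $C^1$ curve in $W^{1,2}(\Y)$, apply a Fr\'echet chain rule for the convex functional on $L^2(\Y)$, and then send $\delta\downarrow0$. Yours is the classical Lions--Magenes-type argument for absolutely continuous curves in a Gelfand triple $V\hookrightarrow H\hookrightarrow V^*$; it needs nothing beyond the Hilbert structure of the triple and is therefore more portable, whereas the paper's regularizer is handed to it by the metric-measure framework. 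The limit passage you organize (strong $W^{-1,2}$-convergence of the mollified derivatives paired against weak $W^{1,2}$-convergence of $(h^{\delta_n}_r)^+$, with equi-integrability from the $L^1$-contractivity of mollification and Vitali's theorem) is sound and is exactly the right mechanism for that step.

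One point you must tighten: you conclude that $t\mapsto\tfrac12\int|(h_t)^+|^2\,\d\mm_\Y$ merely \emph{coincides a.e.}\ with a locally absolutely continuous function, which is strictly weaker than the lemma's claim that this map \emph{is} locally absolutely continuous—and the application in Proposition~\ref{pro:3}(ii) genuinely needs the stronger form, since Gronwall is anchored at $t=0$ and its conclusion is invoked for every $t\geq0$. The reason you only reach ``a.e.'' is that $L^2([a,b],L^2(\Y))$-convergence of $h^\delta$ yields pointwise $L^2$-convergence along a subsequence only for a.e.\ $t$, so the left-hand side of your integral identity is controlled only for a.e.\ $s,t$. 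The repair is short: the interpolation inequality $\|u\|_{L^2(\Y)}^2\le\|u\|_{W^{1,2}(\Y)}\,\|u\|_{W^{-1,2}(\Y)}$ applied to $u=h_t-h_s$, together with $(h_t)\in L^\infty_{loc}(W^{1,2})\cap AC_{loc}(W^{-1,2})$ (and the weak-compactness remark that the $W^{-1,2}$-continuous representative is in fact $W^{1,2}$-valued for \emph{every} $t$), shows that $t\mapsto h_t$ is locally $\tfrac12$-H\"older continuous with values in $L^2(\Y)$. Hence $h^\delta_t\to h_t$ in $L^2(\Y)$ for every $t$, your integral identity holds for all $s,t$, and the function $t\mapsto\tfrac12\int|(h_t)^+|^2\,\d\mm_\Y$ is continuous, so it agrees everywhere—not just a.e.—with the absolutely continuous function you construct. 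The paper's heat-flow argument sidesteps this because $\h_\eps h_{t_1}\to h_{t_1}$ strongly in $L^2$ for each fixed $t_1$, but it too silently uses that $h_{t_1}\in W^{1,2}(\Y)$ for every $t_1$; so the ingredient is needed either way and deserves to be stated.
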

\begin{proof}
If $(h_t)\in AC_{loc}([0,\infty),L^2(\Y))$, the claim follows easily with the same computations done in \eqref{eq:step1}. The general case follows by approximation via the heat flow. Fix $\eps>0$ and notice that the fact that $\h_\eps$ is a contraction in $W^{1,2}$ and a bounded operator from $L^2$ to $W^{1,2}$ yield  the inequalities
\[
\begin{split}
\|\h_\eps f\|_{L^2}&=\sup_{\|g\|_{L^2}\leq 1}\int \h_\eps f\,g\,\d\mm_\Y\leq\sup_{\|g\|_{L^2}\leq 1}\|\h_\eps g\|_{W^{1,2}}\|f\|_{W^{-1,2}}\leq C_\eps\|f\|_{W^{-1,2}}\\
\|\h_\eps f\|_{W^{-1,2}}&=\sup_{\|g\|_{W^{1,2}}\leq 1}\int \h_\eps f\,g\,\d\mm_\Y\leq\sup_{\|g\|_{W^{1,2}}\leq 1}\|\h_\eps g\|_{W^{1,2}}\|f\|_{W^{-1,2}}\leq \|f\|_{W^{-1,2}},
\end{split}
\]
for all $f\in L^2$, which together with the density of $L^2$ in $W^{-1,2}$ ensures that $\h_\eps$ can be uniquely extended to a linear bounded operator from $W^{-1,2}$ to $L^2$ which is also a contraction when seen with values in $W^{-1,2}$. It is then clear that $\h_\eps f\to f$ in $W^{-1,2}$ as $\eps\downarrow0$ for any $f\in W^{-1,2}$. It follows that for $(h_t)$ as in the assumption, $(\h_\eps h_t)\in AC_{loc}([0,\infty),L^2(\Y))$, so that by what previously said the thesis holds for such curve and writing the identity  \eqref{eq:tesilemma} in integral form we have
\[
\frac12\int | (\h_\eps h_{t_1})^+|^2- | (\h_\eps h_{t_0})^+|^2\,\d\mm_\Y=\int_{t_0}^{t_1}\int  \big(\h_{\eps}h_t\big)^+\h_\eps\Big(\frac{\d}{\d t}h_t\Big)\,\d\mm_\Y\,\d t\qquad\forall 0\leq t_0\leq t_1.
\] 
Letting $\eps\downarrow0$, using the continuity at $\eps=0$ of $\h_\eps$ seen as operator on all the spaces $W^{1,2},L^2,W^{-1,2}$ and the continuity of $h\mapsto h^+$ as map from $W^{1,2}$ with the strong topology to $W^{1,2}$ with the weak one (which follows from the continuity of the same operator in $L^2$ together with the fact that it decreases the $W^{1,2}$ norm), we obtain
\begin{equation}
\label{eq:tesik}
\frac12\int | h_{t_1}^+|^2- | h_{t_0}^+|^2\,\d\mm_\Y=\int_{t_0}^{t_1}\int  (h_t)^+ \frac{\d}{\d t}h_t \,\d\mm_\Y\,\d t\qquad\forall 0\leq t_0\leq t_1.
\end{equation}
Now the bound
\[
\bigg|\int_{t_0}^{t_1}\int  (h_t)^+ \frac{\d}{\d t}h_t \,\d\mm_\Y\,\d t\bigg|\leq \|(h_t)\|_{L^\infty([t_0,t_1],W^{1,2})}\int_{t_0}^{t_1}\Big\|\frac{\d}{\d t}h_t\Big\|_{W^{-1,2}}\,\d t
\] 
   grants the local absolute continuity of $t\mapsto \frac12\int |h_t^+|^2\,\d\mm_\Y$ and the conclusion follows by differentiating \eqref{eq:tesik}.
\end{proof}
\subsection{Hamilton's gradient estimates and related inequalities}
We  start proving Hamilton's gradient estimate on compact $\RCD(K,\infty)$ spaces, with a proof which closely follows the original one in \cite{Hamilton93}. As already said, in fact the same result is know to be true - from \cite{JiangZhang16} - on the more general class of proper $\RCD(K,\infty)$ spaces, but given that the compactness assumption slightly simplifies the argument, for completeness we provide the proof. 
\begin{Proposition}\label{pro:4}
Let $({\X},\sfd,\mm)$ be a compact $\RCD^*(K,\infty)$ space with $K\in\R$ and  let $u_0 \in L^{\infty}({\mm})$ be such that  $u_0 \geq c$ for some positive constant $c$. Put  $u_t := \h_t u_0$ for all $t > 0$. Then
\begin{equation}\label{eq:12}
t|\nabla\log u_t|^2 \leq (1 + 2K^-t)\log\bigg(\frac{\|u_0\|_{L^{\infty}(\mm)}}{u_t}\bigg), \quad \mm\ae
\end{equation}
for all $t > 0$, where $K^-:=\max\{0,-K\}$.
\end{Proposition}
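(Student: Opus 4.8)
The plan is to reproduce Hamilton's classical maximum-principle argument, its non-smooth substitute being the comparison principle of Proposition~\ref{pro:3}(ii). First, the reductions: by the maximum principle \eqref{eq:maxprinc} applied to $u_0$ and to $-u_0$ one has $c\le u_t\le A:=\|u_0\|_{L^\infty(\mm)}$ $\mm$-a.e.\ for every $t\ge0$; if $A=c$ then $u_0$ is constant and \eqref{eq:12} is trivial, so we may assume $A>c$, and since replacing $u_0$ by $u_0/A$ changes neither $\nabla\log u_t$ nor either side of \eqref{eq:12} we may assume $A=1$, so $c\le u_t\le1$. For $t>0$ we have $u_t\in\testipp{\X}$ by \eqref{eq:regflow}, hence, $z\mapsto-\log z$ being smooth on $[c,1]$, the function $w_t:=-\log u_t=\log(1/u_t)$ belongs to $\testi{\X}$ by \eqref{eq:compos}, with $0\le w_t\le\log(1/c)$, and by \eqref{eq:reggrad}--\eqref{eq:lapmistest} its squared gradient $\phi_t:=|\nabla w_t|^2=|\nabla\log u_t|^2$ lies in $W^{1,2}(\X)\cap D(\bd)$. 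From $\partial_t u_t=\Delta u_t$ and the chain rules of Section~\ref{sec:2}, $t\mapsto w_t$ is locally absolutely continuous on $(0,\infty)$ with
\[
\partial_t w_t=-\frac{\Delta u_t}{u_t}=\Delta w_t-|\nabla w_t|^2=\Delta w_t-\phi_t .
\]

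Next, the differential inequalities. Differentiating $\phi_t=\langle\nabla w_t,\nabla w_t\rangle$ in time, substituting $\nabla\partial_t w_t=\nabla\Delta w_t-\nabla\phi_t$, and using the definition $\Ggamma_2(w_t)=\bd\tfrac{\phi_t}{2}-\langle\nabla w_t,\nabla\Delta w_t\rangle\mm$ together with the Bochner inequality \eqref{eq:bochhess}, which gives $\Ggamma_2(w_t)\ge K\phi_t\,\mm\ge-K^-\phi_t\,\mm$, one finds that for every non-negative $h\in D(\Delta)$
\[
\frac{\d}{\d t}\int h\,\phi_t\,\d\mm\le\int\Delta h\,\phi_t\,\d\mm+2K^-\!\int h\,\phi_t\,\d\mm-2\!\int h\,\langle\nabla w_t,\nabla\phi_t\rangle\,\d\mm ,
\]
i.e.\ $\phi_t$ is a weak subsolution of $\partial_t\phi=\Delta\phi+2K^-\phi-2\langle\nabla w_t,\nabla\phi\rangle$. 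Now fix $\tau\in(0,\infty)$, put $\beta_t:=t-\tau$ and $\gamma_t:=1+2K^-\beta_t$, and set for $t\ge\tau$
\[
H_t:=\beta_t\,\phi_t-\gamma_t\,w_t\;=\;(t-\tau)|\nabla\log u_t|^2-\bigl(1+2K^-(t-\tau)\bigr)\log(1/u_t)\;\in\;W^{1,2}(\X)\cap L^\infty(\X).
\]
Combining the identity for $\partial_t w_t$ with the subsolution inequality for $\phi_t$, using $1+2K^-\beta_t+\gamma_t=2\gamma_t$ and the identity $\beta_t\nabla\phi_t=\nabla H_t+\gamma_t\nabla w_t$, a direct computation yields, for every non-negative $h\in D(\Delta)$ and a.e.\ $t>\tau$,
\[
\frac{\d}{\d t}\int h\,H_t\,\d\mm\le\int\Delta h\,H_t\,\d\mm-2K^-\!\int h\,w_t\,\d\mm-2\!\int h\,\langle\nabla w_t,\nabla H_t\rangle\,\d\mm\le\int\Delta h\,H_t\,\d\mm-2\!\int h\,\langle\nabla w_t,\nabla H_t\rangle\,\d\mm ,
\]
the last step because $h,w_t,K^-\ge0$.

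This last display says precisely that $H_t$ is a weak subsolution of \eqref{eq:16} on $[\tau,\infty)$ with $a_0=a_1=a_2=0$ and drift $v_t:=-2w_t$, which fulfils the regularity requirements of Proposition~\ref{pro:3}(ii) because $w_t\in\testi{\X}$ with norms bounded on compact subsets of $(0,\infty)$; the constant $F_t\equiv0$ is trivially a supersolution of the same equation, and $H_\tau=-w_\tau\le0$. Hence Proposition~\ref{pro:3}(ii) (after the obvious time shift) gives $H_t\le0$ $\mm$-a.e.\ for all $t\ge\tau$, i.e.\ $(t-\tau)|\nabla\log u_t|^2\le\bigl(1+2K^-(t-\tau)\bigr)\log(1/u_t)$; fixing $t>0$ and letting $\tau\downarrow0$ yields $t|\nabla\log u_t|^2\le(1+2K^-t)\log(1/u_t)$, which is \eqref{eq:12} in the normalisation $A=1$, and undoing the rescaling gives the general statement.

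The scheme is exactly the smooth one, so the real work lies in making the two displayed differential inequalities rigorous in the metric measure setting: justifying the time differentiation of $t\mapsto\int h\,\phi_t\,\d\mm$ and $t\mapsto\int h\,H_t\,\d\mm$ (equivalently, that $t\mapsto w_t,H_t$ are absolutely continuous with values in $W^{-1,2}(\X)$, as Proposition~\ref{pro:3}(ii) requires), handling the fact that $\Ggamma_2(w_t)$ is only a \emph{measure} --- of which only the one-sided bound \eqref{eq:bochhess} is used, exactly as the term $\langle\nabla w,\nabla\Delta w\rangle$ is bounded from above in the classical Bochner identity --- and verifying the integrability hypotheses on $v_t=-2w_t$ and $H_t$; none of these is a genuine obstruction, thanks to the smoothing of the heat flow. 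I expect the bookkeeping around the measure-valued $\Ggamma_2$ and the time-regularity in negative Sobolev spaces to be the most delicate point.
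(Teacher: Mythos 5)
Your proof is correct, but it is genuinely different from the one in the paper, and the difference is worth pointing out. The paper does not work with $H_t=(t-\tau)|\nabla\log u_t|^2-(1+2K^-(t-\tau))\log(M/u_t)$ but with
\[
v_t:=\varphi_t\frac{|\nabla u_t|^2}{u_t}-u_t\log\frac{M}{u_t},\qquad\varphi_t:=\frac{t}{1+2K^-t},
\]
which (for $\tau=0$) is nothing but $\frac{u_t}{1+2K^-t}H_t$. This multiplicative normalisation is the heart of Hamilton's original computation: after Bochner, the curvature and drift terms reorganise so that $v_t$ is a sub\-solution of the \emph{pure} heat equation in the measure sense, $\big(\tfrac{\d}{\d t}v_t\big)\mm\le\bd v_t$, and one can then invoke the elementary comparison principle Proposition~\ref{pro:3}(i); the slightly peculiar function $\varphi_t$ is precisely what is needed so that the leftover coefficient $\varphi_t'-2K\varphi_t-1$ is nonpositive. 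You instead keep the drift $-2\nabla\log u_t$ explicit and appeal to the more elaborate comparison principle Proposition~\ref{pro:3}(ii). Both routes work; the paper's choice buys a cleaner linear comparison at the cost of the algebraic trick with $\varphi_t$, while yours is more mechanical (no normalising factor to guess) at the cost of verifying the extra regularity hypotheses on $v_t=-2w_t$ and on $(F_t),(G_t)$ demanded by part~(ii) — hypotheses that, as you note, do hold here thanks to the smoothing of the heat flow. Also, the paper deals with the low regularity of $u_0$ by first assuming $u_0\in\testipp\X$ and then applying the result to $u_\eps$ and letting $\eps\downarrow0$; your time shift $\tau\downarrow0$ achieves the same thing in a slightly more compact way. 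One computational remark: the cancellation of the $2\gamma_t\int h\phi_t\,\d\mm$ terms coming from $1+2K^-\beta_t+\gamma_t=2\gamma_t$ and from rewriting $\beta_t\nabla\phi_t=\nabla H_t+\gamma_t\nabla w_t$ is exactly the non-smooth avatar of Hamilton's completion to a nonnegative square; in the paper's variant this is visible as the term $\bigl|\H{u_t}-\frac{\nabla u_t\otimes\nabla u_t}{u_t}\bigr|_\HS^2$, which you discard at the Bochner step when you drop $|\H{w_t}|^2_\HS$. Either way the estimate is not lossy at this level, so nothing is lost.
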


\begin{proof} Let us assume for the moment that $u_0\in\testipp\X$. Set $M := \|u_0\|_{L^{\infty}(\mm)}$ and define for $t \geq 0$
\[
v_t:= \varphi_t\frac{|\nabla u_{t }|^2}{u_{t}} - u_{t }\log\frac{M}{u_{t }},\qquad\text{with }\quad\varphi_t := \frac{t}{1 + 2K^-t}.
\]
Notice that by  the maximum principle \eqref{eq:maxprinc} we know that $c\leq u_t \leq M$ for all $t \geq 0$, thus the definition of $v_t$ is well posed. 

Our thesis is equivalent to the fact that $v_t\leq 0$ and we shall prove this via the comparison principle for the heat flow stated in point $(i)$ of Proposition \ref{pro:3}. The fact that $(u_t)\in AC_{loc}([0,\infty),W^{1,2}(\X))$ and - by the maximum principle \eqref{eq:maxprinc} and the Bakry-\'Emery inequality \eqref{eq:be} - that $(\log(u_t)),(|\nabla u_t|)\in L^\infty_{loc}([0,\infty),L^\infty(\X))$ grant that  $(v_t)\in AC_{loc}([0,\infty), L^2(\X))$. Since  by construction we have $v_0\leq 0$,  we are left to prove that for any $h\in D(\Delta)$ non-negative it holds
\[
\int  h\frac{\d}{\d t} v_t\,\d\mm \leq \int  v_t \Delta h\d\mm \qquad {\rm a.e.}\ t.
\]
We have $u_t\in D(\Delta)$ and, by  \eqref{eq:lapmistest}, that $|\nabla u_t|^2\in D(\bd)$ for any $t\geq 0$, thus since as said   $0<c\leq u_t\leq M$ for  all $t\geq 0$, we deduce that  $v_t\in D(\bd)$ for any $t\geq 0$. Hence our thesis can be rewritten as
\[
\Big(\frac\d{\d t}v_t\Big)\mm\leq \bd v_t \qquad {\rm a.e.}\ t.
\]
The conclusion now follows by  direct computation. We have
\begin{equation}
\label{eq:h1}
\frac{\d}{\d t}v_t=\varphi'_t\frac{|\nabla u_{t }|^2}{u_{t }}+\varphi_t\Big(\frac2{u_t}\la\nabla u_t,\nabla\Delta u_t\ra-\Delta u_t\frac{|\nabla u_t|^2}{u_t^2}\Big)-\Delta u_t\log\frac M{u_t}+\Delta u_t
\end{equation}
and
\begin{equation}
\label{eq:h2}
\Delta \Big(u_t\log\frac M{u_t}\Big)=(\Delta u_t)\log\frac M{u_t}-\Delta u_t-\frac{|\nabla u_t|^2}{u_t}.
\end{equation}
Moreover
\[
\begin{split}
\bd\frac{|\nabla u_t|^2}{u_t}=\frac1{u_t}\bd|\nabla u_t|^2+\Big(|\nabla u_t|^2\Delta(u_t^{-1})+2\la\nabla|\nabla u_t|^2,\nabla(u_t^{-1})\ra\Big)\mm\\
%&=\frac1{u_t}\bd|\nabla u_t|^2+\Big(-\Delta u_t\frac{|\nabla u_t|^2}{u_t^2}+2\frac{|\nabla u_t|^4}{u^3}-\frac2{u_t^2}\la\nabla u_t.\nabla|\nabla u_t|^2\ra\Big)\mm\\
%&\geq\Big(\frac2{u_t}|\H {u_t}|_\HS^2+\frac\Big)\mm
\end{split}
\]
so that using the Bochner inequality \eqref{eq:bochhess} we obtain
\begin{equation}
\label{eq:h3}
\begin{split}
\bd\frac{|\nabla u_t|^2}{u_t}&\geq \Big(\frac2{u_t}|\H {u_t}|_\HS^2+\frac2{u_t}\la\nabla u_t,\nabla \Delta u_t\ra+\frac{2K}{u_t}|\nabla u_t|^2\\
&\qquad\qquad-\Delta u_t\frac{|\nabla u_t|^2}{u_t^2}+2\frac{|\nabla u_t|^4}{u_t^3}-\frac2{u_t^2}\la\nabla u_t,\nabla|\nabla u_t|^2\ra\Big)\mm.
\end{split}
\end{equation}
Putting together \eqref{eq:h1}, \eqref{eq:h2} and \eqref{eq:h3} and using the identity
\[
\Big|\H{u_t}-\frac{\nabla u_t\otimes\nabla u_t}{u_t}\Big|^2_\HS=|\H{u_t}|_\HS^2+\frac{|\nabla u_t|^4}{u_t^2}-\frac{\la\nabla u_t,\nabla|\nabla u_t|^2\ra}{u_t}
\]
we obtain
\[
\begin{split}
\Big(\frac\d{\d t}v_t\Big)\mm- \bd v_t\leq  \Big(\frac{|\nabla u_t|^2}{u_t}\big(\varphi'_t-2K\varphi-1\big)-\frac2{u_t}\Big|\H{u_t}-\frac{\nabla u_t\otimes\nabla u_t}{u_t}\Big|^2_\HS\Big)\mm
\end{split}
\]
and the conclusion follows noticing that by the definition of $\varphi_t$ we have
\[
\varphi'_t-2K\varphi_t-1\leq 0\qquad\forall t\geq 0.
\]
For the general case, recall that by \eqref{eq:regflow} and our assumption on $u_0$ we have that $u_\eps\in\testipp\X$ for every $\eps>0$  and notice that what we have just proved grants that
\[
t|\nabla\log u_{t+\eps}|^2\leq (1+2K^-t)\log\Big(\frac{\|u_\eps\|_{L^\infty}}{u_{t+\eps}}\Big),\qquad\mm\ae,\quad\forall t\geq 0.
\]
By the maximum principle \eqref{eq:maxprinc} we have that $\|u_\eps\|_{L^\infty}\leq\|u_0\|_{L^\infty}$, then the conclusion easily follows letting $\eps\downarrow0$ and using the continuity of $\eps\mapsto u_\eps,|\nabla u_\eps|\in L^2(\mm)$.
\end{proof}
In the compact finite-dimensional case, thanks to the Gaussian estimates for the heat kernel we can now easily obtain a bound independent on the  $L^\infty$ norm of the initial datum present in inequality \eqref{eq:12}:
\begin{Theorem}\label{cor:1}
Let $({\X},\sfd,\mm)$ be a compact $\RCD^*(K,N)$ space with $K \in\R$ and $N \in [1,\infty)$. Then there is a constant $C$ depending on $K,N$ and $D:={\rm diam}(\X)$ only such that for any $u_0 \in L^1(\mm)$  non-negative and not identically 0 the inequality
\begin{equation}\label{eq:23}
|\nabla\log (u_t)|^2 \leq C\bigg(1  + \frac{1}{t^2}\bigg), \quad \mm\ae
\end{equation}
holds for all $t > 0$, where $u_t:=\h_t u$. In particular, for every $\delta>0$ there is a constant $C_\delta>0$ depending on $K,N,D,\delta$ only such that
\begin{equation}
\label{eq:ham}
\sup_{\eps\in(0,1)}\eps \|\nabla\log(u_{\eps t})\|_{L^\infty}\leq C_\delta\qquad\forall t\geq \delta.
\end{equation}
\end{Theorem}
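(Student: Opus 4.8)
The plan is to upgrade Hamilton's estimate \eqref{eq:12} --- whose right-hand side involves $\|u_0\|_{L^\infty}$ --- into the intrinsic bound \eqref{eq:23}, by first running the heat flow for half the time so as to produce an initial datum to which Proposition \ref{pro:4} applies, and then controlling everything by the Gaussian bounds \eqref{eq:gaussest}. As a preliminary reduction, I would observe that replacing $u_0$ by $u_0/\|u_0\|_{L^1(\mm)}$ shifts $\log u_t$ by an additive constant and hence changes neither side of \eqref{eq:23}, while replacing $\mm$ by $\mm/\mm(\X)$ leaves the heat semigroup --- and therefore $u_t$ and $|\nabla\log u_t|$ --- unchanged (the Cheeger energy and the $L^2$-norm rescale by the same factor); so one may assume $\mm(\X)=1$ and $\|u_0\|_{L^1(\mm)}=1$.

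First I would record two-sided bounds on $u_s:=\h_s u_0$. Since $\X$ is compact of diameter $D$ and $\mm$ is doubling, there are $c_0,Q>0$ depending only on $K,N,D$ with $\mm(B_r(y))\ge c_0\min\{1,r^Q\}$ uniformly in $y\in\X$ (for $r>D$ the ball is all of $\X$, of mass $1$). Plugging this, the bound $\sfd(x,y)\le D$, and $\int u_0\,\d\mm=1$ into \eqref{eq:gaussest} --- taking the free parameter there equal to $1$, so that $C_1,C_2$ depend only on $K,N,D$ --- and using also $\mm(B_{\sqrt s}(y))\le 1$ in the lower bound, one obtains, $\mm$-a.e.\ and for every $s>0$,
\[
m(s):=C_1^{-1}e^{-C_1 D^2/s}\ \le\ u_s\ \le\ \frac{C_2}{c_0\min\{1,s^{Q/2}\}}=:M(s).
\]
Using $\log z\le z-1$ it follows that $\log M(s)$ and $-\log m(s)$ are both bounded by $A_1+A_2/s$ for suitable $A_1,A_2=A_1,A_2(K,N,D)$. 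Observe that $m(s)>0$; moreover $u_s\in\testipp \X$ by \eqref{eq:regflow}, hence $\log u_s\in\testi \X$ by \eqref{eq:compos}, so that $|\nabla\log u_s|\in L^\infty(\mm)$ and the statement is meaningful.

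Then, for fixed $t>0$, I would apply Proposition \ref{pro:4} with initial datum $w:=u_{t/2}$, which by the previous step lies in $L^\infty(\mm)$ and satisfies $w\ge m(t/2)>0$; evaluating its conclusion at time $t/2$ and using $\h_{t/2}w=u_t$ yields $\tfrac t2|\nabla\log u_t|^2\le(1+K^-t)\log\big(\|w\|_{L^\infty(\mm)}/u_t\big)$ $\mm$-a.e. Bounding $\|w\|_{L^\infty(\mm)}\le M(t/2)$ and $u_t\ge m(t)$ gives, via the logarithmic estimates above, $\log\big(\|w\|_{L^\infty(\mm)}/u_t\big)\le 2A_1+3A_2/t$, hence $|\nabla\log u_t|^2\le\frac{2(1+K^-t)}{t}\big(2A_1+\tfrac{3A_2}{t}\big)$ $\mm$-a.e.; expanding and using the elementary inequality $\tfrac1t\le\tfrac12\big(1+\tfrac1{t^2}\big)$ produces \eqref{eq:23} with a constant depending only on $K,N,D$. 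Finally \eqref{eq:ham} follows at once by applying \eqref{eq:23} with $\eps t$ in place of $t$: for $\eps\in(0,1)$ and $t\ge\delta$ this gives $\eps^2|\nabla\log u_{\eps t}|^2\le C(\eps^2+t^{-2})\le C(1+\delta^{-2})$ $\mm$-a.e., and taking the essential supremum and the square root yields the claimed uniform bound.

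The only genuinely delicate point is the bookkeeping in the middle step: one must make sure that $\inf_{y}\mm(B_r(y))>0$ uniformly in $y$ --- which is where compactness and the doubling property are both used --- and that all the constants produced depend on $K,N,D$ only. The key structural fact is that $\log M(s)$ and $-\log m(s)$ grow like $O(1+1/s)$ as $s\downarrow0$, which is precisely the rate needed so that dividing by $t$ in Hamilton's estimate leaves an $O(1+1/t^2)$ bound. Beyond Hamilton's estimate and the Gaussian bounds, no new tool is required.
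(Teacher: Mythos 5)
Your proof is correct, and it follows essentially the same route as the paper: apply Proposition~\ref{pro:4} with the time-shifted datum $u_{t/2}$ (the paper uses $u_t$ shifted to $u_{2t}$, same idea) and then control the resulting logarithmic factor by the Gaussian bounds \eqref{eq:gaussest}. The one small difference is in the bookkeeping for that logarithmic factor. The paper never normalizes $u_0$: writing out the representation formula, the quantities $\|u_t\|_{L^\infty}$ and $\inf u_{2t}$ are each bounded by constants times the \emph{same} integral $\int u_0(y)/\mm(B_{\sqrt t}(y))\,\d\mm(y)$, which therefore cancels from the ratio $\|u_t\|_{L^\infty}/u_{2t}$ once Bishop--Gromov is used to replace $B_{\sqrt{2t}}$ by $B_{\sqrt t}$; this avoids any explicit lower bound on volumes of small balls. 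You instead normalize to $\|u_0\|_{L^1}=1$, $\mm(\X)=1$, and bound numerator and denominator separately, which forces you to prove the uniform volume lower bound $\mm(B_r(y))\ge c_0\min\{1,r^Q\}$ via doubling. Your volume bound is correct (the paper in fact records the same estimate later, in the proof of \eqref{eq:volneg} inside Proposition~\ref{pro:v0}), so nothing is missing; the paper's ratio cancellation is just marginally cleaner because it sidesteps that extra step.
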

\begin{proof} Recall  the representation formula \eqref{eq:rapprform}:
\[
u_t(x)=\int u(y)r_t[y](x)\,\d\mm(y)\qquad\forall x\in \X
\]
and that for the transition probability densities $r_t[y](x)$ we have the Gaussian estimates \eqref{eq:gaussest}
\[
\frac{C_0}{\mm(B_{\sqrt t}(y))}e^{-C_1\frac{D^2}t}\leq r_t[y](x)\leq  \frac{C_2}{\mm(B_{\sqrt t}(y))}\qquad\forall x,y\in \X,
\]
for appropriate constants $C_0,C_1,C_2$ depending only on $K,N$. Therefore we have
\[
\begin{split}
\|u_t\|_{L^\infty}&=\sup_x u_t(x)\leq C_2\int\frac{u(y)}{\mm(B_{\sqrt t}(y))}\,\d\mm(y),\\
\inf_x u_{2t}(x)&\geq C_0e^{-C_1\frac{D^2}t}\int\frac{u(y)}{\mm(B_{\sqrt {2t}}(y))}\,\d\mm(y)>0.
\end{split}
\]
By the Bishop-Gromov inequality we know that for some constant $C_3>0$ it holds
\[
\mm(B_{\sqrt{2t}}(y))\leq C_3\mm(B_{\sqrt{t}}(y))\qquad\forall y\in \X,\ t>0,
\]
hence the above yields
\[
\frac{\|u_t\|_{L^\infty}}{u_{2t}(x)}\leq \frac{C_2 C_3}{C_0}e^{C_1\frac{D^2}t}\qquad\forall x\in\X,\ t>0.
\]
We now apply Proposition \ref{pro:4}  with $u_t$ in place of $u_0$ (notice that the assumptions are fulfilled) to get
\[
\begin{split}
t|\nabla\log(u_{2t})|^2&\leq (1+2K^-t)\log\Big(\frac{\|u_t\|_{L^\infty}}{u_{2t}}\Big)\leq (1+2K^-t)\Big(\log\big(\frac{C_2C_3}{C_0}\big) +C_1\frac{D^2}t\Big) \qquad\mm\ae,
\end{split}
\]
which is (equivalent to) the bound \eqref{eq:23}. The last statement is now obvious.
\end{proof}
In inequality \eqref{eq:23}, the right hand side blows-up at $t=0$ and thus it gives no control for small $t$'s. In the next simple proposition we show that if the initial datum is good enough, then we have a control for all $t$'s:
\begin{Proposition}\label{pro:kyoto}
Let $(\X,\sfd,\mm)$ be a compact $\RCD^*(K,\infty)$ space with $K \in\R$  and let $u_0:\X\to(0,\infty)$ be such that $\log u_0$ is Lipschitz. Put  $u_t := \h_t u_0$ for all $t > 0$. Then
\[
|\nabla\log u_t|\leq e^{-Kt}\||\nabla\log u_0|\|_{L^\infty}\qquad \mm\ae.
\]
\end{Proposition}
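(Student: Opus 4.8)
Throughout write $L:=\||\nabla\log u_0|\|_{L^\infty}$. The plan is to prove the estimate first under the additional hypothesis $u_0\in\testipp\X$, by means of the comparison principle in Proposition \ref{pro:3}(ii), and then to recover the general case by regularising $u_0$ through the heat flow and invoking the Bakry--\'Emery estimate \eqref{eq:be}.

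\emph{Step 1: the case $u_0\in\testipp\X$.} Set $u_t:=\h_tu_0$ and $w_t:=\log u_t$. By \eqref{eq:regflow} and the maximum principle \eqref{eq:maxprinc} one has $u_t\in\testipp\X$ and $c\le u_t\le\|u_0\|_{L^\infty}$ for every $t\ge0$, where $c>0$ is a lower bound for $u_0$; hence by \eqref{eq:compos} (applied with $\varphi=\log$) $w_t\in\testi\X$, and since $\partial_tu_t=\Delta u_t$ the Laplacian chain rule gives $\partial_tw_t=\Delta w_t+|\nabla w_t|^2$. Consider $F_t:=|\nabla w_t|^2$ and $G_t:=e^{-2Kt}L^2$. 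Using \eqref{eq:reggrad}, \eqref{eq:lapmistest}, the identity $\Delta u_t=\h_t\Delta u_0\in L^\infty$ and the bound $|\nabla u_t|^2\le e^{-2Kt}\||\nabla u_0|\|_{L^\infty}^2$ coming from \eqref{eq:be}, one checks that $(F_t)$ has the regularity required in Proposition \ref{pro:3}(ii) and that $v_t:=2w_t$ is an admissible drift (indeed $\|\Delta w_t\|_{L^\infty}$ is bounded on compact time intervals). Then, by a computation entirely parallel to the one in the proof of Proposition \ref{pro:4} --- differentiate $t\mapsto\int hF_t\,\d\mm$, insert the equation for $w_t$, and use the Bochner inequality \eqref{eq:bochhess} while discarding the non-negative term $|\H{w_t}|_\HS^2$ --- one verifies that $F_t$ is a weak subsolution of $\partial_tu=\Delta u+\la\nabla u,\nabla(2w_t)\ra-2Ku$ (i.e.\ with $a_0=a_2=0$, $a_1=-2K$); concretely, $\partial_tF_t-\Delta F_t-\la\nabla F_t,\nabla(2w_t)\ra+2KF_t\le-2|\H{w_t}|_\HS^2\le0$ once the Ricci term produced by Bochner is absorbed by $+2KF_t$. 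On the other hand $G_t$ is a genuine solution of the same equation, and $G_0=L^2\ge F_0=|\nabla\log u_0|^2$ $\mm$-a.e.\ by the definition of $L$. Proposition \ref{pro:3}(ii) then gives $G_t\ge F_t$, that is $|\nabla\log u_t|\le e^{-Kt}L$ $\mm$-a.e., for all $t$.

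\emph{Step 2: the general case.} Assume only that $\log u_0$ is Lipschitz. By compactness $u_0$ is Lipschitz with $0<c\le u_0\le M<\infty$, so $u_0\in W^{1,2}(\X)\cap L^\infty(\X)$ with $|\nabla u_0|\le Lu_0$ $\mm$-a.e. For $\eps>0$ put $u_0^\eps:=\h_\eps u_0$; by \eqref{eq:regflow} we have $u_0^\eps\in\testipp\X$, and $\h_tu_0^\eps=u_{t+\eps}$. The Bakry--\'Emery estimate \eqref{eq:be} together with the monotonicity of the heat flow yields $|\nabla u_0^\eps|^2\le e^{-2K\eps}\h_\eps(|\nabla u_0|^2)\le e^{-2K\eps}L^2\,\h_\eps(u_0^2)$ $\mm$-a.e.; since $u_0,u_0^2\in C(\X)$ and $\X$ is compact, the Gaussian estimates \eqref{eq:gaussest} give $\h_\eps(u_0^2)\to u_0^2$ and $\h_\eps u_0\to u_0\ge c$ uniformly as $\eps\downarrow0$, whence
\[
\||\nabla\log u_0^\eps|\|_{L^\infty}^2\le e^{-2K\eps}L^2\Big\|\frac{\h_\eps(u_0^2)}{(\h_\eps u_0)^2}\Big\|_{L^\infty}\xrightarrow[\eps\downarrow0]{}L^2 .
\]
Applying Step 1 with initial datum $u_0^\eps$ we obtain $|\nabla\log u_{t+\eps}|\le e^{-Kt}\||\nabla\log u_0^\eps|\|_{L^\infty}$ $\mm$-a.e. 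Finally fix $t>0$: since $u_{t+\eps}\to u_t$ in $W^{1,2}(\X)$ and $u_{t+\eps},u_t\ge c$ (maximum principle), $\nabla\log u_{t+\eps}\to\nabla\log u_t$ in $L^2(\X)$; passing to a subsequence converging $\mm$-a.e.\ and combining with the previous two displays gives $|\nabla\log u_t|\le e^{-Kt}L$ $\mm$-a.e., as claimed.

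The only delicate point is Step 1: upgrading the formal identity $\partial_t|\nabla w_t|^2=\Delta|\nabla w_t|^2-2|\H{w_t}|_\HS^2-2\Ric(\nabla w_t,\nabla w_t)+2\la\nabla w_t,\nabla|\nabla w_t|^2\ra$ to the weak subsolution inequality of Proposition \ref{pro:3}(ii) --- i.e.\ justifying the differentiation of $t\mapsto\int h|\nabla w_t|^2\,\d\mm$ and handling the measure-valued Bochner inequality \eqref{eq:bochhess} when tested against non-negative $h$, exactly as in the proof of Proposition \ref{pro:4}. Everything else is routine bookkeeping.
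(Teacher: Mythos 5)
Step~1 of your proposal follows the paper's own argument almost verbatim: set $w_t=\log u_t$, use the chain rule to get $\partial_t w_t=\Delta w_t+|\nabla w_t|^2$, Bochner \eqref{eq:bochhess} to see that $F_t=|\nabla w_t|^2$ is a weak subsolution of \eqref{eq:16} with $a_0=a_2=0$, $a_1=-2K$, $v_t=2w_t$, then compare with the constant solution $e^{-2Kt}L^2$ via Proposition~\ref{pro:3}(ii). That part is fine.

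In Step~2 you regularise the \emph{density} by setting $u_0^\eps=\h_\eps u_0$, whereas the paper regularises the \emph{potential}: $u_0^\eps:=e^{\h_\eps(\log u_0)}$. The paper's choice is essentially cost-free, because Bakry--\'Emery \eqref{eq:be} applied to $\log u_0$ gives directly $\||\nabla\log u_0^\eps|\|_{L^\infty}=\||\nabla\h_\eps(\log u_0)|\|_{L^\infty}\le e^{-K\eps}L$, and no further analysis is needed. Your choice instead produces the weaker pointwise bound $|\nabla\log u_0^\eps|^2\le e^{-2K\eps}L^2\,\h_\eps(u_0^2)/(\h_\eps u_0)^2$, and you need to show that this ratio converges uniformly to $1$. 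To that end you invoke the Gaussian estimates \eqref{eq:gaussest}, but those are only available on \emph{finite-dimensional} $\RCD^*(K,N)$ spaces, $N<\infty$, while this Proposition is stated for $\RCD^*(K,\infty)$. As written the justification of the uniform limit is therefore not valid in the stated generality. The fact itself is true --- on a compact $\RCD(K,\infty)$ space with $\supp\mm=\X$, the Bakry--\'Emery estimate shows the family $\{\h_\eps f\}_{\eps\in(0,1)}$ is equi-Lipschitz whenever $f$ is Lipschitz, and combined with $L^2$-convergence and Ascoli--Arzel\`a this gives uniform convergence $\h_\eps f\to f$, which is all you need here --- but you should replace the appeal to \eqref{eq:gaussest} with this argument. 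Overall the paper's regularisation of $\log u_0$ is both shorter and avoids the dimension issue entirely; yours works after the above repair but carries an unnecessary extra step.
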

\begin{proof} Assume for a moment that $u_0\in\testipp\X$ and put $\varphi_t:=\log u_t\in\testi\X$ so that, also recalling the calculus rules stated in the preliminary section, we have $(\varphi_t)\in AC_{loc}([0,\infty),L^{2}(\X))$ and
\begin{equation}
\label{eq:derv}
\ddt\varphi_t=|\nabla\varphi_t|^2+\Delta\varphi_t.
\end{equation}
By the maximum principle \eqref{eq:maxprinc} we know that $u_t(x)\in [c,C]$ for any $t,x$, for some $[c,C]\subset (0,\infty)$ and from this fact and the chain rule for the differential and Laplacian it easily follows that $(\Delta\varphi_t)\in L^\infty_{loc}([0,\infty),W^{1,2}(\X))$ and $(|\nabla\varphi_t|)\in  L^\infty_{loc}([0,\infty),L^\infty(\X))$. Hence taking \eqref{eq:reggrad}  into account we see that $|\nabla\varphi_t|^2\in L^\infty_{loc}([0,\infty),W^{1,2}(\X))$ as well. Therefore from \eqref{eq:derv} we deduce that $(\varphi_t)\in AC_{loc}([0,\infty),W^{1,2}(\X)) $ so that putting
\[
F_t:=|\nabla\varphi_t|^2,
\]
we have that $(F_t)$ satisfies the regularity assumptions needed in point $(ii)$ of Proposition \ref{pro:3} (notice that trivially $AC_{loc}([0,\infty),L^2(\X))\subset AC_{loc}([0,\infty),W^{-1,2}(\X))$). Moreover, from \eqref{eq:derv} we get
\[
\ddt F_t=2\la\nabla\varphi_t,\nabla F_t\ra+2\la\nabla\varphi_t,\nabla\Delta\varphi_t\ra
\]
and therefore from the Bochner inequality \eqref{eq:bochhess} written for $\varphi_t$ - neglecting the term with the Hessian - we see that for any $h\in\testip \X$ it holds
\[
\ddt\int h F_t\,\d\mm\leq \int\Delta hF_t+2h\Big(\la\nabla\varphi_t,\nabla F_t\ra-K F_t\Big)\,\d\mm,
\]
showing that $(F_t)$ is a weak subsolution of \eqref{eq:16} with
\[
a_0=0\qquad a_1=-2K\qquad a_2=0\qquad v_t=2\varphi_t.
\]
On the other hand, the function
 \[
 G_t(x):=e^{-2Kt}\|F_0\|_{L^\infty}
 \]
is a solution of \eqref{eq:16} and  $F_0\leq G_0$ $\mm$-a.e..  Since from the chain rule for the Laplacian and the maximum principle \eqref{eq:maxprinc} we have $\Delta\varphi_t\in L^\infty_{loc}([0,\infty),L^\infty(\X))$, we see that we are in position to apply point $(ii)$ of Proposition \ref{pro:3} and  deduce that $F_t\leq G_t$ $\mm$-a.e.\ for every $t>0$, which is the thesis.

For the case of general $u_0$ as in the assumptions, we put $u^\eps_0:=e^{\h_\eps(\log(u_0))}$ and notice that  by the Bakry-\'Emery estimate \eqref{eq:be}, it holds
\[
\lims_{\eps\downarrow0}\||\nabla\log u_0^\eps|\|_{L^\infty}\leq\||\nabla\log u_0|\|_{L^\infty}.
\] 
Then put $\varphi^\eps_t:=\log \h_tu^\eps_0$ and notice that this last inequality together with what previously proved grants that
\[
\lims_{\eps\downarrow0}\||\nabla \varphi_t^\eps|\|_{L^\infty}\leq e^{-Kt}\||\nabla\log u_0|\|_{L^\infty}.
\]
Conclude noticing that $\varphi^\eps_t\to \log u_t$ $\mm$-a.e.\ as $\eps\downarrow0$ and use the closure of the differential.
\end{proof}

\subsection{A Li-Yau type inequality}
We now prove a version of Li-Yau inequality valid on general compact $\RCD^*(K,N)$ spaces, where $K$ is possibly negative: the bound \eqref{eq:liyau} that we obtain is not sharp (as it is seen by letting $K\uparrow 0$ in the estimate \eqref{eq:21} provided in the proof) but sufficient for our needs.
\begin{Theorem}\label{lem:4}
Let $(\X,\sfd,\mm)$ be a compact $\RCD^*(K,N)$ space with $K \in\R$ and $N \in [1,\infty)$. Then for every $\delta>0$ there exists a constant $C_\delta>0$ depending on $K,N$, ${\rm Diam}(\X)$ and $\delta$ only such that the following holds.

For any  $u_0 \in L^1(\mm)$  non-negative and non-zero and $\eps\in(0,1)$ it holds
\begin{equation}
\label{eq:liyau}
\eps\Delta \log(\h_{\eps t}(u_0))\geq -C_\delta \qquad\forall t\geq \delta.
\end{equation}
\end{Theorem}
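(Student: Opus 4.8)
The plan is to run a Li--Yau maximum-principle argument through the comparison principle of Proposition \ref{pro:3}(ii), applied to the ``slowed down'' potential that is scaled to match Hamilton's estimate \eqref{eq:ham}. Write $u_s:=\h_s u_0$; by \eqref{eq:regflow} we have $\h_{\eps t}u_0\in\testipp\X$ for every $t>0$, so $\psi_t:=\eps\log\h_{\eps t}u_0\in\testi\X$ is well defined for $t>0$ and a direct computation shows it solves the (slowed) Hamilton--Jacobi--Bellman equation $\partial_t\psi_t=|\nabla\psi_t|^2+\eps\Delta\psi_t$, with $\Delta\psi_t=\eps\Delta\log\h_{\eps t}u_0$ precisely the quantity to be bounded below. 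First I would establish, by differentiating in $t$ the identity $\int h\,\Delta\psi_t\,\d\mm=\int \Delta h\,\psi_t\,\d\mm$ (valid for $h\in D(\Delta)$), using that $|\nabla\psi_t|^2\in D(\bd)$ by \eqref{eq:lapmistest} and the Bochner inequality \eqref{eq:bochlap}, that $H_t:=\Delta\psi_t$ is a weak supersolution --- in the sense of Proposition \ref{pro:3}(ii), but with $\eps\Delta$ in place of $\Delta$, a harmless change since the proof of that proposition uses only the sign $-\eps|\nabla(\cdot)^+|^2\le 0$ --- of
\[
\partial_t u=\eps\Delta u+\tfrac2N u^2+\langle\nabla u,\nabla(2\psi_t)\rangle+2K|\nabla\psi_t|^2 .
\]
Along the way one must also check that $(H_t)$ and $(\psi_t)$ satisfy the integrability and regularity hypotheses required by Proposition \ref{pro:3}(ii) on each interval $[\delta/2,T]$; this follows from the Gaussian bounds \eqref{eq:gaussest} together with \eqref{eq:reggrad} and the regularizing property \eqref{eq:regflow}.

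Next I would tame the only term with a bad sign. By Hamilton's gradient estimate in the form \eqref{eq:ham}, applied with $\delta/2$, we get $|\nabla\psi_t|^2=\eps^2|\nabla\log\h_{\eps t}u_0|^2\le C_{\delta/2}^2$ for all $t\ge\delta/2$, \emph{uniformly} in $\eps\in(0,1)$ and in $u_0$. Hence $2K|\nabla\psi_t|^2\ge -a_2$ on $[\delta/2,\infty)$ with $a_2:=2K^-C_{\delta/2}^2$, a constant depending only on $K$, $N$, ${\rm Diam}(\X)$ and $\delta$, so $H_t$ is also a supersolution of the equation above with the constant source $-a_2$. As competitor I would take the \emph{spatially constant} function
\[
G_t:=-\beta\coth\!\Big(\tfrac{2\beta}{N}(t-c)\Big),\qquad \beta:=\sqrt{\tfrac{Na_2}{2}},\quad c\in(0,\delta/2)
\]
(and $G_t:=-\tfrac{N}{2(t-c)}$ in the case $K\ge0$, where $a_2=0$), which is an exact solution of the ODE $\dot u=\tfrac2N u^2-a_2$, hence a subsolution of the PDE, and which blows down to $-\infty$ as $t\downarrow c$.

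Then, for fixed $u_0$ and $\eps$, the function $H_{\delta/2}=\eps\Delta\log\h_{\eps\delta/2}u_0$ belongs to $L^\infty(\mm)$, so one may choose $c\in(0,\delta/2)$ close enough to $\delta/2$ that $G_{\delta/2}\le H_{\delta/2}$ $\mm$-a.e.; this choice depends on $u_0$ and $\eps$, but that will be irrelevant for the final constant. Applying Proposition \ref{pro:3}(ii) (time-translated to start at $t=\delta/2$) yields $H_t\ge G_t$ $\mm$-a.e.\ for all $t\ge\delta/2$. Since $c<\delta/2$, for $t\ge\delta$ we have $t-c>\delta/2$, and as $\coth$ is decreasing we get $G_t\ge -\beta\coth\!\big(\tfrac{\beta\delta}{N}\big)=:-C_\delta$ (respectively $G_t\ge -N/\delta$ when $K\ge0$), a constant depending only on $K$, $N$, ${\rm Diam}(\X)$ and $\delta$. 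This gives $\eps\Delta\log\h_{\eps t}u_0=H_t\ge -C_\delta$ for all $t\ge\delta$, which is the claim. No regularity of $u_0$ beyond $u_0\in L^1(\mm)$ (non-negative, non-zero) is needed, because the comparison is seeded at the positive time $\delta/2$ where the flow has already regularized.

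The step I expect to be the main obstacle is the first one: rigorously deriving the supersolution inequality for $H_t=\Delta\psi_t$ within the $\RCD$ calculus --- handling the measure-valued Laplacian of $|\nabla\psi_t|^2$ via \eqref{eq:lapmistest} and \eqref{eq:bochlap}, justifying the interchange of the time derivative with the spatial integral, and verifying that $t\mapsto \Delta\psi_t$ has the $L^\infty_{loc}$, $W^{1,2}$ and $AC_{loc}(W^{-1,2})$ properties demanded by Proposition \ref{pro:3}(ii). This is analogous to, and can be modelled on, the corresponding computations in the proof of Proposition \ref{pro:4}. A secondary point, already flagged, is the replacement of $\Delta$ by $\eps\Delta$ in the comparison principle; it is exactly the passage to the $\eps$-slowed potential $\psi_t$ that makes Hamilton's estimate usable on the fixed interval $[\delta/2,\infty)$ uniformly in $\eps$, which is why the argument is organized this way.
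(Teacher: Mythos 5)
Your proof is correct, and while it rests on exactly the same triple of ingredients as the paper's proof --- the Bochner inequality \eqref{eq:bochlap}, Hamilton's gradient estimate, and the comparison principle of Proposition \ref{pro:3}(ii) applied to a constant-in-space $\coth$-type subsolution --- it is organized in a genuinely cleaner way. The paper works with the \emph{unscaled} function $f_t:=\log u_t$ and proceeds in two stages: first, for each auxiliary shift parameter $\eta>0$ it uses the raw Hamilton estimate \eqref{eq:23} to get a source term $a_2(\eta)$ that blows up as $\eta\downarrow0$, derives the pointwise bound on $\Delta\log u_{t+\eta}$ from the comparison, and then optimizes by setting $\eta:=t$, arriving at the intermediate inequality \eqref{eq:21}; second, it must separately verify that the scaled quantity $\phi(t,\eps):=\eps\sqrt{N\alpha(\eps t)}\coth\big(\sqrt{\alpha(\eps t)/N}\,\eps t\big)$ is decreasing in $t$ and remains finite as $\eps\downarrow0$. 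Your proof eliminates both the $\eta$-shift-and-optimize device and the final monotonicity-and-limit check by building the $\eps$-scaling in from the start: working with $\psi_t:=\eps\log\h_{\eps t}u_0$ and its Laplacian $H_t$, the $\eps$-uniform form of Hamilton's estimate \eqref{eq:ham} (applied at level $\delta/2$) gives a source term $a_2$ that is constant on the whole half-line $[\delta/2,\infty)$ uniformly in $\eps\in(0,1)$ and in $u_0$, so that seeding the comparison at $t=\delta/2$ and reading it off at $t=\delta$ yields the constant directly. The change $\Delta\to\eps\Delta$ in the comparison principle is indeed harmless, since the only role of the Laplacian term in the proof of Proposition \ref{pro:3}(ii) is to produce the non-positive contribution $-\eps\int|\nabla((G_t-F_t)^+)|^2\,\d\mm$. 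The one thing your route gives up is the intermediate estimate \eqref{eq:21}, which is a clean bound on $\Delta\log u_t$ valid for all $t>0$ independently of any rescaling; but for the purpose of \eqref{eq:liyau} that extra information is not needed, and your version is shorter and more transparent.
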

\begin{proof} We can, and will, assume $K<0$. Let  $C$ be the constant given by Theorem \ref{cor:1} (which only depends on $K,N$ and ${\rm Diam}(\X)$) and put
\[
\alpha(t) :=-KC\Big(1 + \frac{4}{t^2}\Big)>0.
\]
We shall prove that for $u_0$ as in the assumptions we have
\begin{equation}\label{eq:21}
\Delta\log u_t \geq -\sqrt{N\alpha(t)}\coth\Big(\sqrt{\frac{\alpha(t)}N}t\Big)\qquad\forall t>0.
\end{equation}
From this the thesis easily follows as the function $\phi(t,\eps):=\eps \sqrt{N\alpha(\eps t)}\coth\Big(\sqrt{\frac{\alpha(\eps t)}N}\eps t\Big)$ is decreasing in $t$ - as seen by direct computation - so that \eqref{eq:liyau} follows from \eqref{eq:21} and
\[
\lim_{\eps\downarrow 0}\phi(\delta,\eps)=\sqrt{\frac{-4KCN}{\delta^2}}\coth\Big(\sqrt{\frac{-4KC}N}\Big)<+\infty.
\]
Thus fix $u_0$ as in the statement and notice that $u_t\in\testipp \X$ for every $t>0$, so that  $f_t := \log u_t\in \testi \X$ for every $t>0$. Arguing as in the proof of Proposition \ref{pro:kyoto}  we see that  $(f_t)\in AC_{loc}((0,\infty),W^{1,2}(\X))$ with
\begin{equation}
\label{eq:hjb}
\frac{\d}{\d t} f_t = \Delta f_t + |\nabla f_t|^2, \quad \textrm{for a.e. }t > 0.
\end{equation}
Let $\eta>0$ to be fixed later and put $F_t:=\Delta f_{t+\eta}$. From the chain rules for the gradient and Laplacian it is readily verified that $(F_t)\in L^\infty_{loc}([0,\infty),L^{\infty}(\X))\cap L^\infty_{loc}([0,\infty),W^{1,2}(\X))$

Now, as in the proof of Lemma \ref{le:kyoto},  the trivial estimate
\[
\|\Delta f\|_{W^{-1,2}}=\sup_{\|g\|_{W^{1,2}}=1}\int g\Delta f\,\d\mm=\sup_{\|g\|_{W^{1,2}}=1}-\int\la\nabla g,\nabla f\ra\,\d\mm\leq \|f\|_{W^{1,2}}
\]
grants that $\Delta:D(\Delta)\to L^2$ can be uniquely extended to a linear bounded functional, still denoted by  $\Delta$, from $W^{1,2}(\X)$ to $W^{-1,2}(\X)$. It is then clear that  $(F_t)\in AC_{loc}([0,\infty),W^{-1,2}(\X))$.

We want to show that $(F_t)$ is a weak supersolution of \eqref{eq:16} for an appropriate choice of the parameters and to this aim we fix $h\in \testip \X$ and notice that
\[
\frac{\d}{\d t}\int hF_t\,\d\mm=\frac\d{\d t}\int \Delta h f_{t+\eta}\,\d\mm\stackrel{\eqref{eq:hjb}}=\int\Delta h(F_t + |\nabla f_{t+\eta}|^2)\,\d\mm.
\]
Using first the Bochner inequality \eqref{eq:bochlap} and then the gradient estimate \eqref{eq:23} we obtain
\[
\begin{split}
\frac{\d}{\d t}\int hF_t\,\d\mm&\geq \int \Delta h F_t+h\Big(2\la\nabla f_{t+\eta},\nabla F_t\ra+\frac2NF_t^2+2K|\nabla f_{t+\eta}|^2\Big)\,\d\mm\\
&\geq  \int \Delta h F_t+h\Big(2\la\nabla f_{t+\eta},\nabla F_t\ra+\frac2NF_t^2+ 2KC\big(1+\frac1{\eta^2}\big) \Big)\,\d\mm,
\end{split}
\]
thus indeed $(F_t)$ is a weak supersolution of \eqref{eq:16} for
\[
a_0:=\frac2N\qquad a_1:=0\qquad a_2(\eta):=2KC\big(1+\frac1{\eta^2}\big)\qquad v_t:=2f_{t+\eta}.
\]
Noticing that $\alpha_2(\eta)<0$, it is trivial to check that the function
\[
y_t:=-\sqrt{-\frac{ a_2(\eta)N}2}\coth\Big(\sqrt{-\frac{2 a_2(\eta)}N}(t+t_0)\Big)
\]
is the only solution of
\[
y'_t = \frac{2}{N}y^2_t +a_2(\eta)
\]
with $y_0=-\sqrt{-\frac{ a_2(\eta)N}2}\coth\Big(\sqrt{-\frac{2 a_2(\eta)}N}t_0\Big)$. Now recall that $F_0=\Delta f_\eta\in L^\infty$, so that choosing $t_0>0$ sufficiently small we have that $ F_0\geq y_0$ $\mm$-a.e..

Defining $G_t(x):=y_t$ it is then clear that $(G_t)$ is a weak (sub)solution of \eqref{eq:16}, and since $F_0\geq G_0$ holds $\mm$-a.e.\ and, as already argued in the proof of Proposition \ref{pro:kyoto}, $\Delta v_t\in L^\infty_{loc}([0,\infty),L^\infty(\X))$, Proposition \ref{pro:3} grants that for any $t>0$ it holds $F_t\geq G_t$ $\mm$-a.e., that is:
\[
\Delta\log(u_{t+\eta})\geq -\sqrt{-\frac{\alpha_2(\eta)N}2}\coth\Big(\sqrt{-\frac{2\alpha_2(\eta)}N}(t+t_0)\Big)\geq -\sqrt{-\frac{\alpha_2(\eta)N}2}\coth\Big(\sqrt{-\frac{2\alpha_2(\eta)}N}t\Big).
\] 
Picking $\eta:=t$ we obtain (an equivalent version of) \eqref{eq:21}.
\end{proof}
\section{The Schr\"odinger problem: properties of the solutions}\label{sec:5}
\subsection{The setting}
Let us fix once for all the assumptions and notations which  we shall use from now on.
\begin{Setting}\label{set}
 $(\X,\d,\mathfrak{m})$ is a compact $\RCD^*(K,N)$ space with $K \in\R$ and $N \in [1,\infty)$. $D<\infty$ is the diameter of $\X$ and  $\mu_0=\rho_0\mm$ and $\mu_1=\rho_1\mm$ are two absolutely continuous Borel probability measures with bounded densities.
 
 For any $\eps>0$ we consider the couple  $(f^{\varepsilon},g^{\varepsilon})$ given by Theorem \ref{thm:5} normalized in such a way that 
\[
\int\log(\h_{\frac\eps2} f^\eps)\rho_1\,\d\mm=0,
\]
then we  set $\rho^\eps_0:=\rho_0$, $\rho^\eps_1:=\rho_1$, $\mu^\eps_0:=\mu_0$, $\mu^\eps_1:=\mu_1$ and 
\[
\left\{\begin{array}{l}
f^{\varepsilon}_t := \h_{\varepsilon t/2}f^{\varepsilon} \\
\\
\varphi_t^{\varepsilon} := \varepsilon\log f_t^{\varepsilon}\\
\\
\text{for }t\in(0,1]
\end{array}
\right.\qquad\qquad
\left\{\begin{array}{l}
g^{\varepsilon}_t := \h_{\varepsilon(1-t)/2}g^{\varepsilon} \\
\\
\psi_t^{\varepsilon} := \varepsilon\log g_t^{\varepsilon}\\
\\
\text{for }t\in[0,1)
\end{array}
\right.\qquad\qquad
\left\{\begin{array}{l}
\rho^{\varepsilon}_t := f^{\varepsilon}_t g^{\varepsilon}_t \\
\\
\mu^{\varepsilon}_t := \rho^{\varepsilon}_t\mm\\
\\
\vartheta^{\varepsilon}_t := \frac12({\psi^{\varepsilon}_t - \varphi^{\varepsilon}_t})\\
\\
\text{for }t\in(0,1)
\end{array}
\right.
\]
\end{Setting}
The following proposition collects the basic properties of the functions just defined and the respective `PDEs' solved:
\begin{Proposition}\label{pro:7} With the same assumptions and notation as in Setting \ref{set}, the following holds.

All the functions are well defined and belong to $\testi \X$ and for any $\eps>0$ all the curves $(f^\eps_t),(g^\eps_t),(\varphi^\eps_t),(\psi^\eps_t),(\rho^\eps_t),(\vartheta^\eps_t)$ belong to $AC_{loc}(I,W^{1,2}(\X))$, where $I$ is the respective domain of definition (for $(\rho^\eps_t)$ we pick $I=(0,1)$) and their time derivatives are given by the following expressions for a.e.\ $t\in[0,1]$:
\begin{align*}
\frac{\d}{\d t}f^{\varepsilon}_t &= \frac{\varepsilon}{2}\Delta f^{\varepsilon}_t&&\qquad&\frac{\d}{\d t}g^{\varepsilon}_t &= -\frac{\varepsilon}{2}\Delta g^{\varepsilon}_t\\
\frac{\d}{\d t}\varphi^{\varepsilon}_t & =  \frac{1}{2}|\nabla\varphi^{\varepsilon}_t|^2 + \frac{\varepsilon}{2}\Delta\varphi^{\varepsilon}_t&&\qquad&-\frac{\d}{\d t}\psi^{\varepsilon}_t & =  \frac{1}{2}|\nabla\psi^{\varepsilon}_t|^2 + \frac{\varepsilon}{2}\Delta\psi^{\varepsilon}_t\\
\frac{\d}{\d t}\rho^{\varepsilon}_t &+ {\rm div}(\rho^{\varepsilon}_t\nabla\vartheta^{\varepsilon}_t) = 0&&\qquad&\frac{\d}{\d t}\vartheta^{\varepsilon}_t& + \frac{|\nabla\vartheta^{\varepsilon}_t|^2}{2} = -\frac{\varepsilon^2}{8}\Big(2\Delta\log\rho^{\varepsilon}_t + |\nabla\log\rho^{\varepsilon}_t|^2\Big).
\end{align*}
Moreover, for every $\eps>0$ we have:
\begin{itemize}
\item[i)]
\begin{equation}
\label{eq:boundbase}
\sup_{t\in C}\|h^\eps_t\|_{L^\infty}+\Lip(h^\eps_t)+\|\Delta h^\eps_t\|_{W^{1,2}}+\|\Delta h^\eps_t\|_{L^\infty}<\infty
\end{equation}
where $(h^\eps_t)$ is equal to any of $(f^\eps_t),(g^\eps_t),(\varphi^\eps_t),(\psi^\eps_t),(\rho^\eps_t),(\vartheta^\eps_t)$ and $C$ is a compact subset of the respective domain of definition (for $(\rho^\eps_t)$ we pick $I=(0,1)$),
\item[ii)] $\mu^\eps_t$ is a probability measure for every $t\in[0,1]$ and $(\rho^\eps_t)\in C([0,1],L^2(\X))$,
\item[iii)]  we have $f^\eps_t\to f^\eps$  and  $g^\eps_t\to g^\eps$ in $L^2(\X)$ as $t\downarrow0$ and $t\uparrow 1$ respectively,
\end{itemize}
Finally,  if we further assume $\rho_0,\rho_1 \in \testipp{\X}$, then all the above curves can be extended to curves in $AC([0,1],W^{1,2}(\X))$ and we can take $C=I$ in \eqref{eq:boundbase}.
\end{Proposition}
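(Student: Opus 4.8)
The plan is to verify every assertion by direct computation, exploiting systematically the heat‑flow representation of $f^\eps_t$ and $g^\eps_t$, the $\RCD$ calculus rules recalled in Section~\ref{sec:2}, and the smoothing properties of the heat semigroup on a compact $\RCD^*(K,N)$ space. The proof is long but soft: there is no single deep step, only a careful assembly.

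\emph{Membership in the test‑function class.} Since $f^\eps,g^\eps\in L^1(\X)$ are non‑negative and, by the density relations $f^\eps\,\h_{\eps/2}g^\eps=\rho_0$ and $g^\eps\,\h_{\eps/2}f^\eps=\rho_1$ underlying Theorem~\ref{thm:5}, not $\mm$‑a.e.\ zero, the implication \eqref{eq:regflow} gives $f^\eps_t=\h_{\eps t/2}f^\eps\in\testipp\X$ for every $t>0$ and $g^\eps_t=\h_{\eps(1-t)/2}g^\eps\in\testipp\X$ for every $t<1$; in particular both are bounded away from $0$ and from $\infty$, with $\mm$‑a.e.\ bounded gradient and Laplacian. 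Then $\rho^\eps_t=f^\eps_t g^\eps_t\in\testipp\X$ for $t\in(0,1)$, because $\testi\X$ is an algebra and a product of two functions bounded below by positive constants is again bounded below. Finally $\varphi^\eps_t=\eps\log f^\eps_t$ and $\psi^\eps_t=\eps\log g^\eps_t$ lie in $\testi\X$ by \eqref{eq:compos}, applied with the smooth function $\log$ whose relevant domain is a compact subinterval of $(0,\infty)$, and $\vartheta^\eps_t=\tfrac12(\psi^\eps_t-\varphi^\eps_t)\in\testi\X$ because $\testi\X$ is a vector space.

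\emph{The curves and their time derivatives.} By analyticity of the heat semigroup, for $h\in L^2(\X)$ the curve $s\mapsto\h_s h$ is smooth from $(0,\infty)$ into $D(\Delta^k)$ for every $k$, hence into $W^{1,2}(\X)$, with $\tfrac{\d}{\d s}\h_s h=\Delta\h_s h$; rescaling time this yields $(f^\eps_t),(g^\eps_t)\in AC_{loc}$ with the stated heat equations. The equations for $\varphi^\eps_t,\psi^\eps_t,\rho^\eps_t,\vartheta^\eps_t$ then follow from the chain and Leibniz rules for $\d$ and $\Delta$ of Section~\ref{sec:2}, legitimate here because all functions involved are uniformly bounded above and below with uniformly bounded gradients. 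Concretely, the identity $\eps\,\Delta f^\eps_t/f^\eps_t=\Delta\varphi^\eps_t+|\nabla\varphi^\eps_t|^2/\eps$ turns the linear heat equation for $f^\eps_t$ into $\tfrac{\d}{\d t}\varphi^\eps_t=\tfrac12|\nabla\varphi^\eps_t|^2+\tfrac\eps2\Delta\varphi^\eps_t$ (and symmetrically for $\psi^\eps_t$, with the opposite sign coming from $g^\eps_t=\h_{\eps(1-t)/2}g^\eps$); writing $\rho^\eps_t\nabla\vartheta^\eps_t=\tfrac\eps2(f^\eps_t\nabla g^\eps_t-g^\eps_t\nabla f^\eps_t)$ and taking the divergence gives the continuity equation; and using $\varphi^\eps_t+\psi^\eps_t=\eps\log\rho^\eps_t$ and $\psi^\eps_t-\varphi^\eps_t=2\vartheta^\eps_t$ — so that the cross terms in $|\nabla\varphi^\eps_t|^2+|\nabla\psi^\eps_t|^2$ cancel — one obtains the Hamilton--Jacobi equation for $\vartheta^\eps_t$ with the displayed right‑hand side. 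For the bounds: if $C$ is a compact subset of the relevant domain, the heat time $\eps t/2$ (respectively $\eps(1-t)/2$) is bounded below by a positive constant, so the Gaussian estimates \eqref{eq:gaussest} give two‑sided bounds $0<c_C\le f^\eps_t\le C_C$, the Bakry--\'Emery estimate \eqref{eq:be} together with ultracontractivity gives a uniform bound on $\||\nabla f^\eps_t|\|_{L^\infty}$ (hence, by \eqref{eq:sobtolip}, on $\Lip(f^\eps_t)$), and the analytic smoothing bounds $\|\Delta^k\h_s h\|_{L^2}\le C_k\, s^{-k}\|h\|_{L^2}$ combined with the $L^2\to L^\infty$ bound from \eqref{eq:gaussest} control $\|\Delta f^\eps_t\|_{W^{1,2}}$ and $\|\Delta f^\eps_t\|_{L^\infty}$. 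These propagate to $g^\eps_t$ and then, via \eqref{eq:reggrad}, \eqref{eq:lapmistest}, \eqref{eq:compos} and the algebra property, to $\varphi^\eps_t,\psi^\eps_t,\rho^\eps_t,\vartheta^\eps_t$, giving \eqref{eq:boundbase}. For (ii), $\rho^\eps_t\ge0$ and, by self‑adjointness of $\h$ and the semigroup law $\h_{\eps t/2}\h_{\eps(1-t)/2}=\h_{\eps/2}$, $\int\rho^\eps_t\,\d\mm=\int f^\eps\,\h_{\eps/2}g^\eps\,\d\mm=\int\rho_0\,\d\mm=1$, while $\mu^\eps_0=\mu_0$, $\mu^\eps_1=\mu_1$; the $L^2$‑continuity of $(\rho^\eps_t)$ on $(0,1)$ is from $AC_{loc}$, and at $t=0$ (symmetrically $t=1$) one writes $\rho^\eps_t-\rho_0=(\h_{\eps t/2}f^\eps-f^\eps)\,g^\eps_t+f^\eps(g^\eps_t-g^\eps_0)$ and uses $f^\eps_t\to f^\eps$ in $L^2$ together with the uniform $L^\infty$‑bound and $L^\infty$‑continuity of $t\mapsto g^\eps_t$ near $t=0$. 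Claim (iii) is just the strong $L^2$‑continuity of the heat flow at time $0$.

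\emph{The case $\rho_0,\rho_1\in\testipp\X$, and the main obstacle.} Under this extra hypothesis the last part of Theorem~\ref{thm:5} gives $f^\eps,g^\eps\in\testipp\X$ themselves, so $f^\eps_0=f^\eps$ and $g^\eps_1=g^\eps$ are already test functions and all the curves extend to the closed intervals; moreover $s\mapsto\h_s h$ is absolutely continuous into $W^{1,2}(\X)$ on $[0,T]$ whenever $h\in D(\Delta)$ with $\Delta h\in W^{1,2}(\X)$, since $\|\Delta\h_s h\|_{W^{1,2}}=\|\h_s\Delta h\|_{W^{1,2}}$ stays bounded for $s\in[0,T]$, and this passes to products and to compositions with $\log$; hence \eqref{eq:boundbase} holds with $C=I$. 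I expect the bulk of the work — and the only genuinely delicate point — to be the uniform control in (i): one must keep track of two different regularity scales (the $L^\infty$ bounds needed to make the nonlinear manipulations licit, and the $W^{1,2}$ and $L^\infty$ bounds on the Laplacians) and check at each step that the chain and Leibniz rules of Section~\ref{sec:2} really apply; by contrast, the algebraic identities producing the various PDEs and the endpoint continuity in (ii)--(iii) are routine once this test‑function regularity is in place.
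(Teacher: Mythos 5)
Your proposal is correct and follows essentially the same route as the paper: test-function regularity from \eqref{eq:regflow} and the stability properties of $\testi\X$ under products, inverses and smooth compositions; the PDEs by the chain and Leibniz rules once the two-sided bounds on $f^\eps_t,g^\eps_t$ are in place; the uniform bounds in (i) from the maximum principle, $W^{1,2}$-contractivity (equivalently, the analytic smoothing you invoke) and Bakry--\'Emery with Sobolev-to-Lipschitz; and the endpoint identifications in (ii)--(iii) from the Schr\"odinger system $\rho_0=f^\eps\h_{\eps/2}g^\eps$, $\rho_1=g^\eps\h_{\eps/2}f^\eps$. Your direct derivation of the continuity equation via $\rho^\eps_t\nabla\vartheta^\eps_t=\tfrac\eps2(f^\eps_t\nabla g^\eps_t-g^\eps_t\nabla f^\eps_t)$ is a mild repackaging of the paper's computation through $\eps\log\rho^\eps_t=\varphi^\eps_t+\psi^\eps_t$, and your more explicit treatment of $L^2$-continuity at $t=0,1$ fills in a detail the paper leaves implicit.
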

\begin{proof} Recalling \eqref{eq:regflow} we see that $f^\eps_{t_0}\in\testipp \X$ for any $t_0>0$. Then the maximum principle for the heat flow, the fact that it is a contraction in $W^{1,2}(\X)$ and the Bakry-\'Emery gradient estimates \eqref{eq:be} together with the Sobolev-to-Lipschitz property grant that \eqref{eq:boundbase} holds for $(f_t^\eps)$. The same arguments apply to $(g^\eps_t)$ and also show that for given $\eps>0$, both $(f^\eps_t)$ and $(g^\eps_t)$ are, locally in $t$, uniformly bounded from below by a positive constant. Then the bound \eqref{eq:boundbase} for $(\varphi^\eps_t),(\psi^\eps_t),(\rho^\eps_t),(\vartheta^\eps_t)$ follows from the chain rules for the gradient and Laplacian and the fact that $\log$ is smooth on $(0,\infty)$ and for the same reason these curves belong to $AC_{loc}(I,L^2(\X))$.

The equations for $\frac\d{\d t}\varphi^\eps_t$ and $\frac\d{\d t}\psi^\eps_t$ are easily derived, for $\frac\d{\d t}\rho^\eps_t$ we notice that $\eps\log\rho^\eps_t=\varphi^\eps_t+\psi^\eps_t$ and thus
\[
\begin{split}
\frac\d{\d t}\rho^\eps_t&=\rho^\eps_t\frac\d{\d t}\log\rho^\eps_t=\rho^\eps_t\frac1\eps\Big(\frac{|\nabla\varphi^\eps_t|^2}{2}-\frac{|\nabla\psi^\eps_t|^2}{2}+\frac\eps2\Delta\varphi^\eps_t-\frac\eps2\Delta\psi^\eps_t\Big)\\
&=\rho^\eps_t\Big(-\la\nabla\vartheta^\eps_t,\nabla\log\rho^\eps_t\ra-\Delta\vartheta^\eps_t\Big)=-\la\nabla\vartheta^\eps_t,\nabla\rho^\eps_t\ra-\rho^\eps_t\Delta\vartheta^\eps_t=-{\rm div}(\rho^\eps_t\nabla\vartheta^\eps_t)
\end{split}
\]
and for $\frac\d{\d t}\vartheta^\eps_t$ we observe that
\[
\begin{split}
\frac\d{\d t}\vartheta^\eps_t+\frac{|\nabla\vartheta^\eps_t|^2}2&=-\frac{|\nabla\psi^\eps_t|^2}{4}-\frac{\eps }4\Delta\psi^\eps_t-\frac{|\nabla\varphi^\eps_t|^2}4-\frac{\eps}4\Delta\varphi^\eps_t+\frac{|\nabla\psi^\eps_t|^2}8+\frac{|\nabla\varphi^\eps_t|^2}8-\frac{\la\nabla\psi^\eps_t,\nabla\varphi^\eps_t\ra}4\\
&=-\frac{\eps^2}4\Delta\log\rho^\eps_t-\frac18\Big(|\nabla\psi^\eps_t|^2+|\nabla\varphi^\eps_t|^2-2\la\nabla\varphi^\eps_t,\nabla\psi^\eps_t\ra\Big)\\
&=-\frac{\varepsilon^2}{8}\Big(2\Delta\log\rho^{\varepsilon}_t + |\nabla\log\rho^{\varepsilon}_t|^2\Big).
\end{split}
\]
The fact that $(\varphi^\eps_t),(\psi^\eps_t),(\rho^\eps_t),(\vartheta^\eps_t)$ are absolutely continuous with values in $W^{1,2}(\X)$ is then a direct consequence of the expressions for their derivatives and the bound \eqref{eq:boundbase} in conjunction with \eqref{eq:reggrad}.

It is clear that $\rho^\eps_t\geq 0$ for every $\eps,t$, hence the identity
\[
\int\rho^\eps_t\,\d\mm=\int \h_{\varepsilon t/2}f^\eps\h_{\varepsilon(1-t)/2}g^{\varepsilon}\,\d\mm=\int f^\eps\h_{\eps/2} g^\eps\,\d\mm=\int \rho^\eps_0\,\d\mm=1
\]
shows that $\mu^\eps_t\in\prob \X$. 

Due to the continuity of $[0,\infty)\ni t\mapsto\h_th\in L^2(\X)$ for every $h\in L^2(\X)$, the claimed 

The claimed continuities in $L^2$ for the $f$'s and $g$'s  follow from the continuity in $L^2$ of $[0,\infty)\ni t\mapsto \h_th$ for every $h\in L^2$. Then for what concerns the $\rho$'s, we need to check that for every $\eps>0$ we have
\begin{equation}
\label{eq:contin0}
\rho_0=f^\eps\h_{\eps/2}g^\eps\qquad\qquad\qquad\rho_1=g^\eps\h_{\eps/2}f^\eps.
\end{equation}
As already noticed in the proof of Theorem \ref{thm:5}, these are equivalent to the fact that $f^\eps\otimes g^\eps\,\hR^{\eps/2}$ is a transport plan from $\mu_0$ to $\mu_1$; hence, \eqref{eq:contin0} holds by the very choice of $(f^\eps,g^\eps)$ made.

Finally, the last claim follows recalling that   the last part of Theorem \ref{thm:5} grants that  $f^\eps,g^\eps\in\testipp \X$ and then arguing as before.
\end{proof}

Using the terminology adopted in the literature (see \cite{Leonard14}) we shall refer to:
\begin{itemize}
\item $\varphi^{\varepsilon}_t$ and $\psi^{\varepsilon}_t$ as Schr\"odinger potentials, in connection with Kantorovich ones;
\item $(\mu^{\varepsilon}_t)_{t \in [0,1]}$ as entropic interpolation, in analogy with displacement one.
\end{itemize}

\subsection{Uniform estimates for the densities and the potentials}
We start collecting information about quantities which remain bounded as $\eps\downarrow0$.
\begin{Proposition}[uniform $L^{\infty}$ bound on the densities]\label{pro:6} With the same assumptions and notations as in Setting \ref{set} the following holds.

There exists a constant $M > 0$ which only depends on $K,N,D$ such that
\begin{equation}
\label{eq:linftybound}
\|\rho_t^{\varepsilon}\|_{L^{\infty}(\mathfrak{m})} \leq M\max\{\|\rho_0\|_{L^{\infty}(\mathfrak{m})}, \|\rho_1\|_{L^{\infty}(\mathfrak{m})} \}
\end{equation}
for every $t \in [0,1]$ and for every $\varepsilon > 0$.
\end{Proposition}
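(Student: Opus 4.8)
The plan is to exploit the factorization $\rho^\eps_t = f^\eps_t\, g^\eps_t$ with $f^\eps_t = \h_{\eps t/2}f^\eps$ and $g^\eps_t = \h_{\eps(1-t)/2}g^\eps$, together with the Schr\"odinger identities $\rho_0 = f^\eps\,\h_{\eps/2}g^\eps$ and $\rho_1 = g^\eps\,\h_{\eps/2}f^\eps$ (which, recall, are equivalent to $f^\eps\otimes g^\eps\,\hR^{\eps/2}$ being a transport plan from $\mu_0$ to $\mu_1$, as in the proof of Theorem \ref{thm:5}), and the two-sided Gaussian heat kernel estimates \eqref{eq:gaussest} supplemented by the Bishop--Gromov/doubling inequality. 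The point to keep in mind throughout is that \emph{individually} the functions $f^\eps_t$ and $g^\eps_t$ --- equivalently the potentials $\varphi^\eps_t,\psi^\eps_t$ --- are \emph{not} uniformly bounded as $\eps\downarrow 0$: their $L^\infty$ norms blow up, and only the product stays controlled. Hence one cannot estimate the two factors separately; the argument must keep $f^\eps$ and $g^\eps$ coupled through the Schr\"odinger system so that the $\eps$-divergent constants produced by the \emph{upper} Gaussian bound of one heat kernel cancel against those produced by the \emph{lower} Gaussian bound of another.

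Concretely, from $f^\eps = \rho_0/\h_{\eps/2}g^\eps \le \|\rho_0\|_{L^\infty}/\h_{\eps/2}g^\eps$ (legitimate since $\h_{\eps/2}g^\eps>0$ by \eqref{eq:gaussest}) and the representation formula \eqref{eq:rapprform} one gets
\[
\rho^\eps_t(x)=g^\eps_t(x)\,\h_{\eps t/2}f^\eps(x)\;\le\; \|\rho_0\|_{L^\infty}\int_\X \frac{g^\eps_t(x)}{\h_{\eps/2}g^\eps(y)}\;\hr_{\eps t/2}[x](y)\,\d\mm(y).
\]
The heart of the matter is to bound the ratio $g^\eps_t(x)/\h_{\eps/2}g^\eps(y)$: writing $g^\eps_t(x)=\int\hr_{\eps(1-t)/2}[x](z)g^\eps(z)\,\d\mm(z)$ and $\h_{\eps/2}g^\eps(y)=\int\hr_{\eps/2}[y](z)g^\eps(z)\,\d\mm(z)$, a pointwise comparison of the kernels $\hr_{\eps(1-t)/2}[x](z)$ and $\hr_{\eps/2}[y](z)$ via \eqref{eq:gaussest} and Bishop--Gromov (using $\eps(1-t)/2\le \eps/2$) yields $g^\eps_t(x)\le \Theta(x,y)\,\h_{\eps/2}g^\eps(y)$ with $\Theta(x,y)\le C\exp\big(c\,\sfd^2(x,y)/\eps\big)$, $C,c$ depending only on $K,N,D$. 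Substituting this back, the crucial cancellation takes place: the Gaussian decay of $\hr_{\eps t/2}[x](\cdot)$, of rate $\sim 1/(\eps t)$, dominates the exponential growth of $\Theta$ of rate $\sim c/\eps$, and --- after absorbing the polynomial corrections coming from Bishop--Gromov --- the prefactor $1/\mm(B_{\sqrt{\eps t/2}}(x))$ is eaten by the Gaussian integral over $\X$. This produces $\|\rho^\eps_t\|_{L^\infty}\le M\|\rho_0\|_{L^\infty}$ on an initial subinterval $[0,t_*]$, and, symmetrically, $\|\rho^\eps_t\|_{L^\infty}\le M\|\rho_1\|_{L^\infty}$ on a final subinterval $[1-t_*,1]$, with $t_*,M$ depending only on $K,N,D$.

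To reach all $t\in[0,1]$ I would then iterate. For fixed $t_1\in(0,1)$ the curve $s\mapsto \rho^\eps_{t_1+s(1-t_1)}$, $s\in[0,1]$, is itself a Schr\"odinger interpolation: after the affine time change it has scale $\eps(1-t_1)$, Schr\"odinger functions built from $f^\eps_{t_1}$ and $g^\eps$, and endpoint densities $\rho^\eps_{t_1}$ and $\rho_1$ --- again absolutely continuous with bounded densities, since $\rho^\eps_{t_1}\in L^\infty$ by the previous step. The estimate above therefore applies verbatim and enlarges the interval on which the bound holds from $[0,t_1]$ to $[0,\,t_1+t_*(1-t_1)]$ at the cost of one extra factor $M$; doing the same from the right end and noting that each step increases the covered fraction of $[0,1]$ by a fixed amount, after a number of steps depending only on $K,N,D$ the whole interval is covered, with final constant a fixed power of $M$ times $\max\{\|\rho_0\|_{L^\infty},\|\rho_1\|_{L^\infty}\}$, which is \eqref{eq:linftybound}.

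The step I expect to be the real obstacle is the core Gaussian estimate: ensuring that the decay rate of the short-time kernel $\hr_{\eps t/2}$ \emph{strictly} dominates the growth rate of $\Theta$, i.e.\ getting the two exponents to balance with a genuine margin rather than borderline. This is precisely where the constants in \eqref{eq:gaussest}, the freedom in the auxiliary parameter $\delta$ there, and the restriction that $t$ stay away from the endpoint of the current parametrisation all have to be used carefully; once that inequality is in hand, the rest is bookkeeping with the doubling property and the elementary iteration.
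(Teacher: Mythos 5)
The approach you propose is genuinely different from the paper's --- you try to propagate the endpoint bound through a pointwise heat-kernel comparison, whereas the paper runs a differential inequality for the $L^p$-norms $E_p(t):=\int(\rho^\eps_t)^p\,\d\mm$ using the Li--Yau estimate (which it establishes in Theorem~\ref{lem:4}), and then sends $p\to\infty$. Unfortunately your route has a gap precisely where you flag the danger: the Gaussian cancellation at the core of the argument is \emph{exactly} borderline, not marginal, and therefore the estimate does not close.

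Concretely, check the claim on the flat torus, where the Gaussian estimates are exact. Writing $s=\eps(1-t)/2$, $\tau=\eps/2$ and $v=y-x$, the sharpest pointwise comparison constant is obtained by maximizing $-\frac{|x-z|^2}{4s}+\frac{|y-z|^2}{4\tau}$ over $z$, and a direct completion of the square gives the maximum value $\frac{|v|^2}{4\tau t}=\frac{\sfd^2(x,y)}{2\eps t}$, together with the volume prefactor $(s/\tau)^{-d/2}=(1-t)^{-d/2}$. So the sharp
\[
\Theta(x,y)=(1-t)^{-d/2}\exp\Big(\frac{\sfd^2(x,y)}{2\eps t}\Big),
\]
and its growth rate is $\frac{1}{2\eps t}$, which is \emph{exactly} the Gaussian decay rate of $\hr_{\eps t/2}[x](\cdot)$. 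The exponentials then cancel identically in the integral
\[
\int\Theta(x,y)\,\hr_{\eps t/2}[x](y)\,\d\mm(y)=(1-t)^{-d/2}(2\pi\eps t)^{-d/2}\vol(\X),
\]
which blows up like $(\eps t)^{-d/2}$ as $\eps\downarrow 0$. On a general compact $\RCD^*(K,N)$ space the situation is only worse: the lower Gaussian bound in \eqref{eq:gaussest} carries a constant $C_1$ in the exponent while the upper bound carries $(4+\delta)^{-1}$, so the cross terms in $z$ do not even cancel at the endpoints, and you pick up an extra $\exp(cD^2/\eps)$ on top of the polynomial divergence. No choice of $t_*$, $\delta$, or Bishop--Gromov bookkeeping can recover from a prefactor of this sort, because the obstruction is already present in the model case. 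The subsequent iteration is also fine in spirit but cannot be started, since the base estimate on $[0,t_*]$ never closes.

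The reason the paper's route succeeds where the kernel comparison fails is that it never tries to bound the two factors $f^\eps_t,g^\eps_t$ (or their ratios) pointwise. Instead, from the continuity equation one finds
\[
E_p'(t)=-(p-1)\int(\rho^\eps_t)^p\Delta\psi^\eps_t\,\d\mm+\tfrac\eps2(p-1)\int(\rho^\eps_t)^p\Delta\log\rho^\eps_t\,\d\mm;
\]
the second integral is nonpositive after one integration by parts, and the first is controlled by the one-sided, $\eps$-uniform Li--Yau bound $\Delta\psi^\eps_t\geq -C$ on $[0,1/2]$ (Theorem~\ref{lem:4}). Gronwall and $p\to\infty$ then give the claim on $[0,1/2]$, and symmetry handles $[1/2,1]$. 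In short: the one-sided Laplacian bound is the correct $\eps$-robust replacement for the two-sided Gaussian comparison you attempted, and it is the ingredient your proposal is missing.
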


\begin{proof}
Fix $\varepsilon > 0$. We know from Proposition \ref{pro:7}  that $(\rho_t^\eps)\in C([0,1],L^2(\X))\cap AC_{loc}((0,1),L^2(\X))$, thus for many $p>1$ the function
 $E_p : [0,1] \to[0,\infty)$ defined by
\[
E_p(t) := \int (\rho^\eps_t)^p\,\d\mm,
\]
belongs to $C([0,1])\cap AC_{loc}((0,1))$. An application of the dominated convergence theorem grants that its derivative can be computed passing the limit inside the integral, obtaining
\[
\begin{split}
\frac{\d}{\d t}E_p(t) &=p\int (\rho^\eps_t)^{p-1}\frac{\d}{\d t}\rho^\eps_t\d\mm=-p\int (\rho^\eps_t)^{p-1}{\rm div}(\rho^\eps_t\nabla\vartheta_t)\,\d\mm\\
&=p\int \langle\nabla (\rho^\eps_t)^{p-1},\nabla\vartheta^\eps_t\rangle \rho^\eps_t\,\d\mm=(p-1)\int \langle\nabla (\rho^\eps_t)^{p},\nabla\vartheta^\eps_t\rangle \,\d\mm=-(p-1)\int (\rho^\eps_t)^{p}\Delta\vartheta^\eps_t \,\d\mm.
\end{split}
\]
Now notice that $\vartheta^\eps_t = \psi^\eps_t - \frac{\varepsilon}{2}\log\rho^\eps_t$ to get
\begin{equation}\label{eq:18}
\ddt E_p(t) = -(p-1)\int(\rho_t^\eps)^p\Delta\psi^\eps_t\d\mm + \frac{\varepsilon}{2}(p-1) \int (\rho^\eps_t)^p\Delta\log\rho^\eps_t\d\mm.
\end{equation}
Choosing $\delta:=\frac12$ in \eqref{eq:liyau} we get the existence of a constant $C>0$ depending on $K,N,D$ only such that $\Delta\psi^\eps_t\geq -C$ for any $t\in[0,\frac12]$, thus  we have
\[
-(p-1)\int (\rho_t^\eps)^p\Delta\psi^\eps_t\d\mm \leq -C(p-1)\int(\rho^\eps_t)^p\d\mm, \quad \forall t \in [0,1/2].
\]
On the other hand,
\[
\int (\rho^\eps_t)^p\Delta\log\rho^\eps_t\d\mm = -p\int(\rho^\eps_t)^{p-1}\langle \nabla \rho^\eps_t,\nabla \log\rho^\eps_t \rangle \d\mm = -p\int(\rho^\eps_t)^{p-2}|\nabla \rho^\eps_t|^2\d\mm \leq 0,
\]
so that  plugging these two inequalities into \eqref{eq:18} we obtain $E_p' \leq -C(p-1)E_p$ for all $t \in [0,1/2]$, whence by Gronwall's inequality
\[
E_p(t) \leq E_p(0)e^{-C(p-1)} , \quad \forall t \in [0,1/2].
\]
Passing to  the $p$-th roots and observing that, being  $\mm$  a probability measure, we have $\|h\|_{L^p}\uparrow\|h\|_{L^\infty}$ as $p\to\infty$, we obtain 
\[
\|\rho^\eps_t\|_{L^{\infty}} \leq e^{-C}\|\rho_0\|_{L^{\infty}}, \quad \forall t \in [0,1/2].
\]
Switching the roles of $\rho_0$ and $\rho_1$ we get the analogous control for $t\in[\frac12,1]$, whence the conclusion holds with $M := e^{-C}$.
\end{proof}

\begin{Proposition}[Uniform Lipschitz and Laplacian controls  for the potentials]\label{pro:1} With the same assumptions and notations as in Setting \ref{set} the following holds.

For all $\delta \in\, (0,1)$ there exists $C_\delta>0$ which only depends on $K,N,D,\delta$ such that
\begin{subequations}
\begin{align}
\label{eq:lipcontr}
\Lip(\varphi^\eps_t)&\leq C_\delta\\
\label{eq:lapcontr}
\Delta\varphi^\eps_t&\geq -C_\delta\\
\label{eq:lapcontr2}
\|\Delta\varphi^\eps_t\|_{L^1(\mm)}&\leq C_\delta
\end{align}
\end{subequations}
for every $t\in[\delta,1]$ and $\eps\in(0,1)$. Analogous bounds hold for the $\psi_t^\eps$'s in the time interval $[0,1-\delta]$.

If moreover $\rho_0,\rho_1 \in \testipp{\X}$, then we can take $\delta=0$ in the Lipschitz estimate \eqref{eq:lipcontr} above.
\end{Proposition}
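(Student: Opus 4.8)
The plan is to deduce all three bounds directly from the two estimates of Section~\ref{sec:4}, namely Hamilton's gradient estimate (Theorem~\ref{cor:1}, in the rescaled form \eqref{eq:ham}) and the Li--Yau inequality (Theorem~\ref{lem:4}, in the form \eqref{eq:liyau}), applied to the heat flow $t\mapsto f^\eps_t=\h_{\eps t/2}f^\eps$ and, symmetrically, to $t\mapsto g^\eps_t=\h_{\eps(1-t)/2}g^\eps$. The decisive feature of \eqref{eq:ham} and \eqref{eq:liyau} that we exploit is that their constants depend only on $K,N,D$ and on the exponent, and \emph{not} on the initial datum of the heat flow: since $f^\eps$ and $g^\eps$ are nonnegative, nonzero elements of $L^1(\mm)$ (Theorem~\ref{thm:5}, Proposition~\ref{pro:7}), the dependence of $f^\eps,g^\eps$ on $\eps$ is then immaterial and the resulting bounds are genuinely uniform in $\eps\in(0,1)$. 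Throughout we also use that $\varphi^\eps_t=\eps\log f^\eps_t\in\testi\X$ for $t>0$ (Proposition~\ref{pro:7}), so that the Sobolev-to-Lipschitz property \eqref{eq:sobtolip} upgrades $L^\infty$ bounds on $|\nabla\varphi^\eps_t|$ to bounds on $\Lip(\varphi^\eps_t)$.

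For \eqref{eq:lipcontr}, writing $\eps t/2=\eps\cdot(t/2)$ and applying \eqref{eq:ham} with $u=f^\eps$ and with $\delta/2$ in place of $\delta$, we get $\||\nabla\varphi^\eps_t|\|_{L^\infty}=\eps\||\nabla\log\h_{\eps t/2}f^\eps|\|_{L^\infty}\le C_{\delta/2}$ for all $t\ge\delta$ and $\eps\in(0,1)$; then \eqref{eq:sobtolip} gives \eqref{eq:lipcontr}. The same rescaling applied to \eqref{eq:liyau} with $u_0=f^\eps$ yields $\Delta\varphi^\eps_t=\eps\Delta\log\h_{\eps t/2}f^\eps\ge -C_{\delta/2}$ for $t\ge\delta$, which is \eqref{eq:lapcontr}. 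For \eqref{eq:lapcontr2} we use that $\varphi^\eps_t\in\testi\X\subset D(\Delta)$ and that $\mm$ is a probability measure, so that $1\in W^{1,2}(\X)$ and hence $\int\Delta\varphi^\eps_t\,\d\mm=-\int\la\nabla 1,\nabla\varphi^\eps_t\ra\,\d\mm=0$; therefore $\int(\Delta\varphi^\eps_t)^+\,\d\mm=\int(\Delta\varphi^\eps_t)^-\,\d\mm$ and, by \eqref{eq:lapcontr},
\[
\|\Delta\varphi^\eps_t\|_{L^1(\mm)}=2\int(\Delta\varphi^\eps_t)^-\,\d\mm\le 2C_\delta\,\mm(\X)=2C_\delta .
\]
The statements for $\psi^\eps_t=\eps\log\h_{\eps(1-t)/2}g^\eps$ on $[0,1-\delta]$ follow verbatim, with $g^\eps$ and $1-t$ in the roles of $f^\eps$ and $t$.

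It remains to remove the restriction $t\ge\delta$ from \eqref{eq:lipcontr} when $\rho_0,\rho_1\in\testipp\X$. By Theorem~\ref{thm:5} we then have $f^\eps,g^\eps\in\testipp\X$, so $\log f^\eps$ is bounded and has bounded gradient, hence is Lipschitz. First we bound $|\nabla\varphi^\eps_0|$: from the Schr\"odinger equation \eqref{eq:contin0} (equivalently $\varphi^\eps_0+\psi^\eps_0=\eps\log\rho^\eps_0=\eps\log\rho_0$) together with $\psi^\eps_0=\eps\log\h_{\eps/2}g^\eps$, applying \eqref{eq:ham} to $g^\eps$ at the fixed time $t=1/2$ gives $\||\nabla\psi^\eps_0|\|_{L^\infty}\le C_{1/2}$, whence
\[
\||\nabla\varphi^\eps_0|\|_{L^\infty}\le \eps\||\nabla\log\rho_0|\|_{L^\infty}+C_{1/2}\le \||\nabla\log\rho_0|\|_{L^\infty}+C_{1/2}=:C
\]
uniformly in $\eps\in(0,1)$. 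Then Proposition~\ref{pro:kyoto}, applied with $u_0=f^\eps$, gives $|\nabla\log\h_s f^\eps|\le e^{-Ks}\||\nabla\log f^\eps|\|_{L^\infty}$ for every $s\ge 0$; taking $s=\eps t/2\in[0,1/2]$ we obtain
\[
\||\nabla\varphi^\eps_t|\|_{L^\infty}=\eps\||\nabla\log\h_{\eps t/2}f^\eps|\|_{L^\infty}\le e^{K^-/2}\,\eps\||\nabla\log f^\eps|\|_{L^\infty}=e^{K^-/2}\||\nabla\varphi^\eps_0|\|_{L^\infty}\le e^{K^-/2}C
\]
for all $t\in[0,1]$, $\eps\in(0,1)$, and \eqref{eq:sobtolip} concludes; similarly for the $\psi$'s.

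I do not expect a substantial obstacle in this proposition: it is essentially an assembly of the two estimates of Section~\ref{sec:4} with \eqref{eq:sobtolip} and the elementary identity $\int\Delta\varphi^\eps_t\,\d\mm=0$. The only mildly delicate point is the behaviour as $t\downarrow0$ in the $\testipp$ case, where \eqref{eq:ham} degenerates and must be replaced by Proposition~\ref{pro:kyoto} fed with the $t=0$ identity $\varphi^\eps_0+\psi^\eps_0=\eps\log\rho_0$; and one must consistently track that the constants produced by \eqref{eq:ham} and \eqref{eq:liyau} are independent of $f^\eps,g^\eps$, so that uniformity in $\eps$ is preserved.
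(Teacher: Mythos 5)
Your argument is correct and matches the paper's proof almost step for step: Hamilton's rescaled gradient estimate \eqref{eq:ham} plus Sobolev-to-Lipschitz \eqref{eq:sobtolip} for the Lipschitz bound, the Li--Yau bound \eqref{eq:liyau} for the one-sided Laplacian bound, the identity $\int|\Delta\varphi^\eps_t|\,\d\mm=2\int(\Delta\varphi^\eps_t)^-\,\d\mm$ for the $L^1$ bound, and the combination of $\varphi^\eps_0+\psi^\eps_0=\eps\log\rho_0$ with Proposition \ref{pro:kyoto} to cover $t\in[0,\delta)$ in the $\testipp\X$ case. The only cosmetic difference is that the paper obtains $\sup_\eps\Lip(\psi^\eps_0)<\infty$ by simply citing the already-proved $\psi$-version of \eqref{eq:lipcontr} at $t=0$, whereas you rederive it by applying \eqref{eq:ham} to $g^\eps$ at $t=1/2$ --- the same computation unwound.
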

\begin{proof} Fix $\delta\in(0,1)$ and notice that the bound \eqref{eq:ham}  yields
\[
\||\nabla\varphi^\eps_t|\|_{L^\infty}=\eps\||\nabla\log\h_{\frac{\eps t}2f^\eps|}\|_{L^\infty}\leq C\qquad\forall t\in[\delta,1],\ \eps\in(0,1).
\]
Thus recalling the Sobolev-to-Lipschitz property \eqref{eq:sobtolip} we obtain the bound \eqref{eq:lipcontr}. The bound \eqref{eq:lapcontr} is a restatement of \eqref{eq:liyau} and \eqref{eq:lapcontr2} comes from \eqref{eq:lapcontr} and the trivial identity
\[
\int|\Delta\varphi^\eps_t|\,\d\mm=\int \Delta\varphi^\eps_t\,\d\mm+2\int(\Delta\varphi^\eps_t)^-\,\d\mm=2\int(\Delta\varphi^\eps_t)^-\,\d\mm.
\]
The bounds for $\psi^\eps_t$ are obtained in the same way.

For the last part of the statement, notice that we have just proved that $\sup_{\eps\in(0,1)}\Lip(\psi^\eps_0)<\infty$ which together with the identity $\varphi^\eps_0+\psi^\eps_0=\eps\log\rho_0$ and the assumption on $\rho_0$ ensures that $\sup_{\eps\in(0,1)}\Lip(\varphi^\eps_0)<\infty$. The claim then follows from Proposition \ref{pro:kyoto} and a symmetric argument provides the conclusion for the $\psi^\eps_t$'s.
\end{proof}

\subsection{The entropy along entropic interpolations}

L\'eonard computed in \cite{Leonard13} the first and second derivatives of the relative entropy along entropic interpolations: here our first goal is to verify that his computations are fully justifiable in our setting. As we shall see later on, these formulas will be the crucial tool for showing that the acceleration of the entropic interpolation goes to 0 in a suitable sense.

\begin{Proposition}\label{pro:5}
With the same assumptions and notations as in Setting \ref{set} the following holds.

For any $\eps>0$ the map $t\mapsto H(\mu^{\varepsilon}_t \,|\, \mm)$ belongs to $C([0,1])\cap C^2(0,1)$ and for every $t\in(0,1)$ it holds
\begin{subequations}
\begin{align}
\label{eq:firstder}
\frac{\d}{\d t}H(\mu^{\varepsilon}_t \,|\, \mm) &= \int\la\nabla \rho^\eps_t,\nabla\vartheta^\eps_t\ra\,\d\mm=\frac1{2\eps}\int\big(|\nabla\psi^\eps_t|^2-|\nabla\varphi^\eps_t|^2\big)\rho^\eps_t\,\d\mm,\\
\label{eq:secondder}
 \frac{\d^2}{\d t^2}H(\mu^{\varepsilon}_t \,|\, \mm) & =\int\rho^\eps_t\,\d\big(\Ggamma_2(\vartheta^\eps_t)+\tfrac{\eps^2}4\Ggamma_2(\log(\rho^\eps_t))\big)= \frac{1}{2}\int \rho^\eps_t\,\d\big(\Ggamma_2(\varphi^{\varepsilon}_t) + \Ggamma_2(\psi^{\varepsilon}_t)\big).
\end{align}
\end{subequations}
If in addition $\rho_0,\rho_1\in\testipp \X$, then $t\mapsto H(\mu^{\varepsilon}_t \,|\, \mm)$ belongs to $C^2([0,1])$ and the above formulas are valid for any $t\in[0,1]$.
\end{Proposition}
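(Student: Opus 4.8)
\emph{Strategy and first derivative.} The plan is to differentiate $t\mapsto H(\mu^\eps_t\,|\,\mm)=\int\rho^\eps_t\log\rho^\eps_t\,\d\mm$ twice, using throughout the regularity gathered in Proposition \ref{pro:7}: for fixed $\eps>0$ the curves $(\rho^\eps_t),(\vartheta^\eps_t),(\varphi^\eps_t),(\psi^\eps_t),(\log\rho^\eps_t)$ belong to $AC_{loc}((0,1),W^{1,2}(\X))$, take values in $\testi\X$, obey \eqref{eq:boundbase} on every compact subinterval, and $\rho^\eps_t$ is locally in $t$ bounded away from $0$; moreover $\tfrac{\d}{\d t}\rho^\eps_t=-{\rm div}(\rho^\eps_t\nabla\vartheta^\eps_t)$ and $\tfrac{\d}{\d t}\vartheta^\eps_t=-\tfrac12|\nabla\vartheta^\eps_t|^2-\tfrac{\eps^2}{8}\big(2\Delta\log\rho^\eps_t+|\nabla\log\rho^\eps_t|^2\big)$. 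On a compact subinterval $z\mapsto z\log z$ is $C^1$ on the range $[c,M]\ni\rho^\eps_t$, so differentiation under the integral sign (by dominated convergence, exactly as in the proof of Proposition \ref{pro:6}) together with $\int\tfrac{\d}{\d t}\rho^\eps_t\,\d\mm=0$ gives $\tfrac{\d}{\d t}H(\mu^\eps_t\,|\,\mm)=\int\log\rho^\eps_t\,\tfrac{\d}{\d t}\rho^\eps_t\,\d\mm$; substituting the continuity equation, integrating by parts (legitimate since $\log\rho^\eps_t\in W^{1,2}(\X)$ and $\rho^\eps_t\nabla\vartheta^\eps_t\in D({\rm div})$) and using $\nabla\rho^\eps_t=\rho^\eps_t\nabla\log\rho^\eps_t$ yields the first equality in \eqref{eq:firstder}, and the second then follows from $\eps\nabla\log\rho^\eps_t=\nabla\varphi^\eps_t+\nabla\psi^\eps_t$ and $\nabla\vartheta^\eps_t=\tfrac12\nabla(\psi^\eps_t-\varphi^\eps_t)$ after expanding the scalar product. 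Continuity of $H(\mu^\eps_\cdot\,|\,\mm)$ at $t=0,1$ follows from $\rho^\eps_t\to\rho_0,\rho_1$ in $L^2(\X)$ (Proposition \ref{pro:7}), the uniform bound $\rho^\eps_t\le M'$ of Proposition \ref{pro:6}, and $z\log z\ge-e^{-1}$, which make $z\log z$ evaluated at the $\rho^\eps_t$ uniformly bounded, so dominated convergence applies along a.e.-convergent subsequences and, the limit being always $H(\mu_0\,|\,\mm)$ or $H(\mu_1\,|\,\mm)$, along the whole family.

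\emph{Second derivative: setting up the derivative of $G$.} Write $G(t):=\int\la\nabla\rho^\eps_t,\nabla\vartheta^\eps_t\ra\,\d\mm$. Since $\nabla\colon W^{1,2}(\X)\to L^2(T\X)$ is bounded linear, $(\nabla\rho^\eps_t),(\nabla\vartheta^\eps_t)\in AC_{loc}((0,1),L^2(T\X))$, so the product rule for Hilbert-space valued absolutely continuous curves gives $G\in AC_{loc}((0,1))$ and, for a.e.\ $t$,
\[
G'(t)=\int\la\nabla\tfrac{\d}{\d t}\rho^\eps_t,\nabla\vartheta^\eps_t\ra\,\d\mm+\int\la\nabla\rho^\eps_t,\nabla\tfrac{\d}{\d t}\vartheta^\eps_t\ra\,\d\mm.
\]
For the first integral, integrate by parts using $\vartheta^\eps_t\in D(\Delta)$, substitute $\tfrac{\d}{\d t}\rho^\eps_t=-{\rm div}(\rho^\eps_t\nabla\vartheta^\eps_t)$, and integrate by parts once more (using $\Delta\vartheta^\eps_t\in W^{1,2}(\X)$ from \eqref{eq:boundbase} and $\rho^\eps_t\nabla\vartheta^\eps_t\in D({\rm div})$) to obtain $-\int\rho^\eps_t\la\nabla\vartheta^\eps_t,\nabla\Delta\vartheta^\eps_t\ra\,\d\mm$. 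For the second integral, insert the formula for $\tfrac{\d}{\d t}\vartheta^\eps_t$, splitting it into $-\tfrac12\int\la\nabla\rho^\eps_t,\nabla|\nabla\vartheta^\eps_t|^2\ra\,\d\mm$ and $-\tfrac{\eps^2}{8}\int\la\nabla\rho^\eps_t,\nabla\big(2\Delta\log\rho^\eps_t+|\nabla\log\rho^\eps_t|^2\big)\ra\,\d\mm$; all the gradients appearing are in $L^2$ thanks to \eqref{eq:boundbase} and \eqref{eq:reggrad}.

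\emph{Second derivative: the $\Gamma_2$ identity.} The key remark is that for every $f\in\testi\X$, since $\tfrac12|\nabla f|^2\in D(\bd)$ by \eqref{eq:lapmistest} and $\rho^\eps_t\in\testi\X$ is Lipschitz with bounded support ($\X$ being compact), testing the definition of the measure-valued Laplacian with $\rho^\eps_t$ gives $\int\rho^\eps_t\,\d\bd\tfrac12|\nabla f|^2=-\tfrac12\int\la\nabla\rho^\eps_t,\nabla|\nabla f|^2\ra\,\d\mm$, hence
\[
\int\rho^\eps_t\,\d\Ggamma_2(f)=-\tfrac12\int\la\nabla\rho^\eps_t,\nabla|\nabla f|^2\ra\,\d\mm-\int\rho^\eps_t\la\nabla f,\nabla\Delta f\ra\,\d\mm.
\]
With $f=\vartheta^\eps_t$ this shows that the first integral of the previous step plus the term $-\tfrac12\int\la\nabla\rho^\eps_t,\nabla|\nabla\vartheta^\eps_t|^2\ra\,\d\mm$ equals $\int\rho^\eps_t\,\d\Ggamma_2(\vartheta^\eps_t)$; with $f=\log\rho^\eps_t$, after using $\nabla\rho^\eps_t=\rho^\eps_t\nabla\log\rho^\eps_t$ to rewrite $-\tfrac{\eps^2}{8}\int\la\nabla\rho^\eps_t,\nabla\big(2\Delta\log\rho^\eps_t+|\nabla\log\rho^\eps_t|^2\big)\ra\,\d\mm$, one finds it equals $\tfrac{\eps^2}{4}\int\rho^\eps_t\,\d\Ggamma_2(\log\rho^\eps_t)$. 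This is the first equality in \eqref{eq:secondder}. For the second, note that $\vartheta^\eps_t=\tfrac12(\psi^\eps_t-\varphi^\eps_t)$ and $\eps\log\rho^\eps_t=\varphi^\eps_t+\psi^\eps_t$, so since $f\mapsto\Ggamma_2(f)$ is a quadratic form the parallelogram identity $\Ggamma_2(a+b)+\Ggamma_2(a-b)=2\Ggamma_2(a)+2\Ggamma_2(b)$ yields $\Ggamma_2(\vartheta^\eps_t)+\tfrac{\eps^2}{4}\Ggamma_2(\log\rho^\eps_t)=\tfrac14\big(\Ggamma_2(\psi^\eps_t-\varphi^\eps_t)+\Ggamma_2(\psi^\eps_t+\varphi^\eps_t)\big)=\tfrac12\big(\Ggamma_2(\varphi^\eps_t)+\Ggamma_2(\psi^\eps_t)\big)$.

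\emph{Regularity, the $\testipp$ case, and the main difficulty.} So far $\tfrac{\d}{\d t}H=G$ with $G\in AC_{loc}((0,1))$ and the explicit a.e.\ formula above. Each summand of $G'$ pairs quantities of the form $\rho^\eps_t$, $\nabla\varphi^\eps_t$, $\Delta\varphi^\eps_t$, $\nabla\Delta\varphi^\eps_t$, $\nabla|\nabla\varphi^\eps_t|^2$ (and the same with $\psi$), each continuous in $t$ in the pertinent norm: indeed, on compact subintervals of $(0,1)$ the heat flow is smoothing, so $t\mapsto\h_{\eps t/2}f^\eps$ depends smoothly on $t$ together with its gradient and Laplacian (in $L^2(\X)$, resp.\ $W^{1,2}(\X)$), and $\h_{\eps t/2}f^\eps$ stays bounded away from $0$; the chain rule then gives continuity of $\varphi^\eps_t$ and its derivatives up to $\nabla\Delta\varphi^\eps_t$, and of $|\nabla\varphi^\eps_t|^2,\nabla|\nabla\varphi^\eps_t|^2$ via \eqref{eq:reggrad}. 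Hence $G'$ coincides a.e.\ with a continuous function, so $G\in C^1((0,1))$ and $H\in C^2(0,1)$ with second derivative \eqref{eq:secondder}. If moreover $\rho_0,\rho_1\in\testipp\X$, the last part of Proposition \ref{pro:7} extends all the curves above to $AC([0,1],W^{1,2}(\X))$ with \eqref{eq:boundbase} valid on the closed interval and $\rho^\eps_t$ bounded away from $0$ on $[0,1]$, so the entire argument runs verbatim on $[0,1]$, giving $H\in C^2([0,1])$ and \eqref{eq:firstder}, \eqref{eq:secondder} on $[0,1]$. The main obstacle is the second-derivative computation: making the $\Ggamma_2$ manipulations rigorous — in particular the integration by parts against the genuinely measure-valued $\bd\tfrac12|\nabla f|^2$ with $\rho^\eps_t$ as test function, and the passage, via the quadratic-form structure of $\Ggamma_2$, to the symmetric form $\tfrac12(\Ggamma_2(\varphi^\eps_t)+\Ggamma_2(\psi^\eps_t))$ — together with carefully justifying the several integrations by parts from the uniform bounds \eqref{eq:boundbase}.
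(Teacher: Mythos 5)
Your proposal is correct and follows essentially the same route as the paper's proof: differentiate $t\mapsto\int u(\rho^\eps_t)\,\d\mm$ using the continuity equation and $\rho^\eps_t\in AC_{loc}((0,1),W^{1,2})$ for the first derivative, differentiate $G(t)=\int\la\nabla\rho^\eps_t,\nabla\vartheta^\eps_t\ra\,\d\mm$ and substitute the expressions for $\ddt\rho^\eps_t,\ddt\vartheta^\eps_t$ for the second, identify the resulting integrand with $\int\rho^\eps_t\,\d\big(\Ggamma_2(\vartheta^\eps_t)+\tfrac{\eps^2}4\Ggamma_2(\log\rho^\eps_t)\big)$ via the pairing of $\rho^\eps_t$ against $\bd\tfrac12|\nabla f|^2$, and then upgrade to $C^1$/$C^2$ by exhibiting the a.e.\ derivative as a continuous function of $t$. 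The only differences are cosmetic: you spell out the parallelogram identity for the quadratic form $\Ggamma_2$ where the paper merely invokes quadraticity, and your $C^2$-regularity argument via analyticity of the heat semigroup is a bit less explicit than the paper's direct verification of the continuity of $t\mapsto\int\rho^\eps_t\,\d\Ggamma_2(\varphi^\eps_t)$ and $t\mapsto\int\rho^\eps_t\,\d\Ggamma_2(\psi^\eps_t)$, but both are sound.
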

\begin{proof}The continuity of $[0,1]\ni t\mapsto H(\mu^{\varepsilon}_t \,|\, \mm)$ is a direct consequence of the fact that $(\rho^\eps_t)\in C([0,1],L^2(\X))$ (Proposition \ref{pro:7}) and the equality of the two expressions for both the first and second derivative follows from $\vartheta^\eps_t=\frac{\psi^\eps_i-\varphi^\eps_t}2$, $\eps\log\rho^\eps_t=\psi^\eps_t+\varphi^\eps_t$ and the fact that $\Ggamma_2(\cdot)$ is a quadratic form.

Now fix $\eps>0$ and recall from Proposition \ref{pro:7} that $(\rho^\eps_t)\in AC_{loc}((0,1),L^2(\X))$ and that it is, locally in $t\in(0,1)$, uniformly bounded in $L^\infty$. Therefore for $u(z):=z\log z$ we have that $(0,1)\ni t\mapsto u(\rho^\eps_t)\in L^{2}(\X)$ is absolutely continuous. In particular, so is $\int u(\rho^\eps_t)\,\d\mm$ and it is then clear that
\[
\ddt H(\mu^{\varepsilon}_t \,|\, \mm) =\ddt\int  u(\rho^\eps_t)\,\d\mm=\int(\log(\rho^\eps_t)+1)\ddt\rho_t^\eps\,\d\mm,\qquad{\rm a.e.}\ t.
\]
Using the formula for $\ddt\rho^\eps_t$ provided by Proposition \ref{pro:7} we then get
\[
\begin{split}
\ddt H(\mu^{\varepsilon}_t \,|\, \mm) &=-\int (\log(\rho^\eps_t)+1){\rm div}(\rho_t^\eps\nabla\vartheta^\eps_t)\d\mm=\int\langle\nabla\log(\rho^\eps_t),\nabla\vartheta^\eps_t\rangle\rho^\eps_t\d\mm=\int\la\nabla \rho^\eps_t,\nabla\vartheta^\eps_t\ra\d\mm,
\end{split}
\]
thus establishing the formula for the first derivative at least for a.e.\ $t$. Now notice that since $(\rho^\eps_t),(\vartheta^\eps_t)\in AC_{loc}((0,1),W^{1,2})$ (by Proposition \ref{pro:7}), the rightmost term is an absolutely continuous function of time. In particular $t\mapsto H(\mu^{\varepsilon}_t \,|\, \mm) $ is $C^1$, the last formula holds for any $t\in(0,1)$, $t\mapsto \ddt H(\mu^{\varepsilon}_t \,|\, \mm) $ is absolutely continuous and  for a.e.\ $t$ it holds
\[
\frac{\d^2}{\d t^2} H(\mu^{\varepsilon}_t \,|\, \mm) =\ddt \int\la\nabla \rho^\eps_t,\nabla\vartheta^\eps_t\ra\,\d\mm=\int \langle\nabla\ddt \rho^\eps_t,\nabla\vartheta^\eps_t\rangle+\langle\nabla \rho^\eps_t,\nabla\ddt\vartheta^\eps_t\rangle\,\d\mm.
\]
Thus from the formulas for $\ddt\rho^\eps_t,\ddt\vartheta^\eps_t$ provided in Proposition \ref{pro:7} we obtain 
\[
\begin{split}
\frac{\d^2}{\d t^2}& H(\mu^{\varepsilon}_t \,|\, \mm)\\
&=\int-\la\nabla({\rm div}(\rho^\eps_t\nabla\vartheta^\eps_t)),\nabla\vartheta^\eps_t\ra +\langle\nabla\rho^\eps_t,\nabla\big(-\tfrac12|\nabla\vartheta^\eps_t|^2-\tfrac{\eps^2}{4}\Delta\log(\rho^\eps_t)-\tfrac{\eps^2}8|\nabla\log(\rho^\eps_t)|^2\big)\rangle\,\d\mm\\
&=\int -\rho^\eps_t\la\nabla\vartheta^\eps_t,\nabla\Delta\vartheta^\eps_t\ra+\tfrac12\Delta\rho^\eps_t|\nabla\vartheta^\eps_t|^2-\tfrac{\eps^2}{4}\langle\underbrace{\nabla\rho^\eps_t}_{=\rho^\eps_t\nabla\log\rho^\eps_t},\nabla \Delta\log(\rho^\eps_t)\rangle+\tfrac{\eps^2}8\Delta\rho^\eps_t|\nabla\log(\rho^\eps_t)|^2\,\d\mm\\
&=\int\rho^\eps_t\,\d\big(\Ggamma_2(\vartheta^\eps_t)+\tfrac{\eps^2}4\Ggamma_2(\log(\rho^\eps_t))\big)
\end{split}
\]
for a.e.\ $t$, so that \eqref{eq:secondder} is satisfied for a.e.\ $t$.

To obtain $C^2$ regularity and that the formula for the second derivative is valid for any $t$ it is sufficient to check that both $t\mapsto \int \rho^\eps_t\,\d\Ggamma_2(\varphi^\eps_t)$ and $t\mapsto \int \rho^\eps_t\,\d\Ggamma_2(\psi^\eps_t)$ are continuous. We have
\[
\int  \rho^\eps_t\,\d\Ggamma_2(\varphi^\eps_t)=-\int\frac12\la\nabla\rho^\eps_t,\nabla(|\nabla\varphi^\eps_t|^2)\ra\,\d\mm-\int\rho^\eps_t\la\nabla\varphi^\eps_t,\nabla\Delta\varphi^\eps_t\ra\,\d\mm
\]
and recalling the regularity properties stated in Proposition \ref{pro:7}, we see that the first integral is continuous because $(\rho_t^\eps)\in AC_{loc}((0,1),W^{1,2}(\X))$ and $(|\nabla\varphi^\eps_t|^2)\in C((0,1),L^{2}(\X))\cap L^\infty_{loc}((0,1),W^{1,2}(\X))$ while the continuity of the second comes from $(\rho^\eps_t)\in C((0,1),L^2(\X))\cap L^\infty_{loc}((0,1),L^\infty(\X))$, $(\varphi^\eps_t)\in AC_{loc}((0,1),W^{1,2}(\X))$ and, as it is readily verified, $(\Delta\varphi^\eps_t)\in C((0,1),L^{2}(\X))\cap L^\infty_{loc}((0,1),W^{1,2}(\X))$.

The last claim follows by the same arguments and the additional regularity ensured by the last part of Proposition \ref{pro:7}.
\end{proof}
As a first consequence of the formulas just obtained, we show that some quantities remain bounded as $\eps\downarrow0$:
\begin{Lemma}[Bounded quantities]\label{lem:7} With the same assumptions and notations of Setting \ref{set} we have
\begin{equation}
\label{eq:boundenergia}
\sup_{\eps\in(0,1)}\iint_0^1\Big(|\nabla\vartheta^\eps_t|^2+{\eps^2}|\nabla\log\rho^\eps_t|^2\Big)\rho^\eps_t\,\d t\,\d\mm<\infty\\
\end{equation}
and for any $\delta\in(0,\frac12)$
\begin{subequations}
\begin{align}
\label{eq:bhess}
& \sup_{\varepsilon \in\, (0,1)} \iint_\delta^{1-\delta}\Big(|\H{\vartheta^\eps_t}|_\HS^2+  {\eps^2} |\H{\log\rho_t^{\varepsilon}}|^2_{\HS}\Big)\rho^{\varepsilon}_t\,\d t\,\d\mm < \infty,  \\
 \label{eq:blap}
 &\sup_{\varepsilon \in\, (0,1)} \iint_\delta^{1-\delta}\Big(|\Delta{\vartheta^\eps_t}|^2+ {\eps^2} |\Delta{\log\rho_t^{\varepsilon}}|^2\Big)\rho^{\varepsilon}_t\,\d t\,\d\mm < \infty.  
\end{align}
\end{subequations}
If $\rho_0,\rho_1\in\testipp \X$, then we can take $\delta=0$.
\end{Lemma}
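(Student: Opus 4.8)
The plan is to establish the gradient bound \eqref{eq:boundenergia} first and then to deduce \eqref{eq:bhess} and \eqref{eq:blap} from it together with the second--derivative formula \eqref{eq:secondder} and the Bochner inequalities \eqref{eq:bochhess}, \eqref{eq:bochlap}. For \eqref{eq:boundenergia} the key is the elementary pointwise identity
\[
|\nabla\vartheta^\eps_t|^2+\tfrac{\eps^2}4|\nabla\log\rho^\eps_t|^2=\tfrac12\big(|\nabla\varphi^\eps_t|^2+|\nabla\psi^\eps_t|^2\big),
\]
which follows from $\vartheta^\eps_t=\tfrac12(\psi^\eps_t-\varphi^\eps_t)$ and $\eps\log\rho^\eps_t=\varphi^\eps_t+\psi^\eps_t$. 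Differentiating $t\mapsto\int\varphi^\eps_t\,\d\mu^\eps_t$ and $t\mapsto\int\psi^\eps_t\,\d\mu^\eps_t$ and inserting the expressions for $\ddt\varphi^\eps_t,\ddt\psi^\eps_t,\ddt\rho^\eps_t$ from Proposition \ref{pro:7} (an integration by parts produces a cancellation of the $\Delta$--terms), one obtains for a.e.\ $t\in(0,1)$
\[
\ddt\int\varphi^\eps_t\,\d\mu^\eps_t=-\frac12\int|\nabla\varphi^\eps_t|^2\,\d\mu^\eps_t,\qquad\qquad\ddt\int\psi^\eps_t\,\d\mu^\eps_t=\frac12\int|\nabla\psi^\eps_t|^2\,\d\mu^\eps_t,
\]
these manipulations being justified on compact subintervals of $(0,1)$ by the regularity collected in Proposition \ref{pro:7}. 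Integrating over $[\delta,1-\delta]$ and letting $\delta\downarrow0$ --- the boundary terms converge because $f^\eps_t,g^\eps_t$ are bounded in $L^\infty$ for fixed $\eps$ and converge in $L^2$ as $t\to0,1$, so e.g.\ $\int\varphi^\eps_\delta\,\d\mu^\eps_\delta=\eps\int(f^\eps_\delta\log f^\eps_\delta)\,g^\eps_\delta\,\d\mm\to\eps\int\log f^\eps\,\d\mu_0$ by dominated convergence --- one expresses $\iint_0^1\big(|\nabla\varphi^\eps_t|^2+|\nabla\psi^\eps_t|^2\big)\,\d\mu^\eps_t\,\d t$ through the four quantities $\int\varphi^\eps_0\,\d\mu_0$, $\int\varphi^\eps_1\,\d\mu_1$, $\int\psi^\eps_0\,\d\mu_0$, $\int\psi^\eps_1\,\d\mu_1$; using $\varphi^\eps_i+\psi^\eps_i=\eps\log\rho_i$ and $\int\varphi^\eps_0\,\d\mu_0+\int\psi^\eps_1\,\d\mu_1=\eps H(\ggamma^\eps\,|\,\hR^{\eps/2})$, where $\ggamma^\eps=f^\eps\otimes g^\eps\hR^{\eps/2}$, this reduces to a bounded multiple of $\eps H(\ggamma^\eps\,|\,\hR^{\eps/2})+\eps\big(H(\mu_0\,|\,\mm)+H(\mu_1\,|\,\mm)\big)$.

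Thus \eqref{eq:boundenergia} follows once $\eps H(\ggamma^\eps\,|\,\hR^{\eps/2})$ is bounded uniformly in $\eps\in(0,1)$. Since $\ggamma^\eps$ minimizes $H(\,\cdot\,|\,\hR^{\eps/2})$ among transport plans from $\mu_0$ to $\mu_1$, one has $H(\ggamma^\eps\,|\,\hR^{\eps/2})\le H(\mu_0\otimes\mu_1\,|\,\hR^{\eps/2})$, and the estimate already obtained in the proof of Proposition \ref{pro:2} bounds the right--hand side by $H(\mu_0\,|\,\mm)+H(\mu_1\,|\,\mm)+\|\rho_0\|_{L^\infty}\|\rho_1\|_{L^\infty}\big(H(\mm\otimes\mm\,|\,\hR^{\eps/2})+2e^{-1}\big)$; the lower Gaussian bound in \eqref{eq:gaussest} gives $-\log\hr_{\eps/2}[x](y)\le\log C_1+2C_1D^2/\eps$ (using $\mm(B_{\sqrt{\eps/2}}(y))\le1$), whence $\eps H(\mm\otimes\mm\,|\,\hR^{\eps/2})\le\eps\log C_1+2C_1D^2$ stays bounded on $(0,1)$. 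Since the relative entropy of two probability measures is non--negative, the displayed combination is bounded below by $0$ and above by a finite constant independent of $\eps$, which proves \eqref{eq:boundenergia}.

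Now fix $\delta\in(0,1/2)$. Feeding the Bochner inequality \eqref{eq:bochhess}, applied to $\vartheta^\eps_t$ and to $\log\rho^\eps_t$, into \eqref{eq:secondder} gives, for every $t\in(0,1)$,
\[
\int\Big(|\H{\vartheta^\eps_t}|_\HS^2+\tfrac{\eps^2}4|\H{\log\rho^\eps_t}|_\HS^2\Big)\rho^\eps_t\,\d\mm\le\frac{\d^2}{\d t^2}H(\mu^\eps_t\,|\,\mm)+K^-\int\Big(|\nabla\vartheta^\eps_t|^2+\tfrac{\eps^2}4|\nabla\log\rho^\eps_t|^2\Big)\rho^\eps_t\,\d\mm,
\]
and the analogous inequality with $|\H\cdot|_\HS^2$ replaced by $|\Delta\cdot|^2/N$ from \eqref{eq:bochlap}. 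Integrating over $[\delta,1-\delta]$, the last integral is controlled by \eqref{eq:boundenergia} uniformly in $\eps$, while $\int_\delta^{1-\delta}\tfrac{\d^2}{\d t^2}H(\mu^\eps_t\,|\,\mm)\,\d t=\ddt H(\mu^\eps_t\,|\,\mm)\big|_{t=1-\delta}-\ddt H(\mu^\eps_t\,|\,\mm)\big|_{t=\delta}$, so it remains to bound these interior derivatives uniformly in $\eps$. For this set $F(t):=H(\mu^\eps_t\,|\,\mm)$: by Proposition \ref{pro:6} and Jensen's inequality $0\le F\le\log\bar M$ on $[0,1]$, with $\bar M:=M\max\{\|\rho_0\|_{L^\infty},\|\rho_1\|_{L^\infty}\}$ and $M$ the constant of Proposition \ref{pro:6}; by Proposition \ref{pro:5} $F\in C^2(0,1)\cap C([0,1])$ with $F(0),F(1)$ independent of $\eps$; and by \eqref{eq:secondder}--\eqref{eq:bochhess} again $F''(t)\ge-h^\eps(t)$, where $h^\eps(t):=K^-\int\big(|\nabla\vartheta^\eps_t|^2+\tfrac{\eps^2}4|\nabla\log\rho^\eps_t|^2\big)\rho^\eps_t\,\d\mm\ge0$ has $\int_0^1 h^\eps\,\d t$ uniformly bounded by \eqref{eq:boundenergia}. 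Then $G(t):=F(t)+\int_0^t(t-s)h^\eps(s)\,\d s$ is convex on $(0,1)$, continuous on $[0,1]$, and satisfies $0\le G\le\log\bar M+\int_0^1 h^\eps$; the monotonicity of $G'$ and the bounds $\tfrac{G(\delta)-G(0)}{\delta}\le G'(\delta)\le\tfrac{G(1)-G(\delta)}{1-\delta}$ (and the symmetric ones at $1-\delta$) bound $|G'(\delta)|,|G'(1-\delta)|$ by a constant independent of $\eps$, and since $F'=G'-\int_0^{\,\cdot}h^\eps$ the same holds for $|F'(\delta)|,|F'(1-\delta)|$. This establishes \eqref{eq:bhess} and \eqref{eq:blap}.

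Finally, when $\rho_0,\rho_1\in\testipp \X$ all the curves extend to $AC([0,1],W^{1,2}(\X))$ (Proposition \ref{pro:7}), $F\in C^2([0,1])$ (Proposition \ref{pro:5}), and the identities of the first step hold directly over $\int_0^1$; so it suffices to bound $|F'(0)|,|F'(1)|$ uniformly in $\eps$. Writing $F'(0)=\int\la\nabla\rho_0,\nabla\vartheta^\eps_0\ra\,\d\mm$ (the first form in \eqref{eq:firstder}) with $\nabla\vartheta^\eps_0=\nabla\psi^\eps_0-\tfrac\eps2\nabla\log\rho_0$, and using the uniform Lipschitz bound on $\psi^\eps_0$ from Proposition \ref{pro:1} (valid up to $\delta=0$ in this case) together with $\rho_0,\log\rho_0\in\testi \X$ (so $\nabla\rho_0\in L^2$, $|\nabla\log\rho_0|\in L^\infty$), one gets $|F'(0)|\le C$ uniformly in $\eps$, and likewise $|F'(1)|$; hence $\delta=0$ is admissible throughout. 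The main obstacle is precisely this uniform--in--$\eps$ control of $\ddt H(\mu^\eps_t\,|\,\mm)$ at the endpoints of $[\delta,1-\delta]$ --- the ``almost-convexity'' argument extracting a one--sided derivative bound from an $L^1$-in-time lower bound on $F''$ and an $L^\infty$ bound on $F$; the endpoint passage $\delta\downarrow0$ in the first step is a secondary technical point, while the Bochner manipulations and the integrations by parts are routine.
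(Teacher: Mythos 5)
Your proof is correct and reaches the same conclusion, but it takes a genuinely different route from the paper in two places, both of which are interesting. For \eqref{eq:boundenergia} the paper first uses the equi-Lipschitz estimate of Proposition~\ref{pro:1} to bound $\iint_{1/2}^1|\nabla\varphi^\eps_t|^2\,\d\mu^\eps_t\,\d t$ and $\iint_0^{1/2}|\nabla\psi^\eps_t|^2\,\d\mu^\eps_t\,\d t$ directly, and then transfers the bound to the complementary half-intervals via the first-derivative formula~\eqref{eq:firstder} and the nonnegativity of $H(\,\cdot\,|\,\mm)$. You instead derive (and I checked the computation) the exact global identity
\[
\iint_0^1\Big(|\nabla\vartheta^\eps_t|^2+\tfrac{\eps^2}{4}|\nabla\log\rho^\eps_t|^2\Big)\,\d\mu^\eps_t\,\d t
=2\,\eps H\big(\ggamma^\eps\,|\,\hR^{\eps/2}\big)-\eps\big(H(\mu_0\,|\,\mm)+H(\mu_1\,|\,\mm)\big),
\]
where $\ggamma^\eps=f^\eps\otimes g^\eps\,\hR^{\eps/2}$, and then bound the rescaled entropic cost $\eps H(\ggamma^\eps\,|\,\hR^{\eps/2})$ by minimality, the comparison plan $\mu_0\otimes\mu_1$, and the Gaussian lower bound in~\eqref{eq:gaussest}. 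This avoids Hamilton's gradient estimate entirely at this step and moreover shows that the quantity in~\eqref{eq:boundenergia} is essentially the rescaled entropic cost, which nicely foreshadows the convergence $\eps I_\eps\to\tfrac12 W_2^2$ proved later. For the uniform control of $\frac{\d}{\d t}H(\mu^\eps_t\,|\,\mm)$ at $t=\delta$ and $t=1-\delta$ the paper combines~\eqref{eq:firstder}, the decomposition $\vartheta^\eps_t=-\varphi^\eps_t+\tfrac\eps2\log\rho^\eps_t$, and the Li-Yau lower bound~\eqref{eq:lapcontr} to get one-sided derivative bounds at the endpoints of $[\delta,1-\delta]$. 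You instead use only that $F(t):=H(\mu^\eps_t\,|\,\mm)$ is bounded in $L^\infty$ uniformly in $\eps$ (Proposition~\ref{pro:6} plus Jensen) and that $F''\geq -h^\eps$ with $\int_0^1 h^\eps$ uniformly bounded by~\eqref{eq:boundenergia} and the $K$-term of Bochner; the almost-convexity argument via $G(t)=F(t)+\int_0^t(t-s)h^\eps(s)\,\d s$ then gives two-sided interior derivative bounds with no \emph{direct} appeal to Li-Yau (though Li-Yau still enters indirectly, through the density bound of Proposition~\ref{pro:6}). Both approaches are valid; the paper's is shorter once~\eqref{eq:lapcontr} is available, while yours is softer, more modular, and makes the $1/\min\{\delta,1-\delta\}$ dependence of the constant transparent. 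Your treatment of the $\delta=0$ case coincides with the paper's.
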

\begin{proof} We start with \eqref{eq:boundenergia} and recall that Proposition \ref{pro:1} grants that  
\[
\sup_{\eps\in(0,1)}\sup_{t\in[\frac12,1]}\Lip(\varphi^\eps_t)<\infty\qquad\sup_{\eps\in(0,1)}\sup_{t\in[0,\frac12]}\Lip(\psi^\eps_t)<\infty,
\] 
so that trivially 
\begin{equation}
\label{eq:partefacile}
\sup_{\eps\in(0,1)}\iint_{\frac12}^1|\nabla\varphi^\eps_t|^2\rho^\eps_t\,\d t\,\d\mm+\iint_0^{\frac12}|\nabla\psi^\eps_t|^2\rho^\eps_t\,\d t\,\d\mm<\infty.
\end{equation}
Now notice that \eqref{eq:firstder} gives
\[
\begin{split}
\iint_0^{\frac12}|\nabla\varphi^\eps_t|^2\rho^\eps_t\,\d t\,\d\mm&=\iint_0^{\frac12}|\nabla\psi^\eps_t|^2\rho^\eps_t\,\d t\,\d\mm-2\eps\int_0^{\frac12}\ddt H(\mu^\eps_t\,|\,\mm)\,\d t\\
&=\iint_0^{\frac12}|\nabla\psi^\eps_t|^2\rho^\eps_t\,\d t\,\d\mm+2\eps\big(H(\mu_0\,|\,\mm)-H(\mu^\eps_t\,|\,\mm)\big)
\end{split}
\]
so that taking into account the non-negativity of the relative entropy and \eqref{eq:partefacile} we see that the right hand side is uniformly bounded for $\eps\in(0,1)$. Using again \eqref{eq:partefacile} we deduce that
\[
\sup_{\eps\in(0,1)}\iint_0^{1}|\nabla\varphi^\eps_t|^2\rho^\eps_t\,\d t\,\d\mm<\infty.
\]
A symmetric argument provides the analogous bound for $(\psi^\eps_t)$ and thus recalling that $\vartheta^\eps_t=\frac12(\psi^\eps_t-\varphi^\eps_t)$ and $\eps\log\rho^\eps_t=\psi^\eps_t+\varphi^\eps_t$ we obtain \eqref{eq:boundenergia}.

Now use the fact that $\vartheta^{\varepsilon}_t = -\varphi^{\varepsilon}_t + \frac{\varepsilon}{2}\log\rho^{\varepsilon}_t$  in conjunction with  \eqref{eq:firstder} to get
\[
\begin{split}
\ddt H(\mu^\eps_t\,|\,\mm)\restr{t=\delta}&=- \int  \langle \nabla\rho_{\delta}^\eps, \nabla\varphi_{\delta}^{\varepsilon} \rangle \d\mathfrak{m} + \frac{\varepsilon}{2} \int \langle \nabla\rho_{\delta}^\eps, \nabla\log\rho_{\delta}^{\varepsilon} \rangle \d\mathfrak{m}\\
&=\int  \rho^\eps_{\delta} \Delta\varphi_{\delta}^{\varepsilon}  \d\mathfrak{m} + \frac{\varepsilon}{2} \int\frac{| \nabla\rho_{\delta}^\eps|^2}{\rho^\eps_\delta} \d\mathfrak{m}\geq \int  \rho_{\delta}^\eps \Delta\varphi_{\delta}^{\varepsilon}  \d\mathfrak{m}.
\end{split}
\]
Recalling the bound \eqref{eq:lapcontr} we get that for some constant $C_\delta$ independent on $\eps$ it holds
\[
\ddt H(\mu^\eps_t\,|\,\mm)\restr{t=\delta}\geq -C_\delta\qquad\forall \eps\in(0,1)
\]
and an analogous argument starting from  $\vartheta^\eps_t=\psi^\eps_t-\frac\eps2\log\rho^\eps_t$  yields
$
\ddt H(\mu^\eps_t\,|\,\mm)\restr{t=1-\delta}\leq C_\delta$ for every $\eps\in(0,1)$.
Therefore
\[
\sup_{\eps\in(0,1)}\int_\delta^{1-\delta}\frac{\d^2}{\d t^2} H(\mu^\eps_t\,|\,\mm)=\sup_{\eps\in(0,1)}\bigg(\ddt H(\mu^\eps_t\,|\,\mm)\restr{t=1-\delta}-\ddt H(\mu^\eps_t\,|\,\mm)\restr{t=\delta}\bigg)<\infty.
\]
The bounds \eqref{eq:bhess} and \eqref{eq:blap} then come from this last inequality used in conjunction with \eqref{eq:boundenergia} and the Bochner inequality written as in \eqref{eq:bochhess} and \eqref{eq:bochlap} respectively.

For the last claim we recall  that under the further regularity assumptions on $\rho_0,\rho_1$ we have that
\[
\Big|\ddt H(\mu^\eps_t\,|\,\mm)\restr{t=0}\Big|=\Big|\int\la\nabla\rho_0,\nabla\vartheta^\eps_0\ra\,\d\mm\Big|\leq \frac12\Lip(\rho_0)\big(\Lip(\varphi^\eps_0)+\Lip(\psi^\eps_0)\big)
\]
and using the uniform Lipschitz bounds given in the last part of  Proposition \ref{pro:1} we obtain that $\sup_{\eps\in(0,1)}\big|\ddt H(\mu^\eps_t\,|\,\mm)\restr{t=0}\big|<\infty$. A similar argument provides a  uniform bound on $\big|\ddt H(\mu^\eps_t\,|\,\mm)\restr{t=1}\big|$ and then we conclude as before.
\end{proof}
With the help of the previous lemma we can now prove that some crucial quantities vanish in the limit $\eps\downarrow0$; as we shall see in the proof of our main theorem \ref{thm:main}, this is what we will need to prove that the acceleration of the entropic interpolations goes to 0 as $\eps$ goes to zero. 
\begin{Lemma}[Vanishing quantities]\label{le:vanish}With the same assumptions and notations of Setting \ref{set}, for any $\delta\in(0,\frac12)$ we have
\begin{subequations}
\begin{align}
\label{eq:b1}
&\lim_{\varepsilon \downarrow 0}\varepsilon^2 \iint_\delta^{1-\delta} \rho^{\varepsilon}_t |\Delta\log\rho^{\varepsilon}_t|  \,\d t\,  \d\mm= 0, \\
\label{eq:b2}
&\lim_{\varepsilon \downarrow 0}\varepsilon^2 \int_\delta^{1-\delta} \rho^{\varepsilon}_t |\nabla\log\rho^{\varepsilon}_t|^2 \,\d t\,  \d\mm = 0, \\
\label{eq:b3}
&\lim_{\varepsilon \downarrow 0}\varepsilon^2 \iint_\delta^{1-\delta} \rho^{\varepsilon}_t |\Delta\log\rho^{\varepsilon}_t| |\nabla\log\rho^{\varepsilon}_t| \,\d t\,  \d\mm = 0, \\
\label{eq:b4}
 &\lim_{\varepsilon \downarrow 0} \varepsilon^2 \iint_\delta^{1-\delta}\rho^{\varepsilon}_t |\nabla\log\rho^{\varepsilon}_t|^3 \,\d t\,  \d\mm= 0.
\end{align}
\end{subequations}
If in addition $\rho_0,\rho_1\in\testipp \X$, then we can take $\delta=0$.
\end{Lemma}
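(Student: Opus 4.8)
The plan is to deduce all four limits from the uniform bounds collected in Lemma \ref{lem:7}, namely \eqref{eq:boundenergia}, \eqref{eq:bhess} and \eqref{eq:blap}, via Cauchy--Schwarz with respect to the probability measure $\rho^\eps_t\,\d t\,\d\mm$ on $[\delta,1-\delta]\times\X$. The key observation is that each of the four integrands carries a factor $\eps^2$, while the bounds in \eqref{eq:bhess} and \eqref{eq:blap} already control the corresponding $\eps$-weighted $L^2$ norms uniformly, so that after splitting off an appropriate power of $\eps$ we obtain an explicit power of $\eps$ going to zero times a uniformly bounded quantity.

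Concretely, for \eqref{eq:b1} I would write $\eps^2\iint_\delta^{1-\delta}\rho^\eps_t|\Delta\log\rho^\eps_t|\,\d t\,\d\mm=\eps\cdot\iint_\delta^{1-\delta}(\eps|\Delta\log\rho^\eps_t|)\,\rho^\eps_t\,\d t\,\d\mm$ and apply Cauchy--Schwarz to the second factor against the constant $1$, using that $\rho^\eps_t\,\d t\,\d\mm$ has total mass $1-2\delta\le 1$; this bounds it by $\eps\,(1-2\delta)^{1/2}\big(\iint_\delta^{1-\delta}\eps^2|\Delta\log\rho^\eps_t|^2\rho^\eps_t\,\d t\,\d\mm\big)^{1/2}$, which is $\eps$ times a constant by \eqref{eq:blap}, hence tends to $0$. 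The limit \eqref{eq:b2} is even more direct: it is just $\eps$ times $\eps\iint_\delta^{1-\delta}|\nabla\log\rho^\eps_t|^2\rho^\eps_t\,\d t\,\d\mm$, and the latter is bounded by $\eps^{-1}$ times the quantity controlled in \eqref{eq:boundenergia}; actually more simply, $\eps^2|\nabla\log\rho^\eps_t|^2\rho^\eps_t$ integrates to something bounded by \eqref{eq:boundenergia}, so $\eps^2\iint|\nabla\log\rho^\eps_t|^2\rho^\eps_t = \eps\cdot\big(\eps\iint|\nabla\log\rho^\eps_t|^2\rho^\eps_t\big)\le\eps\cdot C\to 0$.

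For the two mixed/cubic terms \eqref{eq:b3} and \eqref{eq:b4} I would again use Cauchy--Schwarz but now pairing two genuinely $\eps$-weighted factors. For \eqref{eq:b3}, write the integrand as $\eps\cdot(\eps|\Delta\log\rho^\eps_t|)\cdot(|\nabla\log\rho^\eps_t|)\cdot\rho^\eps_t$ and apply Cauchy--Schwarz with the two factors $(\eps|\Delta\log\rho^\eps_t|)$ and $|\nabla\log\rho^\eps_t|$: this yields $\eps\,\big(\iint\eps^2|\Delta\log\rho^\eps_t|^2\rho^\eps_t\big)^{1/2}\big(\iint|\nabla\log\rho^\eps_t|^2\rho^\eps_t\big)^{1/2}$. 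The first square root is bounded by \eqref{eq:blap}; the second is bounded by $\eps^{-1/2}$ times the constant in \eqref{eq:boundenergia}, so the product is $O(\eps^{1/2})\to0$. For \eqref{eq:b4} the integrand is $\eps^2|\nabla\log\rho^\eps_t|^3\rho^\eps_t=(\eps^2|\nabla\log\rho^\eps_t|^2\rho^\eps_t)\cdot|\nabla\log\rho^\eps_t|$; here I would use Cauchy--Schwarz pairing $(\eps|\nabla\log\rho^\eps_t|^2)$ with $(\eps|\nabla\log\rho^\eps_t|)$, giving $\big(\iint\eps^2|\nabla\log\rho^\eps_t|^4\rho^\eps_t\big)^{1/2}\big(\iint\eps^2|\nabla\log\rho^\eps_t|^2\rho^\eps_t\big)^{1/2}$, and the second factor is $O(\eps^{1/2})$ by \eqref{eq:boundenergia}. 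The only point needing a little care is controlling $\iint\eps^2|\nabla\log\rho^\eps_t|^4\rho^\eps_t$: this is where one must invoke the Hessian bound \eqref{eq:bhess}, using that $|\nabla|\nabla\log\rho^\eps_t|^2|\le 2|\H{\log\rho^\eps_t}|_\HS|\nabla\log\rho^\eps_t|$ (Leibniz rule \eqref{eq:leibh}) together with a Gagliardo--Nirenberg/interpolation-type estimate — or, more elementarily, by writing $\eps^2|\nabla\log\rho^\eps_t|^4\rho^\eps_t=(\eps^2|\nabla\log\rho^\eps_t|^2\rho^\eps_t)\cdot|\nabla\log\rho^\eps_t|^2$ and noting the first factor is already handled while the product $|\nabla\log\rho^\eps_t|^2\rho^\eps_t$ over space-time is bounded uniformly by $\eps^{-2}$ times \eqref{eq:boundenergia}, which still leaves an overall power $\eps^{2-2}\cdot\eps^2=\eps^2$ — so in fact $\iint\eps^2|\nabla\log\rho^\eps_t|^4\rho^\eps_t$ need only be shown to be $O(1)$, which follows by combining \eqref{eq:boundenergia} with \eqref{eq:bhess} through the elementary inequality $|\nabla f|^4\le|\nabla f|^2\cdot 2(|\H f|_\HS^2+\text{lower order})$ integrated against $\rho^\eps_t$ after an integration by parts.

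Thus the main obstacle is entirely concentrated in the cubic estimate \eqref{eq:b4}, specifically in obtaining the auxiliary bound on $\iint_\delta^{1-\delta}\eps^2|\nabla\log\rho^\eps_t|^4\rho^\eps_t\,\d t\,\d\mm$: all the other three follow by a one-line Cauchy--Schwarz from bounds already in hand. I would handle this auxiliary bound by integration by parts, $\int|\nabla u|^4\rho\,\d\mm=\int|\nabla u|^2\langle\nabla u,\nabla u\rangle\rho\,\d\mm$ and expanding $\langle\nabla|\nabla u|^2,\cdot\rangle$ via \eqref{eq:leibh} to trade one power of $|\nabla u|$ for a factor $|\H u|_\HS$, then absorbing via Young's inequality and closing the estimate using \eqref{eq:bhess} and \eqref{eq:boundenergia} (with $u=\log\rho^\eps_t$, keeping track of the $\eps$-weights throughout). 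Finally, the case $\rho_0,\rho_1\in\testipp\X$ with $\delta=0$ is immediate since Lemma \ref{lem:7} already provides the corresponding bounds on $[0,1]$, and the arguments above never used $\delta>0$ beyond invoking those bounds.
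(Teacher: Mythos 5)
Your argument for \eqref{eq:b1} is correct and matches the paper's. The rest, however, rests on a misreading of \eqref{eq:boundenergia} that a Cauchy--Schwarz bookkeeping cannot fix, plus a genuine missing idea for \eqref{eq:b4}.

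\textbf{The misreading.} Equation \eqref{eq:boundenergia} bounds $\eps^2\iint|\nabla\log\rho^\eps_t|^2\rho^\eps_t$ uniformly in $\eps$, not $\eps\iint|\nabla\log\rho^\eps_t|^2\rho^\eps_t$. Consequently your ``more simply'' argument for \eqref{eq:b2} only yields $\eps^2\iint|\nabla\log\rho^\eps_t|^2\rho^\eps_t=O(1)$, not $o(1)$. The same slip kills your \eqref{eq:b3} argument: after pairing $(\eps|\Delta\log\rho^\eps_t|)$ with $|\nabla\log\rho^\eps_t|$ you obtain $\eps\cdot O(1)\cdot O(\eps^{-1})=O(1)$. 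The point is that \eqref{eq:b2} is \emph{not} a consequence of \eqref{eq:boundenergia} alone; it requires an extra idea. The paper uses the identity $\rho^\eps_t|\nabla\log\rho^\eps_t|^2=-\rho^\eps_t\Delta\log\rho^\eps_t+\Delta\rho^\eps_t$ together with $\int\Delta\rho^\eps_t\,\d\mm=0$, which reduces \eqref{eq:b2} to the $L^1$ estimate \eqref{eq:b1} already in hand. Once \eqref{eq:b2} is established as a genuine vanishing limit, the balanced pairing $(\eps|\Delta\log\rho^\eps_t|)$ against $(\eps|\nabla\log\rho^\eps_t|)$ — not your unbalanced one — then gives \eqref{eq:b3}.

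\textbf{The gap in \eqref{eq:b4}.} Your plan hinges on a fourth-moment bound $\iint\eps^2|\nabla\log\rho^\eps_t|^4\rho^\eps_t=O(1)$, which you never actually establish; the ``elementary inequality'' $|\nabla f|^4\le |\nabla f|^2\cdot 2(|\H f|^2_\HS+\text{lower order})$ is false pointwise, and the Gagliardo--Nirenberg sketch is not carried out. The paper avoids fourth moments entirely: it multiplies the same identity $\rho^\eps_t|\nabla\log\rho^\eps_t|^2=-\rho^\eps_t\Delta\log\rho^\eps_t+\Delta\rho^\eps_t$ by $|\nabla\log\rho^\eps_t|$, absorbs the first resulting term via \eqref{eq:b3}, and handles $\iint\Delta\rho^\eps_t|\nabla\log\rho^\eps_t|$ by integrating by parts after a regularization $\sqrt{\eta+|\nabla\log\rho^\eps_t|^2}$, which trades $\Delta\rho^\eps_t$ for a Hessian factor appearing \emph{linearly}; then a Cauchy--Schwarz between the Hessian term (bounded by \eqref{eq:bhess}) and the gradient term (vanishing by \eqref{eq:b2}) closes the argument. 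The moral is that the proof is not a sequence of independent Cauchy--Schwarz applications, but a chain: \eqref{eq:b1} $\Rightarrow$ \eqref{eq:b2} $\Rightarrow$ \eqref{eq:b3}, \eqref{eq:b4}, each feeding off the previous via the divergence identity.
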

\begin{proof}
For \eqref{eq:b1} we notice that
\[
\varepsilon^2 \iint_\delta^{1-\delta} \rho^{\varepsilon}_t |\Delta\log\rho^{\varepsilon}_t|\,\d t\,  \d\mm\leq\eps\sqrt{1-2\delta}\sqrt{\varepsilon^2 \iint_\delta^{1-\delta} \rho^{\varepsilon}_t |\Delta\log\rho^{\varepsilon}_t|^2  \,\d t\,  \d\mm}
\]
and the fact that, by \eqref{eq:blap}, the last square root is uniformly bounded in $\eps\in(0,1)$. 

For \eqref{eq:b2} we start from the identity $\rho^{\varepsilon}_t|\nabla\log\rho^{\varepsilon}_t|^2 = -\rho^{\varepsilon}_t \Delta \log \rho^{\varepsilon}_t + \Delta\rho^{\varepsilon}_t$,  use the fact that $\int\Delta\rho^{\varepsilon}_t\,\d\mm=0$ to get
\[
\varepsilon^2\iint_{\delta}^{1-\delta}\rho^{\varepsilon}_t|\nabla\log\rho^{\varepsilon}_t|^2 \,\d t\,  \d\mm = -\varepsilon^2\iint_{\delta}^{1-\delta}\rho^{\varepsilon}_t \Delta\log\rho^{\varepsilon}_t\,\d t\,  \d\mm \leq \varepsilon^2 \iint_{\delta}^{1-\delta}  \rho^{\varepsilon}_t|\Delta\log\rho^{\varepsilon}_t|\,\d t\,  \d\mm
\]
and then conclude by \eqref{eq:b1}. 

For \eqref{eq:b3} we observe that
\[
\begin{split}
\varepsilon^2 \iint_\delta^{1-\delta} \rho^{\varepsilon}_t &|\Delta\log\rho^{\varepsilon}_t| |\nabla\log\rho^{\varepsilon}_t| \,\d t\,  \d\mm\\
&\leq \sqrt{\varepsilon^2 \iint_\delta^{1-\delta} \rho^{\varepsilon}_t |\Delta\log\rho^{\varepsilon}_t|^2 \,\d t\,  \d\mm}\sqrt{\varepsilon^2 \iint_\delta^{1-\delta} \rho^{\varepsilon}_t |\nabla\log\rho^{\varepsilon}_t|^2 \,\d t\,  \d\mm},
\end{split}
\]
and use the fact that the first square root in the right hand side is bounded (by \eqref{eq:blap}) and the second one goes to 0 (by \eqref{eq:b2}). 

To prove \eqref{eq:b4} we start again from   the identity $\rho^{\varepsilon}_t|\nabla\log\rho^{\varepsilon}_t|^2 = -\rho^{\varepsilon}_t \Delta \log \rho^{\varepsilon}_t + \Delta\rho^{\varepsilon}_t$ to get
\[
 \iint_\delta^{1-\delta} \rho^{\varepsilon}_t |\nabla\log\rho^{\varepsilon}_t|^3 \,\d t\,\d\mm=-\iint_\delta^{1-\delta} \rho^{\varepsilon}_t\Delta\log\rho^{\varepsilon}_t |\nabla\log\rho^{\varepsilon}_t| \,\d t\,\d\mm +  \iint_\delta^{1-\delta} \Delta\rho^{\varepsilon}_t |\nabla\log\rho^{\varepsilon}_t|\,\d t\,\d\mm.
\]
After a multiplication by $\eps^2$ we see that the first integral on the right-hand side vanishes as $\varepsilon \downarrow 0$ thanks to \eqref{eq:b3}. For the second we start noticing that an application of the dominated convergence theorem ensures that
\begin{equation}
\label{eq:inpiu}
\iint_\delta^{1-\delta}\Delta\rho^{\varepsilon}_t |\nabla\log\rho^{\varepsilon}_t| \,\d t\,  \d\mm =\lim_{\eta\downarrow 0}\iint_\delta^{1-\delta}\Delta\rho^{\varepsilon}_t \sqrt{\eta+|\nabla\log\rho^{\varepsilon}_t|^2} \,\d t\,  \d\mm,
\end{equation}
then we observe that for every $\eta>0$ the map $z\mapsto \sqrt{\eta+z}$ is in $C^1([0,\infty))$ and since $|\nabla\log\rho^{\varepsilon}_t|^2\in W^{1,2}(\X)$ we deduce that $\sqrt{\eta+|\nabla\log\rho^{\varepsilon}_t|^2} \in W^{1,2}(\X)$ as well. Thus by the chain rule for gradients and the Leibniz rule \eqref{eq:leibh} it holds
\[
\begin{split}
\bigg|\iint_\delta^{1-\delta}\Delta\rho^{\varepsilon}_t& \sqrt{\eta+|\nabla\log\rho^{\varepsilon}_t|^2} \,\d t\,  \d\mm\bigg|\\
&=\bigg|\iint_\delta^{1-\delta}\frac{\rho^\eps_t}{2\sqrt{\eta+|\nabla\log\rho^\eps_t|^2}}\langle\nabla\log\rho^\eps_t,\nabla|\nabla\log\rho^\eps_t|^2\rangle \,\d t\,  \d\mm\bigg|\\
&=\bigg|\iint_\delta^{1-\delta}\frac{\rho^\eps_t}{\sqrt{\eta+|\nabla\log\rho^\eps_t|^2}}\H{\log\rho^{\varepsilon}_t}(\nabla\log\rho^{\varepsilon}_t,\nabla\log\rho^{\varepsilon}_t) \,\d t\,  \d\mm\bigg|\\
&\leq\iint_\delta^{1-\delta}\rho^\eps_t|\H{\log\rho^{\varepsilon}_t}|_{\HS}\,|\nabla\log\rho^{\varepsilon}_t|\,\d t\,  \d\mm
\end{split}
\]
and being this true for any $\eta>0$, from \eqref{eq:inpiu} we obtain
\[
\begin{split}
\varepsilon^2\bigg|\iint_\delta^{1-\delta}\Delta\rho^{\varepsilon}_t |\nabla\log\rho^{\varepsilon}_t| \,\d t\,  \d\mm\bigg|  
& \leq  \varepsilon^2\iint_\delta^{1-\delta} \rho^{\varepsilon}_t |\H{\log\rho^{\varepsilon}_t}|_{\HS}|\nabla\log\rho^{\varepsilon}_t|\,\d t\,  \d\mm \\ 
& \leq  \sqrt{\varepsilon^2\iint_\delta^{1-\delta} \rho^{\varepsilon}_t |\H{\log\rho^{\varepsilon}_t}|^2_{\HS} \,\d t\,  \d\mm }\\
&\qquad\qquad\qquad\qquad\qquad\times \sqrt{\varepsilon^2\iint_\delta^{1-\delta} \rho^{\varepsilon}_t |\nabla\log\rho^{\varepsilon}_t|^2 \,\d t\,  \d\mm} .
\end{split}
\]
In this last expression the first square root is uniformly bounded in $\varepsilon\in(0,1)$ by \eqref{eq:bhess}, while the second one vanishes as $\varepsilon \downarrow 0$ thanks to \eqref{eq:b2}. 

The last claim follows from the fact that under the stated additional regularity properties of $\rho_0,\rho_1$ we can take $\delta=0$ in \eqref{eq:bhess}, \eqref{eq:blap}. Then we argue as before.
\end{proof}

\section{From entropic to displacement interpolations}\label{sec:6}

\subsection{Compactness}

Starting from the uniform estimates discussed in Section \ref{sec:5}, let us first prove that when we pass to the limit as $\varepsilon \downarrow 0$, up to subsequences Schr\"odinger potentials and entropic interpolations converge in a suitable sense to limit potentials and interpolations. 
\begin{Proposition}[Compactness]\label{lem:6} With the same assumptions and notations as in Setting \ref{set} the following holds.

For any sequence $\eps_n\downarrow0$ there exists a subsequence, not relabeled, so that:
\begin{itemize}
\item[(i)] the curves $(\mu^{\eps_n}_t)$ uniformly converge in $(\prob{\X},W_2)$ to a limit curve  $(\mu_t)$ which belongs to $AC([0,1],(\prob{\X},W_2))$. Moreover, there is $M>0$ so that
\begin{equation}
\label{eq:bdens}
\mu_t\leq M\mm\qquad\forall t\in[0,1]
\end{equation}
and setting $\rho_t:=\frac{\d\mu_t}{\d\mm}$ it holds
\begin{equation}\label{eq:66}
\rho^{\varepsilon_n}_t \stackrel{\ast}{\rightharpoonup} \rho_t \quad \textrm{in } L^{\infty}(\mm)\qquad\forall t\in[0,1].
\end{equation}
\item[(ii)] the curves $(\varphi^{\eps_n}_t),(\psi^{\eps_n}_t)$ converge locally uniformly on $I$ with values in $ L^1(\X)$ to limit curves $(\varphi_t),(\psi_t)\in  AC_{loc}(I,L^1(\X))$ with $\Lip(\varphi_t),\Lip(\psi_t)$ locally bounded for $t\in I$, where $I:=(0,1]$ for the $\varphi$'s and $I:=[0,1)$ for the $\psi$'s. Moreover for every $t\in(0,1)$ it holds 
\begin{equation}\label{eq:63}
\begin{split}
\varphi_t+\psi_t&\leq 0\qquad \text{ on }\X,\\
\varphi_t +\psi_t&=0 \qquad \text{ on }\supp(\mu_t).
\end{split}
\end{equation}
Similarly, the curves $(\vartheta^{\eps_n}_t)$ converge  in $(0,1)$ to the limit curve $t\mapsto\vartheta_t:=\frac12(\psi_t-\varphi_t)$ in the same sense as above.

If  we further assume that $\rho_0,\rho_1 \in \testipp{\X}$, we can pick $I:=[0,1]$ for both the $\varphi$'s and the $\psi$'s and \eqref{eq:63} holds for all $t\in[0,1]$.
\end{itemize}
\end{Proposition}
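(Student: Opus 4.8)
The plan is to extract converging subsequences by a combination of Arzel\`a--Ascoli type arguments, using the uniform bounds collected in Propositions \ref{pro:6} and \ref{pro:1} and Lemma \ref{lem:7}. For part (i), I would first observe that by Proposition \ref{pro:6} each density $\rho^\eps_t$ is bounded in $L^\infty(\mm)$ by a constant independent of $\eps$ and $t$; since $\mm$ is a probability measure on a compact space, this gives tightness and bounds in $W_2$, so $(\mu^{\eps_n}_t)$ takes values in a fixed $W_2$-compact set for every $t$. Equicontinuity in $t$ w.r.t.\ $W_2$ comes from the continuity equation \eqref{eq:cei}: indeed by Theorem \ref{thm:GH} the metric speed satisfies $|\dot\mu^\eps_t|^2=\int|\nabla\vartheta^\eps_t|^2\,\d\mu^\eps_t$, and Lemma \ref{lem:7} (estimate \eqref{eq:boundenergia}) bounds $\iint_0^1|\nabla\vartheta^\eps_t|^2\rho^\eps_t\,\d t\,\d\mm$ uniformly in $\eps$, so the curves $(\mu^\eps_t)$ are uniformly $1/2$-H\"older-like (more precisely equi-absolutely-continuous in the $W_2$ sense via a uniform bound on $\int_0^1|\dot\mu^\eps_t|^2\,\d t$). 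Arzel\`a--Ascoli in $C([0,1],(\prob\X,W_2))$ then yields a uniformly converging subsequence to some $(\mu_t)\in AC([0,1],(\prob\X,W_2))$. The bound \eqref{eq:bdens} passes to the limit because $W_2$-convergence plus the uniform $L^\infty$ bound forces weak-$*$ convergence of the densities (on a compact space, $\mu^{\eps_n}_t\to\mu_t$ narrowly and $\rho^{\eps_n}_t$ bounded in $L^\infty$ implies $\rho^{\eps_n}_t\stackrel{*}{\rightharpoonup}\rho_t$ with $\rho_t\le M$), which is exactly \eqref{eq:66}; a diagonal argument over a countable dense set of times plus the uniform continuity upgrades this to hold for every $t\in[0,1]$.

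For part (ii), the key is the uniform Lipschitz bound of Proposition \ref{pro:1}: on any interval $[\delta,1]$ the functions $\{\varphi^\eps_t\}$ are equi-Lipschitz with a constant $C_\delta$, hence (being defined on a compact space and, after normalization, equibounded — the normalization $\int\log(\h_{\eps/2}f^\eps)\rho_1\,\d\mm=0$ together with equi-Lipschitzness pins down the additive constant) they lie in a compact subset of $C(\X)\subset L^1(\X)$ for each fixed $t$. Equicontinuity in $t$ in the $L^1(\X)$ topology follows from the PDE $\frac{\d}{\d t}\varphi^\eps_t=\tfrac12|\nabla\varphi^\eps_t|^2+\tfrac\eps2\Delta\varphi^\eps_t$ of Proposition \ref{pro:7}: the first term is bounded in $L^\infty$ by $\tfrac12 C_\delta^2$ uniformly, while the term $\tfrac\eps2\Delta\varphi^\eps_t$ is controlled in $L^1(\mm)$ by $\tfrac\eps2 C_\delta$ from \eqref{eq:lapcontr2} and hence vanishes; thus $t\mapsto\varphi^\eps_t$ is uniformly (locally) Lipschitz into $L^1(\X)$. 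Arzel\`a--Ascoli in $C_{loc}(I,L^1(\X))$ (combined with the previous diagonal subsequence, refined further) produces the limit $(\varphi_t)$, which inherits the local Lipschitz-in-space bounds by lower semicontinuity of the local Lipschitz constant under $L^1$-convergence of equi-Lipschitz functions, and is locally absolutely continuous in time. The same argument handles the $\psi$'s, and $\vartheta_t:=\tfrac12(\psi_t-\varphi_t)$ then converges by linearity. The relations \eqref{eq:63} are obtained by passing to the limit in the pointwise identity $\varphi^\eps_t+\psi^\eps_t=\eps\log\rho^\eps_t$: since $\rho^\eps_t\le M$, we get $\varphi^\eps_t+\psi^\eps_t\le\eps\log M\to0$, giving $\varphi_t+\psi_t\le0$ on all of $\X$; for the equality on $\supp(\mu_t)$ one uses that $\int(\varphi^\eps_t+\psi^\eps_t)\rho^\eps_t\,\d\mm=\eps\int\rho^\eps_t\log\rho^\eps_t\,\d\mm=\eps H(\mu^\eps_t\mid\mm)$, which is $O(\eps)$ by the uniform $L^\infty$ bound, so in the limit $\int(\varphi_t+\psi_t)\,\d\mu_t=0$; combined with $\varphi_t+\psi_t\le0$ everywhere and the fact that $\varphi_t+\psi_t$ is continuous, this forces $\varphi_t+\psi_t=0$ on $\supp(\mu_t)$ (here $W_2$-convergence is needed to justify $\int(\varphi^\eps_t+\psi^\eps_t)\rho^\eps_t\,\d\mm\to\int(\varphi_t+\psi_t)\,\d\mu_t$, using uniform convergence of the potentials and narrow convergence of the measures).

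For the final assertion, when $\rho_0,\rho_1\in\testipp\X$ the last parts of Propositions \ref{pro:7} and \ref{pro:1} give that the curves and the Lipschitz bounds extend up to $t=0$ and $t=1$ (via Proposition \ref{pro:kyoto}), so the Arzel\`a--Ascoli argument can be run on the closed interval $[0,1]$ and \eqref{eq:63} holds there as well, the identities $\varphi^\eps_0=\eps\log f^\eps$ etc.\ being handled as above.

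\textbf{Main obstacle.} I expect the delicate point to be the normalization/equiboundedness needed to turn the equi-Lipschitz bound on $\{\varphi^\eps_t\}$ into genuine precompactness in $L^1(\X)$ (equi-Lipschitz alone only gives precompactness modulo additive constants), and correspondingly making sure the chosen normalization of $(f^\eps,g^\eps)$ propagates to a uniform sup-bound on $\varphi^\eps_t,\psi^\eps_t$ for $t$ away from the endpoints; this is where one must carefully use the relation $\varphi^\eps_t+\psi^\eps_t=\eps\log\rho^\eps_t$ together with the entropy bound and the Hamilton--Jacobi--Bellman evolution. The rest is a fairly standard, if somewhat lengthy, compactness-and-diagonalization exercise.
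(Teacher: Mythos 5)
Your proposal follows essentially the same route as the paper's proof: Theorem \ref{thm:GH} plus the uniform energy bound \eqref{eq:boundenergia} give $W_2$-equicontinuity and hence Arzel\`a--Ascoli compactness for part (i), while the HJB equation gives local $L^1$-Lipschitz continuity in time, which combined with the spatial equi-Lipschitz estimate of Proposition \ref{pro:1} and the chosen normalization yields compactness of the potentials in $C_{loc}(I,L^1(\X))$. The ``main obstacle'' you flagged is resolved exactly as you anticipated: the normalization $\int\varphi^\eps_1\,\d\mu_1=0$ together with the elementary bound $\|h\|_{L^\infty(\mm)}\leq\|h\|_{L^1(\mu)}+D\,\Lip(h)$ pins down $\varphi^\eps_1$ (and then $\varphi^\eps_t$ for $t\in[\delta,1]$ via the $L^1$-Lipschitz-in-time estimate), while the $\psi$'s are pinned down via the entropy identity $0\leq\int(\varphi^\eps_t+\psi^\eps_t)\,\d\mu^\eps_t\leq\eps\log M$ evaluated at $t=1/2$ together with the already-established bound on $\varphi^\eps_{1/2}$.
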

\begin{proof}

\noindent{\bf (i)} Fix $\eps\in(0,1)$; we want to apply Theorem \ref{thm:GH} to   $(\mu^\eps_t)$ and $(\nabla\vartheta^\eps_t)$. The continuity of $t\mapsto\rho^\eps_t\in L^2(\X)$ granted by Proposition \ref{pro:7} yields weak continuity of $(\mu_t)$ and the uniform $L^\infty$-bound \eqref{eq:linftybound} gives \eqref{eq:bih}. From the bound \eqref{eq:boundenergia} it follows \eqref{eq:bkh} and from the formula for $\frac\d{\d t}\rho^\eps_t$ given in Proposition \ref{pro:7} and again the $L^2$-continuity of $(\rho^\eps_t)$ on $[0,1]$  it easily follows that $(\mu_t)$ and $(\vartheta^\eps_t)$ solve the continuity equation in the sense of Theorem \ref{thm:GH}. The conclusion of such theorem     ensures that $(\mu^\eps_t)$ is $W_2$-absolutely continuous with 
\[
\int_0^1|\dot\mu^\eps_t|^2\,\d t=\iint_0^1|\nabla\vartheta^\eps_t|^2\rho^\eps_t\,\d t\,\d\mm.
\]
The bound \eqref{eq:boundenergia} grants that the right hand side is uniformly bounded in $\eps\in(0,1)$ and since $(\prob X,W_2)$ is compact, this  is sufficient to ensure the compactness of the family $\{(\mu^\eps_t)\}_\eps$ in $C([0,1],(\prob X,W_2))$ and, by the lower semicontinuity of the kinetic energy, the fact that any limit curve $(\mu_t) $ is absolutely continuous. The bound \eqref{eq:bdens} is then a direct consequence of the uniform bound \eqref{eq:linftybound} and the convergence property \eqref{eq:66} comes from the weak convergence of the measures and the uniform bound on the densities.

\noindent{\bf (ii)} From the formula for $\ddt\varphi^\eps_t$ provided in Proposition \ref{pro:7} we obtain
\[
\|\varphi^{\varepsilon}_t - \varphi^{\varepsilon}_s\|_{L^1(\mm)} \leq \iint_t^s   \frac{|\nabla\varphi^{\varepsilon}_r|^2}{2} + \frac{\varepsilon}{2}|\Delta\varphi^{\varepsilon}_r|\,\d r\,\d\mm\qquad\forall\eps>0,\ \forall t,s\in(0,1],\ t<s.
\]
Thus for  $\delta\in(0,1)$ the estimates \eqref{eq:lipcontr} and \eqref{eq:lapcontr2} give
\begin{equation}
\label{eq:lipl1}
\|\varphi^{\varepsilon}_t - \varphi^{\varepsilon}_s\|_{L^1(\mm)} \leq C'_\delta|s-t |\qquad\forall\eps\in(0,1),\ \forall t,s\in[\delta,1],\ t<s.
\end{equation}
Now notice that  for $h\in {\rm LIP}(X)$ and $\mu\in\prob \X$, integrating in $y$ w.r.t.\ $\mu$ the trivial inequality $h(x)\leq h(y)+D\Lip(h)$ yields
\[
h(x)^+\leq\Big(\int h\,\d\mu+D\Lip(h)\Big)^+\leq \Big|\int h\,\d\mu\Big|+D\Lip(h)
\]
and since a similar bound can be obtained for $h(x)^-$ we get
\begin{equation}
\label{eq:l1li}
\|h\|_{L^\infty(\mm)}\leq  \Big|\int h\,\d\mu\Big|+D\Lip(h)\leq \|h\|_{L^1(\mu)}+D\Lip(h).
\end{equation}
Choosing $\mu:=\mu_1$ and $h:=\varphi^\eps_1$ and recalling that the normalization chosen for $(f^\eps,g^\eps)$ in Setting \ref{set} reads as $\int \varphi^\eps_1\,\d\mu_1=0$, we deduce that $\{\varphi^\eps_1\}_{\eps\in(0,1)}$ is uniformly bounded in $L^\infty(\mm)$. Using this information together with \eqref{eq:lipl1} and \eqref{eq:l1li} with $\mu:=\mm$ we conclude that
\[
\sup_{\eps\in(0,1)}\sup_{t\in[\delta,1]}\|\varphi^\eps_t\|_{L^\infty(\mm)}<\infty.
\]
By Ascoli-Arzel\`a's theorem, this uniform bound and the equi-Lipschitz continuity in space given by \eqref{eq:lipcontr} together with the equi-Lipschitz continuity in time given by \eqref{eq:lipl1} give compactness in $C([\delta,1],L^1(\X))$; it is clear then  that any limit curve $(\varphi_t)$ belongs to ${\rm LIP}([\delta,1],L^1(\X))$ and  that $\sup_{t\in[\delta,1]}\Lip(\varphi_t)<\infty$. A diagonalization argument and the arbitrariness of $\delta\in(0,1)$ then provide the  required results on $(0,1]$.

The argument for the $\psi^\eps_t$'s follows the same lines provided we are able to show that for some $t\in[0,1)$ the functions $\psi^\eps_t$ are uniformly bounded. To see this, observe that from the estimate \eqref{eq:linftybound} it follows that 
\[
0\leq H(\mu^\eps_t\,|\,\mm)\leq \log M\qquad\forall \eps\in(0,1),\ t\in[0,1],
\]
thus multiplying the identity
\begin{equation}
\label{eq:k4}
\varphi^\eps_t+\psi^\eps_t=\eps\log\rho^\eps_t\qquad\forall t\in(0,1)
\end{equation}
by $\rho^\eps_t$ and integrating we get
\begin{equation}
\label{eq:k5}
0\leq \int\varphi^\eps_t+\psi^\eps_t\,\d\mu^\eps_t\leq\eps \log M\qquad\forall t\in(0,1).
\end{equation}
Since we know that $\varphi^\eps_{1/2}$ is uniformly bounded, this yields a uniform control on $\int \psi^\eps_{1/2}\,\d\mu^\eps_{1/2}$ and then we can proceed as before starting from \eqref{eq:l1li} with $h:=\psi^\eps_{1/2}$ and $\mu:=\mu^\eps_{1/2}$.

The claim for the $(\vartheta^\eps_t)$ is now obvious.

Finally, to prove the first in \eqref{eq:63} we pass to the limit in \eqref{eq:k4} recalling the uniform bound \eqref{eq:linftybound}, then passing to the limit in \eqref{eq:k5} (by uniform convergence of functions and weak convergence of measures) we deduce that
\[
\int\varphi_t+\psi_t\,\d\mu_t=0,
\]
which forces the second in \eqref{eq:63}. 

For the last claim, start recalling that under the additional regularity assumption by Proposition \ref{pro:7} we know that $(\varphi^\eps_t),(\psi^\eps_t)\in AC([0,1],W^{1,2})$ for every $\eps>0$. Then notice that for every $t_0,t_1\in[0,1]$ and $\eps\in(0,1)$ the uniform Lipschitz estimates granted by the last part of Proposition \ref{pro:1} ensure that
\[
\Big|\int \varphi^\eps_{t_1}-\varphi^\eps_{t_0}\,\d\mm\Big|=\Big|\iint_{t_0}^{t_1}\frac{|\nabla\varphi^\eps_t|^2}{2}+\frac\eps2\Delta\varphi^\eps_t\,\d t\,\d\mm\Big|=\iint_{t_0}^{t_1}\frac{|\nabla\varphi^\eps_t|^2}{2}\,\d t\,\d\mm\leq |t_1-t_0| C
\]
for some $C>0$ independent on $\eps$. Thus from \eqref{eq:l1li} with $h:=\varphi_{t_1}-\varphi_{t_0}$ and $\mu:=\mm$ we deduce that $\|\varphi_{t_1}-\varphi_{t_0}\|_{L^\infty(\mm)}$ is uniformly bounded in $t_0,t_1\in[0,1]$ and the conclusion follows along the same lines used before. Similarly for $(\psi^\eps_t)$.
\end{proof}

\subsection{Identification of the limit curve and potentials}
We now show that the limit interpolation is the geodesic from $\mu_0$ to $\mu_1$ and the limit potentials are Kantorovich potentials. We shall make use of the following simple lemma valid on general metric measure spaces: 
\begin{Lemma}\label{le:dermiste}
Let $(\Y,\sfd_\Y,\mm_\Y)$ be a complete separable metric measure space endowed with a non-negative measure $\mm_\Y$ which is finite on bounded sets and assume that $W^{1,2}(\Y)$ is separable. Let $\ppi$ be a test plan and $f\in W^{1,2}(\Y)$. Then $t\mapsto \int f\circ\e_t\,\d\ppi$ is absolutely continuous and 
\begin{equation}
\label{eq:bder1}
\Big|\frac\d{\d t}\int f\circ\e_t\,\d\ppi \Big|\leq \int |\d f|(\gamma_t)|\dot\gamma_t|\,\d\ppi(\gamma)\qquad{\rm a.e.}\ t\in[0,1],
\end{equation}
where the exceptional set can be chosen to be independent on $f$.

Moreover, if $(f_t)\in AC([0,1],L^2(\Y))\cap L^\infty([0,1],W^{1,2}(\Y))$, then the map $t\mapsto\int f_t\circ\e_t\,\d\ppi$ is also absolutely continuous and 
\[
\frac\d{\d s}\Big(\int f_s\circ\e_s\,\d\ppi\Big)\restr{s=t}=\int \big(\frac\d{\d s}f_s\restr{s=t}\big)\circ\e_t\,\d\ppi+\frac\d{\d s}\Big(\int f_t\circ\e_s\,\d\ppi\Big)\restr{s=t}\qquad {\rm a.e.}\ t\in[0,1].
\]
\end{Lemma}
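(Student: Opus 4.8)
The plan is to treat the two assertions separately, exploiting that both are essentially consequences of the defining property of test plans. For the first part, recall that if $\ppi$ is a test plan then by definition it is concentrated on $AC([0,1],\Y)$, has bounded compression, and $\iint_0^1|\dot\gamma_t|^2\,\d t\,\d\ppi(\gamma)<\infty$. For $f\in W^{1,2}(\Y)$ the standard argument (see \cite{AmbrosioGigliSavare11}) gives that $t\mapsto f(\gamma_t)$ is absolutely continuous for $\ppi$-a.e.\ $\gamma$ with $\big|\tfrac\d{\d t}f(\gamma_t)\big|\le|\d f|(\gamma_t)|\dot\gamma_t|$ for a.e.\ $t$. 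First I would integrate this pointwise bound against $\ppi$: writing $F(t):=\int f\circ\e_t\,\d\ppi$, for $t_0<t_1$ one has $|F(t_1)-F(t_0)|\le\iint_{t_0}^{t_1}|\d f|(\gamma_t)|\dot\gamma_t|\,\d t\,\d\ppi(\gamma)$, and since $(t,\gamma)\mapsto|\d f|(\gamma_t)|\dot\gamma_t|$ is in $L^1([0,1]\times\ppi)$ (by Cauchy--Schwarz, using $|\d f|\in L^2(\mm_\Y)$, bounded compression, and finiteness of the kinetic energy), this shows $F$ is absolutely continuous and yields \eqref{eq:bder1} at every Lebesgue point of the integrand. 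To make the exceptional set independent of $f$, I would use the separability of $W^{1,2}(\Y)$: fix a countable dense set $\{f_k\}$, take the (null) union of the bad sets for each $f_k$, and pass to general $f$ by approximation, noting that both sides of \eqref{eq:bder1} are continuous in $f$ with respect to the $W^{1,2}$-norm after integration (the left side because $F$ depends continuously on $f$ in $L^\infty(0,1)$, the right side by Cauchy--Schwarz again).

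For the second part, the strategy is a standard "diagonal" increment decomposition. Set $G(t):=\int f_t\circ\e_t\,\d\ppi$ and, for fixed $s$, $H_s(t):=\int f_s\circ\e_t\,\d\ppi$; the claim is that $G$ is absolutely continuous with $G'(t)=\big(\int (\tfrac{\d}{\d r}f_r|_{r=t})\circ\e_t\,\d\ppi\big)+H_t'(t)$ a.e. I would write the increment as
\[
G(t_1)-G(t_0)=\int (f_{t_1}-f_{t_0})\circ\e_{t_1}\,\d\ppi+\Big(\int f_{t_0}\circ\e_{t_1}\,\d\ppi-\int f_{t_0}\circ\e_{t_0}\,\d\ppi\Big).
\]
The second term is $H_{t_0}(t_1)-H_{t_0}(t_0)$, controlled by the first part of the lemma. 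For the first term, since $(f_t)\in AC([0,1],L^2(\Y))$ one has $f_{t_1}-f_{t_0}=\int_{t_0}^{t_1}\tfrac{\d}{\d r}f_r\,\d r$ in $L^2$, and bounded compression of $\ppi$ lets one integrate against $(\e_{t_1})_*\ppi\le C\mm_\Y$ and apply Fubini, giving $\int(f_{t_1}-f_{t_0})\circ\e_{t_1}\,\d\ppi=\int_{t_0}^{t_1}\int(\tfrac{\d}{\d r}f_r)\circ\e_{t_1}\,\d\ppi\,\d r$. Combining these two identities yields absolute continuity of $G$ and, after differentiating at a common Lebesgue point $t$, the stated formula — here one also uses continuity of $r\mapsto(\e_r)_*\ppi$ in a suitable sense together with $\sup_r\|f_r\|_{W^{1,2}}<\infty$ to replace $\e_{t_1}$ by $\e_t$ in the limit.

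The main obstacle is the bookkeeping in the second part: one must ensure that the two error terms produced by replacing $f_{t_1}$ by $f_{t_0}$ at time $t_1$, and by evaluating $f_{t_0}$ along $\e_{t_1}$ instead of $\e_{t_0}$, genuinely go to zero after dividing by $t_1-t_0$, and in particular that the "cross term" does not contribute. This requires the uniform $W^{1,2}$-bound on $(f_t)$ (so the first part applies with a $t$-independent constant), the $L^2$-absolute continuity of $t\mapsto f_t$ (so the first error term is $o(t_1-t_0)$ in $L^2$, hence after integration against the compressed measure), and a mild continuity of $t\mapsto(\e_t)_*\ppi$ to identify the limit of $H_{t_0}'$ with $H_t'$; all of these are available, so the argument is routine but must be assembled carefully. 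Everything else reduces to Cauchy--Schwarz, Fubini, and the definition of a test plan.
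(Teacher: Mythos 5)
Your plan follows essentially the same route as the paper, and the first part is fine. For the second part, your two-term split is algebraically equivalent to the paper's three-term identity: the paper isolates the genuine cross term $\int(f_{t_1}-f_{t_0})\circ\e_{t_1}-(f_{t_1}-f_{t_0})\circ\e_{t_0}\,\d\ppi$, which you absorb into your first summand. The gap is in the mechanism you propose to ``replace $\e_{t_1}$ by $\e_t$ in the limit.'' The uniform bound $\sup_r\|f_r\|_{W^{1,2}}<\infty$ does not by itself close this step: controlling the cross term via the first part of the lemma applied to $f_{t_1}-f_{t_0}$, Cauchy--Schwarz, and bounded compression yields, at a Lebesgue point of $r\mapsto\int|\dot\gamma_r|^2\,\d\ppi$, a bound of order $(t_1-t_0)\,\|\d(f_{t_1}-f_{t_0})\|_{L^2(\mm_\Y)}$, which after dividing by $t_1-t_0$ is merely bounded, not $o(1)$; the $W^{1,2}$ bound gives no smallness of $\|\d(f_{t_1}-f_{t_0})\|_{L^2}$.

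What actually makes the cross term $o(t_1-t_0)$ is the pairing of strong with weak convergence: at an $L^2$-differentiability point $t$ of $(f_s)$ one has $\frac{f_{t+h}-f_t}{h}\to\frac{\d}{\d s}f_s\restr{s=t}$ strongly in $L^2$, while $(\e_{t+h})_*\ppi\weakto(\e_t)_*\ppi$ with equibounded densities (by bounded compression); a strongly $L^2$-convergent sequence integrated against a weakly convergent sequence of measures with equibounded $L^\infty$-densities converges, so $\int\frac{f_{t+h}-f_t}{h}\circ\e_{t+h}\,\d\ppi$ and $\int\frac{f_{t+h}-f_t}{h}\circ\e_{t}\,\d\ppi$ have the same limit $\int\big(\frac{\d}{\d s}f_s\restr{s=t}\big)\circ\e_{t}\,\d\ppi$, and their difference --- the cross term over $h$ --- vanishes. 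This is exactly the paper's argument. The $L^2$-absolute continuity of $(f_t)$ is the decisive hypothesis here; the uniform $W^{1,2}$ bound serves only to make the first-part estimate uniform in $t$ for the summand $H_{t_0}(t_1)-H_{t_0}(t_0)$, as you correctly note. Also, the remark about ``identifying the limit of $H_{t_0}'$ with $H_t'$'' is not needed: once $t_0=t$ is fixed, the derivative of $H_t$ at $t$ is taken with the subscript frozen.
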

\begin{proof} The absolute continuity of $t\mapsto \int f\circ\e_t\,\d\ppi$ and the bound \eqref{eq:bder1} are  trivial consequences of the definitions of test plans and Sobolev functions. The fact that the exceptional set can be chosen independently on $f$ follows from the separability of $W^{1,2}(\Y)$ and standard approximation procedures, carried out, for instance, in \cite{Gigli14}.

For the second part, we start noticing that the second derivative in the right hand side exists for a.e.\ $t$ thanks to what we have just proved, so that the claim makes sense. The  absolute continuity follows from the fact that for any $t_0,t_1\in[0,1]$, $t_0<t_1$ it holds
\[
\begin{split}
\Big|\int f_{t_1}\circ\e_{t_1}-f_{t_0}\circ\e_{t_0}\,\d\ppi\Big|&\leq \Big|\int f_{t_1}\circ\e_{t_1}-f_{t_1}\circ\e_{t_0}\,\d\ppi\Big|+\Big|\int f_{t_1}-f_{t_0}\,\d(\e_{t_0})_*\ppi\Big|\\
&\leq \iint_{t_0}^{t_1}|\d f_{t_1}|(\gamma_t)|\dot\gamma_t|\,\d t\,\d\ppi(\gamma)+\iint_{t_0}^{t_1}\Big|\ddt f_t\Big|\,\d t\,\d(\e_{t_0})_*\ppi
\end{split}
\]
and our assumptions on $(f_t)$ and $\ppi$. Now fix a point $t$ of differentiability for $(f_t)$ and observe that the fact that $\frac{f_{t+h}-f_t}{h}$ strongly converges in $L^2(\Y)$ to $\ddt f_t$ and $(\e_{t+h})_*\ppi$ weakly converges to $(\e_t)_*\ppi$ as $h\to 0$ and the densities are equibounded is sufficient to get
\[
\lim_{h\to 0}\int\frac{f_{t+h}-f_t}{h}\circ\e_{t+h}\,\d\ppi=\int \ddt f_t\circ\e_t\,\d\ppi=\lim_{h\to 0}\int\frac{f_{t+h}-f_t}{h}\circ\e_{t}\,\d\ppi.
\]
Hence the conclusion comes dividing by $h$ the trivial identity
\[
\begin{split}
\int f_{t+h}\circ\e_{t+h}-f_{t}\circ\e_{t}\,\d\ppi=&\int f_{t}\circ\e_{t+h}-f_{t}\circ\e_{t}\,\d\ppi+\int f_{t+h}\circ\e_{t}-f_{t}\circ\e_{t}\,\d\ppi+\\
&\qquad+\int (f_{t+h}-f_t)\circ\e_{t+h}-(f_{t+h}-f_t)\circ\e_{t}\,\d\ppi
\end{split}
\]
and letting $h\to 0$.
\end{proof}
We now prove that in the limit the potentials evolve according to the Hopf-Lax semigroup (recall  formula \eqref{eq:hli}).

\begin{Proposition}[Limit curve and potentials]\label{pro:9} With the same assumptions and notations as in Setting \ref{set} the following holds.

The limit curve $(\mu_t)$ given by Proposition \ref{lem:6} is unique (i.e.\ independent on the sequence $\eps_n\downarrow0$) and is the only $W_2$-geodesic connecting $\mu_0$ to $\mu_1$.

Any limit curve $(\varphi_t)$ given by Proposition \ref{lem:6} is in $AC_{loc}((0,1],C(\X))\cap L^\infty_{loc}((0,1],W^{1,2}(\X))$ and for any $ t_0,t_1\in(0,1]$, $t_0<t_1$ we have
\begin{subequations}
\begin{align}
\label{eq:hl1}
-\varphi_{t_1}&=Q_{t_1-t_0}(-\varphi_{t_0})\\
\label{eq:costointermedio}
\int \varphi_{t_0}\,\d\mu_{t_0}- \int \varphi_{t_1}\,\d\mu_{t_1}&= \frac1{2(t_1-t_0)}W_2^2(\mu_{t_0},\mu_{t_1})
\end{align}
\end{subequations}
and $-(t_1-t_0)\varphi_{t_1}$ is a Kantorovich potential from $\mu_{t_1}$ to $\mu_{t_0}$.
Similarly,  any limit curve  $(\psi_t)$ given by Proposition \ref{lem:6} belongs to  $AC_{loc}([0,1),C(\X))\cap L^\infty_{loc}([0,1),W^{1,2}(\X))$  and for every $ t_0,t_1\in[0,1)$,  $t_0<t_1$ we have
\begin{subequations}
\begin{align}
\label{eq:hl2}
-\psi_{t_0}&= Q_{t_1-t_0}(-\psi_{t_1})\\
\label{eq:costointermedio2}
\int \psi_{t_1}\,\d\mu_{t_1}- \int \psi_{t_0}\,\d\mu_{t_0}&= \frac1{2(t_1-t_0)}W_2^2(\mu_{t_0},\mu_{t_1})
\end{align}
\end{subequations}
and $-(t_1-t_0)\psi_{t_0}$ is a Kantorovich potential from $\mu_{t_0}$ to $\mu_{t_1}$.

Finally,  if we further assume that $\rho_0,\rho_1\in\testipp \X$ then the claimed properties of $(\varphi_t)$, $(\psi_t)$ hold for $t_0,t_1\in[0,1]$.
\end{Proposition}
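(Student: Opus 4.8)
The core of the statement is the Hopf--Lax identity \eqref{eq:hl1}; everything else then follows from it by soft optimal--transport arguments. To establish \eqref{eq:hl1} I would start from the Hopf--Cole-type representation that the semigroup property yields: since $f^\eps_{t_1}=\h_{\eps(t_1-t_0)/2}f^\eps_{t_0}$, setting $\sigma:=(t_1-t_0)/2$,
\[
\varphi^\eps_{t_1}(x)=\eps\log\Big(\int e^{\varphi^\eps_{t_0}(y)/\eps}\,\hr_{\eps\sigma}[x](y)\,\d\mm(y)\Big),\qquad x\in\X.
\]
Plugging in the \emph{sharp} upper Gaussian bound of \eqref{eq:gaussest}, pulling the supremum in $y$ out of the exponential, and noticing that, by Bishop--Gromov, $\eps\log\!\big(\int\mm(B_{\sqrt{\eps\sigma}}(y))^{-1}\d\mm(y)\big)\to0$, one obtains (after letting the parameter $\delta$ in \eqref{eq:gaussest} tend to $0$ and using the uniform convergence $\varphi^{\eps_n}_{t_0}\to\varphi_{t_0}$ of Proposition~\ref{lem:6}) the inequality $\varphi_{t_1}\le-Q_{t_1-t_0}(-\varphi_{t_0})$. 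The \emph{converse} inequality $\varphi_{t_1}\ge-Q_{t_1-t_0}(-\varphi_{t_0})$ cannot be obtained from the crude lower Gaussian bound, whose exponential constant is not sharp; here I would instead use the Hamilton--Jacobi--Bellman equation $\partial_t\varphi^\eps_t=\tfrac12|\nabla\varphi^\eps_t|^2+\tfrac\eps2\Delta\varphi^\eps_t$ together with the Li--Yau bound $\Delta\varphi^\eps_t\ge-C_\delta$ of Proposition~\ref{pro:1}, which shows that $t\mapsto\varphi^\eps_t+\tfrac\eps2C_\delta t$ is, up to an $O(\eps)$ perturbation, a supersolution of the inviscid Hamilton--Jacobi flow $\partial_t u=\tfrac12|\nabla u|^2$ and therefore lies above the corresponding Hopf--Lax evolution; passing to the limit and using the Hopf--Lax semigroup of Proposition~\ref{pro:11} then yields $\varphi_{t_1}\ge-Q_{t_1-t_0}(-\varphi_{t_0})$. \textbf{This converse inequality is the main obstacle of the whole proposition.} The relation \eqref{eq:hl2} for the $\psi$'s is proved in the same way, starting instead from $g^\eps_{t_0}=\h_{\eps(t_1-t_0)/2}g^\eps_{t_1}$.

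Granted \eqref{eq:hl1}, for $t_0<t_1$ one has
\[
-(t_1-t_0)\varphi_{t_1}(x)=(t_1-t_0)Q_{t_1-t_0}(-\varphi_{t_0})(x)=\inf_y\Big(\frac{\sfd^2(x,y)}{2}-(t_1-t_0)\varphi_{t_0}(y)\Big)=\big((t_1-t_0)\varphi_{t_0}\big)^c(x),
\]
so $-(t_1-t_0)\varphi_{t_1}$ is $c$-concave; moreover, since $-\varphi_{t_0}$ is Lipschitz (Proposition~\ref{lem:6}) and $\X$ is a compact geodesic space, Proposition~\ref{pro:11} shows that $t\mapsto\varphi_t=-Q_{t-t_0}(-\varphi_{t_0})$ is locally Lipschitz from $(0,1]$ to $C(\X)$ and equi-Lipschitz in space, which upgrades the $L^1$-valued statement of Proposition~\ref{lem:6} to the claimed regularity $(\varphi_t)\in AC_{loc}((0,1],C(\X))\cap L^\infty_{loc}((0,1],W^{1,2}(\X))$; the same applies to $(\psi_t)$.

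For \eqref{eq:costointermedio}, the bound ``$\le$'' is obtained by integrating the pointwise inequality $\varphi_{t_0}(y)-\varphi_{t_1}(x)\le\frac{\sfd^2(x,y)}{2(t_1-t_0)}$, which is a direct consequence of \eqref{eq:hl1}, against an optimal plan between $\mu_{t_0}$ and $\mu_{t_1}$. For the bound ``$\ge$'' I would first compute, using the PDEs of Proposition~\ref{pro:7} and the identity $\int\Delta\varphi^\eps_t\,\rho^\eps_t\,\d\mm=-\int\langle\nabla\varphi^\eps_t,\nabla\rho^\eps_t\rangle\,\d\mm$, that $\frac{\d}{\d t}\int\varphi^\eps_t\,\d\mu^\eps_t=-\tfrac12\int|\nabla\varphi^\eps_t|^2\,\d\mu^\eps_t$ (the $\eps$-terms cancel exactly); integrating in $t$, replacing $|\nabla\varphi^\eps_t|^2$ by $|\nabla\vartheta^\eps_t|^2$ up to an error that vanishes in the limit thanks to the $L^1$-bound \eqref{eq:lapcontr2} on $\Delta\varphi^\eps_t$ and to Lemma~\ref{le:vanish}, and then invoking Theorem~\ref{thm:GH} (whence $\int|\nabla\vartheta^\eps_t|^2\,\d\mu^\eps_t=|\dot\mu^\eps_t|^2$), the lower semicontinuity of the action along $W_2$-uniformly convergent curves, and $W_2^2(\mu_{t_0},\mu_{t_1})\le(t_1-t_0)\int_{t_0}^{t_1}|\dot\mu_t|^2\,\d t$, one gets the matching lower bound.

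It remains to read off the last assertions. Summing \eqref{eq:costointermedio} over the pairs $(t_0,t_1),(t_1,t_2),(t_0,t_2)$ gives an additivity relation for the quantities $W_2^2(\mu_{t_i},\mu_{t_j})/(t_j-t_i)$ which, combined with the triangle inequality and Cauchy--Schwarz, forces $(\mu_t)$ to be a constant-speed $W_2$-geodesic from $\mu_0$ to $\mu_1$; since $\mu_0,\mu_1$ have bounded density, Theorem~\ref{thm:bm} grants uniqueness of such a geodesic, so the limit $(\mu_t)$ does not depend on the chosen subsequence. Finally, $-(t_1-t_0)\varphi_{t_1}$ being $c$-concave with $\big(-(t_1-t_0)\varphi_{t_1}\big)^c\ge(t_1-t_0)\varphi_{t_0}$, the identity \eqref{eq:costointermedio} together with the Kantorovich duality inequality forces $\int\big(-(t_1-t_0)\varphi_{t_1}\big)\d\mu_{t_1}+\int\big(-(t_1-t_0)\varphi_{t_1}\big)^c\d\mu_{t_0}=\tfrac12W_2^2(\mu_{t_1},\mu_{t_0})$, i.e.\ $-(t_1-t_0)\varphi_{t_1}$ is a Kantorovich potential from $\mu_{t_1}$ to $\mu_{t_0}$; the statements for the $\psi$'s follow symmetrically. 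When $\rho_0,\rho_1\in\testipp\X$, Propositions~\ref{lem:6} and~\ref{pro:1} supply the required uniform bounds up to $t=0$ and $t=1$ (in particular Lipschitz control at the endpoints), so all of the above goes through with $t_0,t_1\in[0,1]$.
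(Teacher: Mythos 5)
Your outline is essentially correct and reaches the same conclusions, but it takes genuinely different routes in two places and glosses over the step that the paper treats most carefully.

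\textbf{The Hopf--Cole argument is new (relative to the paper) and works.} You prove the inequality $-\varphi_{t_1}\geq Q_{t_1-t_0}(-\varphi_{t_0})$ via the semigroup identity $f^\eps_{t_1}=\h_{\eps(t_1-t_0)/2}f^\eps_{t_0}$, the upper Gaussian bound \eqref{eq:gaussest}, the trivial estimate $\int e^{F/\eps}\d\nu\leq e^{\sup F/\eps}\nu(\X)$, and the fact that $\eps\log\int\mm(B_{\sqrt{\eps\sigma}}(y))^{-1}\d\mm(y)\to 0$ (the same doubling computation the paper uses at \eqref{eq:volneg}), letting first $\eps\downarrow0$ and then $\delta\downarrow0$. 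The paper obtains exactly this inequality by a much more technical route: it constructs a Regular Lagrangian Flow for $(t_1-t_0)\nabla\varphi^\eps_{(1-t)t_1+tt_0}$ via Theorem~\ref{thm:RLF}, pushes $\mm$ forward to a test plan $\ppi^\eps$, and passes to the limit using tightness and lower semicontinuity of the $2$-action. Your argument is shorter and more conceptual, relying only on the Laplace principle and compactness; note you do \emph{not} need the sharp large-deviation asymptotics of Jiang--Li--Zhang for this direction because the upper Gaussian bound alone suffices.

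\textbf{You misidentify the hard direction and leave it under-justified.} What you call ``the main obstacle'' --- $\varphi_{t_1}\geq -Q_{t_1-t_0}(-\varphi_{t_0})$, equivalently $-\varphi_{t_1}\leq Q_{t_1-t_0}(-\varphi_{t_0})$ --- is precisely the part the paper handles \emph{first}, by a soft test-plan computation: it applies Lemma~\ref{le:dermiste} to the optimal lifting $\ppi^r$ of the geodesic between uniform measures on $B_r(x)$ and $B_r(y)$, uses Young's inequality to absorb $\la\nabla\varphi^\eps,\dot\gamma\ra$ into $\frac12|\nabla\varphi^\eps|^2$ and $\frac{|\dot\gamma|^2}{2(t_1-t_0)}$, and controls the viscous term with the uniform $L^1$ bound \eqref{eq:lapcontr2}. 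Your sketch (``$\varphi^\eps_t+\frac\eps2 C_\delta t$ is a supersolution, hence lies above the Hopf--Lax evolution'') is morally the same, but Proposition~\ref{pro:11} --- which you cite for the conclusion --- provides only the Hopf--Lax \emph{equation}, not a comparison principle. To make the supersolution argument precise you would have to import the metric viscosity comparison theorem of Ambrosio--Feng or Gangbo--\'Swiech, or else reproduce the paper's direct test-plan differentiation; as written, this step is incomplete.

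\textbf{The three-point Cauchy--Schwarz argument for the geodesic property is correct and different from the paper's.} Summing \eqref{eq:costointermedio} over $(t_0,t_1),(t_1,t_2),(t_0,t_2)$ yields $\frac{W_2^2(\mu_{t_0},\mu_{t_2})}{t_2-t_0}=\frac{W_2^2(\mu_{t_0},\mu_{t_1})}{t_1-t_0}+\frac{W_2^2(\mu_{t_1},\mu_{t_2})}{t_2-t_1}$, which combined with the triangle inequality and the Cauchy--Schwarz equality case forces $W_2(\mu_{t_0},\mu_{t_2})=W_2(\mu_{t_0},\mu_{t_1})+W_2(\mu_{t_1},\mu_{t_2})$ and proportional speeds, hence a constant-speed geodesic on $(0,1)$, and on $[0,1]$ by continuity. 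The paper instead reads off the constant-speed property from the Kantorovich potentials and Theorem~\ref{thm:bm}$(ii)$. Both work; yours is a cleaner purely metric argument.

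Your remaining steps --- the observation that the $\eps$-terms cancel exactly in $\frac\d{\d t}\int\varphi^\eps_t\,\d\mu^\eps_t=-\tfrac12\int|\nabla\varphi^\eps_t|^2\,\d\mu^\eps_t$ (which is a correct reformulation of the paper's computation leading to \eqref{eq:perpot}, after an integration by parts), the replacement of $|\nabla\varphi^\eps_t|^2$ by $|\nabla\vartheta^\eps_t|^2$ up to vanishing errors controlled by Lemma~\ref{le:vanish} and \eqref{eq:lapcontr2}, the duality argument for the Kantorovich-potential property, and the extension to $[0,1]$ under $\rho_0,\rho_1\in\testipp{\X}$ --- all agree with the paper.
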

\begin{proof}

\noindent{\bf Inequality $\leq$ in \eqref{eq:hl1}}. Pick $x,y \in \X$, $r>0 $,  define
\[
\nu^r_x := \frac{1}{\mm(B_r(x))}\mm\restr{B_r(x)} \qquad\qquad\qquad\nu^r_y := \frac{1}{\mm(B_r(y))}\mm\restr{B_r(y)}
\]
and $\ppi^r$ as the only lifting of the only $W_2$-geodesic from $\nu^r_x$ to $\nu^r_y)$ (recall point $(i)$ of Theorem \ref{thm:bm}). Let $\eps\in(0,1)$ and $0<t_0<t_1\leq 1$ and apply Lemma \ref{le:dermiste} to  $\ppi^r$ and  $t\mapsto\varphi^\eps_{(1-t)t_0+tt_1}\in W^{1,2}(\X)$ to get
\[
\begin{split}
\ddt\int \varphi^\eps_{(1-t)t_0+tt_1}\circ\e_t\,\d\ppi^r\geq \int(t_1-t_0)\frac{\d}{\d s}\varphi^\eps_s\restr{s={(1-t)t_0+tt_1}}(\gamma_t)-|\d \varphi^\eps_{(1-t)t_0+tt_1}|(\gamma_t)|\dot\gamma_t|\,\d\ppi^r(\gamma).
\end{split}
\]
Thus recalling the expression for $\ddt \varphi^\eps_t$ and using Young's inequality we obtain
\[
\ddt\int \varphi^\eps_{(1-t)t_0+tt_1}\circ\e_t\,\d\ppi^r\geq \int\eps\frac{t_1-t_0}2\Delta\varphi^\eps_{(1-t)t_0+tt_1}(\gamma_t)-\frac1{2(t_1-t_0)}|\dot\gamma_t|^2\,\d\ppi^r(\gamma).
\]
Integrating in time  and recalling that $\ppi^r$ is optimal we get
\[
\int\varphi^\eps_{t_1}\,\d\nu^r_y-\int\varphi_{t_0}^\eps\,\d\nu^r_x\geq -\frac1{2(t_1-t_0)}W_2^2(\nu^r_y,\nu^r_x)+\iint_0^1\eps\frac{t_1-t_0}2\Delta\varphi^\eps_{(1-t)t_0+tt_1}\circ\e_t\,\d t\,\d\ppi^r.
\]
Let  $\eps\downarrow0$ along the sequence $(\eps_n)$ for which $(\varphi^{\eps_n}_t)$ converges to our given $(\varphi_t)$ as in Proposition \ref{lem:6} and use the uniform bound \eqref{eq:lapcontr2} and the fact that $\ppi^r$ has bounded compression to deduce that
\[
\int\varphi_{t_1}\,\d\nu^r_y-\int\varphi_{t_0}\,\d\nu^r_x\geq -\frac1{2(t_1-t_0)}W_2^2(\nu^r_y,\nu^r_x)
\]
and finally letting $r\downarrow 0$  we conclude from the arbitrariness of $x\in\X$ that
\begin{equation}
\label{eq:o1}
-\varphi_{t_1}(y)\leq Q_{t_1-t_0}(-\varphi_{t_0})(y)\qquad\forall y\in \X.
\end{equation} 

\noindent{\bf Inequality $\geq$ in \eqref{eq:hl1}}. To prove the opposite inequality we fix again $0<t_0<t_1\leq 1$ and  apply Theorem \ref{thm:RLF} to the vector fields $((t_1-t_0)\nabla\varphi^\eps_{(1-t)t_1+tt_0})$: the bound \eqref{eq:lapcontr} ensures that the theorem is applicable and we obtain existence of the regular Lagrangian flow $F^\eps$. Put $\ppi^\eps:=(F^\eps_\cdot)_*\mm$, where $F^\eps_\cdot:\X\to C([0,1],\X)$ is the $\mm$-a.e.\ defined map which sends $x$ to $F_t^\eps(x)$, and observe that the  bound \eqref{eq:quantm} and the identity \eqref{eq:quants}  provided by Theorem \ref{thm:RLF} coupled with the estimates \eqref{eq:lipcontr}, \eqref{eq:lapcontr}  on $\nabla\varphi^\eps_t,\Delta\varphi^\eps_t$ ensure that $\ppi^\eps$ is a test plan with
\begin{equation}
\label{eq:uniformtest}
\sup_{\eps\in(0,1)}\iint_0^1|\dot\gamma_t|^2\,\d t\,\d\ppi^\eps(\gamma)<\infty\qquad\text{and}\qquad (\e_t)_*\ppi^\eps\leq C\mm\qquad\forall t\in[0,1],\ \eps\in(0,1),
\end{equation}
for some $C<\infty$. Thus by Lemma \ref{le:dermiste} applied to  $\ppi^\eps$ and $t\mapsto \varphi^\eps_{(1-t)t_1+tt_0}$  we obtain
\[
\begin{split}
\ddt\int &\varphi^\eps_{(1-t)t_1+tt_0}\circ\e_t\,\d\ppi^\eps\\
&= \int(t_0-t_1)\frac{\d}{\d s}\varphi^\eps_s\restr{s={(1-t)t_1+tt_0}}\circ\e_t\,\d\ppi^\eps+\frac\d{\d s}\int\varphi^\eps_{(1-t)t_1+tt_0}\circ\e_s\,\d\ppi^\eps\restr{s=t}\\
&= \int\Big( \frac{t_0-t_1}{2}|\d\varphi^\eps_{(1-t)t_1+tt_0}|^2 +\eps\frac{t_0-t_1}{2}\Delta\varphi^\eps_{(1-t)t_1+tt_0} +(t_1-t_0)|\d\varphi^\eps_{(1-t)t_1+tt_0}|^2\Big)\circ\e_t\,\d\ppi^\eps\\
&= \int\Big( \frac{t_1-t_0}{2}|\d\varphi^\eps_{(1-t)t_1+tt_0}|^2 +\eps\frac{t_0-t_1}{2}\Delta\varphi^\eps_{(1-t)t_1+tt_0} \Big)\circ\e_t\,\d\ppi^\eps.
\end{split}
\]
Integrating in time and recalling \eqref{eq:quants} we deduce
\begin{equation}
\label{eq:k6}
\int \varphi_{t_0}^\eps\circ\e_1- \varphi_{t_1}^\eps\circ\e_0\,\d\ppi^\eps=\iint_0^1\frac1{2(t_1-t_0)}|\dot\gamma_t|^2+\eps\frac{t_0-t_1}{2}\Delta\varphi^\eps_{(1-t)t_1+tt_0}(\gamma_t)\,\d t\,\d\ppi^\eps(\gamma).
\end{equation}
Now, as before, we let $\eps\downarrow0 $  along the sequence $(\eps_n)$ for which $(\varphi^{\eps_n}_t)$ converges to our given $(\varphi_t)$ as in Proposition \ref{lem:6}: the first in \eqref{eq:uniformtest} grants that $(\ppi^\eps)$ is tight in $\prob{C([0,1],\X)}$ (because $\gamma\mapsto\int_0^1|\dot\gamma_t|^2\,\d t$ has compact sublevels) and thus  up to pass to a subsequence, not relabeled, we can assume that $(\ppi^{\eps_n})$ weakly converges to some $\ppi\in\prob{C([0,1],\X)}$. The second in \eqref{eq:uniformtest} and the bound \eqref{eq:lapcontr2} grant that the term with the Laplacian in \eqref{eq:k6} vanishes in the limit and thus taking into account the lower semicontinuity of the 2-energy we deduce that
\[
\int \varphi_{t_0}\circ\e_1- \varphi_{t_1}\circ\e_0\,\d\ppi\geq \frac1{2(t_1-t_0)}\iint_0^1|\dot\gamma_t|^2\,\d t\,\d\ppi\geq  \frac1{2(t_1-t_0)}\int\sfd^2(\gamma_0,\gamma_1)\,\d\ppi(\gamma).
\]
Now notice that \eqref{eq:o1} implies that 
\begin{equation}
\label{eq:hjcurve}
\frac{\sfd^2(\gamma_0,\gamma_1)}{2(t_1-t_0)}\geq \varphi_{t_0}(\gamma_1)-\varphi_{t_1}(\gamma_0)
\end{equation} 
for any curve $\gamma$, hence the above gives
\[
\int \varphi_{t_0}\circ\e_1- \varphi_{t_1}\circ\e_0\,\d\ppi\geq  \frac1{2(t_1-t_0)}\int\sfd^2(\gamma_0,\gamma_1)\,\d\ppi(\gamma)\geq \int \varphi_{t_0}\circ\e_1- \varphi_{t_1}\circ\e_0\,\d\ppi
\]
thus forcing the inequalities to be equalities. In particular, equality in \eqref{eq:hjcurve} holds for $\ppi$-a.e.\ $\gamma$ and since $(\e_0)_*\ppi=\mm$, this is the same as to say that for $\mm$-a.e.\ $y\in \X$ equality holds in \eqref{eq:o1}. Since both sides of $\eqref{eq:o1}$ are continuous in $y$, we conclude that equality holds for any $y\in \X$. 

\noindent{\bf Other properties of $\varphi_t$}. 
The fact that $(\varphi_t)\in AC_{loc}((0,1],C(\X))\cap L^\infty_{loc}((0,1],W^{1,2}(\X))$ is a direct consequence of \eqref{eq:hl1} and Proposition \ref{pro:11}.

Up to extract a further subsequence - not relabeled - we can assume that the curves $(\mu^{\eps_n}_t)$ converge to a limit curve $(\mu_t)$ as in Proposition \ref{lem:6}. We claim that for any $t_0,t_1\in(0,1]$, $t_0<t_1$ it holds 
\begin{equation}
\label{eq:perpot}
 -\int \varphi_{t_1}\,\d\mu_{t_1}+\int \varphi_{t_0}\,\d\mu_{t_0}\geq \frac1{2(t_1-t_0)}W_2^2(\mu_{t_0},\mu_{t_1}).
\end{equation}
To see this, start noticing that from  Proposition \ref{pro:7} it is clear that $t\mapsto\int\varphi^\eps_t\rho^\eps_t\,\d\mm$ is in $C((0,1])\cap AC_{loc}((0,1))$ and that it holds
\[
\begin{split}
-\frac\d{\d t}\int\varphi^\eps_t\rho^\eps_t\,\d\mm=\int\Big(-\frac{|\nabla\varphi^\eps_t|^2}2-\frac\eps2\Delta\varphi^\eps_t-\la\nabla\varphi^\eps_t,\nabla\vartheta^\eps_t \ra\Big)\rho^\eps_t\,\d\mm\qquad{\rm a.e.}\ t\in(0,1).
\end{split}
\]
Integrating and recalling that $\varphi^\eps_t=\frac\eps2\log\rho^\eps_t-\vartheta^\eps_t$ we deduce
\[
 -\int \varphi^\eps_{t_1}\,\d\mu^\eps_{t_1}+\int \varphi^\eps_{t_0}\,\d\mu^\eps_{t_0}=\iint_{t_0}^{t_1}\Big(\frac{|\nabla\vartheta^\eps_t|^2}2-\frac{\eps^2}8|\nabla\log\rho^\eps_t|^2-\frac\eps2\Delta\varphi^\eps_t\Big)\rho^\eps_t\,\d t\,\d\mm.
\]
As already noticed in the proof of point $(i)$ of Proposition \ref{lem:6}, $(\mu^\eps_t)$ and $(\nabla\vartheta^\eps_t)$ satisfy the assumptions of Theorem \ref{thm:GH},   thus from such theorem we deduce that
\[
\iint_{t_0}^{t_1}\frac{|\nabla\vartheta^\eps_t|^2}2\rho^\eps_t\,\d t\,\d\mm=\frac12\int_{t_0}^{t_1}|\dot\mu^\eps_t|^2\,\d t\geq\frac1{2(t_1-t_0)}W_2^2(\mu^\eps_{t_0},\mu^\eps_{t_1}).
\]
Therefore
\[
 -\int \varphi^\eps_{t_1}\,\d\mu^\eps_{t_1}+\int \varphi^\eps_{t_0}\,\d\mu^\eps_{t_0}\geq\frac1{2(t_1-t_0)}W_2^2(\mu^\eps_{t_0},\mu^\eps_{t_1})+\iint_{t_0}^{t_1}\Big(-\frac{\eps^2}8|\nabla\log\rho^\eps_t|^2-\frac\eps2\Delta\varphi^\eps_t\Big)\rho^\eps_t\,\d t\,\d\mm.
\]
We now pass to the limit in $\eps=\eps_n\downarrow0$: the left hand side trivially converges to the left hand side of \eqref{eq:perpot} while $W_2^2(\mu^{\eps_n}_{t_0},\mu^{\eps_n}_{t_1})\to W_2^2(\mu_{t_0},\mu_{t_1})$, the contribution of the term with $\Delta\varphi^\eps_t$ vanishes in the limit by \eqref{eq:linftybound} and \eqref{eq:lapcontr2}, while the one with $|\nabla\log\rho^\eps_t|$ vanishes by \eqref{eq:b2}. Hence \eqref{eq:perpot} is proved.

Now notice that  \eqref{eq:hl1} can be rewritten as
\[
-(t_1-t_0)\varphi_{t_1}=\big((t_1-t_0)\varphi_{t_0}\big)^c,
\]
so that in particular  $-(t_1-t_0)\varphi_{t_1}$ is $c$-concave and $(-(t_1-t_0)\varphi_{t_1})^c\geq (t_1-t_0)\varphi_{t_0}$. Hence both \eqref{eq:costointermedio} and the fact that $-(t_1-t_0)\varphi_{t_1}$ is a Kantorovich potential follow from
\[
\begin{split}
 \frac12W_2^2(\mu_{t_0},\mu_{t_1})&\geq \int -(t_1-t_0)\varphi_{t_1}\,\d\mu_{t_1}+\int (-(t_1-t_0)\varphi_{t_1})^c\,\d\mu_{t_0}\\
 &\geq \int -(t_1-t_0)\varphi_{t_1}\,\d\mu_{t_1}+\int (t_1-t_0)\varphi_{t_0}\,\d\mu_{t_0}\stackrel{\eqref{eq:perpot}}\geq \frac12W_2^2(\mu_{t_0},\mu_{t_1})
 \end{split}
\]
The claims about $(\psi_t)$ are proved in the same way.

In the case of additional regularity of $\rho_0,\rho_1$, taking into account the fact that $(\varphi_t)\in C([0,1],L^1(\mm))$and $\sup_t\Lip(\varphi_t)<\infty $  (from Proposition \ref{lem:6}), it is easy to see that we can pass to the limit in $t_0\downarrow0$ to get that \eqref{eq:hl1} holds even for $t_0=0$ - see for instance the arguments used in Proposition \ref{pro:v0} below. Then the other properties follow from the arguments already used in this step and Proposition \ref{pro:11}.

\noindent{\bf $(\mu_t)$ is a geodesic}.   Let $[t_0,t_1]\subset(0,1)$, pick $t\in[0,1]$ and put $t_0':=(1-t)t_1+tt_0$. We know that $-(t_1-t_0)\varphi_{t_1}$ and $-t(t_1-t_0)\varphi_{t_1}$ are Kantorovich potentials from $\mu_{t_1}$ to $\mu_{t_0}$ and from $\mu_{t_1}$ to $\mu_{t_0'}$ respectively and thus by point $(ii)$ of Theorem \ref{thm:bm} we deduce
\[
W_2^2(\mu_{t_0},\mu_{t_1})=\int|\d((t_1-t_0)\varphi_{t_1})|^2\,\d\mu_{t_1}=\frac1{t^2}\int|\d((t_1-t_0')\varphi_{t_1})|^2\,\d\mu_{t_1}=\frac{(t_1-t_0)^2}{(t_1-t_0')^2}W_2^2(\mu_{t_1},\mu_{t_0'}).
\]
Swapping the roles of $t_0,t_1$ and using the $\psi$'s in place of the $\varphi$'s we then get
\[
W_2(\mu_{t_1'},\mu_{t_0'})=\frac{t_1'-t_0'}{t_1-t_0}W_2(\mu_{t_1},\mu_{t_0})\qquad\forall [t_0',t_1']\subset [t_0,t_1]\subset (0,1).
\]
This grants that the restriction of $(\mu_t)$ to any interval $[t_0,t_1]\subset(0,1)$ is a constant speed geodesic. Since $(\mu_t)$ is continuous on the whole $[0,1]$, this gives the conclusion. Since in this situation the $W_2$-geodesic connecting $\mu_0$ to $\mu_1$ is unique (recall point $(i)$ of Theorem \ref{thm:bm}), by the arbitrariness of the subsequences chosen we also proved the uniqueness of the limit curve $(\mu_t)$.
\end{proof}
\begin{Remark}[The vanishing viscosity limit]{\rm
The part of this last proposition concerning the properties of the $\varphi^\eps_t$'s is valid in a context wider than the one provided by Schr\"odinger problem: we could restate the result by saying that if $(\varphi^\eps_t)$ solves
\begin{equation}
\label{eq:hjvv}
\ddt\varphi^\eps_t=\frac12{|\nabla\varphi_t^\eps|^2}+\frac\eps2\Delta\varphi^\eps_t
\end{equation}
and $\varphi^\eps_0$ uniformly converges to some  $\varphi_0$, then $\varphi^\eps_t$ uniformly converges to $\varphi_t:=-Q_t(-\varphi_0)$.

In this direction, it is worth recalling that in \cite{AF14} and \cite{GS15} it has been developed a theory of viscosity solutions for some first-order Hamilton-Jacobi equations on metric spaces. This theory applies in particular to the equation 
\begin{equation}
\label{eq:hjv}
\ddt\varphi_t=\frac12\lip(\varphi_t)^2
\end{equation}
whose only viscosity solution is given by the formula $\varphi_t:=-Q_t(-\varphi_0)$.

Therefore, we have just proved that if one works not only on a metric space, but on a metric measure space which is a $\RCD^*(K,N)$ space, then the solutions of the viscous approximation \eqref{eq:hjvv} converge to the unique viscosity solution of \eqref{eq:hjv}, in accordance with the classical case.
}\fr\end{Remark}
\begin{Remark}{\rm
It is not clear whether the `full' families $\varphi^\eps_t,\psi^\eps_t$ converge as $\eps\downarrow0$ to a unique limit. This is related to the non-uniqueness of the  Kantorovich potentials in the classical optimal transport problem.
}\fr\end{Remark}

 the following lemma. It could be directly deduced from the results obtained by Cheeger in \cite{Cheeger00}, however, the additional regularity assumptions on both the space and the function allow for a `softer' argument based on the metric Brenier's theorem, which we propose:
\begin{Lemma}\label{lem:12}
Let $(\Y,\sfd_\Y,\mathfrak{m}_\Y)$ be a  $\RCD^*(K,N)$ space, possibly not compact, with $K \in \mathbb{R}$ and $N \in [1,\infty)$ and let $\phi : \X \to \R\cup\{-\infty\}$ be a $c$-concave function not identically $-\infty$. Let $\Omega$ be the interior of the set $\{\phi>-\infty\}$. Then $\phi$ is locally Lipschitz on $\Omega$ and
\[
\lip\,\phi = |\d\phi|, \quad \mm\textrm{-a.e. on }\Omega.
\]
\end{Lemma}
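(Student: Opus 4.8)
The plan is to reduce the claimed identity $\lip\phi = |\d\phi|$ $\mm$-a.e.\ on $\Omega$ to the metric Brenier theorem (point (iii) of Theorem \ref{thm:bm}), applied along optimal geodesic plans emanating from suitable localized measures. First I would record the two easy facts: one inequality $|\d\phi| \le \lip\phi$ $\mm$-a.e.\ holds on any metric measure space for locally Lipschitz functions, since $\lip\phi$ is an upper gradient of $\phi$ (here using that $\phi$ is locally Lipschitz on $\Omega$, which follows from $c$-concavity: $\phi$ is an infimum of the uniformly locally Lipschitz family $x \mapsto \tfrac{\sfd^2(x,y)}{2} - \psi^c(y)$, and on compact subsets of $\Omega$ this infimum is attained, giving a genuine local Lipschitz bound). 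Also $\lip\phi \ge |D^+\phi|$ pointwise, so it suffices to prove $|D^+\phi| \ge \lip\phi$, equivalently $|D^+\phi| \ge |\d\phi|$ together with $|D^-\phi|\le|D^+\phi|$, $\mm$-a.e.\ on $\Omega$ — but the cleanest route is to show directly $\lip\phi \le |\d\phi|$ $\mm$-a.e.

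The key step is localization plus optimal transport. Fix an open ball $B$ with $2B \subset \Omega$ and set $\mu_0 := c_B\,\mm\mres B$, a probability measure with bounded density and bounded support. Since $\phi$ is a $c$-concave function finite and Lipschitz on a neighbourhood of $\supp\mu_0$, there is an associated optimal transport: let $\mu_1 := (T)_*\mu_0$ where $T$ is the optimal map (or work directly with an optimal plan $\ppi$ obtained from $\phi$), and apply Theorem \ref{thm:bm}(iv) (valid in $\RCD^*(K,N)$, possibly noncompact, with $\mu_1$ only boundedly supported after truncating $\phi$ suitably to keep displacement finite). Point (iii) gives $|\d\phi|(\gamma_0) = |D^+\phi|(\gamma_0) = \sfd(\gamma_0,\gamma_1)$ for $\ppi$-a.e.\ $\gamma$, hence for $\mu_0$-a.e.\ $x$, i.e.\ $\mm$-a.e.\ on $B$. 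Thus $|\d\phi| = |D^+\phi|$ $\mm$-a.e.\ on $B$, and by a covering argument (Vitali / Lindelöf) $\mm$-a.e.\ on $\Omega$. It then remains to upgrade $|D^+\phi|$ to $\lip\phi$, i.e.\ to control $|D^-\phi|$: on the set where $|D^+\phi| = |\d\phi|$ one shows $|D^-\phi| \le |D^+\phi|$ $\mm$-a.e., for instance because $\lip\phi$ is an upper gradient so $|D^-\phi| \le \lip\phi$, while $|\d\phi| \le \lip\phi$ always and at $\mm$-a.e.\ point $\lip\phi$ cannot strictly exceed $|\d\phi| = |D^+\phi|$ without contradicting the identity $\lip\phi = |\d\phi|$ known to hold $\mm$-a.e.\ for \emph{general} Lipschitz functions (Cheeger) — but since the point of the lemma is to avoid quoting Cheeger, I would instead argue: $\max\{|D^+\phi|,|D^-\phi|\}=\lip\phi \ge |\d\phi|$ trivially, and the reverse $\lip\phi\le|\d\phi|$ $\mm$-a.e.\ is what we must prove; having $|D^+\phi| = |\d\phi|$ a.e., it is enough to show $|D^-\phi|\le|D^+\phi|$ a.e., and this follows by applying the same Brenier argument to $-\phi$ along the reversed geodesics, or more simply by noting that at a point of approximate differentiability in the sense of the plan, the geodesic realizes the sup-slope in a single direction so both one-sided slopes coincide with $\sfd(\gamma_0,\gamma_1)$.

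The main obstacle I expect is the last point: passing from the one-sided statement $|\d\phi| = |D^+\phi|$ (which Theorem \ref{thm:bm}(iii) hands over essentially for free) to the two-sided $|\d\phi| = \lip\phi$, without invoking Cheeger's theorem. The honest fix is to exploit that $c$-concavity gives structure in \emph{both} time directions of the $W_2$-geodesic: if $-\,\phi = \psi^c$ then along the interpolation the roles of forward/backward potentials are symmetric, so applying Theorem \ref{thm:bm}(iii) to the geodesic run backwards (with potential $\psi$) yields the matching lower bound $|D^-\phi|(\gamma_0) \le \sfd(\gamma_0,\gamma_1) = |\d\phi|(\gamma_0)$ for $\mm$-a.e.\ starting point, and combining gives $\lip\phi = |\d\phi|$ $\mm$-a.e.\ on $\Omega$. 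A secondary technical nuisance is ensuring $\mu_1$ has bounded support / finite second moment after localizing $\phi$ to $B$; this is handled by the truncation trick recalled just before Theorem \ref{thm:bm} in the excerpt (replace $\psi^c$ by its minimum with a large constant), which changes $\phi$ only outside a neighbourhood of $\supp\mu_0$ and hence does not affect slopes or differentials $\mm$-a.e.\ on $B$. I would also remark that the locality of $|\d\phi|$ and of $\lip\phi$ on open sets makes the covering/globalization step routine.
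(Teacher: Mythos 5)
Your strategy is the same as the paper's: localize $\phi$ to a compact subset of $\Omega$, view it as a Kantorovich potential for a suitable pair of measures, and apply the metric Brenier theorem (point (iii) of Theorem~\ref{thm:bm}) to get $|\d\phi|=|D^+\phi|$ $\mm$-a.e. The paper makes one step explicit that you wave at: the existence of an optimal map. It invokes Lemma~3.3 of~\cite{GigliRajalaSturm13} to show that $\partial^c\phi$ is nonempty at every $x\in\Omega$ and that $\cup_{x\in K}\partial^c\phi(x)$ is bounded for $K\subset\Omega$ compact, then notes that $\partial^c\phi$ is closed, and uses the Kuratowski--Ryll-Nardzewski Borel selection theorem to extract a Borel map $T$ with $T(x)\in\partial^c\phi(x)$. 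Setting $\nu:=T_*\mu$ gives the target measure with bounded support directly; no truncation of $\psi^c$ is needed. This is routine but worth spelling out.

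The real problem is your last step, upgrading $|D^+\phi|$ to $\lip\phi$, i.e.\ showing $|D^-\phi|\le|D^+\phi|$ $\mm$-a.e. You correctly identify this as the obstacle, but your proposed repairs do not close it. ``Applying the same Brenier argument to $-\phi$'' fails because $-\phi$ is not $c$-concave, so Theorem~\ref{thm:bm}(iii) does not apply to it. The ``geodesic run backwards with potential $\psi=\phi^c$'' variant is also inconclusive: Theorem~\ref{thm:bm}(iii) applied to $\psi$ controls $|D^+\psi|(\gamma_1)$ at the endpoint $\gamma_1$, not the descending slope $|D^-\phi|(\gamma_0)$ at $\gamma_0$. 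And the remark that ``the geodesic realizes the sup-slope in a single direction so both one-sided slopes coincide'' is essentially the conclusion restated, not an argument for it. The paper dispatches the whole issue with a one-line citation: since $\mm$ is doubling and $\phi$ is Lipschitz on a neighbourhood of $\supp\mu$, Proposition~2.7 of~\cite{AmbrosioGigliSavare11} gives $|D^+\phi|=|D^-\phi|$ $\mm$-a.e. This is a genuinely elementary fact (a density-point argument on a doubling space), not Cheeger's differentiability theorem which you were trying to avoid; conflating the two is what led you away from the simple fix.
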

\begin{proof} Lemma 3.3 in \cite{GigliRajalaSturm13} grants that $\phi$ is locally Lipschitz on $\Omega$ and that $\partial^c\phi(x)\neq\emptyset$ for every $x\in \Omega$. The same lemma also grants that for $K\subset \Omega$ compact, the set $\cup_{x\in K}\partial^c\phi(x)$ is bounded. Recalling that $\partial^c\phi$ is the set of $(x,y)\in \Y^2$ such that
\[
\phi(x)+\phi^c(y)=\frac12\sfd^2(x,y)
\]
and that $\phi,\phi^c$ are upper semicontinuous, we see that $\partial^c\phi$ is closed. Hence for $K\subset \Omega$ compact the set $\cup_{x\in K}\partial^c\phi(x)$ is compact and not empty and thus  by the Kuratowski--Ryll-Nardzewski  Borel selection theorem we deduce the existence of a Borel map $T:\Omega\to \Y$ such that $T(x)\in\partial^c\phi(x)$ for every $x\in \Omega$.  

Pick  $\mu\in\probt \Y$ with $\supp(\mu)\subset\subset\Omega$ and $\mu\leq C\mm$ for some $C>0$ and set  $\nu := T_{*}\mu$. By construction, $\mu,\nu$ have both bounded support,  $T$ is an optimal map and $\phi$ is a Kantorovich potential from $\mu$ to $\nu$. 

Hence point $(iii)$ of Theorem \ref{thm:bm}  applies and since $\lip\,\phi=\max\{|D^+\phi|,|D^-\phi|\}$, by the arbitrariness of $\mu$ to conclude it is sufficient to show that $|D^+\phi|=|D^-\phi|$ $\mm$-a.e.. This easily follows from the fact that $\mm$ is doubling and $\phi$ Lipschitz, see Proposition 2.7 in \cite{AmbrosioGigliSavare11}.
\end{proof}
With this said, we can now show that the energies of the Schr\"odinger potentials converge to the energy of the limit ones:
\begin{Proposition}\label{thm:10} With the same assumptions and notations as in Setting \ref{set} the following holds.

Let  $\eps_n\downarrow 0$ be a sequence such that $(\varphi^{\eps_n}_t), (\psi^{\eps_n}_t)$ converge to limit curves $(\varphi_t),(\psi_t)$ as in Proposition \ref{lem:6}. Then for every $\delta\in(0,1)$ we have
\begin{equation}
\label{eq:energialimite}
\begin{split}
\lim_{n \to \infty}\iint_{\delta}^1|\d \varphi^{\varepsilon_n}_t|^2\,\d t\,\d\mm &= \iint_{\delta}^1|\d\varphi_t|^2\,\d t\,\d\mm,\\
\lim_{n \to \infty}\iint_0^{1-\delta}|\d \psi^{\varepsilon_n}_t|^2\,\d t\,\d\mm &= \iint_0^{1-\delta}|\d\psi_t|^2\,\d t\,\d\mm.
\end{split}
\end{equation}
If we further assume that $\rho_0,\rho_1 \in \testipp{\X}$, $\delta$ can be chosen equal to 0.
\end{Proposition}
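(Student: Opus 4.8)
The statement to prove is the convergence of the Dirichlet energies of the Schrödinger potentials to the energy of the limit potentials. The plan is to establish this via lower semicontinuity on one side and an upper bound coming from the variational characterization of the limit potentials on the other. I will only discuss the $\varphi$'s; the argument for the $\psi$'s is symmetric.

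First I would prove the inequality $\liminf_{n}\iint_\delta^1|\d\varphi^{\eps_n}_t|^2\,\d t\,\d\mm\geq \iint_\delta^1|\d\varphi_t|^2\,\d t\,\d\mm$. From Proposition \ref{lem:6} we know that $\varphi^{\eps_n}_t\to\varphi_t$ uniformly (hence in $L^2(\mm)$) for every $t\in(0,1]$, and from Proposition \ref{pro:1} the functions $\{\varphi^{\eps_n}_t\}$ are equi-Lipschitz on $[\delta,1]$; so $\varphi^{\eps_n}_t\weakto\varphi_t$ in $W^{1,2}(\X)$ for each such $t$, and by weak lower semicontinuity of the $W^{1,2}$-seminorm together with Fatou's lemma (applied to $t\mapsto\int|\d\varphi^{\eps_n}_t|^2\,\d\mm$, which is bounded uniformly in $n$ and $t\in[\delta,1]$ by the equi-Lipschitz bound) we obtain the desired $\liminf$ inequality.

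Next I would prove the matching $\limsup$ inequality, which is the heart of the matter. The key is the identity \eqref{eq:costointermedio} from Proposition \ref{pro:9}: for $t_0<t_1$ in $(0,1]$ one has $\int\varphi_{t_0}\,\d\mu_{t_0}-\int\varphi_{t_1}\,\d\mu_{t_1}=\frac1{2(t_1-t_0)}W_2^2(\mu_{t_0},\mu_{t_1})$, and since $(\mu_t)$ is the $W_2$-geodesic, $\frac1{2(t_1-t_0)}W_2^2(\mu_{t_0},\mu_{t_1})=\frac{t_1-t_0}2 W_2^2(\mu_0,\mu_1)$. Combined with point $(iii)$ of Theorem \ref{thm:bm} (or Lemma \ref{lem:12}, giving $|\d\varphi_{t_1}|=\sfd(\cdot,\cdot)/(t_1-t_0)$ along the transport, equivalently $\int|\d((t_1-t_0)\varphi_{t_1})|^2\,\d\mu_{t_1}=W_2^2(\mu_{t_0},\mu_{t_1})$), this pins down $t\mapsto\int\varphi_t\,\d\mu_t$ as an affine function whose derivative is $-\tfrac12\int|\nabla\varphi_t|^2\,\d\mu_t$, and the latter equals a constant. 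On the approximating side, from Proposition \ref{pro:7} and the computation already carried out in the proof of Proposition \ref{pro:9} (the displayed formula for $-\frac\d{\d t}\int\varphi^\eps_t\rho^\eps_t\,\d\mm$), one integrates to get an exact expression for $\int\varphi^{\eps_n}_{\delta}\,\d\mu^{\eps_n}_\delta-\int\varphi^{\eps_n}_1\,\d\mu^{\eps_n}_1$ in terms of $\iint_\delta^1|\nabla\vartheta^{\eps_n}_t|^2\rho^{\eps_n}_t$, $\iint_\delta^1\eps_n^2|\nabla\log\rho^{\eps_n}_t|^2\rho^{\eps_n}_t$ and $\iint_\delta^1\eps_n\Delta\varphi^{\eps_n}_t\rho^{\eps_n}_t$. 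Passing to the limit: the left side converges (uniform convergence of potentials, weak-$*$ convergence of densities from \eqref{eq:66}), the $\eps_n\Delta\varphi$-term vanishes by \eqref{eq:linftybound} and \eqref{eq:lapcontr2}, and the $\eps_n^2|\nabla\log\rho|^2$-term vanishes by \eqref{eq:b2}; since $\vartheta^{\eps_n}_t=\tfrac\eps2\log\rho^{\eps_n}_t-\varphi^{\eps_n}_t$ one relates $\iint_\delta^1|\nabla\vartheta^{\eps_n}_t|^2\rho^{\eps_n}_t$ to $\iint_\delta^1|\nabla\varphi^{\eps_n}_t|^2\rho^{\eps_n}_t$ up to terms that vanish (again by \eqref{eq:b2} and Cauchy-Schwarz). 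This identifies $\lim_n\iint_\delta^1|\nabla\varphi^{\eps_n}_t|^2\,\d\mu^{\eps_n}_t$ with $\iint_\delta^1|\nabla\varphi_t|^2\,\d\mu_t$. To transfer this from the $\mu^{\eps_n}_t$-integral to the $\mm$-integral I would note that the densities $\rho^{\eps_n}_t$ are uniformly bounded above (Proposition \ref{pro:6}) and below only weakly — so this step is not immediate.

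The main obstacle is exactly this last transfer: one has energy convergence weighted by the (degenerating) densities $\rho^{\eps_n}_t$, but the statement asks for convergence of the unweighted energies $\iint_\delta^1|\d\varphi^{\eps_n}_t|^2\,\d\mm$. To bridge the gap I would argue that the $\liminf$ inequality already proved gives $\iint_\delta^1|\d\varphi_t|^2\,\d\mm\leq\liminf_n\iint_\delta^1|\d\varphi^{\eps_n}_t|^2\,\d\mm$, while the density-weighted identity plus the fact that $\int|\d\varphi_{t}|^2\,\d\mu_t$ is constant in $t$ and equals $\int|\d\varphi_t|^2\,\d\mm$ "on the support" (using $\varphi_t+\psi_t=0$ on $\supp\mu_t$ and the geodesic structure) should force equality; alternatively, one uses that off $\supp(\mu_t)$ the limit $\varphi_t$, being $-Q_{\cdot}(-\varphi_0)$ and a Kantorovich potential, satisfies $\lip(\varphi_t)=|\d\varphi_t|$ $\mm$-a.e.\ and that the approximating energies cannot concentrate mass outside the support because of the equi-Lipschitz bound combined with $W_2$-convergence of $\mu^{\eps_n}_t\to\mu_t$. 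Making this rigorous — ruling out that $|\d\varphi^{\eps_n}_t|^2\,\d\mm$ carries extra energy on the region where $\rho^{\eps_n}_t\to0$ — is the delicate point, and I expect it to require the weak $W^{1,2}$-convergence for each fixed $t$ (established in the first step) upgraded to strong convergence via the matching of norms on a full-measure set, then dominated convergence in $t$ using the uniform equi-Lipschitz bound.
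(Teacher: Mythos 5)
Your proposed route has a genuine gap, which you yourself correctly flag in the last paragraph, and the suggested patch cannot be made to work. The identity \eqref{eq:costointermedio}, together with the computation from the proof of Proposition \ref{pro:9} based on $-\frac{\d}{\d t}\int\varphi^\eps_t\rho^\eps_t\,\d\mm$, only controls the $\rho^\eps_t$-weighted quantities $\iint_\delta^1|\nabla\varphi^{\eps_n}_t|^2\rho^{\eps_n}_t\,\d t\,\d\mm$ and their limit $\iint_\delta^1|\d\varphi_t|^2\rho_t\,\d t\,\d\mm=(1-\delta)W_2^2(\mu_0,\mu_1)$. The proposition, however, is about the \emph{unweighted} $\mm$-energies $\iint_\delta^1|\d\varphi^{\eps_n}_t|^2\,\d t\,\d\mm$, and in general $\supp(\mu_t)$ can be a small portion of $\X$ while $\varphi_t$ is a nontrivial Lipschitz function on all of $\X$, so $\int|\d\varphi_t|^2\,\d\mm$ is not recoverable from $\int|\d\varphi_t|^2\,\d\mu_t$. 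The claim that equi-Lipschitz bounds plus $W_2$-convergence of $\mu^{\eps_n}_t\to\mu_t$ ``rule out that $|\d\varphi^{\eps_n}_t|^2\,\d\mm$ carries extra energy off $\supp(\mu_t)$'' is not an argument: the equi-Lipschitz bound only says $|\d\varphi^{\eps_n}_t|\le C$ uniformly, and $W_2$-convergence of measures says nothing about what the gradients do where the densities degenerate. Your weak-lower-semicontinuity step correctly gives the $\liminf$ inequality, but that is the easy direction; the $\limsup$ remains unjustified.

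The paper's proof takes a different and much shorter route that sidesteps the weighting entirely: integrate the Hamilton--Jacobi--Bellman equation $\frac{\d}{\d t}\varphi^\eps_t=\frac12|\nabla\varphi^\eps_t|^2+\frac\eps2\Delta\varphi^\eps_t$ against $\mm$ (not against $\mu^\eps_t$) over $[\delta,1]$ to get
\[
\int\varphi^\eps_1-\varphi^\eps_\delta\,\d\mm=\frac12\iint_\delta^1|\d\varphi^\eps_t|^2\,\d t\,\d\mm+\frac\eps2\iint_\delta^1\Delta\varphi^\eps_t\,\d t\,\d\mm.
\]
The left side converges by the uniform ($L^1$) convergence from Proposition \ref{lem:6}, and the Laplacian term is disposed of via \eqref{eq:lapcontr2}, so that $\lim_n\frac12\iint_\delta^1|\d\varphi^{\eps_n}_t|^2\,\d t\,\d\mm=\int\varphi_1-\varphi_\delta\,\d\mm$ with no density weight anywhere. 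To recognize the right side as $\frac12\iint_\delta^1|\d\varphi_t|^2\,\d t\,\d\mm$, the paper combines the Hopf--Lax representation \eqref{eq:hl1} with Proposition \ref{pro:11} to get $\frac{\d}{\d t}\varphi_t(x)=\frac12(\lip\varphi_t)^2(x)$ for a.e.\ $t$ and every $x$, and then invokes Lemma \ref{lem:12} --- using that $(t_1-t_0)\varphi_{t_1}$ is $c$-concave --- to replace $\lip\varphi_t$ with $|\d\varphi_t|$ $\mm$-a.e.\ on the whole space $\X$, not merely on $\supp(\mu_t)$. Integrating the resulting identity over $[\delta,1]\times\X$ closes the argument. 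This identification of $\lip\varphi_t$ with $|\d\varphi_t|$ $\mm$-a.e., valid globally and not just on the support of the geodesic, is precisely the ingredient missing from your approach.
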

\begin{proof}
Fix $\delta \in\, (0,1)$ and notice that from the formula for $\ddt\varphi^\eps_t$ we get
\[
\int\varphi^\eps_1-\varphi^\eps_\delta\,\d\mm=\frac12\iint_\delta^1|\d \varphi^\eps_t|^2+\eps\Delta\varphi^\eps_t\,\d t\,\d\mm.
\]
Choosing $\eps:=\eps_n$, letting $n\to\infty$ and using the uniform bound \eqref{eq:lapcontr2} we obtain that
\begin{equation}
\label{eq:k7}
\lim_{n\to\infty}\frac12\iint_\delta^1|\d \varphi^{\eps_n}_t|^2\,\d t\,\d\mm=\lim_{n\to\infty}\int\varphi^{\eps_n}_1-\varphi^{\eps_n}_\delta\,\d\mm=\int\varphi_1-\varphi_\delta\,\d\mm.
\end{equation}
Combining \eqref{eq:55} and \eqref{eq:hl1} we see that for any $x \in \X$ it holds
\[
\ddt \varphi_t(x) = \frac{1}{2}\big((\lip\,\varphi_t)(x)\big)^2\qquad a.e.\ t\in[0,1].
\]
By Fubini's theorem we see that the same identity holds for $\mathscr{L}^1 \times \mm$-a.e.\ $(t,x) \in [\delta,1] \times \X$.  The identity \eqref{eq:hl1}  also grants that $\varphi_t$ is a multiple of a $c$-concave function, thus the thesis of Lemma \ref{lem:12} is valid for $\varphi_t$  and recalling that $(\varphi_t)\in AC_{loc}((0,1],L^1(\X))$ we deduce that  
\[
\int\varphi_1-\varphi_\delta\,\d\mm=\int_\delta^1 \ddt\int\varphi_t\d\mm\,\d t = \iint_{\delta}^1\frac{|\d\varphi_t|^2}{2}\,\d t\,\d\mm,
\]
which together with \eqref{eq:k7} gives the first in \eqref{eq:energialimite}. The second is proved in the same way.

For the last statement we simply recall that from Proposition \ref{lem:6} we know that under the additional assumptions on $\rho_0,\rho_1$ we have that $(\varphi^{\eps_n}_t),(\psi^{\eps_n}_t)$ converge  to $(\varphi_t),(\psi_t)$ respectively in $C([0,1],L^1(\X))$. Then we argue as above.
\end{proof}
As a direct consequence of the limit \eqref{eq:energialimite} and the equi-Lipschitz bounds \eqref{eq:lipcontr} we obtain:
\begin{Corollary}\label{cor:convd} With the same assumptions and notations as in Setting \ref{set} the following holds.

Let  $\eps_n\downarrow 0$ be a sequence such that $(\varphi^{\eps_n}_t), (\psi^{\eps_n}_t)$ converge to limit curves $(\varphi_t),(\psi_t)$ as in Proposition \ref{lem:6}. Then for every $\delta\in(0,1)$ we have
\begin{equation}
\label{eq:limited}
\begin{array}{rlll}
(\d\varphi^{\eps_n}_t)\quad&\to\quad (\d\varphi_t)&&\text{ in }\quad L^2([\delta,1],L^2(T^*\X))\\
(\d\psi^{\eps_n}_t)\quad&\to\quad (\d\psi_t)&&\text{ in }\quad L^2([0,1-\delta],L^2(T^*\X))\\
(\d\varphi^{\eps_n}_t\otimes\d\varphi^{\eps_n}_t)\quad&\to\quad (\d\varphi_t\otimes\d\varphi_t)&& \text{ in }\quad L^2([\delta,1],L^2((T^*)^{\otimes 2}\X))\\
(\d\psi^{\eps_n}_t\otimes\d\psi^{\eps_n}_t)\quad&\to\quad (\d\psi_t\otimes\d\psi_t)&&\text{ in }\quad L^2([0,1-\delta],L^2((T^*)^{\otimes 2}\X))\\
(\d\varphi^{\eps_n}_t\otimes\d\psi^{\eps_n}_t)\quad&\to\quad (\d\varphi_t\otimes\d\psi_t)&&\text{ in }\quad L^2([\delta,1-\delta],L^2((T^*)^{\otimes 2}\X))
\end{array}
\end{equation}
If we further assume that $\rho_0,\rho_1 \in \testipp{\X}$, $\delta$ can be chosen equal to 0.
\end{Corollary}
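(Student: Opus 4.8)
The plan is to deduce everything from two facts already in hand: the equi-Lipschitz bounds \eqref{eq:lipcontr} and the convergence of the Dirichlet energies in Proposition \ref{thm:10}. First I would upgrade the weak convergence of the differentials to strong convergence, and then handle the tensor products by a purely algebraic manipulation.

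\emph{Step 1: strong $L^2$-convergence of $(\d\varphi^{\eps_n}_t)$ and $(\d\psi^{\eps_n}_t)$.} Fix $\delta\in(0,1)$. By \eqref{eq:lipcontr} the curve $t\mapsto\varphi^{\eps_n}_t$ is bounded in the Hilbert space $L^2([\delta,1],W^{1,2}(\X))$ — for the gradient part directly by \eqref{eq:lipcontr}, for the function part by the uniform $L^\infty$-bound $\sup_n\sup_{t\in[\delta,1]}\|\varphi^{\eps_n}_t\|_{L^\infty(\mm)}<\infty$ established in the proof of Proposition \ref{lem:6} — hence, along a subsequence, it converges weakly in that space. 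On the other hand Proposition \ref{lem:6} gives $\varphi^{\eps_n}_t\to\varphi_t$ in $L^1(\X)$, locally uniformly in $t\in(0,1]$, so the weak limit is forced to be $t\mapsto\varphi_t$ and in particular $\d\varphi^{\eps_n}_t\weakto\d\varphi_t$ weakly in $L^2([\delta,1],L^2(T^*\X))$; since every subsequence admits a further subsequence for which this holds, the full sequence converges weakly. Now Proposition \ref{thm:10} gives the convergence of the norms
\[
\iint_\delta^1|\d\varphi^{\eps_n}_t|^2\,\d t\,\d\mm\ \to\ \iint_\delta^1|\d\varphi_t|^2\,\d t\,\d\mm ,
\]
and weak convergence together with convergence of norms in a Hilbert space is strong convergence. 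This proves the first line of \eqref{eq:limited}; the second follows identically using the second identity in \eqref{eq:energialimite}.

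\emph{Step 2: the tensor products.} For the diagonal term I would use the identity
\[
\d\varphi^{\eps_n}_t\otimes\d\varphi^{\eps_n}_t-\d\varphi_t\otimes\d\varphi_t=(\d\varphi^{\eps_n}_t-\d\varphi_t)\otimes\d\varphi^{\eps_n}_t+\d\varphi_t\otimes(\d\varphi^{\eps_n}_t-\d\varphi_t),
\]
which gives pointwise $\mm$-a.e.
\[
\big|\d\varphi^{\eps_n}_t\otimes\d\varphi^{\eps_n}_t-\d\varphi_t\otimes\d\varphi_t\big|_\HS\le\big(|\d\varphi^{\eps_n}_t|+|\d\varphi_t|\big)\,|\d\varphi^{\eps_n}_t-\d\varphi_t| .
\]
By \eqref{eq:lipcontr} and the local Lipschitz bound on $(\varphi_t)$ from Proposition \ref{lem:6}, the factor $|\d\varphi^{\eps_n}_t|+|\d\varphi_t|$ is bounded by a constant $C_\delta$ on $[\delta,1]$, so the $L^2([\delta,1],L^2((T^*)^{\otimes2}\X))$-norm of the left-hand side is at most $2C_\delta\|\d\varphi^{\eps_n}_t-\d\varphi_t\|_{L^2([\delta,1],L^2(T^*\X))}$, which tends to $0$ by Step 1. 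The same computation applies to $(\d\psi^{\eps_n}_t\otimes\d\psi^{\eps_n}_t)$ on $[0,1-\delta]$, and to the mixed term on $[\delta,1-\delta]$ — where both families are equi-Lipschitz — via $\d\varphi^{\eps_n}_t\otimes\d\psi^{\eps_n}_t-\d\varphi_t\otimes\d\psi_t=(\d\varphi^{\eps_n}_t-\d\varphi_t)\otimes\d\psi^{\eps_n}_t+\d\varphi_t\otimes(\d\psi^{\eps_n}_t-\d\psi_t)$ together with the strong convergences already established. Finally, when $\rho_0,\rho_1\in\testipp\X$ all the ingredients (the bound \eqref{eq:lipcontr}, Proposition \ref{lem:6}, Proposition \ref{thm:10}) are available with $\delta=0$, so one simply repeats the argument.

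I do not expect a genuine obstacle: the corollary is essentially a repackaging of Proposition \ref{thm:10}. The only slightly delicate point is the identification of the weak limit in Step 1, which is nonetheless immediate from the $L^1$-convergence of Proposition \ref{lem:6} and the closedness of the differential.
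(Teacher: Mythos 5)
Your Step 1 is essentially the paper's own argument: weak $L^2$-in-time convergence of the differentials, identified via the $L^1$-limit from Proposition \ref{lem:6} and the closure of $\d$, upgraded to strong convergence by the norm convergence of Proposition \ref{thm:10}. (The paper phrases the weak convergence slightly differently — first pointwise in $t$ by closure of the differential, then integrated using equiboundedness — but the substance is the same.) Where you diverge is Step 2. The paper handles $\d\varphi^{\eps_n}_t\otimes\d\varphi^{\eps_n}_t$ by the same ``weak convergence + convergence of Hilbert norms'' scheme it used in Step 1: it first shows weak convergence by testing against $\omega_1\otimes\omega_2$, using that $\la\d\varphi^{\eps_n}_t,\omega_i\ra\to\la\d\varphi_t,\omega_i\ra$ in $L^2([\delta,1]\times\X)$ (strong $L^2$-convergence plus the uniform $L^\infty$ bound), and then shows convergence of norms via the identity $|v\otimes v|_\HS^2=|v|^4$ and the same two facts. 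You instead estimate the difference of the tensors directly with the decomposition $a\otimes a-b\otimes b=(a-b)\otimes a+b\otimes(a-b)$, the bound $|u\otimes w|_\HS=|u|\,|w|$, and the equi-Lipschitz bounds \eqref{eq:lipcontr} plus the local Lipschitz bound on $\varphi_t$ from Proposition \ref{lem:6}. Your route is genuinely different and arguably more economical: it dispenses with the need to verify weak convergence of the tensors and the density of simple tensors in $L^2((T^*)^{\otimes 2}\X)$, reducing the tensor convergence to a one-line $L^\infty$-$L^2$ estimate. It also handles the mixed term $\d\varphi^{\eps_n}_t\otimes\d\psi^{\eps_n}_t$ uniformly with the diagonal ones, which the paper leaves to ``analogous arguments.'' Both arguments are correct; yours trades a structural observation (Hilbert-space convergence criteria) for a pointwise bound, and in this setting — where you already have uniform $L^\infty$ control of the gradients — the trade pays off.
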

\begin{proof} Start noticing that the closure of the differential grants that $\d\varphi^{\eps_n}_t\weakto \d\varphi_t$ in $L^2(T^*\X)$ for all $t\in(0,1]$. This and the fact that $(\d\varphi^{\eps_n}_t)$ is equibounded in $L^2([\delta,1],L^2(T^*\X))$, as a  direct consequence of \eqref{eq:lipcontr}, are sufficient to ensure that $(\d\varphi^{\eps_n}_t)\weakto (\d\varphi_t)$ in $L^2([\delta,1],L^2(T^*\X))$. Given that the first in \eqref{eq:energialimite} grants convergence of the $L^2([\delta,1],L^2(T^*\X))$-norms, we deduce strong convergence. This establishes the first limit.

Now observe that for every $\omega\in L^2([\delta,1],L^2(T^*\X))$ the fact that $|\d\varphi^{\eps_n}_t|$ is uniformly bounded in $L^\infty([\delta,1]\times \X)$ and the strong $L^2$-convergence just proved ensure that $\la \d\varphi^{\eps_n}_t,\omega_t\ra\to \la \d\varphi_t,\omega_t\ra$ in $L^2([\delta,1]\times \X)$. It follows that for any $\omega_1,\omega_2\in L^2([\delta,1],L^2(T^*\X))$ we have
\[
\iint_\delta^1\la \d\varphi^{\eps_n}_t,\omega_{1,t}\ra\la \d\varphi^{\eps_n}_t,\omega_{2,t}\ra\,\d t\,\d\mm\quad\to\quad\iint_\delta^1\la \d\varphi_t,\omega_{1,t}\ra\la \d\varphi_t,\omega_{2,t}\ra\,\d t\,\d\mm
\]
and thus to conclude it remains to prove that
\[
\iint_\delta^1|\d\varphi^{\eps_n}_t\otimes \d\varphi^{\eps_n}_t|_\HS^2\,\d t\,\d\mm\quad\to\quad\iint_\delta^1|\d\varphi_t\otimes \d\varphi_t|_\HS^2\,\d t\,\d\mm.
\]
Since $|v\otimes v|_\HS^2=|v|^4$ this is a direct consequence of the fact that  $|\d\varphi^{\eps_n}_t|$ is uniformly bounded and converges to $|\d\varphi_t|$ in $L^2([\delta,1]\times \X)$. Hence also the third limit is established.

The other claims  follow by analogous arguments and the last statement follows from the equi-Lipschitz continuity of the $\varphi^\eps_t,\psi^\eps_t$'s that holds in this case (from the last part of Proposition \ref{pro:1}) and the fact that we can take $\delta=0$ in the first in \eqref{eq:energialimite}.
\end{proof}

\bigskip

The estimates that we have for the functions $\varphi$'s tell nothing about their regularity as $t\downarrow0$ and similarly little we know so far about the $\psi$'s for $t\uparrow1$. We now see in which sense limit functions $\varphi_0,\psi_1$ exist. This is not needed  for the proof of our main result, but we believe it is relevant in its own.

Thus let us fix $\eps_n\downarrow0$ so that $\varphi^{\eps_n}_t\to \varphi_t$ for $t\in(0,1]$ and $\psi^{\eps_n}_t\to \psi_t$ for $t\in[0,1)$ as in Proposition \ref{lem:6}.  Then  define the functions $\varphi_0,\psi_1:\X\to\R\cup\{-\infty\}$ as
\begin{equation}
\label{eq:varphi0}
\begin{split}
\varphi_0(x):=\inf_{t\in(0,1]}\varphi_t(x)=\lim_{t\downarrow0}\varphi_t(x),\\
\psi_1(x):=\inf_{t\in[0,1)}\psi_t(x)=\lim_{t\uparrow1}\psi_t(x).
\end{split}
\end{equation}
Notice that the fact that the $\inf$ are equal to the stated limits is a consequence of formulas \eqref{eq:hl1}, \eqref{eq:hl2}, which directly imply that for every $x\in \X$ the maps $t\mapsto\varphi_t(x)$ and $t\mapsto\psi_{1-t}(x)$ are non-decreasing.

The main properties of $\varphi_0,\psi_1$ are collected in the following proposition:
\begin{Proposition}\label{pro:v0} With the same assumptions and notations as in Setting \ref{set}, and for $\varphi_0,\psi_1$ defined by \eqref{eq:varphi0} the following holds.

\begin{itemize}
\item[i)] The functions $-\varphi_t$ (resp. $-\psi_t$) $\Gamma$-converge to $-\varphi_0$ (resp. $-\psi_1$) as $t\downarrow0$ (resp. $t\uparrow 1$).

\item[ii)] For every $t\in(0,1]$ we have
\[
Q_t(-\varphi_0)=-\varphi_t\qquad\qquad Q_t(-\psi_1)=-\psi_{1-t}.
\]
\item[iii)] It holds
\[
\varphi_0(x)=
\left\{\begin{array}{ll}
-\psi_0(x)&\quad\text{if }x\in\supp(\rho_0)\\
-\infty&\quad\text{otherwise}
\end{array}
\right.
\qquad\quad
\psi_1(x)=
\left\{\begin{array}{ll}
-\varphi_1(x)&\quad\text{if }x\in\supp(\rho_1)\\
-\infty&\quad\text{otherwise}
\end{array}
\right.
\]
\item[iv)] We have
\[
\int\varphi_0\rho_0\,\d\mm+\int\psi_1\rho_1\,\d\mm=\frac12W_2^2(\mu_0,\mu_1).
\]
\item[v)] Define $\varphi^\eps_0$ on $\supp(\rho_0)$ as $\varphi^\eps_0:=\eps\log(f^\eps)$ and let $\eps_n\downarrow0$ be such that $\varphi^{\eps_n}_t,\psi^{\eps_n}_t$ converge to $\varphi_t,\psi_t$ as $n\to\infty$ as in Proposition \ref{lem:6}. 

Then the functions $\rho_0\varphi^{\eps_n}_0$,  set to be 0 on $\X\setminus\supp(\rho_0)$, converge to $\rho_0\varphi_0$  in $L^\infty(\mm)$  as $n\to\infty$. 

With the analogous definition of $\rho_1\psi^{\eps_n}_1$ we have that these converge to $\rho_1\psi_1$ in $L^\infty(\mm)$ as $n\to\infty$. 
\end{itemize}
\end{Proposition}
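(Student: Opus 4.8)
The strategy is to exploit the monotonicity of $t\mapsto\varphi_t(x)$ (which is explicitly noted right before the statement, as a consequence of the Hopf-Lax formulas \eqref{eq:hl1} and \eqref{eq:hl2}) together with the uniform Lipschitz and boundedness estimates from Propositions \ref{pro:1}, \ref{lem:6} and \ref{pro:9}. The key structural facts are that $\varphi_0=\inf_{t}\varphi_t=\lim_{t\downarrow0}\varphi_t$ is a pointwise decreasing limit of equi-Lipschitz-on-compacts functions, and that $-(t_1-t_0)\varphi_{t_1}$ is a Kantorovich potential from $\mu_{t_1}$ to $\mu_{t_0}$.

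First I would prove $(ii)$: since $Q_t(-\varphi_0)=Q_t(\inf_s(-\varphi_s))$ and $Q_t$ is built from an infimum, one checks that $Q_t$ commutes with this monotone infimum (monotone convergence / exchange of two infima over $y$ and over $s$), so $Q_t(-\varphi_0)=\inf_{s\in(0,1]}Q_t(-\varphi_s)=\inf_{s}(-\varphi_{s+t})=-\varphi_t$ using \eqref{eq:hl1} with $t_0=s$, $t_1=s+t$ and taking $s\downarrow 0$. (One must truncate so that $s+t\le 1$; this is harmless.) Then $(i)$ follows from $(ii)$ plus the general fact that Hopf-Lax evolution encodes $\Gamma$-convergence to the initial datum as $t\downarrow0$: the $\Gamma$-$\liminf$ inequality is the lower semicontinuity of $-\varphi_0$ as an infimum of continuous functions, and the $\Gamma$-$\limsup$ is obtained along the constant recovery sequence using that $Q_t(-\varphi_0)(x)\le -\varphi_0(x)$ for all $x$ and $Q_t(-\varphi_0)\to -\varphi_0$ pointwise by monotonicity. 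For $(iii)$, on $\supp(\rho_0)$ I would pass to the limit $t\downarrow 0$ in the relation $\varphi_t+\psi_t=0$ on $\supp(\mu_t)$ from \eqref{eq:63}, using that $\supp(\mu_t)\to\supp(\mu_0)=\supp(\rho_0)$ in a suitable sense and continuity of the limit potentials; off $\supp(\rho_0)$, the claim $\varphi_0=-\infty$ needs the sharper input that the Schr\"odinger densities $f^\eps_t$ decay fast enough there — this is where the Gaussian heat kernel bounds \eqref{eq:gaussest} and the $L^\infty$ bound \eqref{eq:linftybound} enter, forcing $\varphi^\eps_t=\eps\log f^\eps_t\to-\infty$ outside the support. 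Part $(iv)$ is then immediate by passing to the limit $t_0\downarrow0$, $t_1\uparrow1$ in \eqref{eq:costointermedio}–\eqref{eq:costointermedio2} combined, using $(iii)$ to identify the boundary values, and dominated convergence justified by the uniform $L^\infty$ bounds on the $\rho$'s and the uniform boundedness of $\varphi^\eps_1$ on $\supp(\mu_1)$.

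Part $(v)$ is the quantitative heart and the main obstacle. Here one cannot merely use the $L^1$- or uniform-on-compacts convergence of Proposition \ref{lem:6}, because near $t=0$ the estimates degenerate; instead I would argue directly with the heat-flow representation $f^\eps_0=\rho_0/\h_{\eps/2}(g^\eps)$, so that $\varphi^\eps_0=\eps\log\rho_0-\eps\log\h_{\eps/2}(g^\eps)=\eps\log\rho_0-\psi^\eps_0$ wherever $\rho_0>0$; thus $\rho_0\varphi^\eps_0=\eps\rho_0\log\rho_0-\rho_0\psi^\eps_0$, and on $\supp(\rho_0)$ the functions $\psi^\eps_0$ are equi-Lipschitz and equibounded by Proposition \ref{pro:1} (the $\psi$-versions of \eqref{eq:lipcontr}, valid on $[0,1-\delta]$, applied at $t=0$). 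Then $\rho_0\varphi^{\eps_n}_0\to -\rho_0\psi_0=\rho_0\varphi_0$ in $L^\infty(\mm)$, using that $\psi^{\eps_n}_0\to\psi_0$ uniformly (Proposition \ref{lem:6}, which gives $C(\X)$-convergence of the $\psi$'s including at $t=0$ since $0$ is interior to $[0,1)$ — here one may need the $\testipp{\X}$ case or a localization argument near $\supp(\rho_0)$), that $\eps_n\|\rho_0\log\rho_0\|_{L^\infty}\to0$ by boundedness of $\rho_0$, and that $\varphi_0=-\psi_0$ on $\supp(\rho_0)$ from $(iii)$. Off $\supp(\rho_0)$ both $\rho_0\varphi^{\eps_n}_0$ and $\rho_0\varphi_0$ are set to $0$, so there is nothing to check. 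The delicate point — the one I expect to require the most care — is making the uniform (not just $L^1$) convergence $\psi^{\eps_n}_0\to\psi_0$ near $\supp(\rho_0)$ rigorous without the extra $\testipp{\X}$ regularity: one controls this by combining the equi-Lipschitz bound on $\psi^\eps_0$ with the $L^1$-convergence via the elementary estimate \eqref{eq:l1li} (sup-norm controlled by an integral plus $D\,\mathrm{Lip}$), exactly as in the proof of Proposition \ref{lem:6}.
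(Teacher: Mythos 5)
The core difficulty in your proposal is a sign error in part $(ii)$ that invalidates the commutation argument. Since $\varphi_0 := \inf_{s\in(0,1]}\varphi_s$, we have $-\varphi_0 = \sup_{s}(-\varphi_s)$, \emph{not} $\inf_s(-\varphi_s)$: from \eqref{eq:hl1} the map $t\mapsto -\varphi_t$ is monotone \emph{decreasing}, so $-\varphi_t$ increases as $t\downarrow 0$. Thus what you wrote as ``exchange of two infima'' is really an exchange of an infimum (over $y$, in $Q_t$) with a supremum (over $s$), and this does \emph{not} hold for free. One direction, $Q_t(-\varphi_0)\geq \sup_s Q_t(-\varphi_s) = -\varphi_t$, follows from monotonicity of $Q_t$ and the continuity of $t\mapsto\varphi_t\in C(\X)$ — this is the easy half the paper also records first. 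The opposite inequality genuinely requires a compactness argument: pick minimizers $y_n$ for $Q_t(-\varphi_{t_n})(x)$ along a sequence $t_n\downarrow 0$, extract a convergent subsequence $y_n\to y$, and pass to the limit using the $\Gamma$-$\liminf$ inequality from part $(i)$. Consequently the order of proof matters: $(i)$ must come \emph{before} $(ii)$ (the paper does this), whereas you propose $(ii)$ first with a proof that would need $(i)$ anyway. The same sign confusion reappears in your $(i)$ (``$-\varphi_0$ as an infimum of continuous functions'' — it is a supremum, which is why it is lower semicontinuous), though there the conclusion happens to be right once the monotonicity is correctly invoked.

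A second, smaller gap is in $(iii)$: the identity $\varphi_0=-\psi_0$ on $\supp(\rho_0)$ does not follow from vague convergence of supports. The inequality $\varphi_0\leq -\psi_0$ on all of $\X$ is easy (let $t\downarrow0$ in $\varphi_t\leq -\psi_t$). The reverse inequality on $\supp(\rho_0)$ needs a genuine argument: the paper picks, for each $x\in\supp(\rho_0)$, a geodesic $\gamma$ in the support of the optimal lifting $\ppi$ with $\gamma_0=x$, so $\gamma_t\in\supp(\mu_t)$ and $\varphi_t(\gamma_t)=-\psi_t(\gamma_t)$ holds along it; one then applies the $\Gamma$-$\liminf$ inequality of $(i)$ along $t\mapsto\gamma_t\to x$ to get $\varphi_0(x)\geq\limsup_{t\downarrow0}\varphi_t(\gamma_t)=-\psi_0(x)$. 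This is a second place where $(i)$ is used as a tool, reinforcing that the $\Gamma$-convergence is not a corollary of $(ii)$ but rather a prerequisite for both.

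Your treatment of $\varphi_0=-\infty$ outside $\supp(\rho_0)$, and of parts $(iv)$ and $(v)$, is in substance the same as the paper's. For $(v)$ in particular the worry you raise is not actually an obstruction: Proposition \ref{lem:6} combined with the equi-Lipschitz bound of Proposition \ref{pro:1} already upgrades the $L^1$-convergence of $\psi^{\eps_n}_0$ to uniform convergence (via \eqref{eq:l1li}), and $t=0$ lies in the interior of the interval $[0,1)$ on which the $\psi$'s are controlled — no $\testipp{\X}$ hypothesis is needed.
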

\begin{proof} We shall prove the claims for $\varphi_0$ only, as those for $\psi_1$ follow along similar lines.\\
\noindent{\bf (i)} For the $\Gamma-\lims$ inequality we simply observe that by definition $-\varphi_0(x)=\lim_{t\downarrow0}-\varphi_t(x)$. To prove the $\Gamma-\limi$ inequality, use the fact that $-\varphi_t\geq-\varphi_s$ for $0<t\leq s$ and the continuity of $\varphi_s$:  for given $(x_t)$ converging to $x$ we have
\[
\limi_{t\downarrow0}-\varphi_t(x_t)\geq\limi_{t\downarrow0}-\varphi_s(x_t)=-\varphi_s(x)\qquad\forall s>0.
\]
The conclusion follows letting $s\downarrow0$.

\noindent{\bf (ii)} This claim follows from the general properties of $\Gamma$-convergence; we quickly report the argument. From $-\varphi_0\geq-\varphi_s$ we deduce that 
\[
Q_t(-\varphi_0)\geq Q_t(-\varphi_s)\stackrel{\eqref{eq:hl1}}=-\varphi_{t+s}\qquad\forall s\in(0,1]
\]
and thus letting $s\downarrow 0$ and using the continuity of $t\mapsto \varphi_t\in C(\X)$ we obtain $Q_t(-\varphi_0)\geq -\varphi_t$. For the opposite inequality  fix $x\in \X$, a sequence $t_n\downarrow0$ and find $y_n\in \X$ such that  $Q_t(-\varphi_{t_n})(x)=\frac{\sfd^2(x,y_n)}{2t}-\varphi_{t_n}(y_n)$. By compactness, up to pass to a subsequence we can assume that $y_n\to y$ for some $y\in \X$, so that taking into account the $\Gamma-\limi$ inequality previously proved we get
\[
\frac{\sfd^2(x,y)}{2t}-\varphi_{0}(y)\leq \limi_{n\to\infty}\frac{\sfd^2(x,y_n)}{2t}-\varphi_{t_n}(y_n)=\limi_{n\to\infty}Q_t(-\varphi_{t_n})(x)\stackrel{\eqref{eq:hl1}}=\limi_{n\to\infty}-\varphi_{t_n+t}(x)=-\varphi_t(x)
\]
which shows that  $Q_t(-\varphi_0)(x)\leq -\varphi_t(x)$, as desired.

\noindent{\bf (iii)} For any $t\in(0,1]$ we have
\[
\varphi_0\leq \varphi_t\stackrel{\eqref{eq:63}}\leq-\psi_t
\]
so that letting $t\downarrow0$ and using the continuity of $[0,1)\ni t\mapsto\psi_t\in C(\X)$ we deduce that 
\[
\varphi_0\leq -\psi_0\qquad{\rm on}\ \X.
\]
Now notice that the fact that $-\varphi_0\leq \Gamma-\limi(-\varphi_t)$ implies that
\begin{equation}
\label{eq:magg}
\varphi_0(\gamma_0)\geq\lims_{t\downarrow 0}\varphi_t(\gamma_t)\qquad\forall \gamma\in C([0,1],\X).
\end{equation}
Let $\ppi$ be the lifting  of the $W_2$-geodesic $(\mu_t)$ (recall point $(i)$ of Theorem \ref{thm:bm}); taking into account that the evaluation maps $\e_t:C([0,1],\X)\to \X$ are continuous and that $\supp(\ppi)$ is a compact subset of $C([0,1],\X)$ it is easy to see that for every  $\gamma\in\supp(\ppi)$ and $t\in[0,1]$ we have $\gamma_t\in \supp(\mu_t)$ and viceversa for every $x\in\supp(\mu_t)$ there is $\gamma\in\supp(\ppi)$ with $\gamma_t=x$.  

Thus let $x\in\supp(\rho_0)=\supp(\mu_0)$ and find $\gamma\in \supp(\ppi)$ with $\gamma_0=x$: from the fact that $\gamma_t\in\supp(\mu_t)$ and \eqref{eq:63} we get
\[
\varphi_0(x)\stackrel{\eqref{eq:magg}}\geq\lims_{t\downarrow 0}\varphi_t(\gamma_t)=\lims_{t\downarrow0}-\psi_t(\gamma_t)=-\psi_0(x).
\]
Thus to conclude it remains to   prove that $\varphi_0=-\infty$ outside $\supp(\rho_0)$ and to this aim we shall use the Gaussian estimates \eqref{eq:gaussest}. Let $v_\eps:=\inf_y\mm(B_{\sqrt\eps}(y))$ and  $V_\eps:=\sup_y\mm(B_{\sqrt\eps}(y))$. We start claiming that 
\begin{equation}
\label{eq:volneg}
\lim_{\eps\downarrow0}\eps\log( v_\eps)=\lim_{\eps\downarrow0}\eps\log (V_\eps)=0.
\end{equation}
Indeed on one side since $\mm(\X)=1$ we have $\eps\log( v_\eps)\leq \eps\log( V_\eps)\leq 0$ for every $\eps>0$. On the other one, letting $C$ be the doubling constant of $(\X,\sfd,\mm)$ we have
\[
\mm(B_{\sqrt\eps}(y))\geq C^{\log_2(D/\sqrt\eps)+1}\mm(\X)= C^{\log_2(D/\sqrt\eps)+1}\qquad\forall y\in \X.
\]
Thus $v_\eps\geq C^{\log_2(D/\sqrt\eps)+1}$ from which it  follows that $\limi_{\eps}\eps\log(v_\eps)\geq 0$ and thus \eqref{eq:volneg} is proved. Now use the first inequality in \eqref{eq:gaussest} and the fact that $f^\eps\otimes g^\eps \hR^\eps$ is a probability measure (by construction - recall our Setting \ref{set}) to obtain
\begin{equation}
\label{eq:integrfg}
\int f^\eps\,\d\mm\int g^\eps \,\d\mm\leq C_1V_\eps e^{\frac{C_1D^2}\eps}\int f^\eps (x)g^\eps (y)\,\d \hR^\eps(x,y)= C_1V_\eps e^{\frac{C_1D^2}\eps}.
\end{equation}
Observing that by construction we have $\supp(f^\eps)=\supp(\rho_0)$ for every $\eps>0$,  the second in \eqref{eq:gaussest} yields
\[
\begin{split}
f^\eps_t(x)&=\h_{\eps t/2}f^\eps(x)=\int f^\eps(y)\hr_{\eps t/2}(x,y)\,\d\mm(y)\leq\frac{C_2}{v_{\eps t/2}}e^{-\frac{\sfd^2(x,\supp(\rho_0))}{3\eps t}}\int f^\eps\,\d\mm,\\
g^\eps_t(x)&=\h_{\eps (1-t)/2}g^\eps(x)=\int g^\eps(y)\hr_{\eps(1-t)/2}(x,y)\,\d\mm(y)\leq\frac{C_2}{v_{\eps (1-t)/2}}\int g^\eps\,\d\mm,
\end{split}
\]
for every $t\in(0,1)$ and thus coupling these bounds with \eqref{eq:integrfg} we obtain
\[
\rho^\eps_t(x)=f^\eps_t(x)g^\eps_t(x)\leq \frac{C_1C_2^2V_\eps}{v_{\eps (1-t)/2}v_{\eps t/2}}e^{\frac{C_1D^2}\eps}e^{-\frac{\sfd^2(x,\supp(\rho_0))}{3\eps t}}\qquad\forall x\in\X,\ t\in(0,1).
\]
Therefore recalling \eqref{eq:volneg} we obtain
\begin{equation}
\label{eq:altdens}
\lims_{\eps\downarrow0}\eps\log(\rho^\eps_t(x))\leq C_1D^2-\frac{\sfd^2(x,\supp(\rho_0))}{3t}\qquad\forall x\in\X,\ t\in(0,1).
\end{equation}
Now let  $\eps_n\downarrow0$ be the sequence such that $\varphi^{\eps_n}_t,\psi^{\eps_n}_t$ converge to $\varphi_t,\psi_t$ as in Proposition \ref{lem:6} and put  $S:=\sup_{\eps\in(0,1),t\in[0,1/2]}\|\psi^\eps_t\|_{L^\infty}<\infty$. The inequality
\[
\varphi_t(x)=\lim_{n\to\infty}\varphi^{\eps_n}_t(x)\leq \lims_{n\to\infty}\eps_n\log(\rho^{\eps_n}_t(x))-\lim_{n\to\infty}\psi^{\eps_n}_t(x)\stackrel{\eqref{eq:altdens}}\leq S+C_1D^2-\frac{\sfd^2(x,\supp(\rho_0))}{3t}
\]
shows that if $x\notin\supp(\rho_0)$ we have $\varphi_0(x)=\lim_{t\downarrow0}\varphi_t(x)=-\infty$, as desired.

\noindent{\bf (iv)} By the point $(iii)$ just proven we have
\[
\int\varphi_0\rho_0\,\d\mm+\int\psi_1\rho_1\,\d\mm=-\int\psi_0\rho_0\,\d\mm-\int\varphi_1\rho_1\,\d\mm
\]
so that taking into account the weak continuity of $t\mapsto\mu_t$ and the uniform continuity of $t\mapsto \varphi_t$ (resp. $t\mapsto\psi_t$) for $t$ close to 1 (resp. close to 0) we get
\[
\begin{split}
\int\varphi_0\rho_0\,\d\mm+\int\psi_1\rho_1\,\d\mm&\stackrel{\phantom{\eqref{eq:63}}}=\lim_{t\downarrow0}-\int\psi_t\rho_t\,\d\mm-\int\varphi_{1-t}\rho_{1-t}\,\d\mm\\
&\stackrel{\eqref{eq:63}}=\lim_{t\downarrow0}\int\varphi_t\rho_t\,\d\mm-\int\varphi_{1-t}\rho_{1-t}\,\d\mm\stackrel{\eqref{eq:costointermedio}}=\frac12W_2^2(\mu_0,\mu_1).
\end{split}
\] 
\noindent{\bf (v)} Since $\rho_0\in L^\infty(\mm)$, we also have $\rho_0\log(\rho_0)\in L^\infty(\mm)$. The claim then follows from the identity $\rho_0\varphi^\eps_0=\eps\rho_0\log\rho_0-\rho_0\psi^\eps_0$, the uniform convergence of $\psi^{\eps_n}_0$ to $\psi_0$ as $n\to\infty$ and the fact that $\psi_0=-\varphi_0$ on $\supp(\rho_0)$.
\end{proof}

\begin{Remark}[Entropic and transportation cost]{\rm For $\eps>0$  the \emph{entropic cost} from $\rho_0$ to $\rho_1$ is defined as
\[
I_\eps(\rho_0,\rho_1):=\inf H(\ggamma\,|\,\hR^\eps),
\]
the infimum being taken among all transport plans $\ggamma$ from $\mu_0:=\rho_0\mm$ to $\mu_1:=\rho_1\mm$. Hence  with our notation
\[
I_\eps(\rho_0,\rho_1)=H\big(f^\eps\otimes g^\eps \hR^\eps\,|\,\hR^\eps\big)=\frac1\eps\int\varphi^\eps_0\oplus\psi^\eps_1 f^\eps\otimes g^\eps \,\d \hR^\eps=\frac1\eps\Big(\int\varphi^\eps_0\rho_0\,\d\mm+\int\psi^\eps_1\rho_1\,\d\mm\Big)
\]
and by $(iv),(v)$ of the previous proposition we get
\[
\lim_{\eps\downarrow0}\eps \,I_\eps(\rho_0,\rho_1)=\frac12W_2^2(\mu_0,\mu_1).
\]
In other words, after the natural rescaling the entropic cost converges to the quadratic transportation cost, thus establishing another link between the Schr\"odinger problem and the transport one.

We emphasize that although this argument is new, the result is not,  not even on $\RCD^*(K,N)$ spaces: L\'eonard proved in \cite{Leonard12} that the same limit holds in a very abstract setting provided the heat kernel satisfies the appropriate large deviation principle $\eps\log \hr^\eps_t(x,y)\sim-\frac{\sfd^2(x,y)}{2}$. Since recently such asymptotic behavior for the heat kernel on $\RCD^*(K,N)$ spaces has been proved by Jiang-Li-Zhang in \cite{JLZ15}, L\'eonard's result applies. Thus in this remark we simply wanted to show an alternative proof of such limiting property. 
}\fr\end{Remark}
\subsection{Proof of the main theorem}
We start with the following simple continuity statement:
\begin{Lemma}\label{le:percon} With the same assumptions and notation as in Setting \ref{set}, let $t\mapsto\mu_t=\rho_t\mm$ be the $W_2$-geodesic from $\mu_0$ to $\mu_1$ and  $(\varphi_t)_{t\in(0,1]}$ and $(\psi_t)_{t\in[0,1)}$ any couple of limit functions given by Proposition \ref{lem:6}.

Then the maps
\[
\begin{array}{rll}
(0,1]\ni t\quad&\mapsto\quad \rho_t\,\d \varphi_t&\in L^2(T^*\X)\\
\ [0,1)\ni t\quad&\mapsto\quad \rho_t\,\d \psi_t&\in L^2(T^*\X)\\
(0,1]\ni t\quad&\mapsto\quad \rho_t\,\d \varphi_t\otimes\d\varphi_t&\in L^2((T^*)^{\otimes2}\X)\\
\ [0,1)\ni t\quad&\mapsto\quad \rho_t\,\d \psi_t\otimes\d\psi_t&\in L^2((T^*)^{\otimes2}\X)
\end{array}
\]
are all continuous w.r.t.\ the strong topologies.
\end{Lemma}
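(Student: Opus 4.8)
The plan is to exploit the identity $\int|\d\varphi_t|^2\,\d\mu_t=W_2^2(\mu_0,\mu_1)$, valid for every $t\in(0,1]$, which makes the $L^2(T^*\X)$-norm of the ``momentum field'' $\sqrt{\rho_t}\,\d\varphi_t$ constant in $t$; for such a field strong continuity will then follow from mere \emph{weak} continuity, and the weighted maps in the statement will be recovered from it by multiplication with uniformly bounded factors.

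First I would record two elementary continuity facts. By Propositions \ref{lem:6} and \ref{pro:9}, $(\mu_t)$ is the $W_2$-geodesic from $\mu_0$ to $\mu_1$ with $\mu_t=\rho_t\mm\le M\mm$; combining the $W_2$-continuity of $t\mapsto\mu_t$, the uniform bound $\rho_t\le M$ and the $\mm$-a.e.\ convergence along subsequences furnished by Lemma \ref{lem:8}, a routine subsequence argument and dominated convergence (recall $\mm$ is a probability measure) show that $t\mapsto\rho_t$ and $t\mapsto\sqrt{\rho_t}$ are continuous from the relevant interval into $L^p(\mm)$ for every $p\in[1,\infty)$. Secondly, since $(\varphi_t)\in AC_{loc}((0,1],C(\X))$ with $\Lip(\varphi_t)$ locally bounded (Propositions \ref{lem:6}, \ref{pro:9}), one has $\varphi_t\to\varphi_s$ in $L^2(\mm)$ and $\{\d\varphi_t\}$ locally bounded in $L^2(T^*\X)$, so closedness of the differential gives $\d\varphi_t\weakto\d\varphi_s$ weakly in $L^2(T^*\X)$ as $t\to s$.

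Next I would prove that $\sqrt{\rho_t}\,\d\varphi_t\weakto\sqrt{\rho_s}\,\d\varphi_s$ weakly in $L^2(T^*\X)$: for a test field $v\in L^2(T\X)$ one splits $\sqrt{\rho_t}\,\d\varphi_t(v)-\sqrt{\rho_s}\,\d\varphi_s(v)=(\sqrt{\rho_t}-\sqrt{\rho_s})\,\d\varphi_t(v)+\big(\d\varphi_t-\d\varphi_s\big)(\sqrt{\rho_s}\,v)$ and integrates; the first term is controlled by $\|\sqrt{\rho_t}-\sqrt{\rho_s}\|_{L^2}\Lip(\varphi_t)\|v\|_{L^2}$, which vanishes, while the second tends to $0$ because $\sqrt{\rho_s}\,v\in L^2(T\X)$ and $\d\varphi_t\weakto\d\varphi_s$. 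For the norms, arguing as in the last step of the proof of Proposition \ref{pro:9} (for $s_0\in(0,t)$ the function $-(t-s_0)\varphi_t$ is a Kantorovich potential from $\mu_t$ to $\mu_{s_0}$, and $W_2(\mu_t,\mu_{s_0})=(t-s_0)W_2(\mu_0,\mu_1)$), Theorem \ref{thm:bm}$(iii)$ gives
\[
\|\sqrt{\rho_t}\,\d\varphi_t\|_{L^2(T^*\X)}^2=\int|\d\varphi_t|^2\,\d\mu_t=W_2^2(\mu_0,\mu_1)\qquad\text{for all }t\in(0,1].
\]
Since in a Hilbert space weak convergence together with convergence of the norms implies strong convergence, this yields $\sqrt{\rho_t}\,\d\varphi_t\to\sqrt{\rho_s}\,\d\varphi_s$ strongly in $L^2(T^*\X)$.

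Finally I would conclude by multiplying out. Writing $\rho_t\,\d\varphi_t=\sqrt{\rho_t}\,(\sqrt{\rho_t}\,\d\varphi_t)$ and $\rho_t\,\d\varphi_t\otimes\d\varphi_t=(\sqrt{\rho_t}\,\d\varphi_t)\otimes(\sqrt{\rho_t}\,\d\varphi_t)$, and using the pointwise bounds $\sqrt{\rho_t}\le\sqrt M$, $\sqrt{\rho_t}\,|\d\varphi_t|\le\sqrt M\,\Lip(\varphi_t)$ (locally uniform in $t$) together with $|a\otimes a-b\otimes b|_\HS\le|a-b|\,(|a|+|b|)$, one gets bounds of the form
\[
\|\rho_t\,\d\varphi_t-\rho_s\,\d\varphi_s\|_{L^2}\le\sqrt M\,\Lip(\varphi_t)\,\|\sqrt{\rho_t}-\sqrt{\rho_s}\|_{L^2}+\sqrt M\,\|\sqrt{\rho_t}\,\d\varphi_t-\sqrt{\rho_s}\,\d\varphi_s\|_{L^2}
\]
and
\[
\|\rho_t\,\d\varphi_t\otimes\d\varphi_t-\rho_s\,\d\varphi_s\otimes\d\varphi_s\|_{L^2}\le 2\sqrt M\max\{\Lip(\varphi_t),\Lip(\varphi_s)\}\,\|\sqrt{\rho_t}\,\d\varphi_t-\sqrt{\rho_s}\,\d\varphi_s\|_{L^2},
\]
both of which tend to $0$ by the previous steps; this gives continuity of the first and third maps, and the $\psi$-maps are handled by the symmetric argument on $[0,1)$ (now $-(t_1-t)\psi_t$ is a Kantorovich potential from $\mu_t$ to $\mu_{t_1}$, so $\int|\d\psi_t|^2\,\d\mu_t=W_2^2(\mu_0,\mu_1)$). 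The one point that is not purely formal is the constancy of $\int|\d\varphi_t|^2\,\d\mu_t$ (and of $\int|\d\psi_t|^2\,\d\mu_t$), which reduces the convergence of norms to a triviality; I expect this — already essentially contained in the proof of Proposition \ref{pro:9} — to be the main (indeed the only) substantive ingredient, everything else being soft functional analysis.
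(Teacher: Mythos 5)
Your proof follows the paper's strategy exactly: show $t\mapsto\sqrt{\rho_t}\,\d\varphi_t$ is weakly continuous in $L^2(T^*\X)$, show its norm is the constant $W_2(\mu_0,\mu_1)$ so that weak convergence upgrades to strong, and then recover all four maps by multiplying by the uniformly bounded factors $\sqrt{\rho_t}$ and $\sqrt{\rho_t}\,\d\varphi_t$. The only (cosmetic, and if anything preferable) variation is in establishing the norm identity: you apply Theorem \ref{thm:bm}$(iii)$ to $-(t-s_0)\varphi_t$ as a Kantorovich potential from $\mu_t$ to $\mu_{s_0}$, which works for every $t\in(0,1]$, whereas the paper passes through $\psi_t$ via \eqref{eq:63} and the potential $-(1-t)\psi_t$ from $\mu_t$ to $\mu_1$, an argument that formally needs $t<1$.
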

\begin{proof}
By Lemma \ref{lem:8} we know that for any $p<\infty$ we have $\rho_s\to\rho_t$ in $L^p(\mm)$ as $s\to t$ and thus in particular $\sqrt{\rho_s}\to\sqrt{\rho_t}$ as $s\to t$. The closure of the differential and the fact that $\varphi_s\to \varphi_t$ weakly in $W^{1,2}(\X)$ as $s\to t>0$ (as a consequence of $(\varphi_t)\in C((0,1],C(\X))\cap L^\infty_{loc}((0,1),W^{1,2}(\X))$, see Proposition \ref{pro:9}) grant that $\d\varphi_s\to\d\varphi_t$ weakly in $L^2(T^*\X)$. Together with the previous claim about the densities and the fact that the latter are uniformly bounded in $L^\infty(\mm)$, this is sufficient to conclude that $t\mapsto \sqrt{\rho_t}\d\varphi_t\in L^2(T^*\X)$ is weakly continuous.

We now claim that $t\mapsto \sqrt{\rho_t}\d\varphi_t\in L^2(T^*\X)$ is strongly continuous and to this aim we   show that their $L^2(T^*\X)$-norms are constant. To see this, recall that by Proposition \ref{pro:9} we know that for $t\in(0,1]$ the function $-(1-t)\psi_t$ is a Kantorovich potential from $\mu_t$ to $\mu_1$ while from \eqref{eq:63} and the locality of the differential we get that $|\d \varphi_t|=|\d\psi_t|$ $\mu_t$-a.e., thus by point $(iii)$ in Theorem \ref{thm:bm} we have that
\[
\int|\d\varphi_t|^2\rho_t\,\d\mm=\frac1{(1-t)^2}\int |\d(1-t)\psi_t|^2\rho_t\,\d\mm=\frac1{(1-t)^2}W_2^2(\mu_t,\mu_1)=W_2^2(\mu_0,\mu_1).
\]
Multiplying the $\sqrt{\rho_t}\d\varphi_t$'s by $\sqrt{\rho_t}$ and using again the $L^2(\mm)$-strong continuity of $\sqrt{\rho_t}$ and the uniform $L^\infty(\mm)$-bound we conclude that $t\mapsto {\rho_t}\d\varphi_t\in L^2(T^*\X)$ is strongly continuous, as desired.

To prove the strong continuity of $t\mapsto  \rho_t\,\d \varphi_t\otimes\d\varphi_t\in  L^2((T^*)^{\otimes2}\X)$ we argue as in Corollary \ref{cor:convd}: the strong continuity of $t\mapsto \sqrt{\rho_t}\d\varphi_t\in L^2(T^*\X)$ and the fact that these are, locally in $t\in(0,1]$, uniformly bounded, grant both that $t\mapsto \|\rho_t\d\varphi_t\otimes\d\varphi_t\|_{L^2((T^*)^{\otimes 2}\X)}$ is continuous and that $t\mapsto \rho_t\d\varphi_t\otimes\d \varphi_t\in L^2((T^*)^{\otimes 2}\X)$ is weakly continuous.

The claims about the $\psi_t$'s follow in the same way.
\end{proof}

We now have all the tools needed to prove our main result. Notice that we shall not make explicit use of Theorem \ref{thm:1i} but rather reprove it for (the restriction to $[\delta,1-\delta]$ of) entropic interpolations.
\begin{Theorem}\label{thm:main}
Let $(\X,\sfd,\mathfrak{m})$ be a compact $\RCD^*(K,N)$ space with $K \in \mathbb{R}$ and $N \in [1,\infty)$. Let $\mu_0,\mu_1\in\prob \X$ be  such that $\mu_0,\mu_1\leq C\mm$ for some $C>0$ and let $(\mu_t)$  be the unique $W_2$-geodesic connecting $\mu_0$ to $\mu_1$. Also, let $h\in H^{2,2}(\X)$.

Then the map 
\[
[0,1]\ni \ t\quad \mapsto\quad \int h\,\d\mu_t\ \in\R
\]
belongs to $C^2([0,1])$ and the following formulas hold for every $t\in[0,1]$:
\begin{equation}
\label{eq:derivate}
\begin{split}
\ddt \int h\,\d\mu_t&=\int\la\nabla h,\nabla\phi_t\ra\,\d\mu_t,\\
\frac{\d^2}{\d t^2}\int h\,\d\mu_t&=\int \H h(\nabla\phi_t,\nabla\phi_t)\,\d\mu_t,
\end{split}
\end{equation}
where $\phi_t$ is any function such that for some $s\neq t$, $s\in[0,1]$, the function $-(s-t)\phi_t$ is a Kantorovich potential from $\mu_t$ to $\mu_s$.
\end{Theorem}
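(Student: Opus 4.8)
The plan is to establish the two formulas first for $h\in\testi\X$ and then to reach general $h\in H^{2,2}(\X)$ by density: $\testi\X$ is $W^{2,2}$-dense in $D(\Delta)$ (regularize by the heat flow and use \eqref{eq:heslap} to pass the Hessian to the limit), hence dense in $H^{2,2}(\X)$, and along our geodesic one has $\mu_t\le C'\mm$ for every $t\in[0,1]$ by \eqref{eq:linftyrcd} and $|\d\phi_t|\le\diam(\X)$ $\mu_t$-a.e.\ by Theorem \ref{thm:bm}(iii), so the maps $t\mapsto\int h\,\d\mu_t$, $t\mapsto\int\la\nabla h,\nabla\phi_t\ra\,\d\mu_t$ and $t\mapsto\int\H h(\nabla\phi_t,\nabla\phi_t)\,\d\mu_t$ depend continuously on $h$, uniformly in $t\in[0,1]$, with respect to the $L^2(\X)$-, $W^{1,2}(\X)$- and $W^{2,2}(\X)$-norms respectively. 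Moreover $t\mapsto\int h\,\d\mu_t$ is already known to be $C^1([0,1])$, with derivative $\int\la\nabla h,\nabla\phi_t\ra\,\d\mu_t$, by Theorem \ref{thm:1} applied with base point $t$; hence only the $C^2$ regularity and the second formula remain, and by Lemma \ref{le:gradpot} its right-hand side equals $\int\H h(\nabla\vartheta_t,\nabla\vartheta_t)\rho_t\,\d\mm$, where $(\vartheta_t)$ is any limit curve furnished by Proposition \ref{lem:6} and $\rho_t:=\tfrac{\d\mu_t}{\d\mm}$.

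Fix $\eps>0$, $h\in\testi\X$ and set $g^\eps(t):=\int h\,\d\mu^\eps_t$. By Proposition \ref{pro:7}, on $(0,1)$ the function $g^\eps$ is $C^1$ with $(g^\eps)'(t)=\int\la\nabla h,\nabla\vartheta^\eps_t\ra\rho^\eps_t\,\d\mm$, and this last expression is locally absolutely continuous in $t$; differentiating it via the Leibniz rule \eqref{eq:leibh}, the continuity equation $\tfrac{\d}{\d t}\rho^\eps_t=-\div(\rho^\eps_t\nabla\vartheta^\eps_t)$ and the equation $\tfrac{\d}{\d t}\vartheta^\eps_t=a^\eps_t-\tfrac12|\nabla\vartheta^\eps_t|^2$ of Proposition \ref{pro:7}, and integrating by parts once, the two terms involving $\H{\vartheta^\eps_t}$ cancel by symmetry of the Hessian and one is left with
\begin{equation}
\label{eq:2ndentropic}
\frac{\d^2}{\d t^2}\int h\,\d\mu^\eps_t=\int\H h(\nabla\vartheta^\eps_t,\nabla\vartheta^\eps_t)\,\d\mu^\eps_t+\int\la\nabla h,\nabla a^\eps_t\ra\,\d\mu^\eps_t\qquad\text{for a.e.\ }t\in(0,1).
\end{equation}
All the manipulations are legitimate because $h$, $\vartheta^\eps_t$, $\rho^\eps_t$ and $\log\rho^\eps_t$ all belong to $\testi\X$ with the bounds recorded in \eqref{eq:boundbase}; this is the announced reproof of Theorem \ref{thm:1i} for (the interior of) entropic interpolations.

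Next I would fix $\delta\in(0,1/2)$ and a sequence $\eps_n\downarrow0$ along which Proposition \ref{lem:6} and Corollary \ref{cor:convd} hold, and let $n\to\infty$ in the twice time-integrated form of \eqref{eq:2ndentropic} on $[\delta,1-\delta]$. On this interval $\rho^{\eps_n}_t$ is uniformly bounded and converges weakly-$\ast$ in $L^\infty(\mm)$ to $\rho_t$ for every $t$, hence also weakly in $L^2([\delta,1-\delta]\times\X)$; by Corollary \ref{cor:convd}, $\d\vartheta^{\eps_n}_t\to\d\vartheta_t$ and $\d\vartheta^{\eps_n}_t\otimes\d\vartheta^{\eps_n}_t\to\d\vartheta_t\otimes\d\vartheta_t$ strongly in $L^2([\delta,1-\delta]\times\X)$, so pairing the fixed objects $\nabla h$, resp.\ $\H h$, with the strongly convergent factor and then with the weakly-$\ast$ convergent density makes the Hessian term pass to the limit to $\int\H h(\nabla\vartheta_t,\nabla\vartheta_t)\rho_t\,\d\mm$. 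The acceleration term is treated by writing $a^\eps_t=-\tfrac{\eps^2}{8}\big(2\Delta\log\rho^\eps_t+|\nabla\log\rho^\eps_t|^2\big)$, integrating by parts, and using $\nabla\rho^\eps_t=\rho^\eps_t\nabla\log\rho^\eps_t$, $\Delta h\in L^\infty(\X)$, the Leibniz rule for $\nabla|\nabla\log\rho^\eps_t|^2$, the bounds of Lemma \ref{lem:7} and the vanishings \eqref{eq:b1}, \eqref{eq:b2}, \eqref{eq:b3} of Lemma \ref{le:vanish}: this yields precisely \eqref{eq:2ii}, i.e.\ $\iint_\delta^{1-\delta}\la\nabla h,\nabla a^{\eps_n}_t\ra\rho^{\eps_n}_t\,\d t\,\d\mm\to0$, and the same with any bounded weight in $t$. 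Since $\int_\delta^{1-\delta}|(g^{\eps_n})''(t)|\,\d t$ is bounded uniformly in $n$ (again via Lemma \ref{lem:7} and the equi-Lipschitz estimate \eqref{eq:lipcontr}), a routine argument (integrate \eqref{eq:2ndentropic} twice, let $n\to\infty$ using $g^{\eps_n}(t)\to\int h\,\d\mu_t$, and identify the limits) shows that $t\mapsto\int h\,\d\mu_t$ is $C^2$ on $(0,1)$ with second derivative $\int\H h(\nabla\vartheta_t,\nabla\vartheta_t)\rho_t\,\d\mm$; the continuity in $t$ on $(0,1)$ of this expression is Lemma \ref{le:percon}.

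It remains to reach the endpoints. By \eqref{eq:63} one has $\d\varphi_t=-\d\psi_t$ $\mu_t$-a.e., hence $\rho_t\,\d\vartheta_t\otimes\d\vartheta_t=\rho_t\,\d\psi_t\otimes\d\psi_t$ $\mm$-a.e.\ for $t\in[0,1)$ and $=\rho_t\,\d\varphi_t\otimes\d\varphi_t$ $\mm$-a.e.\ for $t\in(0,1]$; by Lemma \ref{le:percon} both expressions are strongly continuous on their respective intervals, so $t\mapsto\int\H h(\nabla\vartheta_t,\nabla\vartheta_t)\rho_t\,\d\mm$ extends continuously to $[0,1]$. Together with the $C^1([0,1])$ regularity from Theorem \ref{thm:1}, the elementary fact that a $C^1$ function on $[0,1]$ which is $C^2$ on $(0,1)$ with a second derivative admitting a continuous extension to $[0,1]$ is in fact $C^2([0,1])$ gives the two formulas for $h\in\testi\X$; the density argument of the first paragraph then extends them to all $h\in H^{2,2}(\X)$, and Lemma \ref{le:gradpot} rewrites the answer in terms of an arbitrary $\phi_t$ as in the statement. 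I expect the main obstacle to be the limit passage in the acceleration term: although the delicate $\eps$-uniform second-order estimates are already isolated in Lemma \ref{lem:7} and Lemma \ref{le:vanish}, turning them into \eqref{eq:2ii} requires the right integrations by parts, and one must at the same time carefully juggle the various strong, weak-$\ast$ and space-time modes of convergence for the remaining terms.
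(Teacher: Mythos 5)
Your proof is correct and follows essentially the same strategy as the paper's: approximate the geodesic by entropic interpolations, compute the second derivative along them via Proposition \ref{pro:7} and the Leibniz rule \eqref{eq:leibh} so that the $\H{\vartheta^\eps_t}$ terms cancel, pass to the limit using Corollary \ref{cor:convd} and Lemma \ref{le:vanish} to kill the acceleration term, invoke Lemma \ref{le:percon} for continuity up to the endpoints, and finish by density of $\testi\X$ in $H^{2,2}(\X)$. Two inessential variations: you quote Theorem \ref{thm:1} for the $C^1$ part where the paper re-derives it inside the same approximation; and your treatment of the acceleration term, splitting $a^\eps_t$, integrating by parts only on the $\Delta\log\rho^\eps_t$ piece, and using \eqref{eq:leibh} with the Hessian bound \eqref{eq:bhess} and Cauchy--Schwarz for the $|\nabla\log\rho^\eps_t|^2$ piece, sidesteps the cubic estimate \eqref{eq:b4}, which the paper's single integration by parts against $h\rho^\eps_t$ does require.
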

\begin{proof} For the given $\mu_0,\mu_1$ introduce the notation of Setting \ref{set} and then find $\eps_n\downarrow0$ such that $(\varphi^{\eps_n}_t),(\psi^{\eps_n}_t)$ converge to limit curves $(\varphi_t),(\psi_t)$ as in Proposition \ref{lem:6}. 

By Lemma \ref{le:gradpot} we know that the particular choice of the $\phi_t$'s as in the statement does not affect the right hand sides in \eqref{eq:derivate}, we shall therefore prove that such formulas hold for the choice $\phi_t:=\psi_t$, which is admissible thanks to Proposition \ref{pro:9} whenever $t<1$. The case $t=1$ can be achieved swapping the roles of $\mu_0,\mu_1$ or, equivalently, with the choice $\phi_t=-\varphi_t$ which is admissible for $t>0$.

Fix $h\in H^{2,2}(\X)$ and for $t\in[0,1]$ set
\[
I_n(t):=\int h\,\d\mu^{\eps_n}_t\qquad\qquad I(t):=\int h\,\d\mu_t.
\]
The bound \eqref{eq:linftybound} grants that the $I_n$'s are uniformly bounded and the convergence in \eqref{eq:66} that $I_n(t)\to I(t)$ for any $t\in[0,1]$.

Since $(\rho^{\eps_n}_t)\in AC_{loc}((0,1),W^{1,2}(\X))$ we have that $I_n\in AC_{loc}((0,1))$ and, recalling the formula for $\ddt\rho^\eps_t$ given by Proposition \ref{pro:7}, that
\begin{equation}
\label{eq:der1}
\ddt I_n(t)=\int h\ddt\rho^{\eps_n}_t\,\d\mm=-\int h\,{\rm div}(\rho^{\eps_n}_t\nabla\vartheta^{\eps_n}_t)=\int \la\nabla h,\nabla\vartheta_t^{\eps_n}\ra\rho^{\eps_n}_t\,\d\mm.
\end{equation}
The fact that  $\vartheta_t=\frac{\psi_t-\varphi_t}2$ and  the bounds \eqref{eq:lipcontr} and \eqref{eq:linftybound} ensure that $\big|\ddt I_n(t)\big|$ is uniformly bounded in $n$ and $t\in[t_0,t_1]\subset(0,1)$ and the convergence properties \eqref{eq:limited} and  \eqref{eq:66} grant that 
\[
\iint_{t_0}^{t_1} \la\nabla h,\nabla\vartheta_t^{\eps_n}\ra\rho^{\eps_n}_t\,\d t\,\d\mm\quad\to\quad\iint_{t_0}^{t_1}  \la\nabla h,\nabla\vartheta_t\ra\rho_t\,\d t\,\d\mm.
\]
This is sufficient to pass  to the limit in the distributional formulation of $\ddt I_n(t)$ and taking into account that $I\in C([0,1])$ we have just proved that $I\in AC_{loc}((0,1))$ with
\begin{equation}
\label{eq:der11}
\ddt I(t)=\int \la\nabla h,\nabla\vartheta_t\ra\rho_t\,\d\mm
\end{equation}
for a.e.\ $t\in[0,1]$. Recalling that $\vartheta_t=\frac{\psi_t-\varphi_t}2$, \eqref{eq:63} and the locality of the differential we see that
\begin{equation}
\label{eq:gradt}
\nabla\vartheta_t=\nabla\psi_t\quad\rho_t\mm\ae\qquad\forall t\in[0,1),
\end{equation} 
and thus by Lemma \ref{le:percon} we see that the right hand side of \eqref{eq:der11} has a continuous representative in $t\in[0,1)$, which then implies  that $I\in C^1([0,1))$ and that the first in \eqref{eq:derivate} holds for any $t\in[0,1)$.

For the second derivative we assume for a moment that $h\in\testi \X$. Then we recall  that $(\rho^{\eps_n}_t),(\vartheta^{\eps_n}_t)\in AC_{loc}((0,1),W^{1,2}(\X))$ and consider the rightmost side of \eqref{eq:der1} to get that $\ddt I_n(t)\in AC_{loc}((0,1))$ and
\[
\frac{\d^2}{\d t^2}I_n(t)=\int\langle\nabla h,\nabla\ddt \vartheta^{\eps_n}_t\rangle\rho^{\eps_n}_t+\la\nabla h,\nabla \vartheta^{\eps_n}_t\ra\ddt\rho^{\eps_n}_t\,\d\mm
\]
for a.e.\ $t$, so that defining the `acceleration' $a^\eps_t$ as
\[
a^\eps_t:= -\Big(\frac{\varepsilon^2}{4} \Delta\log\rho^{\varepsilon}_t + \frac{\varepsilon^2}{8}|\nabla\log\rho^{\varepsilon}_t|^2\Big)
\]
and recalling the formula for $\ddt\vartheta^\eps_t$ given by Proposition \ref{pro:7} we have
\[
\begin{split}
\frac{\d^2}{\d t^2}I_n(t)&=\int\langle\nabla h,\nabla\Big(-\frac12|\nabla\vartheta^{\eps_n}_t|^2+ a^{\eps_n}_t\Big)\rangle\rho^{\eps_n}_t-\la\nabla h,\nabla \vartheta^{\eps_n}_t\ra{\rm div}(\rho^{\eps_n}_t\nabla\vartheta^{\eps_n}_t)\,\d \mm\\
&=\int\Big(-\frac12\langle\nabla h,\nabla |\nabla\vartheta^{\eps_n}_t|^2\rangle +\langle\nabla(\la\nabla h,\nabla \vartheta^{\eps_n}_t\ra),\nabla\vartheta^{\eps_n}_t\rangle+\la\nabla h,\nabla a^{\eps_n}_t\ra\Big)\rho^{\eps_n}_t\,\d\mm\\
(\text{by \eqref{eq:leibh}})\quad&=\int \H h(\nabla\vartheta^{\eps_n}_t,\nabla\vartheta^{\eps_n}_t)\rho^{\eps_n}_t\,\d\mm-\int \big(\Delta  h+\la\nabla h,\nabla\log\rho^{\eps_n}_t\ra\big) a^{\eps_n}_t\rho^{\eps_n}_t\,\d\mm.
\end{split}
\]
Since $\H h\in L^2(T^{*\otimes 2}\X)$ and  $\vartheta^\eps_t=\frac{\psi^\eps_t-\varphi^\eps_t}2$, by the limiting properties \eqref{eq:limited} and \eqref{eq:66} we know that 
\[
\int \H h(\nabla\vartheta^{\eps_n}_t,\nabla\vartheta^{\eps_n}_t)\rho^{\eps_n}_t\,\d\mm\quad\stackrel{n\to\infty}\to \quad\int \H h(\nabla\vartheta_t,\nabla\vartheta_t)\rho_t\,\d\mm\qquad\text{ in $L^1_{loc}(0,1)$}
\]
and since $|\nabla h|,\Delta h\in L^\infty(\X)$, by Lemma \ref{le:vanish} we deduce that
\[
\int \big(\Delta  h+\la\nabla h,\nabla\log\rho^{\eps_n}_t\ra\big) a^{\eps_n}_t\rho^{\eps_n}_t\,\d\mm\quad\to\quad 0\qquad\text{ in $L^1_{loc}(0,1)$}.
\]
Hence we can pass to the limit in the distributional formulation of $\frac{\d^2}{\d t^2}I_n$ to obtain that $\ddt I\in AC_{loc}((0,1))$ and
\begin{equation}
\label{eq:der2}
\frac{\d^2}{\d t^2}I(t)=\int \H h(\nabla\vartheta_t,\nabla\vartheta_t)\rho_t\,\d\mm
\end{equation}
for a.e.\ $t$. Using again \eqref{eq:gradt} and Lemma \ref{le:percon} we conclude that the right hand side of \eqref{eq:der2} is continuous on $[0,1)$, so that $I\in C^2([0,1))$ and the second in \eqref{eq:derivate} holds for every $t\in[0,1)$.

It remains to remove the assumption that $h\in \testi \X$. Thus pick $h\in H^{2,2}(\X)$ and put $h_s:=\h_sh$. As $s\downarrow0$ we clearly have $h_s\to h$ in $W^{1,2}(\X)$ and $\Delta h_s\to\Delta h$ in $L^2(\X)$. Thus the bound \eqref{eq:heslap} grants that $h_s\to h$ in $W^{2,2}(\X)$. By \eqref{eq:regflow} we know that $h_s\in\testi\X$ for every $s>0$, thus the conclusion of the theorem hold for the $h_s$'s.

Now notice that we can choose the $\phi_t$'s to be uniformly Lipschitz (e.g.\ by taking $\phi_t:=\psi_t$ for $t\geq 1/2$ and $\phi_t:=-\varphi_t$ for $t<1/2$). The uniform $L^\infty$ estimates \eqref{eq:linftyrcd}, the equi-Lipschitz continuity of $\phi_t$ and the $L^2$-convergence of ${h_k},\nabla {h_k},\H {h_k}$ to ${h},\nabla {h},\H {h}$ respectively grant that as $s\downarrow0$ we have that
\[
\begin{split}
\int h_s\,\d\mu_t&\qquad\to\qquad\int h\,\d\mu_t\\
\int\la\nabla h_s,\nabla\phi_t\ra\,\d\mu_t&\qquad\to\qquad \int\la\nabla h,\nabla\phi_t\ra\,\d\mu_t\\
\int \H {h_s}(\nabla\phi_t,\nabla\phi_t)\,\d\mu_t&\qquad\to\qquad \int \H {h}(\nabla\phi_t,\nabla\phi_t)\,\d\mu_t
\end{split}
\]
uniformly in $t\in[0,1]$. This is sufficient to conclude.
\end{proof}

\def\cprime{$'$} \def\cprime{$'$}

\end{document}